\documentclass[a4paper,11pt]{article}

\usepackage{mathrsfs}
\usepackage{amsmath}
\usepackage{bm}
\usepackage{amssymb}
\usepackage{makeidx}
\makeindex
\usepackage[dvipsnames]{xcolor}
\usepackage[colorlinks=true, linkcolor=blue, citecolor=blue, urlcolor=blue]{hyperref}
\usepackage{titlesec}
\usepackage{amsthm}
\usepackage{setspace}
\usepackage{pifont}
\usepackage{esint}
\usepackage{graphicx}
\usepackage{float}
\usepackage{enumerate}

\numberwithin{equation}{section}
\usepackage[
    backend=biber,
    style=numeric,
    sorting=anyt
]{biblatex}

\DeclareFieldFormat[article]{title}{#1}

\renewbibmacro{in:}{}

\DeclareFieldFormat{pages}{#1}
\DeclareFieldFormat[article]
{journaltitle}{\mkbibemph{#1}\addcomma}

\renewbibmacro*{volume+number+eid}{%
	\printfield{volume}%
	\setunit{\addspace}%
	\printfield{eid}%
}

\renewbibmacro*{eprint}{%
	\iffieldundef{eprint}
	{}
	{%
		\setunit{\addcomma\space}
		\printtext{%
			\printfield[eprint:arxiv]{eprint}}%
	}%
}

\titleformat{\section}[block]{\bfseries\Large}{\thesection}{1em}{}

\newtheoremstyle{mystyle}
{}{}{\upshape}{}{\bfseries}{.}{.5em}{}

\theoremstyle{mystyle}
\newtheorem{theorem}{Theorem}[section]
\newtheorem{definition}[theorem]{Definition}

\newtheorem{rmk}[theorem]{Remark}

\newtheorem{proposition}[theorem]{Proposition}

\newtheorem{lemma}[theorem]{Lemma}

\newtheorem{remark}{Remark}[section]

\theoremstyle{plain}
\renewenvironment{proof}{\noindent\textbf{Proof:}}{\qed}

\newcommand{\R}{\mathbb{R}}

\newcommand{\bi}{{\bf i}}
\newcommand{\cP}{\mathcal{P}}

\newcommand{\cA}{\mathcal{A}}
\newcommand{\cT}{\mathcal{T}}
\newcommand{\cL}{\mathcal{L}}
\newcommand{\cF}{\mathcal{F}}
\newcommand{\cS}{\mathcal{S}}
\newcommand{\bT}{{\bf T}}
\newcommand{\bA}{{\bf A}}
\newcommand{\bB}{{\bf B}}
\newcommand{\bK}{{\bf K}}

\begin{document} 

    \title{\large\bfseries
Poincar\'e--Sobolev Levels for Fractional GJMS Operators on Hyperbolic Space}

	\author{\small Huyuan Chen,\qquad Rui Chen}
	\date{}
	\maketitle
	\thispagestyle{empty}
	\pagenumbering{arabic} 
	
	\noindent\textbf{Abstract:}
	This paper is devoted to a qualitative analysis of the Poincar\'e--Sobolev level
	associated with the fractional GJMS operators \(\cP_s\)
	\(\bigl(s\in(0,\tfrac n2)\setminus\mathbb N\bigr)\) on the hyperbolic space \(\mathbb H^n\).
	In contrast to the integer-order case, when \(s\notin\mathbb N\) the operator \(\cP_s\)
	does not enjoy the conformal covariance that allows one, in the upper half-space or
	ball model, to relate it to the Euclidean fractional Laplacian \((-\Delta)^s\); this link
	is crucial for importing Euclidean theory. We therefore introduce
	\(\widetilde{\cP}_s\) (\(s>0\)), which is
	conformally related to the $(-\Delta)^s$.
	Our purpose in the paper is to analyze the monotonicity, attainability, and strict-gap regions of the Poincar\'e--Sobolev levels associated with $\cP_s$
	and $\widetilde{\cP}_s$.

	First, we reinterpret the Brezis–Nirenberg problem through the lens of Poincar\'e--Sobolev levels, connecting earlier results for the Euclidean Laplacian and for  operators $\cP_k$ on $\mathbb H^n$ with integer
	$k\in(0,\tfrac n2)$.
	We then establish new, explicit lower bounds for the Hardy term in fractional Hardy–Sobolev–Maz’ya inequalities involving both  $\cP_s$ and $\widetilde{\cP}_s$.  By applying the concentration–compactness principle together with a detailed analysis of the strict-gap regions for the Poincaré–Sobolev levels, we prove the existence of solutions to the Brezis–Nirenberg problem on $\mathbb H^n$ for both operators. Finally, combining the Hardy lower bounds with criteria for attainability, we obtain a complete characterization of the Poincar\'e–Sobolev levels  $H_{n,s}$ and $\widetilde H_{n,s}$.

	\medskip
	
	\noindent \textbf{Keywords:} Fractional GJMS Operators, Hyperbolic Space, Poincar\'e--Sobolev Constants, Brezis-Nirenberg Problem

    	\medskip
	
	 \noindent {\small {\bf AMS Subject Classifications}:     35R11; 58J40; 35J30; 53C21
	\bigskip
	
	\tableofcontents
	\thispagestyle{empty} 
	
	\setcounter{page}{1} 
	
	\section{Introduction and Main Results}
	
	The aim of this paper is to investigate the quantitative behavior of the
	Poincar\'e--Sobolev level on the hyperbolic space \(\mathbb H^n\), defined by
	\begin{equation}\label{eq:Sns-Hn-lambda}
		H_{n,s}(\lambda)
		:= \inf_{u\in C_c^\infty(\mathbb H^n)\setminus\{0\}}
		\frac{\displaystyle \int_{\mathbb H^n} (\cP_s u)\,u\,dV_{\mathbb H^n}
			- \lambda \int_{\mathbb H^n} |u|^2\,dV_{\mathbb H^n}}
		{\Bigl(\displaystyle\int_{\mathbb H^n} |u|^{2_s^*}\,dV_{\mathbb H^n}\Bigr)^{\!2/2_s^*}},
		\quad \lambda \in \mathbb{R},
	\end{equation}
	where \(n\ge2\), \(s\in\big(0,\frac{n}{2}\big)\setminus\mathbb{N},\ 2_s^*:=\frac{2n}{n-2s}\), and
	\(\cP_s\) denotes the $s$-order GJMS operator on \(\mathbb H^n\).
	It admits the explicit spectral representation
	\begin{equation}\label{all pass}
		\cP_s
		= 2^{2s}\,
		\frac{\Bigl|\Gamma\!\Bigl(\frac{3+2s}{4}+\frac{\bi}{2}\,\cA\Bigr)\Bigr|^{2}}
		{\Bigl|\Gamma\!\Bigl(\frac{3-2s}{4}+\frac{\bi}{2}\,\cA\Bigr)\Bigr|^{2}}
		\qquad
		\text{with}\quad
		\cA:=\sqrt{-\Delta_{\mathbb H^n}-\rho^{2}},\ \ \rho:=\frac{n-1}{2}.
	\end{equation}
	Here \(\bi=\sqrt{-1}\), \(\Gamma\) denotes the Gamma function, and the functional
	calculus is taken on \(L^2(\mathbb H^n)\); see \cite{lu2023explicit}.
	We emphasize that the right-hand side of \eqref{all pass} is well defined for all
	\(s>0\), and we adopt \eqref{all pass} as the definition of \(\cP_s\) for
	\(s\in(0,\infty)\).
	
	For integer orders \(s=k\in\mathbb N\), the GJMS operator \(\cP_k\) is conformally
	intertwined with the Euclidean fractional Laplacian in the standard models of \(\mathbb H^n\).
	More precisely, in the upper half--space model \((\mathbb H^n,g_{\mathbb H^n})\),
	\begin{equation}\label{eq:tildePs-halfspace-intertwine11}
		x_1^{\,k+\frac n2}\,(-\Delta )^k\!\bigl(x_1^{\,k-\frac n2}u\bigr)
		\;=\; {\cP}_k\,u
		\quad{\rm for}\ \, u\in C^\infty({\mathbb{R}^n_+}),
	\end{equation}
	while in the Poincar\'e ball model \((\mathbb B^n,g_{\mathbb B^n})\),
	\begin{equation}\label{eq:tildePs-ball-intertwine11}
		\Bigl(\frac{1-|x|^2}{2}\Bigr)^{\!k+\frac n2}
		(-\Delta)^k\!\Bigl[\Bigl(\frac{1-|x|^2}{2}\Bigr)^{\!k-\frac n2}u\Bigr]
		\;=\; {\cP}_k\,u	\quad{\rm for}\ \,  u\in C^\infty({\mathbb{B}^n}),
	\end{equation}
	where $\Delta$ is the Laplacian in Euclidean space. 
	In contrast, for non-integer orders \(s\in(0,\frac n2)\setminus\mathbb N\),
	the operator \(\cP_s\) is not conformally equivalent to the Euclidean fractional
	Laplacian on \(\mathbb R^n_+\) or \(\mathbb B^n\). To recover a usable intertwining structure,  
	an auxiliary operator is involved 
	\begin{equation}\label{specps}
		\widetilde{\cP}_s
		:= \frac{\bigl|\Gamma\!\bigl(s+\tfrac12+i\cA\bigr)\bigr|^2}
		{\bigl|\Gamma\!\bigl(\tfrac12+i\cA\bigr)\bigr|^2},
	\end{equation}
	which satisfies the conformal intertwining identities
	\eqref{eq:tildePs-halfspace-intertwine11}--\eqref{eq:tildePs-ball-intertwine11}
	for all \(s>0\); see \cite[Theorem~1.7]{lu2023explicit}.
	This operator provides a convenient bridge between analysis on \(\mathbb H^n\)
	and the Euclidean setting.
	Moreover, the precise relation between \(\cP_s\) and \(\widetilde{\cP}_s\) is given by
	\cite[Corollary~5.3]{lu2023explicit}:
	\begin{equation}\label{eq:P-vs-tildeP}
		\cP_s
		\;=\;
		\widetilde{\cP}_s
		\;+\;
		\frac{\sin(\pi s)}{\pi}\,
		\Bigl|\Gamma\!\Bigl(s+\tfrac12+i\cA\Bigr)\Bigr|^2,
		\quad s\in (0,\infty).
	\end{equation}
	In particular, for integer orders \(s\in(0,\tfrac n2)\cap\mathbb N\), one has $\cP_s \;=\; \widetilde{\cP}_s.$
	
	We also introduce the hyperbolic Poincar\'e--Sobolev level
	associated with \(\widetilde{\cP}_s\). This auxiliary level will serve as a key
	tool in our analysis of \(H_{n,s}(\lambda)\), allowing us to circumvent the fact
	that \(\cP_s\) does not admit a direct reduction to the Euclidean fractional
	Laplacian in the non-integer regime:
	\begin{equation}\label{eq:Sns-Hn-lambda111}
		\widetilde H_{n,s}(\lambda)
		:= \inf_{u\in C_c^\infty(\mathbb H^n)\setminus\{0\}}
		\frac{\displaystyle \int_{\mathbb H^n} (\widetilde\cP_s u)\,u\,dV_{\mathbb H^n}
			- \lambda \int_{\mathbb H^n} |u|^2\,dV_{\mathbb H^n}}
		{\Bigl(\displaystyle\int_{\mathbb H^n} |u|^{2_s^*}\,dV_{\mathbb H^n}\Bigr)^{\!2/2_s^*}},
		\quad \lambda\in\mathbb R.
	\end{equation}
	In this paper, we will mainly analyze qualitative properties of the two level
	functions \(\lambda\mapsto H_{n,s}(\lambda)\) and
	\(\lambda\mapsto \widetilde H_{n,s}(\lambda)\), including monotonicity,
	attainability, as well as the associated threshold phenomena.

	The motivation for introducing \(H_{n,s}(\lambda)\) and \(\widetilde H_{n,s}(\lambda)\) stems from its tight connection with Brezis--Nirenberg problem
	\begin{equation}\label{eq:BN-Hn-frac}
		\cP_s u =\lambda u + |u|^{p-1}u
		\quad\ \text{in }\, \mathbb H^n
	\end{equation}
	and
	\begin{equation}\label{eq:BN-Hn-frac2}
		\widetilde\cP_s u =\lambda u + |u|^{p-1}u
		\quad\ \text{in }\, \mathbb H^n,
	\end{equation}
	where $	1<p\le 2_s^*-1$ and $2_s^*:=\frac{2n}{n-2s}.$
	In fact, to establish the existence of nontrivial solution to \eqref{eq:BN-Hn-frac},
	a key step is to determine for which values of \(\lambda\) the following strict
	inequality holds:
	\begin{equation}\label{ineq-1}
		H_{n,s}(\lambda)\;<\;H_{n,s}(0),\quad \widetilde H_{n,s}(\lambda)\;<\;\widetilde H_{n,s}(0).
	\end{equation}
	
	Precisely, when inequality (\ref{ineq-1}) holds,  positive Poincar\'e--Sobolev levels  \(H_{n,s}(\lambda)\) and \(\widetilde H_{n,s}(\lambda)\) attains their infimum, and the corresponding minimizer constitutes a nontrivial solution to the Brezis–Nirenberg problem \eqref{eq:BN-Hn-frac} and \eqref{eq:BN-Hn-frac2}. Consequently, a precise characterization of the strict-gap region, along with the attainability and monotonicity properties of Poincar\'e--Sobolev level—is not only analytically fundamental but also indispensable for establishing existence results for the underlying nonlinear equation. Our primary objective is therefore to characterize both the attainability of the infimum and the strict-gap region \(\mathcal{G}_{n,s}[H_{n,s}]\) and \(\mathcal{G}_{n,s}[\widetilde H_{n,s}]\)associated with the function \(H_{n,s}(\cdot)\) and \(\widetilde H_{n,s}(\cdot)\), where  
	\begin{equation}\label{gap se}  
		\mathcal{G}_{n,s}[f]  
		\;:=\;  
		\big\{\lambda\in \mathbb{R}:\ f(\lambda)<f(0)\big\}.  
	\end{equation}

	\subsection{Euclidean (Fractional) Laplacian and Integer-Order GJMS Operators}
	
	To clarify how the strict-gap region (\ref{ineq-1}) governs solvability of the Brezis–Nirenberg problem \eqref{eq:BN-Hn-frac} and \eqref{eq:BN-Hn-frac2} on $\mathbb{H}^n$,  let's review the known results in a differential viewpoint on the settings: the classical and fractional Laplacians on bounded Euclidean domains  and  then  local (integer-order) conformal Laplacian on  hyperbolic space. We firstly recall
	\begin{equation}\label{eq:upper-bound}
		S_{n,s}:=\inf_{v\in C_c^\infty(\R^n)\setminus\{0\}}
		\frac{\displaystyle\int_{\R^n} v\,(-\Delta)^s v\,dx}
		{\Bigl(\displaystyle\int_{\R^n}|v|^{2_s^*}\,dx\Bigr)^{\!2/2_s^*}} 
	\end{equation}
	with $s\in\big(0,\frac{n}{2}\big) $ and $(-\Delta)^s$ denoting the fractional laplacian on the Euclidean space $\R^n$ by
	\[	\widehat{\,(-\Delta)^s v\,}(\xi) = |\xi|^{2s}\,\hat v(\xi),
	\quad \xi\in\R^n,\ v\in C_c^\infty(\R^n). \]
	
	For $s\in(0,1]$, denote by $S_{n,s,\Omega}(\lambda)$ the Poincar\'e--Sobolev level associated with
	$(-\Delta)^s$ and $\lambda\in\R$:
	\begin{equation}\label{eq:Sns-lambda}
		S_{n,s,\Omega}(\lambda)
		:=\inf_{\substack{v\in C_c^\infty(\Omega)\setminus\{0\}}}
		\frac{\displaystyle\int_{\Omega} v\,(-\Delta)^s v\,dx-\lambda\int_{\Omega} v^2\,dx}
		{\Bigl(\displaystyle\int_{\Omega}|v|^{2_s^*}\,dx\Bigr)^{2/2_s^*}},
	\end{equation}
	where $2_s^*=\frac{2n}{n-2s}$ and $\Omega\subset\R^n$ is either a bounded Lipschitz domain or $\Omega=\R^n$. When $\Omega=\mathbb{R}^n$, we simply write $S_{n,s}(\lambda):=S_{n,s,\mathbb{R}^n}(\lambda)$.
	In fact, for bounded Lipschitz domain $\Omega,$ the infimum in \eqref{eq:Sns-lambda} is unchanged if one replaces
	$C_c^\infty(\Omega)$ by $H_0^s(\Omega)$, where
	\[
	H_0^s(\Omega):=\bigl\{u\in H^s(\R^n):\ u=0 \ \text{in }\Omega^c\bigr\},
	\]
	since $C_c^\infty(\Omega)$ is dense in $H_0^s(\Omega)$ with respect to the $H^s$-norm (see \cite{BisciRadulescuServadei2016}). 
	Remark that 
	$$S_{n,s,\Omega}(0)=S_{n,s}\quad{\rm and}\quad  
	S_{n,s,\Omega}(\lambda)>0\ \ {\rm if}\ \ \lambda<\lambda_{1,s}(\Omega),$$
	where  $S_{n,s}$ is given in (\ref{eq:upper-bound}) and $\lambda_{1,s}(\Omega)$ is the first   eigenvalue of $(-\Delta)^{s}$ on $\Omega$ subject to boundary condition that $u=0$ in $\partial\Omega$ for $s=1$ or
	$u=0$ in $\R^n\setminus \Omega$ for $s\in(0,1)$.  Obviously, the function $\lambda\in\R\mapsto S_{n,s,\Omega}(\lambda)$ is non-increasing.   
	By (\ref{gap se}), the strict gap set defined by
	\[
	\mathcal{G}_{n,s,\Omega}[S_{n,s,\Omega}]
	\;=\;
	\{\lambda\in\mathbb{R}:\ S_{n,s,\Omega}(\lambda)<S_{n,s}\}. 
	\]

	
	When $s=1$,  we start from  the seminal  paper of Ha\"im Brezis and Louis Nirenberg \cite{brezis1983positive} in 1983, which  
	concerns the existence  of positive solutions to the critical semilinear Dirichlet problem
	\begin{equation}\label{eq:BN-or}
		\left\{
		\begin{aligned}
			-\Delta u &= \lambda u + |u|^{2^*-2}u && \text{in }\Omega,\\[1mm]
			u&=0 && \text{on }\partial\Omega,
		\end{aligned}
		\right.
	\end{equation}
	where $\lambda\in\mathbb{R}$ is a real parameter, and $2^*=\frac{2n}{n-2}$ is the critical Sobolev exponent for the embedding $H_0^1(\Omega)\hookrightarrow L^{2^*}(\Omega)$. 
	{\it Later on,  problem (\ref{eq:BN-or}) is named as the Brezis--Nirenberg problem.  }

	Note  that  if $\Omega$ is star-shaped, then \eqref{eq:BN-or} admits no positive solution for $\lambda\le0$ by Pohozaev’s identity and  one rules out positive solutions when $\lambda\ge\lambda_{1,1}(\Omega)$ by testing \eqref{eq:BN-or} against the first eigenfunction.  Brezis and Nirenberg built the crucial inequalities  
	$0<S_{n,1,\Omega}(\lambda)<S_{n,1}$ to guarantee the existence for  $\lambda\in\big(0,\lambda_{1,1}(\Omega)\big)$   when $n\geq 4$
	and for $\lambda\in\big(\tfrac14\lambda_{1,1}(B_1),\lambda_{1,1}(B_1)\big)$ when $N=3$. In these cases, $S_{n,1,\Omega}(\lambda)$ is  achieved in $H_0^1(\Omega)$  and by a Lagrange multiplier argument, the minimizer solves \eqref{eq:BN-or}.  Moreover,  for $n=3$ and $\lambda\le \tfrac14\lambda_{1,1}(B_1)$, they also showed that problem \eqref{eq:BN-or} admits no positive solution. 
    
  Recently, research on the Brezis–Nirenberg problem has significantly expanded in scope and depth; for a comprehensive overview, we refer the reader to \cite{WW25,FK21,FK25,PV22,SZ10,MR24} and the references therein. \smallskip

	When $s\in(0,1)$, if the infimum in \eqref{eq:Sns-lambda} is achieved by some nontrivial $v\in H_0^s(\Omega)$ and $S_{n,s,\Omega}(\lambda)>0$, then a suitable scaling of $v$ yields a positive solution to the fractional Brezis--Nirenberg problem.
	
	\begin{equation}\label{eq:BN-frac}
		\left\{
		\begin{aligned}
			(-\Delta)^s u &= \lambda u + |u|^{2_s^*-2}u && \text{in }\Omega,\\
			u&=0 && \text{in }\R^n\setminus\Omega.
		\end{aligned}
		\right.
	\end{equation}

	A remarkable feature of the nonlocal regime is that the existence theory depends on the interplay between the dimension $n$ and the order $s$. In the range $n>2s$, one has the following picture:
	\begin{itemize}
		\item \textbf{High dimensions relative to $s\in(0,1)$: $n\ge 4s$.}
		One has  (\cite[Claim 14.1]{BisciRadulescuServadei2016})
		\[
		(0,\infty)\subset \mathcal{G}_{n,s,\Omega}[S_{n,s,\Omega}]
		\]
		and consequently \eqref{eq:BN-frac} admits a nontrivial weak solution for $\lambda\in (0,\lambda_{1,s}(\Omega
		)).$ (\cite[Theorem 14.1]{BisciRadulescuServadei2016}) 
		\item \textbf{Low dimensions relative to $s\in(0,1)$: $2s<n<4s$.}
		There exists a constant $\lambda_s^*>0$ such that (\cite[Proposition 16.4]{BisciRadulescuServadei2016})
		\[
		(\lambda_s^*,\infty)\subset \mathcal{G}_{n,s,\Omega}[S_{n,s,\Omega}].
		\]
		In this regime, the relation between $\lambda_s^*$ and $\lambda_{1,s}(\Omega)$ is unknown—even when $\Omega$ is the unit ball; consequently, the standard mountain–pass scheme alone does not guarantee existence. In \cite[Theorem~16.1]{BisciRadulescuServadei2016}, the authors combine mountain–pass and linking arguments, treating separately the cases $\lambda_s^*<\lambda_{1,s}(\Omega
		)$ and $\lambda_s^*\ge \lambda_{1,s}(\Omega)$, and thereby obtain nontrivial solutions of \eqref{eq:BN-frac} for every $\lambda>\lambda_s^*$ that is not a Dirichlet eigenvalue of $(-\Delta)^s$.
	\end{itemize}

     Recently, research on the Brezis–Nirenberg problem involving the nonlocal operators has been studied extensively, we refer the reader to \cite{DK23,SV15,BC15,MS15,MM17,GL21} and the references therein.  Building upon the aforementioned findings, we derive the following conclusion.

	\begin{proposition}\label{prop:Lp-perturbation-frac-prop}
		Let $\Omega\subset\mathbb R^n$ be a bounded Lipschitz domain, $s\in(0,1]$ and $n>2s$, and set $2_s^*=\frac{2n}{n-2s}$. 
		Then the following statements hold:
		\begin{enumerate}
			\item[(i)] For $n\ge4s$, $\mathcal{G}_{n,s,\Omega}[S_{n,s,\Omega}]=(0,+\infty)$ and  $S_{n,s,,\Omega}(\lambda)$ is achieved   in $H_0^s\left(\Omega\right)$ if and only if $\lambda \in \mathcal{G}_{n,s,\Omega}[S_{n,s}]$.   Moreover,  $S_{n,s,\Omega}(\cdot)$ is strictly decreasing in $\mathcal{G}_{n,s,\Omega}[S_{n,s}]$, 
			\[-\lambda |\Omega
			|^{\frac{2s}{n}}\le S_{n,s,\Omega}(\lambda)<S_{n,s,\Omega}(\lambda_{1,s}\left(\Omega\right))=0 \ \, {\rm for}\  \lambda>\lambda_{1,s}(\Omega
			)\quad{\rm and}\quad  
			S_{n,s,\Omega}(\lambda)=S_{n,s} \ \, {\rm for}\  \lambda\leq 0.\]

			\item[(ii)] For $s\in(0,1)$ and $2s<n<4s$,  $(\lambda_{s}^*,+\infty)\subset\mathcal{G}_{n,s,\Omega}[S_{n,s,\Omega}]$ 
			and $S_{n,s,\Omega}(\lambda)$ is achieved  in $H_0^s\left(\Omega\right)$  if $\lambda \in (\lambda_s^*,+\infty)$. Moreover,  $S_{n,s,\Omega}(\cdot)$ is strictly decreasing in $ (\lambda_s^*,+\infty)$ and for any $\lambda\le 0$,
			\[
			S_{n,s,\Omega}(\lambda)=S_{n,s}.
			\]

			\item[(iii)] For  $\Omega=B_1$, $\mathcal{G}_{3,1,B_1}[S_{3,1,B_1}]=(\tfrac14  \lambda_{1,1}(B_1),+\infty)$, 
			$S_{3,1,B_1}(\lambda)$ is achieved  in $H_0^s\left(\Omega\right)$ if and only if $\lambda \in  \mathcal{G}_{3,1,B_1}[S_{3,1,B_1}]$. Moreover,
			$S_{3,1,B_1}(\cdot)$ is strictly decreasing in $\mathcal{G}_{3,1,B_1}[S_{3,1,B_1}]$, and for $\lambda\le \tfrac14  \lambda_{1,1}(B_1)<\lambda_{1,1}(B_1)<\mu$
			\[-\mu|B_1|^{\frac{2}{3}}\le S_{3,1,B_1}(\mu)<S_{3,1,B_1}(\lambda_{1,1}\left(B_1\right))=0  \quad{\rm and}\quad  
			S_{3,1,B_1}(\lambda)=S_{3,1} .  \]
		\end{enumerate}
	\end{proposition}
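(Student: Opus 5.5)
The plan is to assemble the proposition from the cited Euclidean facts plus a few elementary monotonicity and concavity arguments.

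\textbf{Step 1: general structural facts.} These hold in every case.
\begin{enumerate}
\item[(a)] $\lambda\mapsto S_{n,s,\Omega}(\lambda)$ is non-increasing. It is an infimum of affine functions of $\lambda$, so it is also concave and hence continuous wherever it is finite.
\item[(b)] $S_{n,s,\Omega}(\lambda)=S_{n,s}$ for $\lambda\le 0$. Indeed, $S_{n,s,\Omega}(\lambda)\ge S_{n,s,\Omega}(0)=S_{n,s}$, the last equality by scaling invariance and density. Conversely, concentrating bubbles $\varepsilon^{-(n-2s)/2}U(x/\varepsilon)$, cut off inside $\Omega$, have $\|v_\varepsilon\|_{L^2}^2/\|v_\varepsilon\|_{2_s^*}^2\to 0$. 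So $S_{n,s,\Omega}(\lambda)\le S_{n,s}$ for every $\lambda$, which also gives $\mathcal G\subset(0,\infty)$ trivially.
\item[(c)] $S_{n,s,\Omega}(\lambda_{1,s}(\Omega))=0$ and $S_{n,s,\Omega}(\lambda)<0$ for $\lambda>\lambda_{1,s}(\Omega)$. Test with the first eigenfunction $\varphi_1$ to get numerator $(\lambda_{1,s}-\lambda)\|\varphi_1\|_2^2$. Nonnegativity at $\lambda_{1,s}$ follows from the Rayleigh quotient.
\item[(d)] The lower bound $-\lambda|\Omega|^{2s/n}$. Drop the nonnegative quadratic form and apply H\"older, $\|v\|_2^2\le|\Omega|^{2s/n}\|v\|_{2_s^*}^2$.
\end{enumerate}

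\textbf{Step 2: attainability and strict monotonicity.}
\begin{enumerate}
\item[(a)] \emph{If $S_{n,s,\Omega}(\lambda)<S_{n,s}$, the infimum is attained.} For $0\le S_{n,s,\Omega}(\lambda)<S_{n,s}$, use the Brezis--Lieb lemma together with concentration-compactness on a normalized minimizing sequence, as in \cite{brezis1983positive} and \cite[Ch.~14]{BisciRadulescuServadei2016}. Writing $v_k=v+w_k$ with $w_k\rightharpoonup0$, the defect $w_k$ costs at least $S_{n,s}\|w_k\|_{2_s^*}^2$. By strict concavity of $t\mapsto t^{2/2_s^*}$, the gap forces $w_k\to 0$ in $L^{2_s^*}$. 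For $S_{n,s,\Omega}(\lambda)<0$ the same splitting works even more easily.
\item[(b)] \emph{Strict decrease on the gap set.} Let $\mu>\lambda$ with $\lambda$ in the gap set, and let $v_\lambda$ be a minimizer. Then $S(\mu)\le S(\lambda)-(\mu-\lambda)\|v_\lambda\|_2^2/\|v_\lambda\|_{2_s^*}^2<S(\lambda)$.
\item[(c)] \emph{No minimizer for $\lambda\le0$.} A minimizer $v$ with $S_{n,s,\Omega}(\lambda)=S_{n,s}$ and $\lambda\le 0$ would, after extension by zero, attain $S_{n,s}$ on $\mathbb R^n$: for $\lambda<0$ the strict $L^2$ term contradicts the bound, and for $\lambda=0$ it is an extremal. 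But extremals on $\mathbb R^n$ are the Aubin--Talenti/Lieb bubbles, which are positive everywhere, while $v$ vanishes outside $\Omega$. This is a contradiction, and yields the ``only if'' part.
\end{enumerate}

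\textbf{Step 3: the gap sets.}
\begin{enumerate}
\item[(i)] For $n\ge4s$, \cite[Claim~14.1]{BisciRadulescuServadei2016} gives $(0,\infty)\subset\mathcal G$. Combined with Step 1(b), $\mathcal G=(0,\infty)$.
\item[(ii)] For $2s<n<4s$, \cite[Proposition~16.4]{BisciRadulescuServadei2016} gives $(\lambda_s^*,\infty)\subset\mathcal G$. Step 2 then gives attainment and strict decrease there, and Step 1(b) gives the values for $\lambda\le0$.
\item[(iii)] For $n=3$, $s=1$, $\Omega=B_1$, Brezis--Nirenberg give $S(\lambda)<S_{3,1}$ for $\lambda>\tfrac14\lambda_{1,1}(B_1)$. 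For the reverse inclusion, suppose $S(\lambda)<S_{3,1}$ for some $\lambda\le\tfrac14\lambda_{1,1}$. Then $\lambda>0$ by Step 1(b), and by Step 2 the level is attained. Replacing $v$ by $|v|$ yields a positive solution, which by Pohozaev-type nonexistence \cite{brezis1983positive} is impossible for $\lambda\le\tfrac14\lambda_{1,1}(B_1)$. Hence $S(\lambda)=S_{3,1}$ there, and no minimizer exists by the argument of Step 2(c).
\end{enumerate}

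\textbf{Main obstacle.} I expect the delicate points to be the positivity/nonexistence arguments. First, replacing $v$ by $|v|$ must not increase the form; this holds for $s=1$, and for $s\in(0,1)$ by the Gagliardo representation. Second, the ``only if'' direction needs nonexistence of minimizers at the threshold level, which relies on the rigidity of Euclidean extremals.
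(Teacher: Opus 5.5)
Your proposal is correct and follows essentially the same route as the paper. The pieces match: the values for $\lambda\le0$ come from concentrating dilations, attainment below $S_{n,s}$ from the Brezis--Nirenberg/Servadei--Valdinoci compactness lemma (which you reprove rather than cite), strict decrease from testing a minimizer (the paper's Lemma~\ref{prop:comparison-attainment1}(ii)), and the gap sets from the cited results; your rigidity-of-extremals argument also makes explicit the non-attainment at $\lambda=0$ that the paper calls well known. There are two small slips. In (iii), non-attainment for $\lambda\in(0,\tfrac14\lambda_{1,1}(B_1)]$ does not follow from Step 2(c), which only covers $\lambda\le0$; it follows instead from the positive-solution/Pohozaev nonexistence argument you give just before, which applies verbatim to any minimizer there. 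And for $s=1$, $n\ge4$, the inclusion $(0,\infty)\subset\mathcal{G}_{n,1,\Omega}[S_{n,1,\Omega}]$ should be credited to \cite[Lemma~1.1]{brezis1983positive} rather than \cite[Claim~14.1]{BisciRadulescuServadei2016}.
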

	
	\begin{remark}\label{comtim}
		Let \(\mathcal{G}_{n,s,\Omega}[S_{n,s,\Omega}]\) denote the strict--gap region, i.e.,
		the set of parameters for which \(S_{n,s,\Omega}(\lambda)<S_{n,s}\).
		When \(n\ge 4s\), this region is completely understood: one has the full
		classification
		\[
		\mathcal{G}_{n,s,\Omega}[S_{n,s,\Omega}] \;=\; (0,+\infty).
		\]
		In contrast, in the intermediate regime \(2s<n<4s\), a sharp description of
		\(\mathcal{G}_{n,s,\Omega}[S_{n,s,\Omega}]\) remains open. In the local case \(s=1\),
		the situation is settled for the unit ball \(\Omega=B_1\). However, for
		\(s\in(0,1)\), it is still open to identify a critical threshold
		\(\Lambda\le \lambda_s^*\) such that
		\[
		S_{n,s,\Omega}(\lambda)<S_{n,s}\quad \text{for } \lambda>\Lambda,
		\qquad
		S_{n,s,\Omega}(\lambda)=S_{n,s}\quad \text{for } \lambda\le \Lambda,
		\]
		even when \(\Omega=B_1\). Furthermore, for \(s\in (1,\tfrac{n}{2})\), the strict--gap region
		poses significant analytical challenges and  the Brezis--Nirenberg problem remains comparatively underexplored.
	\end{remark}

	On the hyperbolic space $\mathbb H^n$,  let's introduce the conformal GJMS operators (see \cite{graham1992conformally,fefferman2013juhl,gover2006laplacian,juhl2013explicit})
	\[\cP_1=-\Delta_{\mathbb{H}^n}-\tfrac{n(n-2)}{4}= \cA^2+\tfrac14\]
	and for integer order  \( k\ge 2\)  
	\[
	\cP_k
	\;=\;
	\cP_1\,(\cP_1+2)\cdots\bigl(\cP_1+k(k-1)\bigr)
	\;=\;
	\prod_{j=1}^k\Bigl(\cA^2+\bigl(j-\tfrac12\bigr)^2\Bigr),
	\]
	where $-\Delta_{\mathbb H^n}$ is the Laplace--Beltrami  and $\cA,\rho$ are given in (\ref{all pass}).
	Let \(2_k^*:=\frac{2n}{n-2k}\),
	which is the critical exponent for order \(2k\).
	In particular, the bottom of $\cP_k$ satisfies
	\[
	\lambda_{0,k}^{\mathrm{conf}}:=\inf\sigma(\cP_k)
	\;=\;
	\prod_{j=1}^k\Bigl(j-\tfrac12\Bigr)^{\!2}.
	\]
	Recently,   Brezis–Nirenberg type problems on the hyperbolic space have been studied in two principal settings: (i) on bounded domains, and (ii) on the whole space. 
	
	In what follows, we focus on the whole space problem; for results on bounded domains we refer to~\cite{stapelkamp2002brezis,benguria2016solution,li2022higher}. Specifically, we consider
	\begin{equation}\label{eq:BN-Hn-local}
		-\Delta_{\mathbb H^n} u = \lambda u + |u|^{2^*-2}u
		\qquad\text{in }\mathbb H^n,
	\end{equation}
	where $2^*=\frac{2n}{n-2}$, and the bottom of the $L^2$–spectrum of $-\Delta_{\mathbb H^n}$ equals $\frac{(n-1)^2}{4}$. In \cite{ManciniSandeep2008},  authors established the following existence result for positive solutions:
	\begin{itemize}
		\item If $n\ge4$, then for $\frac{n(n-2)}{4}<\lambda\le \frac{(n-1)^2}{4},$
		problem \eqref{eq:BN-Hn-local} admits a positive entire solution (\cite[Theorem 1.5]{ManciniSandeep2008}); for $\lambda\le \frac{n(n-1)}{4}$, problem \eqref{eq:BN-Hn-local} does not have any positive entire solution.
		\item If $n=3$, then for $	\lambda \;\le\; 1$,
		problem \eqref{eq:BN-Hn-local} has no positive entire solution.
	\end{itemize}

	The appearance of the quantity $\frac{n(n-2)}{4}$ in \eqref{eq:BN-Hn-local} is in fact natural from the conformal viewpoint. In fact, \eqref{eq:BN-Hn-local} rewrites as
	\begin{equation}\label{eq:BN-Hn-local1}
		\cP_1 u = \mu u + |u|^{2^*-2}u
		\quad\text{in }\mathbb H^n,\qquad \mu=\lambda-\frac{n(n-2)}{4}.
	\end{equation}
	Since the bottom of the $L^2$–spectrum of $-\Delta_{\mathbb H^n}$ is $\frac{(n-1)^2}{4}$, the bottom of the spectrum of $\cP_1$ is $\frac14$,
	so the admissible window for $\lambda$ is exactly a shift of size $\frac{n(n-2)}{4}$ from the spectral bottom. With this normalization, the results of \cite{ManciniSandeep2008} can be restated as follows:
	
	\begin{itemize}
		\item If $n\ge 4$, then $0 < \mu \le\frac{1}{4}$
		guarantees a positive entire solution to \eqref{eq:BN-Hn-local1} \cite[Theorem~1.5]{ManciniSandeep2008}; 
		whereas for $\mu \;\le\;0,$
		problem \eqref{eq:BN-Hn-local1} admits no positive entire solution \cite[Theorem 1.6]{ManciniSandeep2008}.
		\item If $n=3$, then for $	\mu \;\le\; \frac{1}{4},$
		problem \eqref{eq:BN-Hn-local1} has no positive entire solution \cite[Theorem 1.7]{ManciniSandeep2008}.
	\end{itemize}
	
	When the integer $k\geq 2$, the higher integer order Brezis--Nirenberg problems
	on hyperbolic spaces have been studied in \cite{li2022higher,lu2022green}. Based upon these results, we derive the following conclusion.

	\begin{proposition}\label{prop:Lp-perturbation-frac-prophn11}
		Assume that   integers $k\geq1,\,  n> 2k $    and $H_{n,k}(\lambda),S_{n,k}, \mathcal{G}_{n,k}$ be given in  (\ref{eq:Sns-Hn-lambda}),(\ref{eq:upper-bound}) and (\ref{gap se}) respectively. 
		\begin{enumerate}
			\item[(i)] When $n\ge 4k$, \ $\mathcal{G}_{n,k}[	H_{n,k}]=(0,+\infty)$. \(H_{n,k}(\lambda)\) is achieved and strictly decreasing for every
			\(\lambda\in(0,\lambda_{0,k}^{\mathrm{conf}})\) when \(k\ge2\); while for \(k=1,\)
			it is achieved if and only if \(\lambda\in(0,\lambda_{0,1}^{\mathrm{conf}}]\) and strictly decreasing on \((0,\lambda_{0,1}^{\mathrm{conf}}]\),
			$$-\infty=H_{n,k}(\lambda)<0<H_{n,k}(\lambda_{0,k}^{\mathrm{conf}}) \ \, {\rm for}\ \lambda>\lambda_{0,k}^{\mathrm{conf}}\quad{\rm and}\quad 	 H_{n,k}(\lambda)=S_{n,k}\ \text{ for $\lambda\leq0$}. $$

			\item[(ii)] When $2k+2\le n\le 4k-1$,  there exists $\lambda_k^{\mathrm{conf}}\in (0,\lambda_{0,k}^{\mathrm{conf}})$ such that $(\lambda_k^{\mathrm{conf}},\infty)\subset \mathcal{G}_{n,k}[	H_{n,k}]$. Moreover, $H_{n,k}(\lambda)$ is achieved and strictly decreasing for $\lambda \in (\lambda_k^{\mathrm{conf}},\lambda_{0,k}^{\mathrm{conf}})$, 
			$$-\infty=H_{n,k}(\lambda)<0<H_{n,k}(\lambda_{0,k}^{\mathrm{conf}})  \ \, {\rm for}\ \lambda>\lambda_{0,k}^{\mathrm{conf}}\quad{\rm and}\quad 	 H_{n,k}(\lambda)=S_{n,k}\ \text{ for $\lambda\leq0$}. $$

			\item[(iii)] When $n=2k+1$,  $\mathcal{G}_{n,k}[	H_{n,k}]=(\lambda_{0,k}^{\mathrm{conf}},+\infty)$. Moreover,
			\[
			-\infty=H_{n,k}(\mu)	<0< S_{n,k}=H_{n,k}(\lambda ),\quad \mu> \lambda_{0,k}^{\mathrm{conf}}\geq \lambda.
			\]
			In particular, \(H_{n,k}(\lambda)\) is never achieved for any \(\lambda\in\mathbb R\) when $k=1$. 
		\end{enumerate}
	\end{proposition}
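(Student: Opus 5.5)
The plan is to assemble the proposition from three ingredients: the conformal intertwining \eqref{eq:tildePs-halfspace-intertwine11}--\eqref{eq:tildePs-ball-intertwine11}, the known existence/nonexistence results of \cite{ManciniSandeep2008,li2022higher,lu2022green}, and elementary properties of the quotient in \eqref{eq:Sns-Hn-lambda}.

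\textbf{Step 1: the baseline $H_{n,k}(0)=S_{n,k}$ and the region $\lambda\le 0$.} In the ball model put $v=\bigl(\frac{1-|x|^2}{2}\bigr)^{k-\frac n2}u$. Since $dV_{\mathbb H^n}=\bigl(\frac{2}{1-|x|^2}\bigr)^n dx$, the intertwining identity gives
\[
\int_{\mathbb H^n}(\cP_k u)u\,dV=\int_{\mathbb B^n}v(-\Delta)^kv\,dx,
\qquad
\int_{\mathbb H^n}|u|^{2_k^*}dV=\int_{\mathbb B^n}|v|^{2_k^*}dx .
\]
Hence $H_{n,k}(0)=S_{n,k,\mathbb B^n}(0)$. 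This equals $S_{n,k}$, because the Euclidean Sobolev quotient is invariant under translation and dilation and so is unchanged on $C_c^\infty(\mathbb B^n)$. The function $\lambda\mapsto H_{n,k}(\lambda)$ is nonincreasing, so $H_{n,k}(\lambda)\ge S_{n,k}$ for $\lambda\le0$. For the reverse inequality, take concentrating bubbles $u_\varepsilon$. For them $\|u_\varepsilon\|_{L^2}^2/\|u_\varepsilon\|_{2_k^*}^2\to0$, since the $L^2$ mass vanishes under concentration when $n>2k$. Therefore $H_{n,k}(\lambda)=S_{n,k}$ for $\lambda\le0$, and this value is not achieved, since otherwise $S_{n,k}$ would be attained on a bounded domain.

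\textbf{Step 2: $\lambda$ above the bottom of the spectrum.} For $\lambda>\lambda_{0,k}^{\rm conf}$, choose $u$ with $\langle\cP_ku,u\rangle<\lambda\|u\|_2^2$; such $u$ exist by spectral density, using approximate eigenfunctions localized in frequency near $\cA=0$. Scale $u_R$ to be spread over large geodesic balls so that the numerator is of order $-\|u_R\|_2^2$. Since $\|u_R\|_2^2/\|u_R\|_{2_k^*}^2\to\infty$ as the support spreads (volume grows), we get $H_{n,k}(\lambda)=-\infty$.

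At $\lambda=\lambda_{0,k}^{\rm conf}$, positivity $H_{n,k}(\lambda_{0,k}^{\rm conf})>0$ is the Poincaré--Sobolev (Hardy--Sobolev--Maz'ya type) inequality. For $k=1$ it is due to Mancini--Sandeep; for $k\ge2$ it is from Lu--Yang type results in \cite{lu2022green,li2022higher}.

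\textbf{Step 3: the strict-gap region.} The aim is $H_{n,k}(\lambda)<S_{n,k}$. Test the quotient with $u_\varepsilon=\varphi\cdot U_\varepsilon$, where $U_\varepsilon$ is a transplanted Euclidean bubble centered at a point and $\varphi$ is a cutoff. The expansion reads
\[
\frac{\langle\cP_ku_\varepsilon,u_\varepsilon\rangle-\lambda\|u_\varepsilon\|_2^2}{\|u_\varepsilon\|_{2_k^*}^2}
=S_{n,k}+O(\varepsilon^{n-2k})-\lambda c\,\omega(\varepsilon),
\]
with $\omega(\varepsilon)=\varepsilon^{2k}$ for $n>4k$ and $\varepsilon^{2k}|\log\varepsilon|$ for $n=4k$. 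This gives (i). For $2k+2\le n\le4k-1$ the $L^2$ term is only comparable to the error, so it yields a gap only for $\lambda$ large, which defines $\lambda_k^{\rm conf}$. I expect this threshold estimate, together with the claim $\lambda_k^{\rm conf}<\lambda_{0,k}^{\rm conf}$, to be the \textbf{main obstacle}. There I would invoke the Green function estimates in \cite{lu2022green} and the existence theorems of \cite{li2022higher} rather than redo them.

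Once a gap holds, attainability follows from concentration–compactness: the only loss of compactness on $\mathbb H^n$ is bubbling, which costs $S_{n,k}$, or vanishing, which is excluded by $\lambda<\lambda_{0,k}^{\rm conf}$ via the Poincaré gap.

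Strict decrease follows by testing with a minimizer $u_\lambda$ at $\lambda<\mu$:
\[
H(\mu)\le H(\lambda)-(\mu-\lambda)\frac{\|u_\lambda\|_2^2}{\|u_\lambda\|_{2_k^*}^2}<H(\lambda).
\]
For $k=1$ at $\lambda=\frac14$, attainment is exactly \cite[Theorem 1.5]{ManciniSandeep2008}.

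\textbf{Step 4: the endpoint case $n=2k+1$ and nonexistence.} For $n=2k+1$ we have $H\equiv S_{n,k}$ on $(-\infty,\lambda_{0,k}^{\rm conf}]$. If some $H(\lambda)<S_{n,k}$ with $0<\lambda\le\lambda_{0,k}^{\rm conf}$, Step 3's compactness would produce a positive solution, contradicting the nonexistence results (\cite[Theorem 1.7]{ManciniSandeep2008} for $k=1$, and the corresponding result in \cite{li2022higher} for $k\ge2$). Non-achievement for $k=1$ and $\lambda\le\frac14$ follows by the same contradiction: a minimizer at level $S_{n,1}$ would give a positive solution, and $|u|$ is also a minimizer, so positivity is available. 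Similarly, for $\lambda\le0$ in (i)–(ii), non-achievement is guaranteed by the Pohozaev-type nonexistence of \cite[Theorem 1.6]{ManciniSandeep2008} and its higher-order analogue.
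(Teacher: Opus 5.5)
\textbf{The gap: Step 4 for $k\ge 2$.} You derive $H_{2k+1,k}\equiv S_{2k+1,k}$ on $(-\infty,\lambda_{0,k}^{\mathrm{conf}}]$ by contradiction: a strict gap would, via compactness, produce a positive solution, contradicting ``the corresponding result in \cite{li2022higher} for $k\ge2$''. No such nonexistence theorem is available. The paper explicitly records nonexistence of positive solutions for $n=2k+1$, $k\ge2$ (equivalently, non-attainment of $H_{2k+1,k}$) as an open conjecture; see Remark~\ref{rmk:gap-contrast}(c) and Unsolved problem (i).

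Even if it were available, the step ``a minimizer yields a positive solution'' is unjustified for $k\ge2$. For a higher-order operator, $|u|$ need not lie in the energy space. There is also no inequality $\langle\cP_k|u|,|u|\rangle\le\langle\cP_k u,u\rangle$, so a minimizer need not have a sign.

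\textbf{The missing ingredient} is a direct inequality rather than a nonexistence result: the sharp Hardy--Sobolev--Maz'ya inequality in the borderline dimension,
\[
H_{2k+1,k}(\lambda_{0,k}^{\mathrm{conf}})=S_{2k+1,k}.
\]
Lu--Yang prove this via Green function estimates \cite[Theorem~1.2]{lu2022green} (see also \cite[Theorem~1.6]{lu2023explicit}); for $k=1$ it is due to Benguria--Frank--Loss \cite{BenguriaFrankLoss2007}. Combine it with the monotonicity of $\lambda\mapsto H_{n,k}(\lambda)$ and the upper bound $H_{n,k}\le S_{n,k}$. This gives $H_{2k+1,k}\equiv S_{2k+1,k}$ on $(-\infty,\lambda_{0,k}^{\mathrm{conf}}]$ without any existence or nonexistence theory, and that is how the paper obtains (iii).

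For $k=1$, $n=3$, your contradiction route does work: use $|u|$, the strong maximum principle, and \cite[Theorem~1.7]{ManciniSandeep2008}. It is a legitimate alternative to Benguria--Frank--Loss there, but it does not extend to $k\ge2$.

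\textbf{A smaller point in Step 2.} $\mathbb H^n$ has no dilations, so ``scale $u_R$ to be spread over large geodesic balls'' is not a construction as written. The clean version is the paper's (Proposition~\ref{prop:Hnslambda-minus-infty}). Take $N$ isometric copies of a fixed $\varphi$ with $\langle(\cP_k-\lambda)\varphi,\varphi\rangle=-q<0$ and pairwise disjoint supports. Since $\cP_k$ is a differential operator, the cross terms vanish, so the quotient equals $-N^{1-2/2_k^*}\,q\,\|\varphi\|_{2_k^*}^{-2}\to-\infty$.

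The rest of your treatment of (i)--(ii) follows the paper's route:
\begin{enumerate}
\item the cut-off bubble expansion for the gap when $n\ge4k$;
\item citations to \cite{li2022higher,lu2022green} for attainment and for the threshold $\lambda_k^{\mathrm{conf}}$;
\item strict decrease by testing with a minimizer, as in Lemma~\ref{prop:comparison-attainment1}(ii).
\end{enumerate}
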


	\begin{remark}\label{rmk:gap-contrast}
		(a) There is a jump discontinuity of \(H_{n,k}(\cdot)\) at
		\(\lambda=\lambda_{0,k}^{\mathrm{conf}}\). This phenomenon stems from the fact
		that \(\lambda_{0,k}^{\mathrm{conf}}\) is not an eigenvalue, but rather the bottom of the continuous
		spectrum of \(\cP_k\). This is in sharp contrast with the Euclidean quantity
		\(S_{n,s,\Omega}(\lambda)\) at \(\lambda=\lambda_{1,s}(\Omega)\), since
		\(\lambda_{1,s}(\Omega)\) is the first Dirichlet eigenvalue of
		\((-\Delta)^s\).

		$(b)$ Note  the strict–gap region $\mathcal{G}_{n,k}[H_{n,k}]$ can be given explicitly for  $n\geq 4k$ and $n=2k+1$.
		In contrast, for $2k+2\le n\le 4k-1$, it remains open to give $\mathcal{G}_{n,k}[H_{n,k}]$ with an explicit interval. 
		
		(c) We conjecture that, in the borderline dimension \(n=2k+1\), the level
\(H_{n,k}(\lambda)\) is not attained for any \(\lambda\in\mathbb R\)
whenever \(k\ge 2\).
        
        (d) We emphasize that the results of \cite{li2022higher,lu2022green} yield
only part~(i) away from the endpoint. The case
\(\lambda=\lambda_{0,k}^{\mathrm{conf}}\), and hence the full
characterization
\[
H_{n,k}(\lambda)\ \text{is attained}
\quad\Longleftrightarrow\quad
\lambda\in(0,\lambda_{0,k}^{\mathrm{conf}}],
\]
remained open. Theorem~\ref{prop:strict-gap-fractional}~(i) settles this
endpoint case and completes the characterization.
	\end{remark}
	
	The proofs of Propositions~\ref{prop:Lp-perturbation-frac-prop} and~\ref{prop:Lp-perturbation-frac-prophn11}
	mainly rely on the classical analysis of Poincar\'e--Sobolev levels, together with several auxiliary lemmas,
	and on combining these tools with previously known results in the literature.
	For the convenience of the reader, we provide detailed proofs in the Appendix \ref{appendix}.

	\subsection{Fractional GJMS Operators}
	
	Fractional conformally covariant operators on the conformal infinity of a Poincar\'e--Einstein manifold
	were introduced by Graham and Zworski through scattering theory in their seminal work in \cite{graham2003scattering}, building on the foundational analytic framework
	of Mazzeo and Melrose on meromorphic continuation of the resolvent in the
	\cite{mazzeo1987meromorphic}. More precisely, let $(X^{n+1},g_+)$ be a conformally compact Einstein manifold with conformal
	infinity $(M^n,[g])$, and fix $\gamma\in(0,\tfrac n2)\setminus\mathbb N$ with $s=\tfrac n2+\gamma$.
	Given boundary data $f$ on $M$, consider the generalized eigenvalue problem
	\[
	-\Delta_{g_+}u - s(n-s)u=0,
	\]
	whose solutions admit an expansion
	$u=r^{n-s}(f+\cdots)+r^{s}(h+\cdots)$ near $M$. The scattering operator
	$S(s)$ is defined by $S(s)f=h$, and the fractional GJMS operator $\cP_\gamma$ is obtained (up to a
	normalization) from $S(\tfrac n2+\gamma)$. The resulting $\cP_\gamma$ is an elliptic nonlocal
	pseudodifferential operator of order $2\gamma$ and obeys the expected conformal covariance law.
	We refer to Chang--Gonz\'alez in \cite{chang2011fractional} for the
	extension-type characterization and further analytic developments, and to related works on fractional
	Yamabe-type problems (e.g., in \cite{gonzalez2013fractional}), as well as to the broader scattering literature
	on asymptotically hyperbolic geometry (e.g., in \cite{joshi2000inverse}).
	
	We now return to the general fractional conformal operators \(\cP_s\) with
	\(s\in(0,\tfrac n2)\setminus\mathbb N\) and  \(\widetilde\cP_s\) with
	\(s\in(0,\tfrac n2)\).
	Recalling the spectral representation \eqref{all pass} and \eqref{specps}, we obatin (see section \ref{subsec:conf-frac-lap})
	\begin{equation}\label{conf}
		\lambda_{0,s}^{\mathrm{conf}}
		\;:=\;\inf\sigma(\cP_s)
		\;=\;
		2^{2s}\,
		\frac{\Gamma\!\bigl(\tfrac{3+2s}{4}\bigr)^2}
		{\Gamma\!\bigl(\tfrac{3-2s}{4}\bigr)^2}\ge 0
	\end{equation}
	and
	\begin{equation}\label{la0s}
		\widetilde\lambda_{0,s}^{\mathrm{conf}}
		=\inf\sigma(\widetilde\cP_s)=\frac{\Gamma\!\bigl(s+\frac12\bigr)^2}{\Gamma\!\bigl(\frac12\bigr)^2}>0.
	\end{equation}
	
	We first establish explicit lower bounds for the Hardy-term coefficient in the fractional
	Hardy--Sobolev--Maz'ya inequalities involving $\cP_s$ and $\widetilde{\cP}_s$.
	These bounds, in turn, yield partial characterizations of the Poincar\'e--Sobolev levels
	$H_{n,s}$ and $\widetilde H_{n,s}$.

	\begin{theorem}\label{thm:Hardy-lowerbound-Ptilde}
		Assume that  $n\ge2,s\in(0,\tfrac n2)$, $S_{n,s},\, \widetilde \lambda_{0,s}^{\mathrm{conf}}$ are defined in (\ref{eq:upper-bound})(\ref{la0s}) respectively.  Consider the optimal lower shift for the  inequality
		\[
		\widetilde\Lambda_{n,s}^{\mathrm{HS}}
		\;:=\;
		\inf\Bigl\{\lambda\in\mathbb R:\ 
		\int_{\mathbb H^n} (\widetilde \cP_s u)\,u\,dV_{\mathbb H^n}
		+ \lambda \int_{\mathbb H^n} |u|^2\,dV_{\mathbb H^n}
		\;\ge\;
		S_{n,s}\Bigl(\int_{\mathbb H^n}|u|^{2_s^*}\,dV_{\mathbb H^n}\Bigr)^{\!\frac{2}{2_s^*}}
		\ \ \forall\,u\in C_c^\infty(\mathbb H^n)\Bigr\}.
		\]
		Then:
		\begin{enumerate}
			\item[(i)] If $s\in(0,\tfrac n4]$, one has $	\widetilde\Lambda_{n,s}^{\mathrm{HS}}\ge 0$. 
			\item[(ii)] If $s\in(\tfrac n4,\tfrac n2)$, then there exists $\widetilde\lambda_{s}^{\mathrm{conf}}\in(0,\widetilde{\lambda}_{0,s}^{\mathrm{conf}}]$ such that $		\widetilde\Lambda_{n,s}^{\mathrm{HS}}\;\ge\;-\widetilde\lambda_{s}^{\mathrm{conf}}.$
		\end{enumerate}
	\end{theorem}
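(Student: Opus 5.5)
The plan is to show that, for the relevant values of \(\lambda\), the inequality in the definition of \(\widetilde\Lambda_{n,s}^{\mathrm{HS}}\) \emph{fails} for suitable test functions. The key tool is the conformal intertwining \eqref{eq:tildePs-halfspace-intertwine11} for \(\widetilde{\cP}_s\), valid for all \(s>0\) by \cite[Theorem~1.7]{lu2023explicit}.

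\textbf{Transfer to Euclidean quantities.} In the upper half-space model, set \(u=x_1^{\frac n2-s}v\) with \(v\in C_c^\infty(\R^n_+)\). Since \(dV_{\mathbb H^n}=x_1^{-n}dx\), the intertwining identity gives
\[
\int_{\mathbb H^n}(\widetilde\cP_s u)u\,dV_{\mathbb H^n}=\int_{\R^n}v(-\Delta)^s v\,dx,\qquad
\int_{\mathbb H^n}|u|^{2_s^*}dV_{\mathbb H^n}=\int_{\R^n}|v|^{2_s^*}dx,
\]
and
\[
\int_{\mathbb H^n}u^2\,dV_{\mathbb H^n}=\int_{\R^n}x_1^{-2s}v^2\,dx .
\]
So the hyperbolic quotient with shift \(\lambda\) equals
\[
\frac{\int v(-\Delta)^s v\,dx+\lambda\int x_1^{-2s}v^2\,dx}{\|v\|_{2_s^*}^2}.
\]

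\textbf{Bubble test functions.} Fix \(x_0=(1,0,\dots,0)\), a cutoff \(\eta\) supported in \(B_{1/2}(x_0)\), and take \(v_\varepsilon=\eta\,U_\varepsilon\), where \(U_\varepsilon\) is the Euclidean extremal of \(S_{n,s}\) concentrated at \(x_0\) at scale \(\varepsilon\).

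For the fractional and higher-order cases I would use the standard cutoff estimates (as in \cite{BisciRadulescuServadei2016} for \(s<1\), and their higher-order analogues obtained via Fourier/Riesz-potential computations). These should give
\[
\frac{\int v_\varepsilon(-\Delta)^s v_\varepsilon\,dx}{\|v_\varepsilon\|_{2_s^*}^2}\le S_{n,s}+O(\varepsilon^{n-2s}),
\]
while \(\int x_1^{-2s}v_\varepsilon^2\,dx\) is comparable, with constants bounded above and below, to
\[
\varepsilon^{2s}\ \ (n>4s),\qquad \varepsilon^{2s}|\log\varepsilon|\ \ (n=4s),\qquad \varepsilon^{n-2s}\ \ (n<4s).
\]
This step is the main technical obstacle: one must control the nonlocal cutoff error for \((-\Delta)^s\) with \(s\) possibly larger than \(1\) and non-integer. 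I would handle it on the Fourier side, writing \(v_\varepsilon=U_\varepsilon-(1-\eta)U_\varepsilon\) and using the decay \(U_\varepsilon\sim\varepsilon^{\frac{n-2s}{2}}|x-x_0|^{2s-n}\) away from \(x_0\).

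\textbf{Case (i), \(s\le \frac n4\).} Let \(\lambda<0\). When \(n>4s\), the negative term \(\lambda c\,\varepsilon^{2s}\) dominates the error \(O(\varepsilon^{n-2s})\), because \(2s<n-2s\). When \(n=4s\), the extra factor \(|\log\varepsilon|\) makes it dominate as well. In both cases the quotient is strictly below \(S_{n,s}\) for small \(\varepsilon\), so the inequality fails for every \(\lambda<0\). Hence \(\widetilde\Lambda_{n,s}^{\mathrm{HS}}\ge 0\).

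\textbf{Case (ii), \(\frac n4<s<\frac n2\).} Now both the \(\lambda\)-term and the error are of order \(\varepsilon^{n-2s}\). The quotient is therefore at most
\[
S_{n,s}+\bigl(C_1-|\lambda|c_2\bigr)\varepsilon^{n-2s}+o(\varepsilon^{n-2s}),
\]
so the inequality fails whenever \(\lambda<-C_1/c_2=:-C_*\).

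Independently, the inequality also fails for every \(\lambda<-\widetilde\lambda_{0,s}^{\mathrm{conf}}\). Indeed, by \eqref{la0s}, \(\widetilde\lambda_{0,s}^{\mathrm{conf}}\) is the bottom of \(\sigma(\widetilde\cP_s)\). Taking a Weyl-type sequence \(u_j\in C_c^\infty(\mathbb H^n)\) (by density of \(C_c^\infty\) in the form domain) with \(\int(\widetilde\cP_s u_j)u_j\le(\widetilde\lambda_{0,s}^{\mathrm{conf}}+\delta)\int u_j^2\) makes the left-hand side negative. Thus \(\widetilde\Lambda_{n,s}^{\mathrm{HS}}\ge-\widetilde\lambda_{0,s}^{\mathrm{conf}}\).

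Setting \(\widetilde\lambda_s^{\mathrm{conf}}:=\min\{C_*,\widetilde\lambda_{0,s}^{\mathrm{conf}}\}\in(0,\widetilde\lambda_{0,s}^{\mathrm{conf}}]\) yields \(\widetilde\Lambda_{n,s}^{\mathrm{HS}}\ge-\widetilde\lambda_s^{\mathrm{conf}}\), since the inequality fails for all \(\lambda<-\widetilde\lambda_s^{\mathrm{conf}}\) by at least one of the two arguments.
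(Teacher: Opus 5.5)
Your proposal is correct and is essentially the paper's proof: conformal reduction of the $\widetilde{\cP}_s$-quotient to the Euclidean one (the paper uses the ball model with bubbles centred at $0$, an immaterial difference), truncated bubbles whose energy quotient is $S_{n,s}+O(\varepsilon^{n-2s})$ against an $L^2$ term of order $\varepsilon^{2s}$, $\varepsilon^{2s}|\log\varepsilon|$ or $\varepsilon^{n-2s}$, and in (ii) the spectral bottom, where your Weyl-sequence argument makes explicit what the paper obtains by citing \cite[Theorem~1.8]{lu2023explicit} and by itself already gives (ii) with $\widetilde\lambda_s^{\mathrm{conf}}=\widetilde\lambda_{0,s}^{\mathrm{conf}}$, so the bubble constant $C_*$ is not needed there. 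When you write out the deferred energy estimate, bound the tail $(1-\eta)U_\varepsilon=\varepsilon^{\frac{n-2s}{2}}(1-\eta)(\varepsilon^2+|x-x_0|^2)^{-\frac{n-2s}{2}}$ uniformly in the homogeneous $\dot H^{s}$ seminorm rather than by an inhomogeneous $H^m$ norm, because this tail is not in $L^2(\R^n)$ when $n\le 4s$, which includes the borderline case $n=4s$ of part (i).
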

	
	For $s \in (0,\, n/4] \cap \mathbb{N}$, the above result recovers
	\cite[Theorem~1.7]{lu2022green} and yields an alternative proof thereof; whereas for
	$s \in (n/4,\, n/2)$, they established that $\lambda_{s}^{\mathrm{conf}} \in \bigl(0,\, \widetilde{\lambda}_{0,s}^{\mathrm{conf}}\bigr).$
	
	Next, from the relation identity (\ref{eq:P-vs-tildeP}) between $\cP_s$ and $\widetilde\cP_s$ we introduce an important parameter
	\begin{equation}\label{bsss}
		b_s
		:= \max\Bigl\{0,\ \frac{\sin(\pi s)}{\pi}\Bigr\}\,
		\bigl|\Gamma\!\bigl(s+\tfrac12\bigr)\bigr|^{2}
		\quad{\rm for}\ \, s>0.
	\end{equation}
	And we have following result:
	\begin{theorem}
		\label{prop:gap-bs-lambda011}
		Let $n\ge 2$, $s\in \left(0,\frac{n}{2}\right)\setminus \mathbb{N}$, $b_s,\lambda_{0,s}^{\mathrm{conf}}$ be defined in (\ref{bsss}), (\ref{conf}) respectively  and
		the optimal lower shift for the  inequality
		\[
		\Lambda_{n,s}^{\mathrm{HS}}
		\;:=\;
		\inf\Bigl\{\lambda\in\mathbb R:\ 
		\int_{\mathbb H^n} ( \cP_s u)\,u\,dV_{\mathbb H^n}
		+ \lambda \int_{\mathbb H^n} |u|^2\,dV_{\mathbb H^n}
		\;\ge\;
		S_{n,s}\Bigl(\int_{\mathbb H^n}|u|^{2_s^*}\,dV_{\mathbb H^n}\Bigr)^{\!\frac{2}{2_s^*}}
		\ \ \forall\,u\in C_c^\infty(\mathbb H^n)\Bigr\}.
		\]
		
		$(a)$ One has that 
		\begin{equation}\label{eq:gap-explicit}
			\lambda_{0,s}^{\mathrm{conf}}-b_s
			=
			\begin{cases}
				\displaystyle \frac{\Gamma\bigl(s+\tfrac12\bigr)^2}{\pi},
				& \sin(\pi s)>0,\\[1.2ex]
				\displaystyle \frac{1+\sin(\pi s)}{\pi}\,\Gamma\bigl(s+\tfrac12\bigr)^2,
				& \sin(\pi s)\le 0.
			\end{cases}
		\end{equation}

		$(b)$  One has that 
		\[ \Lambda_{n,s}^{\mathrm{HS}}\ge -b_s\quad\text{for $s\in(0,\tfrac n4]$}  \]
		and 
		\[	\Lambda_{n,s}^{\mathrm{HS}}\;\ge\;-\min\left\{b_s+\widetilde\lambda_{s}^{\mathrm{conf}},\lambda_{0,s}^{\mathrm{conf}}\right\}\quad \text{for $s\in(\tfrac n4,\tfrac n2)$.  }\]
	\end{theorem}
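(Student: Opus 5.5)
The plan is to use the scalar identity behind \eqref{eq:P-vs-tildeP} for part (a). For part (b), I will use an operator upper bound $\cP_s\le\widetilde\cP_s+b_s$ together with the elementary spectral obstruction coming from $\inf\sigma(\cP_s)$.

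\emph{Part (a).} The functional calculus identity \eqref{eq:P-vs-tildeP} is an identity between the scalar multipliers
\[
m_s(\tau)=2^{2s}\frac{|\Gamma(\frac{3+2s}{4}+\frac{\bi}{2}\tau)|^2}{|\Gamma(\frac{3-2s}{4}+\frac{\bi}{2}\tau)|^2},\qquad
\widetilde m_s(\tau)=\frac{|\Gamma(s+\frac12+\bi\tau)|^2}{|\Gamma(\frac12+\bi\tau)|^2},
\]
for $\tau\in[0,\infty)$, which is the spectrum of $\cA$. It reads
\[
m_s(\tau)=\widetilde m_s(\tau)+\frac{\sin(\pi s)}{\pi}\,|\Gamma(s+\tfrac12+\bi\tau)|^2 .
\]
By \eqref{conf}, $\lambda_{0,s}^{\mathrm{conf}}=m_s(0)$. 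Evaluating the identity at $\tau=0$ and using $\Gamma(\tfrac12)^2=\pi$ gives
\[
\lambda_{0,s}^{\mathrm{conf}}=\frac{\Gamma(s+\frac12)^2}{\pi}\bigl(1+\sin(\pi s)\bigr).
\]
Subtracting $b_s$ from \eqref{bsss} then gives \eqref{eq:gap-explicit} directly in the two cases $\sin(\pi s)>0$ and $\sin(\pi s)\le0$.

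\emph{Part (b), first bound.} The key pointwise fact is $|\Gamma(x+\bi y)|\le\Gamma(x)$ for $x>0$ and $y\in\mathbb R$, which follows from $|\int_0^\infty t^{x-1+\bi y}e^{-t}\,dt|\le\int_0^\infty t^{x-1}e^{-t}\,dt$. Hence
\[
\frac{\sin(\pi s)}{\pi}|\Gamma(s+\tfrac12+\bi\tau)|^2\le b_s\quad\text{for all }\tau\ge0.
\]
If $\sin(\pi s)>0$, the left side is at most $\frac{\sin(\pi s)}{\pi}\Gamma(s+\frac12)^2=b_s$. If $\sin(\pi s)\le0$, the left side is $\le0=b_s$. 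By the spectral theorem,
\[
\int_{\mathbb H^n}(\cP_s u)u\,dV\le\int_{\mathbb H^n}(\widetilde\cP_s u)u\,dV+b_s\int_{\mathbb H^n}u^2\,dV.
\]
Now suppose $\lambda$ is admissible in the definition of $\Lambda_{n,s}^{\mathrm{HS}}$. Then $\lambda+b_s$ is admissible for the $\widetilde\cP_s$-inequality, so $\lambda+b_s\ge\widetilde\Lambda_{n,s}^{\mathrm{HS}}$. Taking the infimum over admissible $\lambda$ gives $\Lambda_{n,s}^{\mathrm{HS}}\ge\widetilde\Lambda_{n,s}^{\mathrm{HS}}-b_s$. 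Theorem~\ref{thm:Hardy-lowerbound-Ptilde}(i) then gives $\Lambda_{n,s}^{\mathrm{HS}}\ge -b_s$ for $s\le n/4$. For $s\in(\frac n4,\frac n2)$, Theorem~\ref{thm:Hardy-lowerbound-Ptilde}(ii) gives $\Lambda_{n,s}^{\mathrm{HS}}\ge -b_s-\widetilde\lambda_s^{\mathrm{conf}}$.

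\emph{Part (b), second bound.} It remains to show $\Lambda_{n,s}^{\mathrm{HS}}\ge-\lambda_{0,s}^{\mathrm{conf}}$. Suppose $\lambda<-\lambda_{0,s}^{\mathrm{conf}}$ and choose $\varepsilon>0$ with $\lambda+\lambda_{0,s}^{\mathrm{conf}}+\varepsilon<0$. Since $\lambda_{0,s}^{\mathrm{conf}}=\inf\sigma(\cP_s)$, there is $u$ in the form domain with $\langle\cP_s u,u\rangle<(\lambda_{0,s}^{\mathrm{conf}}+\varepsilon)\|u\|_2^2$. Because $C_c^\infty(\mathbb H^n)$ is a form core for $\cP_s$, a pseudodifferential operator of order $2s$ whose form domain is $H^s(\mathbb H^n)$, we may take $u\in C_c^\infty$. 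For this $u$ the left side of the inequality is negative while the right side is positive, so $\lambda$ is not admissible. Combining this with the previous paragraph gives
\[
\Lambda_{n,s}^{\mathrm{HS}}\ge\max\bigl\{-b_s-\widetilde\lambda_s^{\mathrm{conf}},\,-\lambda_{0,s}^{\mathrm{conf}}\bigr\}=-\min\bigl\{b_s+\widetilde\lambda_s^{\mathrm{conf}},\,\lambda_{0,s}^{\mathrm{conf}}\bigr\}.
\]

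\emph{Main obstacle.} There is no real obstacle beyond results already stated; the substantive input is Theorem~\ref{thm:Hardy-lowerbound-Ptilde}. The only points needing care are the sign bookkeeping in the case $\sin(\pi s)\le0$, where $b_s=0$ but the correction term is negative (this is harmless because only an upper bound on $\cP_s$ is needed), and justifying the density of $C_c^\infty$ in the form domain.
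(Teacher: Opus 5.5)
Your proposal is correct and follows essentially the same route as the paper. Part (a) evaluates the Lu--Yang multiplier identity behind \eqref{eq:P-vs-tildeP} at zero frequency. Part (b) uses the operator bound $\cP_s\le\widetilde\cP_s+b_s$, which comes from $|\Gamma(s+\tfrac12+\bi\tau)|\le\Gamma(s+\tfrac12)$, to transfer admissibility to Theorem~\ref{thm:Hardy-lowerbound-Ptilde}. The $-\lambda_{0,s}^{\mathrm{conf}}$ part is the spectral-bottom obstruction, which the paper takes from Lemma~\ref{prop:bottom-spectrum-quadratic-form}.

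You handle the two sign cases of $\sin(\pi s)$ in one step through $b_s$, and you justify $|\Gamma(x+\bi y)|\le\Gamma(x)$ and the form-core density explicitly. These only spell out what the paper leaves implicit.
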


	We next provide a characterization of the Poincar\'e--Sobolev level $\widetilde H_{n,s}$.
	The proof combines Theorem~\ref{thm:Hardy-lowerbound-Ptilde} with the attainability analysis for $\widetilde H_{n,s}$.

	\begin{theorem}\label{prop:strict-gap-fractional}
		Let $n\ge2$ and $s\in (0,\frac{n}{2})$. Then:
		\begin{enumerate}
			\item[(i)] 
			For $s\in \left(0,\frac{n}{4}\right]$, $\mathcal{G}_{n,s}[\widetilde H_{n,s}]=(0,+\infty)$, \(\widetilde H_{n,s}(\lambda)\) is achieved if and only if \(\lambda\in(0,\widetilde\lambda_{0,s}^{\mathrm{conf}}]\) and strictly decreasing in 
			\((0,\widetilde\lambda_{0,s}^{\mathrm{conf}}]\). Moreover,
			$$-\infty=\widetilde H_{n,s}(\mu)<0<\widetilde H_{n,s}(\widetilde \lambda_{0,s}^{\mathrm{conf}}) \ \, {\rm for}\ \mu>\widetilde\lambda_{0,s}^{\mathrm{conf}}\quad{\rm and}\quad 	\widetilde H_{n,s}(\lambda)=S_{n,s}\ \text{ for $\lambda\leq0$}. $$

			\item[(ii)] 
			For $s\in (\frac{n}{4},\frac{n-1}{2})$ when $n\ge 3$ and $s\in (\frac{1}{2},1)$ when $n=2$, then there exists $\widetilde\lambda_{s}^{\mathrm{conf}}\in(0,\widetilde{\lambda}_{0,s}^{\mathrm{conf}}]$ such that $\bigl(\widetilde\lambda_{s}^{\mathrm{conf}},\infty\bigr)
			\;\subset\;
			\mathcal{G}_{n,s}\bigl[\widetilde H_{n,s}\bigr]$ and for any
			$\lambda\le0<\widetilde\lambda_{0,s}^{\mathrm{conf}}<\mu$, 
			\[
			-\infty=\widetilde H_{n,s}(\mu)
			\;<\;\widetilde H_{n,s}(\lambda)
			\;=\;S_{n,s}.
			\]
			
			\item[(iii)]  When $n\ge3$ and $s\in[\frac{n-1}{2},\frac{n}{2}),$ $\mathcal{G}_{n,s}\bigl[\widetilde H_{n,s}\bigr]
			\;=\;\bigl(\widetilde{\lambda}_{0,s}^{\mathrm{conf}},\infty\bigr)$. Moreover,
			\[
			-\infty=\widetilde H_{n,s}(\mu)\;<\;0
			\;<\;\widetilde H_{n,s}(\lambda)
			\;=\;\widetilde H_{n,s}\!\bigl(\widetilde{\lambda}_{0,s}^{\mathrm{conf}}\bigr)
			\;=\;S_{n,s},\text{ for $\lambda\le \widetilde{\lambda}_{0,s}^{\mathrm{conf}}<\mu$}.
			\]
		\end{enumerate}
	\end{theorem}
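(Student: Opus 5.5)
We obtain Theorem~\ref{prop:strict-gap-fractional} from four ingredients: the conformal intertwining of $\widetilde\cP_s$, the Hardy lower bound of Theorem~\ref{thm:Hardy-lowerbound-Ptilde}, an explicit test-function construction, and a concentration--compactness attainability criterion.

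\textbf{Universal facts.} First we record what holds for all $s\in(0,\frac n2)$.

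\emph{Upper bound.} Because $\widetilde\cP_s$ satisfies the intertwining identity \eqref{eq:tildePs-halfspace-intertwine11} with $x_1^{s-\frac n2}$, the quotient for $\widetilde H_{n,s}(0)$ transforms under $v=x_1^{s-\frac n2}u$ exactly into the Euclidean quotient for $(-\Delta)^s$ on $\mathbb R^n_+$. Indeed, $dV_{\mathbb H^n}=x_1^{-n}dx$ and $|u|^{2_s^*}x_1^{-n}=|v|^{2_s^*}$. Concentrating Euclidean bubbles inside $\mathbb R^n_+$ gives $\widetilde H_{n,s}(\lambda)\le S_{n,s}$ for every $\lambda$; the $L^2$ term vanishes under concentration because $2<2_s^*$.

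\emph{Non-positive $\lambda$.} Since $\int v(-\Delta)^sv\ge S_{n,s}\|v\|_{2_s^*}^2$ on $\mathbb R^n$, we get $\widetilde H_{n,s}(0)\ge S_{n,s}$. Monotonicity in $\lambda$ then yields $\widetilde H_{n,s}(\lambda)=S_{n,s}$ for $\lambda\le0$.

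\emph{Large $\lambda$.} For $\mu>\widetilde\lambda_{0,s}^{\mathrm{conf}}$, take $u$ with spectral support in $\cA$ near $0$, which is possible since $\inf\sigma(\widetilde\cP_s)=\widetilde\lambda_{0,s}^{\mathrm{conf}}$ by \eqref{la0s}. Such $u$ satisfy $\langle\widetilde\cP_su,u\rangle-\mu\|u\|_2^2<0$. Scaling $u\mapsto tu$ alone does not make the quotient $-\infty$, because it is $0$-homogeneous. Instead, spread $u$ over a large geodesic ball so that $\|u\|_{2_s^*}/\|u\|_2\to0$, the standard exponential volume-growth argument. This forces the quotient to $-\infty$.

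\emph{Positivity at the spectral bottom.} $\widetilde H_{n,s}(\widetilde\lambda_{0,s}^{\mathrm{conf}})>0$ whenever a Hardy--Sobolev--Maz'ya type inequality holds with remainder. This follows from Theorem~\ref{thm:Hardy-lowerbound-Ptilde}:
\[
\langle\widetilde\cP_su,u\rangle+\Lambda\|u\|_2^2\ge S_{n,s}\|u\|_{2_s^*}^2 .
\]
Interpolate this with $\langle\widetilde\cP_su,u\rangle-\widetilde\lambda_{0,s}^{\mathrm{conf}}\|u\|_2^2\ge0$ using a convex combination.

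\textbf{Case (i), $s\le\frac n4$.} Here $\widetilde\Lambda^{\mathrm{HS}}_{n,s}\ge0$.

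\emph{Strict gap.} To show $(0,\infty)\subset\mathcal G$, build a truncated Euclidean bubble $x_1^{\frac n2-s}U_\varepsilon$ centered at an interior point. The $\lambda$-term contributes $-\lambda\|U_\varepsilon\|^2_{L^2(x_1^{-2s}dx)}$. For $n>4s$ this is of order $\varepsilon^{2s}$, which dominates the $O(\varepsilon^{n-2s})$ truncation errors. For $n=4s$ it is of order $\varepsilon^{2s}|\log\varepsilon|$. In both cases the quotient drops strictly below $S_{n,s}$ for every $\lambda>0$.

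\emph{Attainability.} For $\lambda\in(0,\widetilde\lambda_{0,s}^{\mathrm{conf}}]$ with $0<\widetilde H<S_{n,s}$, apply concentration--compactness on $\mathbb H^n$ and use the isometry group to recenter minimizing sequences. A Sobolev bubble would cost exactly $S_{n,s}$, and vanishing is excluded as usual via $L^{2_s^*}_{loc}$ bounds.

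The endpoint $\lambda=\widetilde\lambda_{0,s}^{\mathrm{conf}}$ is the main obstacle. There the energy does not control $\|u\|_2$, so minimizing sequences are bounded only in the $\widetilde\cP_s-\widetilde\lambda_{0,s}^{\mathrm{conf}}$ energy space. We would work in the completion of $C_c^\infty$ under this form, using the positive HSM constant to embed it into $L^{2_s^*}$, and run the same compactness there.

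\emph{Strict decrease and non-attainability.} Strict decrease on $(0,\widetilde\lambda_{0,s}^{\mathrm{conf}}]$ follows by testing $\widetilde H(\lambda_2)$ with the minimizer of $\widetilde H(\lambda_1)$. Non-attainment for $\lambda\le0$ holds because a minimizer would, after transport, be an extremal on $\mathbb R^n_+$ compactly supported in the half-space, contradicting the classification of Euclidean extremals. For $\lambda>\widetilde\lambda_{0,s}^{\mathrm{conf}}$ the value is $-\infty$, so there is no minimizer.

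\textbf{Case (ii), $\frac n4<s<\frac{n-1}2$.} Theorem~\ref{thm:Hardy-lowerbound-Ptilde}(ii) gives $\widetilde\lambda_s^{\mathrm{conf}}$. The bubble computation now gives $\|U_\varepsilon\|^2_{L^2(x_1^{-2s})}$ of order $\varepsilon^{n-2s}$, comparable to the error terms. So a strict gap holds only for $\lambda$ larger than an explicit threshold; we set $\widetilde\lambda^{\mathrm{conf}}_s$ to be that threshold, truncated at $\widetilde\lambda_{0,s}^{\mathrm{conf}}$. Monotonicity gives the whole interval, and the other claims are the universal facts above.

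\textbf{Case (iii), $s\ge\frac{n-1}2$.} Here we claim $\widetilde H_{n,s}(\widetilde\lambda_{0,s}^{\mathrm{conf}})=S_{n,s}$. We would prove the sharp inequality
\[
\langle\widetilde\cP_su,u\rangle-\widetilde\lambda_{0,s}^{\mathrm{conf}}\|u\|_2^2\ge S_{n,s}\|u\|_{2_s^*}^2 .
\]
The route is to compare the symbols: show that $\widetilde\cP_s-\widetilde\lambda_{0,s}^{\mathrm{conf}}$ dominates the operator that intertwines to $(-\Delta)^s$ minus a nonnegative remainder. Alternatively, pass to the ball model with radial rearrangement and use the kernel expansion, where $s\ge\frac{n-1}2$ makes the difference kernel nonnegative. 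This is the hardest step, and I expect it to require the explicit Helgason--Fourier kernel of $\widetilde\cP_s$.
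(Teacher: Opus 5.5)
Your outline of (i)--(ii) follows the paper's route: bubble expansions as in the proof of Theorem~\ref{thm:Hardy-lowerbound-Ptilde}, isometric recentering as in the proof of Theorem~\ref{mainthe2}, and strict decrease via Lemma~\ref{prop:comparison-attainment1}(ii). For $\lambda<\widetilde\lambda_{0,s}^{\mathrm{conf}}$, where the energy space is $H^s$, this is fine. The step at the bottom of the spectrum, however, has a genuine gap.

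You obtain $0<\widetilde H_{n,s}(\widetilde\lambda_{0,s}^{\mathrm{conf}})$ by interpolating an HSM inequality ``from Theorem~\ref{thm:Hardy-lowerbound-Ptilde}'' with the Poincar\'e inequality. That theorem supplies no valid inequality. It says $\widetilde\Lambda^{\mathrm{HS}}_{n,s}\ge0$ (resp.\ $\ge-\widetilde\lambda_s^{\mathrm{conf}}$), i.e.\ the inequality \emph{fails} below that shift. The only shift you have for free is $\Lambda=0$ (Proposition~\ref{prop:conf-frac-Sob-Hn}).

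Moreover, no convex combination reaches the bottom. Set $A=\langle\widetilde\cP_su,u\rangle$, $B=\|u\|_2^2$, $C=\|u\|_{2_s^*}^2$. From $A\ge\widetilde\lambda_{0,s}^{\mathrm{conf}}B$ and $A+\Lambda B\ge S_{n,s}C$ you get, for $t\in[0,1]$,
\[
A-\Bigl(\widetilde\lambda_{0,s}^{\mathrm{conf}}-(1-t)\bigl(\widetilde\lambda_{0,s}^{\mathrm{conf}}+\Lambda\bigr)\Bigr)B\;\ge\;(1-t)\,S_{n,s}\,C .
\]
A positive multiple of $C$ survives at $\lambda=\widetilde\lambda_{0,s}^{\mathrm{conf}}$ only if $\Lambda\le-\widetilde\lambda_{0,s}^{\mathrm{conf}}$. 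That would mean $\widetilde H_{n,s}(\widetilde\lambda_{0,s}^{\mathrm{conf}})\ge S_{n,s}$, contradicting the strict gap you prove in (i). So interpolation gives positivity only for $\lambda<\widetilde\lambda_{0,s}^{\mathrm{conf}}$.

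What is needed is a genuine Poincar\'e--Sobolev inequality at the bottom of the spectrum, $\langle(\widetilde\cP_s-\widetilde\lambda_{0,s}^{\mathrm{conf}})u,u\rangle\ge c\|u\|_{2_s^*}^2$ with $c>0$. For $s=1$ this is due to Mancini--Sandeep \cite{ManciniSandeep2008}; the paper imports the fractional case from \cite[Theorems~1.3 and~1.8]{lu2023explicit}. It is load-bearing. Your endpoint attainability needs it to make the completion under $\widetilde\cP_s-\widetilde\lambda_{0,s}^{\mathrm{conf}}$ a function space embedded in $L^{2_s^*}$ and locally in $H^s$ (Lemma~\ref{lem:critical-compact-support-and-weak-Hs}). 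That local $H^s$ control is what kills the $\lambda\|u\|_2^2$ term in the concentrating alternative. That argument also needs, for a nonlocal operator, an explicit cut-off localization estimate; the paper supplies this with the commutator Lemma~\ref{lem:localization-Ps}.

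Part (iii) is not actually proved in your proposal, and your first suggested route cannot work. The operator intertwined with $(-\Delta)^s$ \emph{is} $\widetilde\cP_s$, which lies above $\widetilde\cP_s-\widetilde\lambda_{0,s}^{\mathrm{conf}}$. No quadratic-form domination of that type can produce the constant $S_{n,s}$; the comparison has to be made on inverses.

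In the half-space model, the full-space Riesz kernel transplanted with the conformal weights is the radial kernel $c_{n,s}\bigl(2\sinh\tfrac{d(x,y)}{2}\bigr)^{2s-n}$ on $\mathbb H^n$. A pointwise bound of the Green's function of $\widetilde\cP_s-\widetilde\lambda_{0,s}^{\mathrm{conf}}$ by this kernel, followed by sharp HLS on $\mathbb R^n$ and duality, gives the sharp inequality. For $n=3$, $s=1$ this is the duality argument of Benguria--Frank--Loss \cite{BenguriaFrankLoss2007}, resting on $\frac{1}{4\pi\sinh d}\le\frac{1}{8\pi\sinh(d/2)}$. Your kernel-based alternative is in this spirit only if the ``difference kernel'' means this one.

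The paper does not prove this either. It takes $\widetilde H_{n,s}(\widetilde\lambda_{0,s}^{\mathrm{conf}})=S_{n,s}$ from \cite[Theorem~1.9]{lu2023explicit}, after which (iii) follows from monotonicity, Proposition~\ref{prop:basic-props-Hns} and Proposition~\ref{prop:Hnslambda-minus-infty}.

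Two of your steps differ from the paper and are worth keeping.

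\textbf{The $-\infty$ step.} For $\mu>\widetilde\lambda_{0,s}^{\mathrm{conf}}$ the paper uses many far-apart isometric copies plus the off-diagonal decay of Proposition~\ref{lem:offdiag-exp-decay-Ptilde}. Your spectral route avoids kernel estimates entirely. State it without ``spreading'', which would not preserve negativity for a nonlocal operator. Take the radial function $u_\eta:=\int_{|\beta|<\eta}\Phi_\beta\,|c(\beta)|^{-2}\,d\beta$. Then:
\begin{itemize}
\item $\langle(\widetilde\cP_s-\mu)u_\eta,u_\eta\rangle\le\bigl(\sup_{|\beta|\le\eta}\widetilde m_s(\beta)-\mu\bigr)\|u_\eta\|_2^2$;
\item $\|u_\eta\|_2^2\simeq\eta^3$;
\item $|u_\eta|\lesssim\eta^3\Phi_0$, with $\Phi_0\in L^{2_s^*}(\mathbb H^n)$.
\end{itemize}
Hence the quotient is $\lesssim-\eta^{-3}$, and density of $C_c^\infty$ in $H^s$ finishes the argument.

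\textbf{Non-attainment at $\lambda=0$.} Your classification-of-extremals argument settles a point the paper leaves implicit. Note that the transported extremal vanishes on the complement of the half-space; it is not compactly supported. For $\lambda<0$ a minimizer would simply violate Proposition~\ref{prop:conf-frac-Sob-Hn}.
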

	
	\begin{remark}
		We conjecture that, when $n\ge3$, $s\in[\frac{n-1}{2},\frac{n}{2}),$ \(\widetilde H_{n,s}(\lambda)\) is never achieved
		for any \(\lambda\in\mathbb R\).    
	\end{remark}

	We introduce the following notation
	\newcommand{\Bplus}{\mathcal{B}_+}
	\newcommand{\Bzero}{\mathcal{B}_0}
	\[
	\Bplus
	:= \Bigl\{ s\in(0,\infty)\,:\, \sin(\pi s)>0 \Bigr\},
	\qquad
	\Bzero
	:= \Bigl\{ s\in(0,\infty)\,:\, \sin(\pi s)\le 0 \Bigr\}.
	\]
	Accordingly, we have $\lambda_{0,s}^{\mathrm{conf}}>b_s>0$ for $s\in\Bplus$, while
	$\lambda_{0,s}^{\mathrm{conf}}\ge b_s=0$ for $s\in\Bzero$.

	\begin{theorem}\label{thm:strict-gap-Bplus-Bzero}
		Let \(n\ge2\) and \(s\in(0,\tfrac{n}{2})\setminus\mathbb N\).  
		\begin{enumerate}
			
			\item[(i)] For $s\in \left(0,\frac{n}{4}\right]\cap\Bzero$, one has that 
			$$H_{n,s}(\lambda)<S_{n,s} \ \, {\rm for}\ \lambda>0\quad{\rm and}\quad 	 H_{n,s}(\lambda)\le S_{n,s}\ \text{ for $\lambda\leq0$}. $$

			\item[(ii)] For $s\in \left(0,\frac{n}{4}\right]\cap\Bplus$, $(b_s,\infty)\subset\mathcal{G}_{n,s}\bigl[ H_{n,s}\bigr]$ and \(H_{n,s}(\lambda)\) is achieved and strictly decreasing for
			\(\lambda\in(b_s,\lambda_{0,s}^{\mathrm{conf}}]\). Moreover,
			$$H_{n,s}(\lambda)=S_{n,s}\quad \text{ for $\lambda\le 0$}.$$
			

			\item[(iii)] 
			For $s\in (\frac{n}{4},\frac{n-1}{2})$ when $n\ge 3$ and $s\in (\frac{1}{2},1)$ when $n=2$, 
			then there exists 
			\[\lambda_{s}^{\mathrm{conf}}:=\min \left\{{b_s+\widetilde\lambda_{s}^{\mathrm{conf}},\lambda_{0,s}^{\mathrm{conf}}}\right\}\in [0,\lambda_{0,s}^{\mathrm{conf}}]\]
			such that for every
			\(\lambda\le0<\lambda_{s}^{\mathrm{conf}}<\mu\),
			\[
			H_{n,s}(\mu)\;< \;S_{n,s},\quad H_{n,s}(\lambda)\le S_{n,s}.
			\]
			
			\item[(iv)] 
			When $n\ge3$ and $s\in[\frac{n-1}2,\frac n2)$, 
			$\mathcal{G}_{n,s}\bigl[ H_{n,s}\bigr]
			= \bigl({\lambda}_{0,s}^{\mathrm{conf}},\infty\bigr).$ Moreover, 
			\[
			-\infty=H_{n,s}(\mu)\;<\;0
			\;<\; H_{n,s}(\lambda)
			\;=\;S_{n,s}\quad \text{ for $\lambda\le {\lambda}_{0,s}^{\mathrm{conf}}<\mu$}.
			\]	
		\end{enumerate}
	\end{theorem}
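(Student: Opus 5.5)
The plan is to reduce everything to the already-established picture for $\widetilde H_{n,s}$ (Theorem~\ref{prop:strict-gap-fractional}), using the decomposition \eqref{eq:P-vs-tildeP}. Write
\[
\cP_s=\widetilde\cP_s+\frac{\sin(\pi s)}{\pi}B_s,\qquad B_s:=\bigl|\Gamma\bigl(s+\tfrac12+\bi\cA\bigr)\bigr|^2 .
\]
Since $|\Gamma(x+\bi y)|\le\Gamma(x)$ for $x>0$, we have $0\le B_s\le\Gamma(s+\tfrac12)^2$ as operators.

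From this we get two comparison bounds for the quadratic forms:
\[
\langle\cP_s u,u\rangle\le\langle\widetilde\cP_s u,u\rangle+b_s\|u\|_2^2,
\qquad
\langle\cP_s u,u\rangle\ge\langle\widetilde\cP_s u,u\rangle \ \text{ when } s\in\Bplus .
\]
The first holds for all $s$: if $\sin(\pi s)\le0$ the extra term is nonpositive, and otherwise it is at most $b_s\|u\|_2^2$. These give
\[
H_{n,s}(\lambda)\le\widetilde H_{n,s}(\lambda-b_s)\ \text{ for all } s,
\qquad
H_{n,s}(\lambda)\ge\widetilde H_{n,s}(\lambda)\ \text{ for } s\in\Bplus .
\]

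Each part then follows from these comparisons and Theorem~\ref{prop:strict-gap-fractional}.

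\emph{(i)} Here $b_s=0$, so $H_{n,s}(\lambda)\le\widetilde H_{n,s}(\lambda)$. For $\lambda>0$ this is $<S_{n,s}$ by Theorem~\ref{prop:strict-gap-fractional}(i); for $\lambda\le0$ it is $\le S_{n,s}$.

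\emph{(ii)} For $\lambda>b_s$ the upper bound gives $H_{n,s}(\lambda)\le\widetilde H_{n,s}(\lambda-b_s)<S_{n,s}$. For $\lambda\le0$ the lower comparison gives $H_{n,s}(\lambda)\ge\widetilde H_{n,s}(\lambda)=S_{n,s}$. The reverse inequality $H_{n,s}(\lambda)\le S_{n,s}$ comes from concentrating Aubin--Talenti bubbles in a chart: since $\cP_s$ has principal symbol that of $(-\Delta)^s$, the lower-order and $L^2$ terms vanish in the limit.

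\emph{(iii)} Take $\mu>\lambda_s^{\mathrm{conf}}$ and consider the two possibilities for the minimum.
\begin{itemize}
\item If $\mu>b_s+\widetilde\lambda_s^{\mathrm{conf}}$, then $\mu-b_s>\widetilde\lambda_s^{\mathrm{conf}}$, and Theorem~\ref{prop:strict-gap-fractional}(ii) gives $H_{n,s}(\mu)\le\widetilde H_{n,s}(\mu-b_s)<S_{n,s}$.
\item If $\mu>\lambda_{0,s}^{\mathrm{conf}}=\inf\sigma(\cP_s)$, take $u$ spectrally localized near the bottom of the spectrum, so that $\langle\cP_s u,u\rangle-\mu\|u\|_2^2<0$. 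Then $H_{n,s}(\mu)<0<S_{n,s}$; in fact scaling $u$ shows $H_{n,s}(\mu)=-\infty$.
\end{itemize}
The bound $H_{n,s}(\lambda)\le S_{n,s}$ is again the bubble argument. The membership $\lambda_s^{\mathrm{conf}}\in[0,\lambda_{0,s}^{\mathrm{conf}}]$ is immediate.

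\emph{(iv)} The case $\mu>\lambda_{0,s}^{\mathrm{conf}}$ gives $-\infty$ as above. For $\lambda\le\lambda_{0,s}^{\mathrm{conf}}$ we need $H_{n,s}(\lambda)\ge S_{n,s}$, i.e.\ $\langle\cP_s u,u\rangle-\lambda_{0,s}^{\mathrm{conf}}\|u\|_2^2\ge S_{n,s}\|u\|_{2_s^*}^2$.

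\textbf{Main obstacle.} Part (iv) does not follow from the Hardy--Sobolev--Maz'ya estimate via Theorem~\ref{prop:gap-bs-lambda011}, whose lower bound involves $\widetilde\lambda_s^{\mathrm{conf}}$. Instead I would use the spectral comparison directly: show that the function
\[
f(a):=\cP_s(a)-\lambda_{0,s}^{\mathrm{conf}}-\bigl(\widetilde\cP_s(a)-\widetilde\lambda_{0,s}^{\mathrm{conf}}\bigr)\ \text{ is } \ge0 \ \text{ on } a\ge0,
\]
where $\cP_s(a)$, $\widetilde\cP_s(a)$ denote the multipliers in \eqref{all pass}, \eqref{specps}. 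By \eqref{eq:P-vs-tildeP}, $f(a)=\frac{\sin(\pi s)}{\pi}\bigl(|\Gamma(s+\tfrac12+\bi a)|^2-\Gamma(s+\tfrac12)^2\bigr)$. Since $|\Gamma(s+\tfrac12+\bi a)|^2$ is decreasing in $a$, $f\ge0$ exactly when $\sin(\pi s)\le0$. This is the case for $s\in[\frac{n-1}2,\frac n2)$ with $n$ odd, where the interval lies in $[k-\tfrac12,k+\tfrac12)$ for some integer $k$ (excluding integers). Granting $f\ge0$, Theorem~\ref{prop:strict-gap-fractional}(iii) yields
\[
\langle\cP_s u,u\rangle-\lambda_{0,s}^{\mathrm{conf}}\|u\|_2^2\ge\langle\widetilde\cP_s u,u\rangle-\widetilde\lambda_{0,s}^{\mathrm{conf}}\|u\|_2^2\ge S_{n,s}\|u\|_{2_s^*}^2 .
\]
When $\sin(\pi s)>0$ (e.g.\ $n$ even, $s\in(k-1,k)$), this comparison fails, and I would instead need the conformal/Green's-function argument behind Theorem~\ref{prop:strict-gap-fractional}(iii), redone for $\cP_s$ with its kernel. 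Verifying that argument is the step I expect to be the hardest. Combining, $H_{n,s}(\lambda)=S_{n,s}$ for $\lambda\le\lambda_{0,s}^{\mathrm{conf}}$, which gives the stated gap region $(\lambda_{0,s}^{\mathrm{conf}},\infty)$.
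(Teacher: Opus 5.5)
\textbf{Parts (i)--(iii).} Your argument for these parts follows the paper's route. You use the form bound $\cP_s\le\widetilde\cP_s+b_s$, which is Theorem~\ref{prop:gap-bs-lambda011}(b). You add the reverse bound $\cP_s\ge\widetilde\cP_s$ when $\sin(\pi s)>0$, and the bubble argument of Proposition~\ref{prop:basic-props-Hns}. Two claims of the statement are still not proved.

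\emph{Attainment in (ii).} You never show that $H_{n,s}(\lambda)$ is \emph{achieved and strictly decreasing} on $(b_s,\lambda_{0,s}^{\mathrm{conf}}]$, and this does not follow from comparisons. The paper gets attainment from Theorem~\ref{mainthe}(ii), the concentration--compactness argument. It applies because $(b_s,\lambda_{0,s}^{\mathrm{conf}}]\subset\mathcal G_{n,s}[H_{n,s}]\cap(-\infty,\lambda_{0,s}^{\mathrm{conf}}]$, and this interval is nonempty since $b_s<\lambda_{0,s}^{\mathrm{conf}}$ by Theorem~\ref{prop:gap-bs-lambda011}(a). Lemma~\ref{prop:comparison-attainment1}(ii) then turns attainment into strict decrease.

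\emph{The value $-\infty$.} Your claim ``scaling $u$ shows $H_{n,s}(\mu)=-\infty$'' is wrong. The quotient is invariant under $u\mapsto tu$, and $\mathbb H^n$ has no dilations. Part (iv) explicitly asserts $-\infty$, so this must be proved. The paper uses Proposition~\ref{prop:Hnslambda-minus-infty}: place $N$ isometric copies of a negative-energy function about $\log N$ apart. The cross terms are controlled by the off-diagonal decay of Proposition~\ref{lem:offdiag-exp-decay-Ptilde}, and the quotient is then $\lesssim -N^{2s/n}$.

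\textbf{Part (iv): the genuine gap.} Your reduction $\cP_s-\lambda_{0,s}^{\mathrm{conf}}\ge\widetilde\cP_s-\widetilde\lambda_{0,s}^{\mathrm{conf}}$ is correct exactly when $\sin(\pi s)\le 0$. In that case Theorem~\ref{prop:strict-gap-fractional}(iii) finishes the argument.

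Your parity bookkeeping is wrong, however. On $[\frac{n-1}{2},\frac n2)\setminus\mathbb N$ one has $\sin(\pi s)<0$ iff $n\equiv 0,3\pmod 4$, and $\sin(\pi s)>0$ iff $n\equiv 1,2\pmod 4$. For example, $n=5$ is odd but $s\in(2,\frac52)$ has $\sin(\pi s)>0$. Conversely, $n=4$ is even but $s\in[\frac32,2)$ has $\sin(\pi s)<0$.

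When $\sin(\pi s)>0$ the comparison reverses: $\cP_s-\lambda_{0,s}^{\mathrm{conf}}\le\widetilde\cP_s-\widetilde\lambda_{0,s}^{\mathrm{conf}}$. The needed inequality
\[
\langle\cP_s u,u\rangle-\lambda_{0,s}^{\mathrm{conf}}\|u\|_2^2\ \ge\ S_{n,s}\|u\|_{2_s^*}^2
\]
is then strictly stronger than its $\widetilde\cP_s$ counterpart and cannot be derived from it. Your proposal leaves this case open.

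The paper does not derive it from $\widetilde\cP_s$ either. It imports this sharp Poincar\'e--Sobolev inequality for $\cP_s$ on the whole range $s\in[\frac{n-1}{2},\frac n2)$ directly from \cite[Theorem~1.4]{lu2023explicit}. It then concludes with Propositions~\ref{prop:basic-props-Hns} and~\ref{prop:Hnslambda-minus-infty}.

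So the missing ingredient is that external result, or an independent Green's-function argument for $\cP_s-\lambda_{0,s}^{\mathrm{conf}}$ itself. Your spectral comparison is a valid shortcut only in the subcase $\sin(\pi s)\le 0$.
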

	
	Below we systematically collect several open questions concerning the operators considered above
	and, in particular, the still largely unresolved issue of attainability for the associated Poincar\'e--Sobolev levels.\\[1mm] 
	\textbf{Unsolved problems:}
	\begin{quote}
		(i) Let $k\ge 2$ and $n=2k+1$, we conjecture that problem \eqref{eq:BN-Hn-frac} admits no positive solution for any $\lambda\in \mathbb{R}$, thus $H_{n,k}$ is never achieved.

		(ii) Let $k\ge 2$ and $2k+2\le n\le 4k-1$. Find an explicit optimal threshold $\Lambda_k^{\mathrm{conf}}$ satisfying
		\[
		0\le \Lambda_k^{\mathrm{conf}}\le \lambda_k^{\mathrm{conf}}<\lambda_{0,k}^{\mathrm{conf}}
		\]
		such that the strict–gap region for the hyperbolic level $H_{n,k}$ satisfies
		\[
		\mathcal{G}_{n,k}[H_{n,k}]=\bigl(\Lambda_k^{\mathrm{conf}},\,\infty\bigr).
		\]
		Equivalently, find the sharp value $\Lambda_k^{\mathrm{conf}}$ for which
		$H_{n,k}(\lambda)<S_{n,k}$ holds if and only if $\lambda>\Lambda_k^{\mathrm{conf}}$.
		
		(iii)  When $n\ge3$ and $s\in[\frac{n-1}{2},\frac{n}{2}),$  we conjecture that problem \eqref{eq:BN-Hn-frac2} admits no positive solution for any $\lambda\in \mathbb{R}$, thus $\widetilde H_{n,s}$ is never achieved.
		
		(iv) Assume $s\in (\frac{n}{4},\frac{n-1}{2})$ when $n\ge 3$ and $s\in (\frac{1}{2},1)$ when $n=2$ with $s\notin\mathbb N$. Find an explicit optimal threshold $\Lambda_s^{\mathrm{conf}}$ satisfying
		\[
		0\le \Lambda_s^{\mathrm{conf}}\le \widetilde{\lambda}_{s}^{\mathrm{conf}}\le \widetilde{\lambda}_{0,s}^{\mathrm{conf}}
		\]
		such that the strict–gap region for the fractional hyperbolic level $\widetilde H_{n,s}$ satisfies
		\[
		\mathcal{G}_{n,s}\!\bigl[\widetilde H_{n,s}\bigr]
		=\bigl(\Lambda_s^{\mathrm{conf}},\,\infty\bigr).
		\]
		Equivalently, find the sharp value $\Lambda_s^{\mathrm{conf}}$ for which
		$\widetilde H_{n,s}(\lambda)<S_{n,s}$ holds if and only if $\lambda>\Lambda_s^{\mathrm{conf}}$.
		
		(v) For $\cP_s$, we can consider analogous problems to (iii) (iv).
	\end{quote}
	
	Finally, we address the Brezis--Nirenberg problem driven by $\cP_s$ and by $\widetilde{\cP}_s$,
	and describe the existence of nontrivial solutions in terms of the parameter $\lambda$.
	In fact, these existence results follow as a consequence of the attainability of the corresponding
	Poincar\'e--Sobolev levels $H_{n,s}$ and $\widetilde H_{n,s}$.

	\begin{theorem}\label{mainthe2}
		Let $s\in (0,\frac{n}{2})$, then problem~\eqref{eq:BN-Hn-frac2} admits a nontrivial solution
		under assumptions:
		\begin{itemize}
			\item[(i)] (Subcritical case) If $1<p<2_s^*-1$, assume that
			$\lambda\le\widetilde\lambda_{0,s}^{\mathrm{conf}}$.
			
			\item[(ii)] (Critical case) If $p=2_s^*-1$, assume that $s$ and $\lambda$ satisfy
			\[
			\mathcal{G}_{n,s}\!\big[\widetilde H_{n,s}\big]\cap
			(-\infty,\widetilde\lambda_{0,s}^{\mathrm{conf}}]\neq\varnothing
			\qquad \text{and}\qquad
			\lambda \in \mathcal{G}_{n,s}\!\big[\widetilde H_{n,s}\big]\cap
			(-\infty,\widetilde\lambda_{0,s}^{\mathrm{conf}}].
			\]
			In particular, when $s\in (0,\frac{n}{4}]$, one has
			\[
			\mathcal{G}_{n,s}\!\big[\widetilde H_{n,s}\big]\cap
			(-\infty,\widetilde\lambda_{0,s}^{\mathrm{conf}}]
			=(0,\widetilde\lambda_{0,s}^{\mathrm{conf}}].
			\]
		\end{itemize}
	\end{theorem}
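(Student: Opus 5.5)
The plan is to obtain solutions as constrained minimizers, rescaled by a Lagrange multiplier, of the Rayleigh-type quotient on the closure $\mathcal H$ of $C_c^\infty(\mathbb H^n)$ under the quadratic form $\langle \widetilde\cP_s u,u\rangle$. By \eqref{la0s}, $\widetilde\cP_s\ge \widetilde\lambda_{0,s}^{\mathrm{conf}}>0$. The Sobolev inequality $\widetilde H_{n,s}(0)\ge S_{n,s}>0$ from Theorem~\ref{thm:Hardy-lowerbound-Ptilde} gives $\mathcal H\hookrightarrow L^q(\mathbb H^n)$ for $2\le q\le 2_s^*$, by interpolation with $L^2$.

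For $\lambda<\widetilde\lambda_{0,s}^{\mathrm{conf}}$ the form $\langle(\widetilde\cP_s-\lambda)u,u\rangle$ is an equivalent norm. At $\lambda=\widetilde\lambda_{0,s}^{\mathrm{conf}}$ it is only a norm, and we work in its completion. Coercivity there will be argued as a Hardy--Sobolev--Maz'ya inequality: the endpoint form still controls the $L^{2_s^*}$ norm and local $L^2$ norms. For $s\le n/4$ this follows from Theorem~\ref{prop:strict-gap-fractional}(i), which gives $\widetilde H_{n,s}(\widetilde\lambda_{0,s}^{\mathrm{conf}})>0$. In general we use the spectral symbol's behaviour $\sim(\widetilde\lambda_{0,s}^{\mathrm{conf}}+c\,\cA^2)$ near $\cA=0$ together with the Poincaré--Hardy inequality on $\mathbb H^n$.

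\textbf{Subcritical case (i).} We minimize
\[
\langle(\widetilde\cP_s-\lambda)u,u\rangle \quad\text{subject to}\quad \|u\|_{p+1}=1 .
\]
Noncompactness comes only from the isometry group of $\mathbb H^n$, since $p+1<2_s^*$ rules out concentration. By symmetric decreasing rearrangement (or the Lions vanishing lemma applied after hyperbolic translations), we reduce to minimizing sequences that do not vanish. For rearrangement, we need a Pólya--Szegő principle for $\widetilde\cP_s$. We obtain it by writing the form as a double integral with a positive radially decreasing kernel (from the Helgason--Fourier representation), or as an extension energy. The cleaner route is Lions: a minimizing sequence with $\sup_{x}\int_{B(x,1)}|u_k|^2\to0$ has $u_k\to0$ in $L^{p+1}$, contradicting the constraint. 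So after translation $u_k\rightharpoonup u\ne0$. A Brezis--Lieb splitting of both the quadratic form and $\|\cdot\|_{p+1}^{p+1}$, combined with strict subadditivity of $t\mapsto t^{2/(p+1)}$, then yields $\|u\|_{p+1}=1$ and $u$ a minimizer. The Euler--Lagrange equation with multiplier $m>0$, followed by scaling $u\mapsto m^{1/(p-1)}u$, solves \eqref{eq:BN-Hn-frac2}.

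\textbf{Critical case (ii).} Take $\lambda\in\mathcal G_{n,s}[\widetilde H_{n,s}]\cap(-\infty,\widetilde\lambda_{0,s}^{\mathrm{conf}}]$. Then $\widetilde H_{n,s}(\lambda)<\widetilde H_{n,s}(0)$, and $\widetilde H_{n,s}(0)=S_{n,s}$ by Theorem~\ref{prop:strict-gap-fractional}. We run concentration--compactness on a minimizing sequence normalized in $L^{2_s^*}$. The profile decomposition has three kinds of loss:
\begin{itemize}
\item[(a)] concentration at points, where the energy cost is at least $S_{n,s}$ per bubble, since locally $\widetilde\cP_s$ is conformal to $(-\Delta)^s$ via \eqref{eq:tildePs-ball-intertwine11} and lower-order terms including $\lambda\|u\|_2^2$ vanish under concentration;
\item[(b)] escape to infinity along hyperbolic isometries, which are symmetries and are handled by translating back;
\item[(c)] vanishing, which by the Lions lemma combined with the strict inequality $\widetilde H_{n,s}(\lambda)<S_{n,s}$ would force the limit energy $\ge S_{n,s}$, a contradiction.
\end{itemize}
The strict gap rules out (a) and (c). The standard convexity argument then shows that the weak limit is a minimizer, which is nonzero. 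When $\widetilde H_{n,s}(\lambda)>0$ it gives a solution after scaling. In the closed interval this positivity is Theorem~\ref{prop:strict-gap-fractional}, together with the Hardy bound Theorem~\ref{thm:Hardy-lowerbound-Ptilde}. The final assertion for $s\le n/4$ is read off from Theorem~\ref{prop:strict-gap-fractional}(i): there $\mathcal G=(0,\infty)$, so $\mathcal G\cap(-\infty,\widetilde\lambda_{0,s}^{\mathrm{conf}}]=(0,\widetilde\lambda_{0,s}^{\mathrm{conf}}]$.

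\textbf{Main obstacle.} I expect the hardest step to be the nonlocal concentration analysis: showing that a concentrating profile of $\widetilde\cP_s$ costs exactly $S_{n,s}$, and that the cross terms between separated profiles vanish. I would transport everything to $\mathbb B^n$ via the intertwining identity, where $\langle\widetilde\cP_s u,u\rangle=\langle(-\Delta)^s v,v\rangle$ with $v=(\frac{1-|x|^2}{2})^{s-\frac n2}u$, and $\|u\|_{L^{2_s^*}(\mathbb H^n)}=\|v\|_{L^{2_s^*}(\mathbb B^n)}$. Euclidean fractional concentration--compactness (Palatucci--Pisante) then applies to $v$, up to the $L^2$ term, which is weight-continuous in the interior. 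The endpoint $\lambda=\widetilde\lambda_{0,s}^{\mathrm{conf}}$ requires care, since there the $L^2$ control is lost and we must rely on the HSM form of the energy.
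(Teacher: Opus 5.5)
\textbf{Gap: the endpoint $\lambda=\widetilde\lambda_{0,s}^{\mathrm{conf}}$.} Your argument is a workable route for $\lambda<\widetilde\lambda_{0,s}^{\mathrm{conf}}$, but it does not reach the endpoint. The statement includes that endpoint in (i), and also in (ii): for $s\le n/4$ the critical range is $(0,\widetilde\lambda_{0,s}^{\mathrm{conf}}]$.

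Both of your compactness tools need a global $L^2$ (hence $H^s$) bound on the minimizing sequence.
\begin{itemize}
\item Lions' lemma is proved by summing local $H^s$ norms over a uniformly locally finite cover of $\mathbb H^n$. This gives $\|u\|_{q}^{q}\lesssim\sup_x\|u\|_{L^2(B(x,1))}^{q-2}\|u\|_{H^s}^2$ for a suitable $q\in(2,2_s^*)$.
\item After transport to $\mathbb B^n$ one has $\|(-\Delta)^{s/2}v_k\|_{L^2(\mathbb R^n)}^2=\widetilde{\mathcal E}_{\lambda,s}(u_k)+\lambda\|u_k\|_{L^2(\mathbb H^n)}^2$. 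So Palatucci--Pisante needs $\|u_k\|_2$ bounded.
\end{itemize}
At the endpoint, $\widetilde m_s(\beta)-\widetilde m_s(0)\sim c\beta^2$ as $\beta\to0$, so the energy does not control $\|u\|_2$. Minimizing sequences really can have $\|u_k\|_2\to\infty$. Add to any minimizing sequence a $w_k$ whose Helgason transform is supported in $\{\beta_k\le|\beta|\le 2\beta_k\}$, with $\beta_k\to0$ and $\|w_k\|_2^2=\beta_k^{-1}$. Then $\widetilde{\mathcal E}_{\lambda,s}(w_k)\sim\beta_k\to0$. By the Poincar\'e--Sobolev inequality you are already assuming, the $L^{p+1}$ and $L^{2_s^*}$ norms of $w_k$ tend to $0$, so the quotient is unchanged in the limit.

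The HSM inequality you invoke gives $L^{2_s^*}$ bounds, and hence local $L^2$ bounds, but those are not what is missing. What is missing is a vanishing or localization mechanism that does not cost a global $L^2$ term. Already for $s=1$, localizing $\int|\nabla u|^2-\rho^2u^2$ with a unit-scale partition $\sum_i\chi_i^2=1$ produces the error $\int\sum_i|\nabla\chi_i|^2u^2$. Saying ``the endpoint requires care'' leaves exactly this step open.

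\textbf{How the paper avoids this, and how your route compares.} The paper never uses a global bound.
\begin{itemize}
\item It takes a Palais--Smale sequence on the Nehari manifold via Ekeland, tests with $\phi^2v_j$ for $\phi\in C_c^\infty$, and uses Proposition~\ref{lem:commutator-Ps} and Lemma~\ref{lem:localization-Ps}. Under $v_j\rightharpoonup0$ this gives $\widetilde{\mathcal E}_{\lambda,s}(\phi v_j)=\int|v_j|^{p-1}(\phi v_j)^2+o(1)$.
\item Combined only with the definition of $\widetilde H_{n,s,p}(\lambda)$, which makes sense up to and including $\widetilde\lambda_{0,s}^{\mathrm{conf}}$, this yields a dichotomy. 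The local $L^{p+1}$ mass either tends to $0$ or is at least $\widetilde H_{n,s,p}(\lambda)^{\frac{p+1}{p-1}}$.
\item Normalizing the concentration function below this threshold excludes $v=0$. In the critical case, $\|\psi v_j\|_2\to0$ makes the localized quotient $\ge\widetilde H_{n,s}(0)$, contradicting the gap.
\item Local compactness at the endpoint comes from Lemma~\ref{lem:critical-compact-support-and-weak-Hs}, which only concerns compactly supported functions.
\end{itemize}

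For $\lambda<\widetilde\lambda_{0,s}^{\mathrm{conf}}$, your route is a valid alternative and even produces a ground state rather than just a nontrivial critical point. It consists of direct minimization, Brezis--Lieb with strict subadditivity of $t\mapsto t^{2/(p+1)}$, and bubbles costing $S_{n,s}$ via the ball-model intertwining.

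Two points to fix there:
\begin{itemize}
\item In step (c), Lions' lemma only kills $L^q$ norms with $q<2_s^*$, not $\lambda\|u_k\|_2^2$. So $\liminf\widetilde{\mathcal E}_{\lambda,s}(u_k)\ge S_{n,s}$ needs an explicit splitting: a concentrating part with negligible $L^2$ mass and cost $\ge S_{n,s}$, and a remainder whose energy is $\ge0$ because $\lambda\le\widetilde\lambda_{0,s}^{\mathrm{conf}}$.
\item Cite Proposition~\ref{prop:conf-frac-Sob-Hn} for the Sobolev inequality, and the Lu--Yang Poincar\'e--Sobolev inequality for positivity at the endpoint. Do not cite Theorem~\ref{thm:Hardy-lowerbound-Ptilde}, nor Theorem~\ref{prop:strict-gap-fractional}, which the paper proves afterwards and partly from this theorem.
\end{itemize}
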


	\begin{theorem}\label{mainthe}
		Let $s\in (0,\frac{n}{2})\setminus\mathbb{N}$. Problem~\eqref{eq:BN-Hn-frac} admits a nontrivial solution
		under assumptions:
		\begin{itemize}
			\item[(i)] (Subcritical case) If $1<p<2_s^*-1$, assume that
			$\lambda\le\lambda_{0,s}^{\mathrm{conf}}$.
			
			\item[(ii)] (Critical case) If $p=2_s^*-1$, assume that $s$ and $\lambda$ satisfy
			\[
			\mathcal{G}_{n,s}\!\big[ H_{n,s}\big]\cap
			(-\infty,\lambda_{0,s}^{\mathrm{conf}}]\neq\varnothing
			\qquad \text{and}\qquad
			\lambda \in \mathcal{G}_{n,s}\!\big[H_{n,s}\big]\cap
			(-\infty,\lambda_{0,s}^{\mathrm{conf}}].
			\]
			In particular, for $s\in \left(0,\frac{n}{4}\right]\cap\Bplus$, one has
			\[
			(b_s,\lambda_{0,s}^{\mathrm{conf}}]\subset\mathcal{G}_{n,s}\!\big[ H_{n,s}\big]\cap
			(-\infty,\lambda_{0,s}^{\mathrm{conf}}].
			\]
		\end{itemize}
	\end{theorem}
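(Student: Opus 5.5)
We prove Theorem~\ref{mainthe} by a variational argument on the Hilbert space $\mathcal H$, defined as the completion of $C_c^\infty(\mathbb H^n)$ under the quadratic form $\langle \cP_s u,u\rangle - \lambda\|u\|_2^2$. This form is coercive for $\lambda<\lambda_{0,s}^{\mathrm{conf}}$, since $\cP_s\ge \lambda_{0,s}^{\mathrm{conf}}$ by \eqref{conf}. At the endpoint $\lambda=\lambda_{0,s}^{\mathrm{conf}}$ the form is only nonnegative, so we use the Hardy--Sobolev--Maz'ya lower bound of Theorem~\ref{prop:gap-bs-lambda011}(b), in the form
\[
\langle(\cP_s-\lambda_{0,s}^{\mathrm{conf}})u,u\rangle\ \ge\ c\|u\|_{2_s^*}^2 ,
\]
to show that the form is still a norm and that $\mathcal H\hookrightarrow L^{2_s^*}$. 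This bound holds because $H_{n,s}(\lambda_{0,s}^{\mathrm{conf}})>0$, which is what Theorem~\ref{thm:strict-gap-Bplus-Bzero} gives. In both the subcritical and the critical case, the solution comes from minimizing
\[
Q_\lambda(u)=\frac{\langle \cP_s u,u\rangle-\lambda\|u\|_2^2}{\|u\|_{p+1}^2}.
\]
A minimizer $u$ satisfies the Euler--Lagrange equation $\cP_s u-\lambda u=\mu|u|^{p-1}u$ with $\mu=Q_\lambda(u)\|u\|_{p+1}^{2-(p+1)}>0$. Rescaling $u\mapsto \mu^{1/(p-1)}u$ then gives a nontrivial solution of \eqref{eq:BN-Hn-frac}.

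\textbf{Subcritical case.} For $\lambda\le\lambda_{0,s}^{\mathrm{conf}}$ and $2<p+1<2_s^*$, the embedding $\mathcal H\hookrightarrow L^{p+1}(\mathbb H^n)$ is continuous by interpolation between $L^2$ and $L^{2_s^*}$. At the endpoint, where no $L^2$ control is available, we instead use a Poincar\'e--Sobolev inequality in $L^{p+1}$ derived from the Hardy-type bound. Compactness fails on all of $\mathbb H^n$ because of the isometry group, so we restrict to radial functions about a fixed point $o$ and prove a Strauss-type compactness lemma:
\[
\mathcal H_{\mathrm{rad}}\hookrightarrow L^{p+1}\quad\text{is compact for }2<p+1<2_s^*.
\]
The proof of this lemma has three steps.
\begin{enumerate}
\item Reduce to the $s\in(0,1)$ case, or work with the Fourier--Helgason representation, and use the pointwise decay of radial functions of the form $|u(r)|\lesssim e^{-\rho r\cdot\theta}\|u\|$, obtained from the spherical-function estimates.
\item Use local compactness of $H^s$ on geodesic balls.
\item Combine the two to conclude.
\end{enumerate}
Alternatively, we can avoid the radial restriction. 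We run concentration--compactness and use the group of hyperbolic isometries to recentre a minimizing sequence. Vanishing is excluded by a Lions-type lemma on $\mathbb H^n$, whose covering by unit balls has bounded overlap. Dichotomy is excluded by the strict subadditivity of the homogeneous quotient. The infimum is attained by weak lower semicontinuity, and by the principle of symmetric criticality the radial minimizer solves the full equation.

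\textbf{Critical case.} Take $\lambda\in\mathcal G_{n,s}[H_{n,s}]\cap(-\infty,\lambda_{0,s}^{\mathrm{conf}}]$, so that $0<H_{n,s}(\lambda)<H_{n,s}(0)=S_{n,s}$. Positivity follows from Theorem~\ref{prop:gap-bs-lambda011} and Theorem~\ref{thm:strict-gap-Bplus-Bzero}, and the equality $H_{n,s}(0)=S_{n,s}$ holds in the regimes considered. Let $(u_k)$ be a minimizing sequence, normalized by $\|u_k\|_{2_s^*}=1$. We apply the concentration--compactness principle after translating by isometries. The energy that concentrates at a point, or escapes in a way that is locally Euclidean in the conformal half-space picture, costs at least $S_{n,s}$ per unit of $L^{2_s^*}$ mass to the power $2/2_s^*$. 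This is because at small scales $\cP_s$ agrees with $(-\Delta)^s$ to leading order: by \eqref{eq:P-vs-tildeP}, the difference $\cP_s-\widetilde\cP_s$ has lower order, and $\widetilde\cP_s$ is conformal to $(-\Delta)^s$. The lower-order terms, including $-\lambda\|u\|_2^2$, vanish on concentrating profiles. Hence the strict inequality $H_{n,s}(\lambda)<S_{n,s}$ rules out concentration, as in Brezis--Nirenberg. Vanishing at infinity is handled by recentring as in the subcritical case, and dichotomy by strict concavity of $t\mapsto t^{2/2_s^*}$. This yields a minimizer, and hence a solution.

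For the final claim, Theorem~\ref{thm:strict-gap-Bplus-Bzero}(ii) gives $(b_s,\lambda_{0,s}^{\mathrm{conf}}]\subset\mathcal G_{n,s}[H_{n,s}]$ when $s\in(0,\tfrac n4]\cap\Bplus$, and the intersection with $(-\infty,\lambda_{0,s}^{\mathrm{conf}}]$ is immediate.

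\textbf{Main obstacle.} We expect the hardest step to be the profile decomposition showing that concentrating bubbles cost at least $S_{n,s}$. This requires a localization or commutator estimate: for cutoffs $\varphi$ at small scale,
\[
\langle \cP_s(\varphi u),\varphi u\rangle=\langle(-\Delta)^s(\tilde\varphi \tilde u),\tilde\varphi\tilde u\rangle+o(1),
\]
for the nonlocal operator $\cP_s$, which is not conformally covariant. Our first attempt will be to write $\cP_s=\widetilde\cP_s+K$ with $K=\tfrac{\sin\pi s}{\pi}|\Gamma(s+\tfrac12+\bi\cA)|^2$. Since $\widetilde\cP_s$ intertwines exactly with $(-\Delta)^s$ by \eqref{eq:tildePs-halfspace-intertwine11}, it remains to show that $K$ is compact relative to $\widetilde\cP_s$ on concentrating sequences. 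For this we use that its symbol ratio $\tfrac{\sin\pi s}{\pi}|\Gamma(\tfrac12+\bi\cA)|^2$ decays like $e^{-\pi\cA}$, so $K$ is smoothing relative to $\widetilde\cP_s$.
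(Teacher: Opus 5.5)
\paragraph{Main gap: the endpoint $\lambda=\lambda_{0,s}^{\mathrm{conf}}$.} The statement includes this endpoint in (i), in (ii), and in the interval $(b_s,\lambda_{0,s}^{\mathrm{conf}}]$, and your treatment of it does not hold up. You obtain $\langle(\cP_s-\lambda_{0,s}^{\mathrm{conf}})u,u\rangle\ge c\|u\|_{2_s^*}^2$ from Theorem~\ref{prop:gap-bs-lambda011}(b) and Theorem~\ref{thm:strict-gap-Bplus-Bzero}, but neither supplies it.
\begin{itemize}
\item Theorem~\ref{prop:gap-bs-lambda011}(b) is a \emph{lower} bound on the optimal shift $\Lambda^{\mathrm{HS}}_{n,s}$. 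It is a failure statement: the inequality with constant $S_{n,s}$ breaks down for shifts that are too negative (for $s\le\frac n4$, this reads $H_{n,s}(\mu)<S_{n,s}$ whenever $\mu>b_s$). It gives no positive lower bound for any quadratic form.
\item Theorem~\ref{thm:strict-gap-Bplus-Bzero} asserts $H_{n,s}(\lambda_{0,s}^{\mathrm{conf}})>0$ only in regime (iv), where it is imported from Lu--Yang. Its attainment claim in (ii) is proved \emph{via} Theorem~\ref{mainthe}, so citing it would be circular. Only its gap inclusion is independent, and that is all your final claim needs.
\item Within the paper you only have $H_{n,s}(\lambda_{0,s}^{\mathrm{conf}})\ge0$ (Lemma~\ref{prop:bottom-spectrum-quadratic-form}).
\item In (i), a critical $L^{2_s^*}$ bound cannot yield subcritical $L^{p+1}$ control. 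At the endpoint the form controls no $L^2$ norm, and $\mathbb H^n$ has infinite volume.
\end{itemize}
The missing input is the fractional Poincar\'e--Sobolev inequality $\langle(\cP_s-\lambda_{0,s}^{\mathrm{conf}})u,u\rangle\ge C\|u\|_q^2$ for $2<q\le 2_s^*$, from \cite[Theorem~1.3]{lu2023explicit}. The paper imports it through \eqref{hnsp} and Lemma~\ref{lem:critical-compact-support-and-weak-Hs}. Without it you lack two things:
\begin{itemize}
\item $H_{n,s,p}(\lambda_{0,s}^{\mathrm{conf}})>0$, which you need both for $\mathcal H\hookrightarrow L^{p+1}$ and for your Lagrange multiplier to satisfy $\mu>0$;
\item the local $L^2$ bounds that Rellich needs to make $\lambda\|\psi v_j\|_2^2$ and $\langle(\cP_s-\widetilde\cP_s)\psi v_j,\psi v_j\rangle$ vanish on concentrating profiles.
\end{itemize}
For $\lambda<\lambda_{0,s}^{\mathrm{conf}}$ positivity is fine, but it comes from Lemma~\ref{lem:equiv-norm-Ps} and Proposition~\ref{prop:conf-frac-Sob-Hn}, not from the gap theorems.

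\paragraph{Comparison with the paper.} Once that input is added, your argument follows the paper's scheme: recentre by isometries, exclude bubbling by the strict gap, and reduce to a localization estimate. Where you differ:
\begin{itemize}
\item \textbf{Localization.} The paper places $[\cP_s,\psi]$ in $\Psi^{2s-1}$ using Taylor's strip-holomorphic calculus on $\mathbb H^n$. You instead write $\cP_s=\widetilde\cP_s\bigl(1+\tfrac{\sin\pi s}{\cosh\pi\cA}\bigr)$. The cutoff commutes with the conformal factor, so the $\widetilde\cP_s$ part reduces exactly to the Euclidean commutator $[(-\Delta)^s,\psi]$, and the remainder is smoothing. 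This avoids pseudodifferential calculus on $\mathbb H^n$. You still must prove the full identity of Lemma~\ref{lem:localization-Ps} for weakly null sequences, not only the small-scale comparison you display.
\item \textbf{Bubble cost.} Your bound $\ge S_{n,s}$, from Proposition~\ref{prop:conf-frac-Sob-Hn} and boundedness of $\cP_s-\widetilde\cP_s$ on $L^2$, is correct and slightly sharper than the paper's comparison with $H_{n,s}(0)$. You can therefore drop the claim $H_{n,s}(0)=S_{n,s}$, which is established only for $s\in\mathcal B_+$ and in regime (iv). The chain $H_{n,s}(\lambda)<H_{n,s}(0)\le S_{n,s}$ is enough.
\item \textbf{Radial lemma.} Step~1 fails for $s\le\frac12$, since radial $H^s$ functions can be unbounded near a sphere. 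Replace the pointwise decay by an averaged one: place many disjoint rotated unit balls in each shell and apply a Lions-type lemma. Alternatively, use your recentring route, which is the paper's.
\end{itemize}
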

	
	\smallskip
	\noindent\textbf{Comments on the main difficulties.} We conclude the introduction with several remarks on the analytic challenges inherent in our approach. The operators studied in this paper—namely, $\cP_s$ and $\widetilde{\cP}_s$—are genuine \emph{nonlocal} fractional GJMS operators. Their analysis is significantly more delicate than that of their integer-order counterparts, owing to the absence of local structure and the associated loss of classical elliptic analysis tools.  
	For instance, the lack of locality prevents us from reducing the problem to purely
	compactly supported computations: even when $u$ is compactly supported,
	$\cP_s u$ (or $\widetilde{\cP}_s u$) need not be compactly supported. This phenomenon is
	quantified by the off-diagonal behavior of the kernels (see
	Proposition~\ref{lem:offdiag-exp-decay-Ptilde}).
	
	Moreover, thanks to the nonlocality, classical integration-by-parts techniques fails in nonlocal cases, while such identities are usually crucial for energy estimates and for
	localization procedures in variational methods. To overcome this issue, we make
	systematic use of the pseudodifferential calculus on manifolds: this framework
	is well adapted to operators given by spectral multipliers, since it requires
	precisely the Fourier-side symbol estimates available for $\cP_s$,
	$\widetilde{\cP}_s$, and this allows us to establish the relevant
	boundedness and commutator properties and to justify the key localization
	estimate (see Section~\ref{pseduo}).
	
	Finally, $\cP_s$ and $\widetilde{\cP}_s$ are defined via spectral
	functional calculus. As a consequence, the most accessible information is
	encoded in the \emph{frequency side} through explicit multipliers, whereas
	direct control in the physical variable is not immediate. An explicit
	kernel representation
	must be involved  for certain priori estimates, 
	see Proposition~\ref{lem:offdiag-exp-decay-Ptilde}.

	\smallskip
	
	The remainder of this paper is organized as follows. In Section~2, we recall basic facts on the
	Helgason--Fourier transform and on fractional GJMS operators on the hyperbolic space.
	Section~3 is devoted to the Poincar\'e--Sobolev levels associated with $\cP_s$ and $\widetilde{\cP}_s$,
	and contains the proofs of Theorems~\ref{thm:Hardy-lowerbound-Ptilde} and~\ref{prop:gap-bs-lambda011}.
	In Section~4, we analyze the attainability of these Poincar\'e--Sobolev levels and, as a consequence,
	establish the existence of positive solutions to the corresponding Brezis--Nirenberg problems,
	proving Theorems~\ref{mainthe2} and~\ref{mainthe}); we also provide the proofs of Theorems~\ref{prop:strict-gap-fractional} and~\ref{thm:strict-gap-Bplus-Bzero} for
	$H_{n,s}$ and $\widetilde H_{n,s}$.
	Finally, in the Appendix we present the proofs of
	Propositions~\ref{prop:Lp-perturbation-frac-prop} and~\ref{prop:Lp-perturbation-frac-prophn11}.

	\section{Geometric and Analytic Preliminaries}
	In this section, we present two fundamental models of the hyperbolic space, which will be used interchangeably throughout the paper according to the needs of different arguments. We then recall the Helgason--Fourier transform on hyperbolic space and explain its connection with functional calculus, which allows us to define general spectral fractional Laplacian operators. In the final part, we introduce the fractional GJMS operators that constitute the main object of this work, and state a key inequality that will be used repeatedly in the sequel.

	\subsection{Half-Space Model and Poincar\'e Ball Model}
	\label{fouri}
	Throughout the paper, we set
	\[	\rho:=\frac{n-1}{2}, \quad \phi(x):=\frac{2}{1-|x|^2},\ \ n\ge 2, \]
	then the bottom of the $L^2$--spectrum equals
	$\rho^2$.
	
	We first recall the two most classical models of the hyperbolic space. Let $\mathbb{H}^n$ denote the $n$--dimensional hyperbolic space, realized in the upper half--space model
	\[
	\mathbb{H}^n
	:=\{x=(r,z)\in\mathbb{R}\times\mathbb{R}^{n-1}: r>0,\ z\in\mathbb{R}^{n-1}\}.
	\]
	It is endowed with the hyperbolic metric $g_{\mathbb{H}^n}(r,z)=\frac{dr^2+|dz|^2}{r^2}$
	and the corresponding volume element $dV_{\mathbb{H}^n}(r,z)=r^{-n}\,dr\,dz.$
	We write $\nabla_{\mathbb{H}^n}$ and $\Delta_{\mathbb{H}^n}$ for the hyperbolic
	gradient and Laplace--Beltrami operator, respectively. In the coordinates
	$(r,z)$, for any smooth function $u$, one has
	\[\nabla_{\mathbb{H}^n}u(r,z)
	= r^2\big(\partial_r u(r,z)\,\partial_r+\nabla_z u(r,z)\big)\]
	and therefore
	\[|\nabla_{\mathbb{H}^n}u(r,z)|_{g_{\mathbb{H}^n}}^2
	= r^2\Big(|\partial_r u(r,z)|^2+|\nabla_z u(r,z)|^2\Big),\]
	the Laplace--Beltrami operator is given by
	\[\Delta_{\mathbb{H}^n}
	= r^2\big(\partial_r^2+\Delta_z\big)-(n-2)\,r\,\partial_r,\]
	where $\nabla_z$ and $\Delta_z$ denote the Euclidean gradient and Laplacian
	in the $z$--variables.
	
	Moreover, the hyperbolic space can also be identified with the unit ball $\mathbb B^n:=\big\{x\in\mathbb R^n:\ |x|<1\big\}$
	endowed with the metric
	\[  g_{\mathbb B^n}
	=\frac{4\,(dx_1^2+\cdots+dx_n^2)}{(1-|x|^2)^2}
	=\phi(x)^2\,dx^2.\]
	The corresponding volume element is $dV_{\mathbb B^n}(x)=\phi(x)^n\,dx.$
	With this normalization, the Laplace--Beltrami operator in ball
	coordinates reads
	\[\Delta_{\mathbb B^n}
	=\frac{1-|x|^2}{4}\Big\{(1-|x|^2)\sum_{i=1}^n\partial_{x_ix_i}
	+2(n-2)\sum_{i=1}^n x_i\partial_{x_i}\Big\}.\]
	
	The isometry group of the Poincar\'e ball model $(\mathbb B^n,g_{\mathbb B^n})$
	consists precisely of those M\"obius transformations that preserve $\mathbb B^n$.
	Moreover, the hyperbolic volume measure $dV_{\mathbb B^n}$ is invariant under
	these transformations. For any $y\in\mathbb B^n$, define the M\"obius transformation
	$T_y:\mathbb B^n\to\mathbb B^n$ by
	\[	T_y(x)
	:=\frac{|x-y|^2\,y-(1-|y|^2)(x-y)}{1-2x\cdot y+|x|^2|y|^2} 
	\quad{\rm for}\ \,  x\in\mathbb B^n .\]
	A direct computation shows that
	\[
	|T_y(x)|^2
	=\frac{|x-y|^2}{1-2x\cdot y+|x|^2|y|^2},
	\]
	and consequently the hyperbolic distance between $x$ and $y$ admits the Euclidean
	representation
	\[
	\cosh d(x,y)
	=\frac{1+|T_y(x)|^2}{1-|T_y(x)|^2}=1+\frac{2|x-y|^2}{\left(1-|x|^2\right)\left(1-|y|^2\right)}.
	\]
	Equivalently, one has the distance formula
	\[	d(x,y)
	=\log\frac{1+|T_y(x)|}{1-|T_y(x)|}
	\quad\ {\rm for}\ \, x,y\in\mathbb B^n.\]
	
	Using M\"obius transformations, we may define the convolution of measurable
	functions $f$ and $g$ on $\mathbb B^n$ by (see, e.g.,~\cite{LiuPeng2009})
	\begin{equation}\label{sjak}
		(f\ast g)(x)
		:=\int_{\mathbb B^n} f(y)\,g\bigl(T_x(y)\bigr)\,dV_{\mathbb B^n}(y),
	\end{equation}
	whenever the integral is well defined.

	\subsection{Helgason Fourier Transform}
	In this subsection, we briefly recall the Helgason--Fourier analysis on the
	hyperbolic space, working in the Poincar\'e ball model
	$\bigl(\mathbb{B}^n,g_{\mathbb{B}^n}\bigr)$. Since $(\mathbb{B}^n, g_{\mathbb{B}^n})$ is a complete Riemannian
	manifold, the Laplace--Beltrami operator
	\(\Delta_{\mathbb{B}^n}\) with initial domain \(C_c^\infty(\mathbb{B}^n)\subset L^2(\mathbb{B}^n)\) is essentially self–adjoint on
	\(C_c^\infty(\mathbb{B}^n)\).
	We denote its unique self–adjoint extension again
	by \(\Delta_{\mathbb{B}^n}\). The quadratic form associated with \(-\Delta_{\mathbb{B}^n}\) is given by
	\[
	{\bf a}(u,v)
	:= \int_{\mathbb{B}^n}\langle\nabla u,\nabla v\rangle_g\,dV_{\mathbb{B}^n},
	\quad u,v\in C_c^\infty(\mathbb{B}^n),
	\]
	and it extends by closure to a densely defined, closed, nonnegative form on
	\(L^2(\mathbb{B}^n)\) with form domain
	\[
	\mathcal{D}({\bf a}) = H^1(\mathbb{B}^n)\times H^1(\mathbb{B}^n),
	\]
	where the integer-order Sobolev space on the hyperbolic space is defined by
\[
H^k(\mathbb B^n)
:=
\Bigl\{
u\in L^2(\mathbb B^n): \nabla_{\mathbb B^n}^{\,j}u\in L^2(\mathbb B^n)
\ \text{for all }0\le j\le k
\Bigr\},
\qquad k\in\mathbb N,
\]
where \(\nabla_{\mathbb B^n}^{\,j}u\) denotes the \(j\)-th covariant derivative of \(u\). It is equipped with the norm
\[
\|u\|_{H^k(\mathbb B^n)}^2
:=
\sum_{j=0}^k \|\nabla_{\mathbb B^n}^{\,j}u\|_{L^2(\mathbb B^n)}^2.
\]
Since \((\mathbb B^n,g_{\mathbb B^n})\) is a complete manifold of bounded geometry, \(C_c^\infty(\mathbb B^n)\) is dense in \(H^k(\mathbb B^n)\), and hence \(H^k(\mathbb B^n)\) is equivalently the completion of \(C_c^\infty(\mathbb B^n)\) with respect to the above norm.

In particular, \(H^1(\mathbb B^n)\) is the natural energy space associated with \(-\Delta_{\mathbb B^n}\). Moreover, the operator domain of the self-adjoint Laplace--Beltrami operator satisfies
\[
\mathcal D(-\Delta_{\mathbb B^n})
=
\{u\in L^2(\mathbb B^n): \Delta_{\mathbb B^n}u\in L^2(\mathbb B^n)\}
=
H^2(\mathbb B^n),
\]
with equivalence of norms; see \cite[Appendix B]{KellerLenzWojciechowski2021}.
	We  set
	\[	\lambda(\beta):=\beta^2+\rho^2,\:\:\beta\in \mathbb{R}.\]
	The basic facts about the Fourier transform on the hyperbolic
	space and  in the Poincar\'e ball model could see the references 
	\cite{LiuPeng2009,GelfandGindikinGraev2003,Helgason2024}.
	
	For $(\beta,\theta)\in\mathbb R\times\mathbb S^{n-1}$, set
\begin{equation}\label{def-h}	
h_{\beta,\theta}(x)
	:=\Bigl(\frac{\sqrt{1-|x|^2}}{|x-\theta|}\Bigr)^{\,n-1-2\beta \bi}, 
	\quad\forall\, x\in\mathbb B^n,
    \end{equation}
	then it satisfies 
	\begin{equation}\label{eq:Helgason-eig}
		-\Delta_{\mathbb{B}^n} h_{\beta,\theta}
		= \bigl(\beta^2+\rho^2\bigr)\,h_{\beta,\theta}.
	\end{equation}
	For $f\in C_c^\infty(\mathbb B^n)$, the Helgason--Fourier transform is defined by
	\begin{equation}\label{eq:Helgason-transform}
		\hat f(\beta,\theta)
		:=\int_{\mathbb B^n} f(x)\,h_{\beta,\theta}(x)\,dV_{\mathbb B^n}(x),
		\quad (\beta,\theta)\in\mathbb R\times\mathbb S^{n-1}.
	\end{equation}
	This transform extends uniquely to a unitary operator
	\[	\mathcal F:\ L^2(\mathbb B^n)\longrightarrow
	L^2\!\Bigl(\mathbb R\times\mathbb S^{n-1},
	\frac{d\beta\,d\sigma(\theta)}{|c(\beta)|^2}\Bigr)\]
	where $d\sigma(\theta)$ is the normalized  surface measure on $\mathbb S^{n-1}$
	and the corresponding Plancherel identity reads
	\[	\int_{\mathbb B^n}|f(x)|^2\,dV_{\mathbb B^n}(x)
	=
	\int_{\mathbb R}\int_{\mathbb S^{n-1}}
	|\hat f(\beta,\theta)|^2\,\frac{d\sigma(\theta)\,d\beta}{|c(\beta)|^2}
	\quad{\rm for\ all }\ \, f\in L^2(\mathbb B^n).\]
	Here $c(\beta)$ is the Harish--Chandra $c$--function (see \cite{LiuPeng2009}):
	\[
	c(\beta):=2^{\frac{n-1}{2}}\,\pi^{\frac{n}{4}}
	\sqrt{\Gamma\!\left(\frac{n}{2}\right)}\,
	\frac{\Gamma(\bi\beta)}{\Gamma\!\left(\frac{n-1}{2}+\bi\beta\right)}
	\]
	satisfying
	\[\left|c(\beta)\right|^{-2}
	=\frac{2^{1-n}}{\Gamma\left(\frac{n}{2}\right) \pi^{\frac{n}{2}}}
	\frac{\left|\Gamma\!\bigl(\bi\beta+\frac{n-1}{2}\bigr)\right|^2}{\left|\Gamma(\bi\beta)\right|^2}.\]

	Moreover, for $f\in C_c^\infty(\mathbb{B}^n),$ there is the inversion formula
	\begin{equation}\label{inverse}
		f(x)
		= \int_{\mathbb{R}}\int_{\mathbb{S}^{n-1}}
		\overline{h_{\beta,\theta}(x)}\,
		\hat f(\beta,\theta)\,
		\frac{d\sigma(\theta)\,d\beta}{|c(\beta)|^2}.
	\end{equation}
	
	\medskip
	
	A key feature of the Helgason transform is that it diagonalizes the Laplace Beltrami operator. A direct computation using
	\eqref{eq:Helgason-eig} and  \eqref{eq:Helgason-transform}  shows that, 
	\[	\widehat{\Delta_{\mathbb{B}^n} f}(\beta,\theta)
	= -\bigl(\beta^2+\rho^2\bigr)\,\hat f(\beta,\theta),
	\quad f\in C_c^\infty(\mathbb{B}^n).\]
	
	We next clarify the connection between the Laplace--Beltrami operator and
	multiplication operator under the Helgason transform, which will serve as the basis
	for identifying Fourier symbols of the nonlocal operators considered later.

	\begin{lemma}\label{lem:diag-laplace}
		Let $M_{\lambda}$ be the multiplication operator on
		$L^2\!\bigl(\mathbb{R}\times\mathbb{S}^{n-1},|c(\beta)|^{-2}d\beta d\sigma(\theta)\bigr):$
		\[
		(M_{\lambda}f)(\beta,\theta)
		:= \lambda(\beta)\,f(\beta,\theta).
		\]
		Then, $-\Delta_{\mathbb{B}^n}$ on $L^2(\mathbb{B}^n)$
		is unitarily equivalent to $M_{\lambda}$, that is,
		\begin{equation}\label{eq:diag-laplace}
			-\Delta_{\mathbb{B}^n}
			= \mathcal{F}^{-1} M_{\lambda}\mathcal{F},
		\end{equation}
		with equality of domains. Equivalently, for every
		$f\in H^2(\mathbb{B}^n)$, one has
		\[
		\widehat{-\Delta_{\mathbb{B}^n}f}(\beta,\theta)
		= \lambda(\beta)\,\hat f(\beta,\theta)
		\quad\text{for a.e. }(\beta,\theta)\in\mathbb{R}\times\mathbb{S}^{n-1}.
		\]
	\end{lemma}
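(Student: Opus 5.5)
We will prove Lemma~\ref{lem:diag-laplace} by comparing two self-adjoint operators that agree on a core. Set $T:=\mathcal F^{-1}M_\lambda\mathcal F$ with domain
\[
\mathcal D(T):=\{f\in L^2(\mathbb B^n):\ \lambda\hat f\in L^2(|c(\beta)|^{-2}d\beta\,d\sigma)\}.
\]
The function $\lambda(\beta)=\beta^2+\rho^2$ is real and measurable. The maximal multiplication operator $M_\lambda$ is therefore self-adjoint on its natural domain, and since $\mathcal F$ is unitary, $T$ is self-adjoint as well. By the identity $\widehat{\Delta_{\mathbb B^n}f}=-\lambda\hat f$ for $f\in C_c^\infty(\mathbb B^n)$ recalled just before the lemma, we have $C_c^\infty(\mathbb B^n)\subset\mathcal D(T)$ and $Tf=-\Delta_{\mathbb B^n}f$ on $C_c^\infty(\mathbb B^n)$. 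That identity follows from \eqref{eq:Helgason-eig} and \eqref{eq:Helgason-transform} by moving $\Delta_{\mathbb B^n}$ onto $h_{\beta,\theta}$ via Green's formula, which involves no boundary terms because $f$ has compact support. Hence $T$ is a self-adjoint extension of $-\Delta_{\mathbb B^n}|_{C_c^\infty}$.

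Next we use essential self-adjointness. The restriction $-\Delta_{\mathbb B^n}|_{C_c^\infty}$ is essentially self-adjoint, as stated earlier because $\mathbb B^n$ is complete, so it has exactly one self-adjoint extension, namely its closure $-\Delta_{\mathbb B^n}$. To carry this out concretely, note that $T$ is closed and extends the restriction, so $T\supset\overline{-\Delta|_{C_c^\infty}}=-\Delta_{\mathbb B^n}$. Taking adjoints gives $T=T^*\subset(-\Delta_{\mathbb B^n})^*=-\Delta_{\mathbb B^n}$. Hence $T=-\Delta_{\mathbb B^n}$ with equality of domains, which is \eqref{eq:diag-laplace}.

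The equivalent formulation now follows. The paper has already identified $\mathcal D(-\Delta_{\mathbb B^n})=H^2(\mathbb B^n)$. So for $f\in H^2(\mathbb B^n)$ we have $f\in\mathcal D(T)$, and therefore
\[
\mathcal F(-\Delta_{\mathbb B^n}f)=\mathcal F(Tf)=\lambda\hat f
\]
as elements of $L^2$, that is, for a.e.\ $(\beta,\theta)$. This pointwise a.e.\ statement should be read with $\hat f$ meaning the $L^2$-extension $\mathcal F f$, since for general $f\in H^2$ the defining integral \eqref{eq:Helgason-transform} need not converge absolutely.

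The only delicate step is the identity on $C_c^\infty(\mathbb B^n)$. Its justification is that $h_{\beta,\theta}$ is smooth in $x\in\mathbb B^n$ for each fixed $(\beta,\theta)$, so the integration by parts is legitimate on the compact support of $f$. The formula $\lambda(\beta)=\beta^2+\rho^2$ itself is the eigenvalue equation \eqref{eq:Helgason-eig}, taken as given. Everything else is the abstract fact that a self-adjoint extension of an essentially self-adjoint operator must coincide with its closure.
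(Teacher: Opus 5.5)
Your proof is correct and follows the paper's argument. Like the paper, you set $T=\mathcal F^{-1}M_\lambda\mathcal F$, observe that it is self-adjoint and agrees with $-\Delta_{\mathbb B^n}$ on $C_c^\infty(\mathbb B^n)$, and conclude from essential self-adjointness via uniqueness of the self-adjoint extension. Your explicit adjoint-inclusion step, and your remark that $\hat f$ must be read as the $L^2$-extension $\mathcal F f$ for $f\in H^2(\mathbb B^n)$, simply spell out what the paper leaves implicit.
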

	
	\begin{proof}
		Recall that the Helgason transform
		\[
		\mathcal{F}:L^2(\mathbb{B}^n)\longrightarrow
		L^2\!\Bigl(\mathbb{R}\times\mathbb{S}^{n-1},
		\frac{d\beta\,d\sigma(\theta)}{|c(\beta)|^2}\Bigr)
		\]
		is unitary, with inverse given by the inversion formula, and  for
		$f\in C_c^\infty(\mathbb{B}^n),$ we have
		\begin{equation}\label{eq:laplace-on-core}
			\widehat{\Delta_{\mathbb{B}^n} f}(\beta,\theta)
			= -\bigl(\beta^2+\rho^2\bigr)\,\hat f(\beta,\theta)
			= -\lambda(\beta)\,\hat f(\beta,\theta).
		\end{equation}
		
		Define the operator $\cT := \mathcal{F}^{-1}M_{\lambda}\mathcal{F}$
		with domain
		\[
		\mathcal{D}(\cT)
		:= \Bigl\{f\in L^2(\mathbb{B}^n):
		\lambda(\beta)\,\hat f(\beta,\theta)
		\in L^2\!\Bigl(\mathbb{R}\times\mathbb{S}^{n-1},
		|c(\beta)|^{-2}d\beta d\sigma(\theta)\Bigr)\Bigr\}.
		\]
		Since $M_{\lambda}$ is a self–adjoint multiplication operator on the
		Helgason side and $\mathcal{F}$ is unitary, $\cT$ is self–adjoint.
		
		On the other hand, since \(\mathbb{B}^n\) is complete, the operator
\(-\Delta_{\mathbb{B}^n}\) with initial domain
\(C_c^\infty(\mathbb{B}^n)\) is essentially self-adjoint and hence admits
a unique self-adjoint extension. Since \(\cT\) is self-adjoint and
\[
\cT u=-\Delta_{\mathbb{B}^n}u,
\qquad
u\in C_c^\infty(\mathbb{B}^n),
\]
the uniqueness of the self-adjoint extension implies $\cT=-\Delta_{\mathbb{B}^n}$.
This yields \eqref{eq:diag-laplace}.
	\end{proof}\medskip

	For every Borel measurable function
	$\Phi:[\rho^2,\infty)\to\mathbb{R},$ the operator $\Phi(-\Delta_{\mathbb{B}^n})$ defined by the functional calculus satisfies $\Phi(-\Delta_{\mathbb{B}^n})
	= \mathcal{H}^{-1}M_{\Phi\circ\lambda}\,\mathcal{H},$
	where $M_{\Phi\circ\lambda}$ is the multiplication operator
	\[
	(M_{\Phi\circ\lambda}F)(\beta,\theta)
	:= \Phi\bigl(\lambda(\beta)\bigr)\,F(\beta,\theta).
	\]
	In particular,
	\[
	\mathcal{D}\bigl(\Phi(-\Delta_{\mathbb{B}^n})\bigr)
	= \Bigl\{f\in L^2(\mathbb{B}^n):
	\Phi\bigl(\lambda(\beta)\bigr)\,\hat f(\beta,\theta)
	\in L^2\!\Bigl(\mathbb{R}\times\mathbb{S}^{n-1},
	|c(\beta)|^{-2}d\beta d\sigma(\theta)\Bigr)\Bigr\},
	\]
	and for $f\in\mathcal{D}(\Phi(-\Delta_{\mathbb{B}^n}))$ one has
	\[
	\cF\big(\Phi(-\Delta_{\mathbb{B}^n})f\big)(\beta,\theta)
	= \Phi\bigl(\lambda(\beta)\bigr)\,\hat f(\beta,\theta),
	\quad f\in  \mathcal{D}\bigl(\Phi(-\Delta_{\mathbb{B}^n})\bigr).
	\]

	\begin{rmk}\label{rem:spectrum-essential-range}
		
		The spectral theorem for multiplication operators on $L^2$ implies that
		the spectrum of $M_\varphi$ is the essential range of its symbol $\varphi$, see \cite[Appendix A]{KellerLenzWojciechowski2021}. Since
		$\mathcal{H}$ is unitary, the spectrum is invariant under this unitary
		equivalence,
		\[
		\sigma(-\Delta_{\mathbb{B}^n})
		= \sigma(M_\lambda)
		= \operatorname{ess\,ran}\lambda
		= \operatorname{ess\,ran}\bigl(\beta^2+\rho^2\bigr).
		\]
		Similarly,
		\[
		\sigma\bigl(\Phi(-\Delta_{\mathbb{B}^n})\bigr)
		= \sigma\bigl(M_{\Phi\circ\lambda}\bigr)
		= \operatorname{ess\,ran}\bigl(\Phi\circ\lambda\bigr)
		= \operatorname{ess\,ran}\bigl(\Phi(\beta^2+\rho^2)\bigr).
		\]
		In particular, if $\Phi$ is continuous, then
		$\sigma\bigl(\Phi(-\Delta_{\mathbb{B}^n})\bigr)
		=\overline{\Phi\bigl(\sigma(-\Delta_{\mathbb{B}^n})\bigr)}$.
	\end{rmk}
	
	We now specialize to the choice $\Phi(\lambda)=\lambda^s,\ s> 0$, see \cite{ChenRLogarithmic}. For $s>0,$ we define the spectral fractional Sobolev space 
	\begin{equation}\label{fracaa}
		H^{2s}(\mathbb{B}^n)
		:= \Bigl\{f\in L^2(\mathbb{B}^n):
		(\beta^2+\rho^2)^s\,\hat f(\beta,\theta)
		\in L^2\!\Bigl(\mathbb{R}\times\mathbb{S}^{n-1},
		|c(\beta)|^{-2}d\beta d\sigma(\theta)\Bigr)\Bigr\},
	\end{equation}
	endowed with the norm
	\[
	\|f\|_{H^{2s}(\mathbb{B}^n)}^2
	:= \int_{\mathbb{R}}\int_{\mathbb{S}^{n-1}}
	\bigl(1+(\beta^2+\rho^2)^{2s}\bigr)\,
	|\hat f(\beta,\theta)|^2\,
	\frac{d\sigma(\theta)\,d\beta}{|c(\beta)|^2}.
	\]
	
	In particular, for an integer $k\in\mathbb N$, the fractional Sobolev space
	$H^{s}(\mathbb B^n)$ with $s=k$ coincides with the classical Sobolev space
	$H^{k}(\mathbb B^n)$, with equivalent norms. Thus, the spectral fractional Laplacian satisfies
	\[\cF\big((-\Delta_{\mathbb{B}^n})^k f\big)(\beta,\theta)
	= \bigl(\beta^2+\rho^2\bigr)^k\,
	\hat f(\beta,\theta)
	\quad{\rm for}\ \, f\in H^{2k}(\mathbb{B}^n).\]
	
	\subsection{Fractional GJMS Operators and Inequality}
	\label{subsec:conf-frac-lap}
	For $\gamma\in \bigl(0,\frac{n}{2}\bigr)\setminus\mathbb N$, recall  the conformal fractional GJMS operator \(\cP_\gamma\) on \(\mathbb H^n\) has the explicit spectral representation (\ref{all pass}). Equivalently, under the
	Helgason--Fourier transform, $\cP_\gamma$ acts as a spectral multiplier:
	\[	\widehat{\cP_\gamma f}(\beta,\theta)
	= m_\gamma(\beta)\,\hat f(\beta,\theta),
	\qquad
	m_\gamma(\beta)
	:= 2^{2\gamma}\,
	\frac{\bigl|\Gamma\!\bigl(\frac{3+2\gamma}{4}+\frac{\bi}{2}\beta\bigr)\bigr|^2}
	{\bigl|\Gamma\!\bigl(\frac{3-2\gamma}{4}+\frac{\bi}{2}\beta\bigr)\bigr|^2}.\]
	
	However, $\cP_\gamma$ does not satisfy the following intertwining relations (\ref{eq:tildePs-halfspace-intertwine11}) and (\ref{eq:tildePs-ball-intertwine11}). To recover a direct intertwining with the Euclidean fractional Laplacian under the
	conformal identifications of $\mathbb H^n$ with $\mathbb R^n_+$ and $\mathbb B^n$,
	Lu et al. introduced an auxiliary family of fractional operators
	$\widetilde \cP_\gamma$ on $\mathbb H^n$ in \cite[Theorem 1.7]{lu2023explicit}.  Equivalently, under
	the Helgason--Fourier transform one has the multiplier representation
	\begin{equation}\label{mstl}
		\widehat{\widetilde \cP_\gamma f}(\beta,\theta)
		=\widetilde m_\gamma(\beta)\,\hat f(\beta,\theta),
		\qquad
		\widetilde m_\gamma(\beta)
		:=\frac{\bigl|\Gamma\!\bigl(\gamma+\frac12+\bi\beta\bigr)\bigr|^2}
		{\bigl|\Gamma\!\bigl(\frac12+\bi\beta\bigr)\bigr|^2}.
	\end{equation}
	
	We denote the bottoms of the $L^2$ spectra by
	\[
	\lambda_{0,s}^{\mathrm{conf}}
	:=\inf\sigma(\cP_s),
	\qquad
	\widetilde\lambda_{0,s}^{\mathrm{conf}}
	:=\inf\sigma(\widetilde \cP_s).
	\]
	Since $A$ has spectrum $\sigma(A)=[0,\infty)$ and both $P_s$ and
	$\widetilde \cP_s$ are spectral multipliers of $A$, we have
	\begin{equation}\label{eq:bottom-spectrum-by-symbol}
		\lambda_{0,s}^{\mathrm{conf}}
		=\inf_{\beta\ge0} m_s(\beta),
		\qquad
		\widetilde\lambda_{0,s}^{\mathrm{conf}}
		=\inf_{\beta\ge0} \widetilde m_s(\beta),
	\end{equation}
	By the classical inequality (see \cite[p.~904, Eq.~(8.236)]{gradshteyn2014table}): 
	\[\bigl|\Gamma(a+\bi\lambda)\bigr|^{2}
	=
	\bigl|\Gamma(a)\bigr|^{2}
	\prod_{k=0}^{\infty}\frac{1}{1+\dfrac{\lambda^{2}}{(a+k)^{2}}},
	\qquad a\in\mathbb R,\ a\neq 0,-1,-2,\ldots\]
	we can obtain
	\[\frac{\bigl|\Gamma(a+\bi\lambda)\bigr|}{\bigl|\Gamma(b+\bi\lambda)\bigr|}
	\ge
	\frac{\bigl|\Gamma(a)\bigr|}{\bigl|\Gamma(b)\bigr|}
	\quad \text{provided that }\ (a+k)^2\ge (b+k)^2,\ \forall k\in\mathbb N\]
	and
	\[	\lambda_{0,s}^{\mathrm{conf}}
	= m_s(0)=2^{2s}\,
	\frac{\Gamma\!\bigl(\frac{3+2s}{4}\bigr)^2}
	{\Gamma\!\bigl(\frac{3-2s}{4}\bigr)^2},
	\qquad
	\widetilde\lambda_{0,s}^{\mathrm{conf}}
	=\widetilde m_s(0)=\frac{\Gamma\!\bigl(s+\frac12\bigr)^2}{\Gamma\!\bigl(\frac12\bigr)^2}.\]
	By \cite[Lemma 5.2]{lu2023explicit}, we have
	\begin{equation}
		\lambda_{0,s}^{\mathrm{conf}}
		-
		\widetilde\lambda_{0,s}^{\mathrm{conf}}
		=\frac{\sin(\pi s)}{\pi}\,\Gamma\bigl(s+\tfrac12\bigr)^2.
	\end{equation}

	Since the Helgason--Fourier
	transform diagonalizes $\cP_k$:
	\begin{equation}\label{eq:Pk-factor}
		\cP_k
		= \cP_1\,(\cP_1+2)\cdots\bigl(\cP_1+k(k-1)\bigr)
		=\prod_{j=1}^k\bigl(\cA^2+(j-\frac12)^2\bigr).
	\end{equation}
	that is, for every $f\in C_c^\infty(\mathbb H^n)$,
	\begin{equation}\label{eq:Pk-multiplier}
		\widehat{\cP_k f}(\beta,\theta)
		= m_k(\beta)\,\hat f(\beta,\theta),
		\quad\forall (\beta,\theta)\in\mathbb R\times\mathbb S^{n-1},
	\end{equation}
	with radial multiplier
	\begin{equation}\label{eq:mk-product}
		m_k(\beta)
		=\prod_{j=1}^k\bigl(\beta^2+ (j-\frac12)^2\bigr).
	\end{equation}
	Using
	$\beta^2+\bigl(j-\tfrac12\bigr)^2=\bigl(j-\tfrac12+\bi\beta\bigr)\bigl(j-\tfrac12-\bi\beta\bigr)$
	and the identity $\prod_{j=1}^k(z+j-\frac12)=\Gamma(z+k+\frac12)/\Gamma(z+\frac12)$,
	we obtain the equivalent Gamma--function closed form
	\begin{equation}\label{eq:mk-gamma}
		m_k(\beta)
		=
		\frac{\Gamma\!\bigl(k+\frac12+\bi\beta\bigr)}{\Gamma\!\bigl(\frac12+\bi\beta\bigr)}
		\frac{\Gamma\!\bigl(k+\frac12-\bi\beta\bigr)}{\Gamma\!\bigl(\frac12-\bi\beta\bigr)}
		=
		\frac{\bigl|\Gamma\!\bigl(k+\frac12+\bi\beta\bigr)\bigr|^2}
		{\bigl|\Gamma\!\bigl(\frac12+\bi\beta\bigr)\bigr|^2}.
	\end{equation}
	In particular, this coincides with the multiplier $\widetilde m_\gamma(\beta)$ of
	$\widetilde \cP_\gamma$ evaluated at $\gamma=k$.
	
	Moreover, when $\gamma=k\in\mathbb N$, the bottom of the spectrum satisfies
	\[
	\widetilde\lambda_{0,k}^{\mathrm{conf}}
	= \frac{\Gamma\!\bigl(k+\tfrac12\bigr)^2}{\Gamma\!\bigl(\tfrac12\bigr)^2}
	= \prod_{j=1}^{k}\Bigl(j-\tfrac12\Bigr)^{\!2}
	= \prod_{j=1}^{k}\frac{(2j-1)^{2}}{4},
	\]
	which equals the bottom of the spectrum of $\cP_k$.

	It is well known that the sharp fractional Sobolev constant in the Euclidean
	setting is independent of the underlying domain. In particular, applying
	\eqref{eq:upper-bound} to the unit ball
	\(\mathbb B^n\), we obtain
	\begin{equation}\label{eq:Eucl-frac-Sobolev}
		S_{n,s}
		\Bigl(\int_{\mathbb B^n}|v|^{2^*_s}\,dx\Bigr)^{\!\frac{2}{2^*_s}}
		\le \int_{\mathbb B^n} v\,(-\Delta)^s v\,dx,
		\quad\forall v\in C_c^\infty(\mathbb B^n).
	\end{equation}
	
	By conformal invariance, the Euclidean inequality \eqref{eq:Eucl-frac-Sobolev} is
	equivalent to a hyperbolic inequality involving $\widetilde \cP_s$. For the integer-order cases
	$s=1$ and, more generally, $s=k\ge 2$, we refer to alternative proofs in
	\cite{hebey1999nonlinear,liu2013sharp}. In the fractional setting, Lu and Yang \cite{lu2023explicit} established the result for
	$n\ge3$ and $\frac{n-1}{2}\le s<\tfrac n2$. Below we present a unified proof valid for
	all $s\in(0,\tfrac n2)$.

	\begin{proposition}\label{prop:conf-frac-Sob-Hn}
		Let $0<s<\tfrac n2$, then the following Sobolev inequality holds:
		\begin{equation}\label{eq:conf-frac-Sob-Hn}
			S_{n,s}
			\Bigl(\int_{\mathbb H^n}|u|^{2^*_s}\,dV_{\mathbb H^n}\Bigr)^{\!\frac{2}{2^*_s}}
			\;\le\;
			\int_{\mathbb H^n} u\,\widetilde\cP_s u\,dV_{\mathbb H^n},
			\qquad u\in C_c^\infty(\mathbb H^n),
		\end{equation}
		where $2_s^*=\frac{2n}{n-2s}$ and $S_{n,s}$ is defined in \eqref{eq:upper-bound}. Moreover, the constant $S_{n,s}$
		in \eqref{eq:conf-frac-Sob-Hn} is optimal.
	\end{proposition}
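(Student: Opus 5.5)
The argument transfers the Euclidean inequality \eqref{eq:Eucl-frac-Sobolev} to $\mathbb H^n$ through the conformal intertwining of $\widetilde\cP_s$ with $(-\Delta)^s$ in the ball model \cite[Theorem~1.7]{lu2023explicit}:
\[
\Bigl(\tfrac{1-|x|^2}{2}\Bigr)^{s+\frac n2}(-\Delta)^s\Bigl[\Bigl(\tfrac{1-|x|^2}{2}\Bigr)^{s-\frac n2}u\Bigr]=\widetilde\cP_s u .
\]
Given $u\in C_c^\infty(\mathbb B^n)$, set $v:=\phi^{\frac n2-s}u$, where $\phi=\frac{2}{1-|x|^2}$. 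Then $v\in C_c^\infty(\mathbb B^n)$, and the intertwining identity becomes $\widetilde\cP_s u=\phi^{-\frac n2-s}(-\Delta)^s v$.

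The two sides of the inequality then match term by term. For the quadratic form, since $dV_{\mathbb B^n}=\phi^n dx$,
\[
\int_{\mathbb B^n}u\,\widetilde\cP_s u\,dV_{\mathbb B^n}=\int_{\mathbb B^n}\phi^{s-\frac n2}v\,\phi^{-\frac n2-s}(-\Delta)^s v\,\phi^n\,dx=\int_{\mathbb B^n} v\,(-\Delta)^s v\,dx .
\]
The last integral may be taken over $\mathbb R^n$ because $v$ is supported in $\mathbb B^n$. For the $L^{2_s^*}$ term, $(\frac n2-s)2_s^*=n$ gives
\[
\int_{\mathbb B^n}|u|^{2_s^*}dV_{\mathbb B^n}=\int_{\mathbb B^n}|v|^{2_s^*}\phi^{-n}\phi^n\,dx=\int |v|^{2_s^*}dx .
\]
Applying \eqref{eq:Eucl-frac-Sobolev} to $v$ yields \eqref{eq:conf-frac-Sob-Hn} for every $s\in(0,\frac n2)$ at once.

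The main obstacle is justifying the intertwining on $C_c^\infty$ functions for general non-integer $s$, including $s>1$. The left side $(-\Delta)^s v$ is nonlocal, so it is not compactly supported. The right side is defined by the Helgason multiplier \eqref{mstl}. I would cite \cite[Theorem~1.7]{lu2023explicit}, which holds for all $s>0$. If a direct check is needed, I would compare kernels: both operators are $\mathrm{Isom}(\mathbb B^n)$-equivariant, with the Euclidean side equivariant under Möbius maps through the conformal weight $\phi$. It therefore suffices to match the Fourier transforms of the kernels on radial functions, where the Gamma-function multiplier $\widetilde m_s$ arises from the Euclidean Riesz kernel $|x-y|^{-n-2s}$ rewritten via $|x-y|^2=\frac{(1-|x|^2)(1-|y|^2)}{4}\cdot 2(\cosh d(x,y)-1)$. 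Only the quadratic-form identity is needed, not a pointwise one. That identity follows from the pairing $\int v(-\Delta)^s v=\int u\,\widetilde\cP_s u\,dV$, where all integrals converge absolutely since $u$ is compactly supported.

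Optimality of $S_{n,s}$ follows from the same isometry run backwards. The map $u\mapsto\phi^{\frac n2-s}u$ is a bijection of $C_c^\infty(\mathbb B^n)$ that preserves both functionals. Hence the best constant on $\mathbb H^n$ equals $S_{n,s,\mathbb B^n}(0)$, the Euclidean Sobolev quotient infimum over $C_c^\infty(\mathbb B^n)$. That infimum equals $S_{n,s}$ by domain-independence: translating and dilating an almost-extremal function in $C_c^\infty(\mathbb R^n)$ into the ball leaves the quotient invariant, since it is scale-invariant. So no larger constant can work.
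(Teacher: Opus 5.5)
Your proposal is correct and matches the paper's proof essentially step for step: the same ball-model substitution $v=\phi^{\frac n2-s}u$, the same energy and $L^{2_s^*}$ identities from the Lu--Yang intertwining \eqref{eq:tildePs-ball-intertwine11} for $\widetilde\cP_s$, and then the Euclidean inequality applied to $v\in C_c^\infty(\mathbb B^n)$. Optimality also goes the same way, by pulling Euclidean near-extremals in $C_c^\infty(\mathbb B^n)$ back through the inverse substitution and using domain-independence of $S_{n,s}$; once the cited intertwining is invoked, the kernel-comparison sketch is not needed.
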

	
	\begin{proof}
		Without loss of generality, we work in the Poincar\'e ball model
		\((\mathbb B^n,g_{\mathbb B^n})\) of the hyperbolic space, where
		\[
		g_{\mathbb B^n}=\phi(x)^2\,dx^2,
		\qquad 
		\phi(x):=\frac{2}{1-|x|^2}.
		\]
		For \(u\in C_c^\infty(\mathbb B^n)\), we introduce the critical conformal
		transformation $v:=\phi^{\frac{n-2s}{2}}u\in C_c^{\infty}(\mathbb B^n).$
		
		By the exact intertwining identity (\ref{eq:tildePs-ball-intertwine11}) for the operator $\widetilde\cP_s$, we have
		\[
		\widetilde\cP_s u
		\;=\;
		\phi^{-\frac{n+2s}{2}}\,
		(-\Delta)^s\!\bigl(\phi^{\frac{n-2s}{2}}u\bigr)
		\;=\;
		\phi^{-\frac{n+2s}{2}}\,(-\Delta)^s v,
		\]
		and since $dV_{{\mathbb B^n}}=\phi(x)^n\,dx$, we obtain the exact energy identity
		\begin{equation}\label{eq:energy-identity-tildeP}
			\int_{\mathbb B^n} u\,\widetilde\cP_s u\,dV_{\mathbb B^n}
			\;=\;
			\int_{\mathbb B^n} v\,(-\Delta)^s v\,dx.
		\end{equation}
		At the critical exponent $2_s^*=\frac{2n}{n-2s}$, one also has the conformal
		invariance of the $L^{2_s^*}$ norm:
		\begin{equation}\label{eq:Lp-invariance}
			\int_{\mathbb B^n} |u|^{2_s^*}\,dV_{\mathbb B^n}
			\;=\;
			\int_{\mathbb B^n} |v|^{2_s^*}\,dx.
		\end{equation}
		Applying the sharp Euclidean fractional Sobolev inequality to $v$ and then using
		\eqref{eq:energy-identity-tildeP}–\eqref{eq:Lp-invariance} yields
		\[
		S_{n,s}\Bigl(\int_{\mathbb B^n}|u|^{2_s^*}\,dV_{\mathbb B^n}\Bigr)^{\!2/2_s^*}
		\;\le\;
		\int_{\mathbb B^n} v\,(-\Delta)^s v\,dx
		\;=\;
		\int_{\mathbb B^n} u\,\widetilde\cP_s u\,dV_{\mathbb B^n},
		\]
		which is \eqref{eq:conf-frac-Sob-Hn}.
		
		To prove sharpness, let $\{w_\varepsilon\}\subset C_c^\infty(\mathbb B^n)\setminus \left\{0\right\}$ be a sequence satisfying
		\[
		\frac{\displaystyle\int_{\mathbb B^n} w_\varepsilon\,(-\Delta)^s w_\varepsilon\,dx}
		{\Bigl(\displaystyle\int_{\mathbb B^n}|w_\varepsilon|^{2_s^*}\,dx\Bigr)^{\!2/2_s^*}}
		\to  S_{n,s}\quad {\rm as}\ \, \varepsilon \rightarrow 0^+.
		\]
		Define $u_\varepsilon\in C_c^\infty(\mathbb B^n)$ by
		$u_\varepsilon:=\phi^{-\frac{n-2s}{2}}\,w_\varepsilon$, then by \eqref{eq:energy-identity-tildeP}–\eqref{eq:Lp-invariance},
		\[
		\frac{\displaystyle\int_{\mathbb B^n} u_\varepsilon\,\widetilde\cP_s u_\varepsilon\,dV_{\mathbb B^n}}
		{\Bigl(\displaystyle\int_{\mathbb B^n}|u_\varepsilon|^{2_s^*}\,dV_{\mathbb B^n}\Bigr)^{\!2/2_s^*}}
		=
		\frac{\displaystyle\int_{\mathbb B^n} w_\varepsilon\,(-\Delta)^s w_\varepsilon\,dx}
		{\Bigl(\displaystyle\int_{\mathbb B^n}|w_\varepsilon|^{2_s^*}\,dx\Bigr)^{\!2/2_s^*}}
		\to S_{n,s}\quad {\rm as}\ \, \varepsilon \rightarrow 0^+.
		\]
		Hence the best constant on $\mathbb B^n$ equals $S_{n,s}$, proving optimality.
	\end{proof}

	\section{Hardy Lower Bounds for Fractional Hardy--Sobolev--Maz'ya Inequalities}
	This section develops the key test-function estimates needed to analyze the fractional Poincar\'e--Sobolev levels associated with $\widetilde{\cP}_s$ and ${\cP}_s$.  We introduce the standard bubble $U_\varepsilon$ and its cut-off version $w_\varepsilon$, and quantify precisely how the truncation affects the key quantities: the critical norm, the $L^2$-mass of the corresponding test function $u_\varepsilon$, and the fractional Dirichlet energy. 
These estimates provide the core test-function machinery for the sequel, and are the main input for the proofs of Theorem~\ref{thm:Hardy-lowerbound-Ptilde} and Theorem~\ref{prop:gap-bs-lambda011}.

	\subsection{Asymptotic Analysis}
	
	In this subsection, without loss of generality, we continue working in the ball model of $\mathbb B^n$ and recall
	\[
	\phi(x)=\frac{1-|x|^2}{2},\quad
	dV_{\mathbb B^n}=\phi(x)^{-n}\,dx
	\quad\text{on }\mathbb B^n.
	\]

	\begin{lemma}
		\label{lem:exact-reduction}
		Let $u\in C_c^\infty(\mathbb B^n)$ and 
		\[
		w(x):=\phi(x)^{s-\frac n2}u(x)
		\quad\text{for }x\in\mathbb B^n,
		\qquad
		w(x)=0\quad\text{for }x\in\mathbb R^n\setminus\mathbb B^n.
		\]
		Then $w\in C_c^\infty(\mathbb R^n)$ and the following identities hold:
		\[	\int_{\mathbb B^n} (\widetilde \cP_s u)\,u\,dV_{{\mathbb B}^n}= \int_{\mathbb R^n} w\,(-\Delta)^s w\,dx,\quad
		\int_{\mathbb B^n}|u|^{2_s^*}\,dV_{{\mathbb B}^n}
		= \int_{\mathbb R^n}|w|^{2_s^*}\,dx.
		\]
	\end{lemma}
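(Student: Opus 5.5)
The plan is to reduce everything to the conformal intertwining identity \eqref{eq:tildePs-ball-intertwine11} for $\widetilde\cP_s$, valid for all $s>0$ by \cite[Theorem~1.7]{lu2023explicit}, together with a direct change of variables. In this section $\phi=\frac{1-|x|^2}{2}$ and $dV_{\mathbb B^n}=\phi^{-n}dx$. Hence the weight in \eqref{eq:tildePs-ball-intertwine11} is exactly $\phi$, and the identity reads
\[
\widetilde\cP_s u=\phi^{s+\frac n2}(-\Delta)^s\bigl(\phi^{s-\frac n2}u\bigr)=\phi^{s+\frac n2}(-\Delta)^s w\quad\text{in }\mathbb B^n .
\]

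First I check the regularity claim. Since $u$ has compact support $K\subset\mathbb B^n$, the weight $\phi^{s-\frac n2}$ is smooth and positive on a neighbourhood of $K$. Therefore $w=\phi^{s-\frac n2}u$ is smooth with support in $K$, and extending it by zero gives $w\in C_c^\infty(\mathbb R^n)$. In particular $(-\Delta)^s w$ is a well-defined smooth function on $\mathbb R^n$, since $\hat w$ is Schwartz. The fractional Laplacian in the intertwining identity is understood as acting on this zero extension.

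Next comes the energy identity. Because $u$ vanishes outside $K\subset\mathbb B^n$, I compute
\[
\int_{\mathbb B^n}(\widetilde\cP_s u)\,u\,\phi^{-n}dx=\int_{\mathbb B^n}\phi^{s+\frac n2}\,(-\Delta)^s w\;\phi^{\frac n2-s}w\;\phi^{-n}\,dx=\int_{\mathbb B^n}w\,(-\Delta)^s w\,dx .
\]
The powers of $\phi$ cancel exactly. Since $w\equiv0$ outside $\mathbb B^n$, the last integral equals $\int_{\mathbb R^n}w(-\Delta)^s w\,dx$, even though $(-\Delta)^s w$ itself need not vanish outside the ball. For the norm identity, note that $|u|^{2_s^*}=\phi^{(\frac n2-s)\frac{2n}{n-2s}}|w|^{2_s^*}=\phi^{n}|w|^{2_s^*}$. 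Multiplying by $\phi^{-n}$ gives $\int_{\mathbb R^n}|w|^{2_s^*}dx$.

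The only delicate point is justifying the intertwining identity pointwise for $u\in C_c^\infty$, since $\widetilde\cP_s$ is defined spectrally on $L^2(\mathbb H^n)$. I take this from \cite[Theorem~1.7]{lu2023explicit}, exactly as in the proof of Proposition~\ref{prop:conf-frac-Sob-Hn}. If one prefers not to rely on it pointwise, it is enough to verify the quadratic-form version, obtained by pairing both sides against $u$. Both sides are continuous in the relevant norms, so one can argue by density. I also need to be consistent about conventions: the intertwining weight $\frac{1-|x|^2}{2}$ coincides with this section's $\phi$, which is the reciprocal of the earlier $\phi=\frac{2}{1-|x|^2}$. With that choice the exponents $s\mp\frac n2$ match those in Proposition~\ref{prop:conf-frac-Sob-Hn}.
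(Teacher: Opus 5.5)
Your proposal is correct and follows the paper's own route: the paper proves this lemma by referring back to the proof of Proposition~\ref{prop:conf-frac-Sob-Hn}, which applies the ball-model intertwining identity for $\widetilde{\cP}_s$ from \cite[Theorem~1.7]{lu2023explicit} and then cancels the powers of $\phi$ against $dV_{\mathbb B^n}=\phi^{-n}dx$, both in the energy and in the $L^{2_s^*}$ norm. Your handling of the two conventions for $\phi$ (Section~3's $\phi$ is the reciprocal of Section~2's) is also correct, so your $w$ is exactly the function $v$ of that proposition.
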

	
	\begin{proof}
		The argument is identical to that of Proposition \ref{prop:conf-frac-Sob-Hn}, hence we omit the details.
	\end{proof}\medskip
	
	Let
	\[
	U(x)=(1+|x|^2)^{-\frac{n-2s}{2}},\qquad
	U_\varepsilon(x)=\varepsilon^{-\frac{n-2s}{2}}U\Bigl(\frac{x}{\varepsilon}\Bigr),\quad\forall  x\in\R^n
	\]
	and fix $0<\delta<\frac14$. Choose $\eta\in C_c^\infty(\mathbb B^n)$ such that
	$0\le \eta\le 1$, $\eta\equiv 1$ on $B_\delta(0)$ and $\eta\equiv 0$ on
	$\mathbb B^n\setminus B_{2\delta}(0)$. Define
	\[
	w_\varepsilon:=\eta\,U_\varepsilon\in C_c^\infty(\mathbb R^n),
	\qquad
	u_\varepsilon:=\phi^{\frac n2-s}w_\varepsilon\in C_c^\infty(\mathbb B^n).
	\]

    We next quantify the effect of the cut-off on the critical $L^{2_s^*}$ norm.

	\begin{proposition}\label{lem:L2sstar-cutoff}
		Let $s\in(0,\frac n2)$, then
		\[	\int_{\mathbb R^n}|w_\varepsilon|^{2_s^*}\,dx
		=\int_{\mathbb R^n}|U|^{2_s^*}\,dx+O(\varepsilon^{n})
		\quad{\rm as}\ \,  \varepsilon\to0^+,\]
		where  $O(\varepsilon^n)=\varepsilon^nO(1)$ constant depends only on $n,s$.
	\end{proposition}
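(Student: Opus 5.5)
We need the estimate $\int|\eta U_\varepsilon|^{2_s^*}dx=\int|U|^{2_s^*}dx+O(\varepsilon^n)$. The plan is a direct computation with scaling invariance and the tail decay of $U$.

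First, the critical norm is dilation invariant: with $x=\varepsilon y$,
\[
\int_{\mathbb R^n}|U_\varepsilon|^{2_s^*}\,dx=\varepsilon^{-n}\int_{\mathbb R^n}U(x/\varepsilon)^{2_s^*}\,dx=\int_{\mathbb R^n}|U|^{2_s^*}\,dy ,
\]
since $\frac{n-2s}{2}\cdot 2_s^*=n$.

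Next, write
\[
\int_{\mathbb R^n}|w_\varepsilon|^{2_s^*}\,dx-\int_{\mathbb R^n}|U|^{2_s^*}\,dy=\int_{\mathbb R^n}\bigl(\eta^{2_s^*}-1\bigr)U_\varepsilon^{2_s^*}\,dx .
\]
Because $0\le\eta\le1$ and $\eta\equiv1$ on $B_\delta(0)$, the integrand vanishes on $B_\delta$ and is bounded in absolute value by $U_\varepsilon^{2_s^*}$ elsewhere. Therefore
\[
\Bigl|\int_{\mathbb R^n}|w_\varepsilon|^{2_s^*}\,dx-\int_{\mathbb R^n}|U|^{2_s^*}\,dy\Bigr|\le\int_{|x|\ge\delta}U_\varepsilon^{2_s^*}\,dx=\int_{|y|\ge\delta/\varepsilon}(1+|y|^2)^{-n}\,dy .
\]

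Finally, bound this tail using $(1+|y|^2)^{-n}\le|y|^{-2n}$ and polar coordinates:
\[
\int_{|y|\ge\delta/\varepsilon}|y|^{-2n}\,dy=|\mathbb S^{n-1}|\int_{\delta/\varepsilon}^\infty r^{-n-1}\,dr=\frac{|\mathbb S^{n-1}|}{n}\,\delta^{-n}\varepsilon^{n}.
\]
This gives the $O(\varepsilon^n)$ remainder. Its constant depends on $n$ and on the fixed cutoff radius $\delta$. It does not depend on $s$ beyond the exponent identity, since $U_\varepsilon^{2_s^*}$ is exactly $\varepsilon^{-n}(1+|x/\varepsilon|^2)^{-n}$.

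There is no real obstacle. The only point needing care is the exponent identity $\frac{n-2s}{2}\,2_s^*=n$. It makes $U^{2_s^*}=(1+|y|^2)^{-n}$ for every $s\in(0,\frac n2)$, so the tail integral converges and decays at the rate $\varepsilon^n$ uniformly in $s$. I also read the statement's phrase that the constant ``depends only on $n,s$'' as allowing dependence on the fixed $\delta$, which I will note explicitly.
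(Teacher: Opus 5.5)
Your proof is correct and follows the paper's argument step for step: dilation invariance of $\int|U_\varepsilon|^{2_s^*}\,dx$, reduction of the error to the tail $\int_{|y|\ge\delta/\varepsilon}(1+|y|^2)^{-n}\,dy$, and the polar-coordinate bound $\frac{|\mathbb S^{n-1}|}{n}\,\delta^{-n}\varepsilon^{n}$. You bound the error directly via $|\eta^{2_s^*}-1|\le 1$ rather than with the paper's informal ``$\sim$'', and you correctly note that the constant also depends on the fixed cutoff radius $\delta$; both are accurate refinements of the paper's presentation.
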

	
	\begin{proof}
		Write $p:=2_s^*=\frac{2n}{n-2s}$. Since $\frac{n-2s}{2}\,p=n$, we have the exact identity
		\[	|U_\varepsilon(x)|^p
		=\varepsilon^{-n}\Bigl(1+\frac{|x|^2}{\varepsilon^2}\Bigr)^{-n}.\]
		By the change of variables $x=\varepsilon y$,
		\[
		\int_{\mathbb R^n}|U_\varepsilon(x)|^p\,dx
		=\int_{\mathbb R^n}\varepsilon^{-n}(1+|x|^2/\varepsilon^2)^{-n}\,dx
		=\int_{\mathbb R^n}(1+|y|^2)^{-n}\,dy
		=\int_{\mathbb R^n}|U(y)|^p\,dy.
		\]
		Since $w_\varepsilon=\eta U_\varepsilon$ and $0\le \eta\le1$,
		\[
		\int_{\mathbb R^n}|w_\varepsilon|^p\,dx
		=\int_{\mathbb R^n}\eta(x)^p|U_\varepsilon(x)|^p\,dx
		=\int_{\mathbb R^n}|U_\varepsilon|^p\,dx
		-\int_{\mathbb R^n}(1-\eta^p)|U_\varepsilon|^p\,dx.
		\]
		Moreover, $1-\eta^p$ is supported in $\mathbb R^n\setminus B_\delta(0)$, hence
		\[	\int_{\mathbb R^n}(1-\eta^p)|U_\varepsilon|^p\,dx
		\sim
		\int_{|x|\ge\delta}|U_\varepsilon(x)|^p\,dx.\]
		Using again $x=\varepsilon y$,
		\[
		\int_{|x|\ge\delta}|U_\varepsilon(x)|^p\,dx
		=\int_{|y|\ge \delta/\varepsilon}(1+|y|^2)^{-n}\,dy.
		\]
		For $|y|\ge1$ we have $(1+|y|^2)^{-n}\le |y|^{-2n}$, and thus by polar coordinates,
		for $R\ge1$,
		\[
		\int_{|y|\ge R}(1+|y|^2)^{-n}\,dy
		\le |\mathbb S^{n-1}|\int_R^\infty r^{n-1}r^{-2n}\,dr
		= \frac{|\mathbb S^{n-1}|}{n}\,R^{-n},
		\]
		Taking $R=\delta/\varepsilon$  yields $	\int_{|x|\ge\delta}|U_\varepsilon(x)|^p\,dx=O(\varepsilon^n),$
		which gives
		\[
		\int_{\mathbb R^n}|w_\varepsilon|^p\,dx
		=\int_{\mathbb R^n}|U_\varepsilon|^p\,dx + O(\varepsilon^n)
		=\int_{\mathbb R^n}|U|^p\,dx + O(\varepsilon^n)\quad {\rm as}\ \, \varepsilon \rightarrow 0^+,
		\]
        we complete the proof.
	\end{proof}

    We next derive the $L^2$-mass estimate in the hyperbolic space. 
In contrast to the Euclidean case, the asymptotic behavior is influenced by the conformal weight and must be analyzed with respect to the hyperbolic volume element.

	\begin{proposition}\label{lem:L2-cutoff}
		Let $0<s<\frac n2$,  then as $\varepsilon\to0^+$
		\[	\int_{\mathbb B^n}|u_\varepsilon|^2\,dV_{{\mathbb B}^n}
		=
		\begin{cases}
			c_{1,s}\,\varepsilon^{2s} \;+\; O(\varepsilon^{n-2s}),
			& n>4s,\\[2mm]
			c_{2,s}\,\varepsilon^{2s}|\log\varepsilon| \;+\; O(\varepsilon^{2s}),
			& n=4s,\\[2mm]
			c_{3,s}\,\varepsilon^{n-2s} \;+\; O(\varepsilon^{2s}),
			& 2s<n<4s,
		\end{cases}\]
		with $c_{i,s}>0,i=1,2,3$ depending only on $n,s$.
	\end{proposition}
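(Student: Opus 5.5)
We rewrite the hyperbolic $L^2$-mass in Euclidean terms. In this subsection $\phi=\frac{1-|x|^2}{2}$ and $dV_{\mathbb B^n}=\phi^{-n}dx$, and $u_\varepsilon=\phi^{\frac n2-s}w_\varepsilon$. Substituting gives
\[
\int_{\mathbb B^n}|u_\varepsilon|^2\,dV_{\mathbb B^n}
=\int_{\mathbb R^n}\phi(x)^{n-2s}\phi(x)^{-n}\,\eta^2U_\varepsilon^2\,dx
=\int_{B_{2\delta}}\phi(x)^{-2s}\,\eta(x)^2U_\varepsilon(x)^2\,dx .
\]
Since $\delta<\frac14$, on $B_{2\delta}$ the weight $\phi^{-2s}\eta^2$ is smooth and bounded above and below by positive constants. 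Moreover $\phi(x)^{-2s}=2^{2s}(1+O(|x|^2))$. The problem is therefore a Euclidean weighted $L^2$ estimate of a truncated bubble, and the weight only changes the constants.

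The plan is to split the integral into $B_\delta$, where $\eta=1$, and the annulus $B_{2\delta}\setminus B_\delta$.

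On the annulus we have $U_\varepsilon(x)\le \varepsilon^{-\frac{n-2s}{2}}(\varepsilon/|x|)^{n-2s}=\varepsilon^{\frac{n-2s}{2}}|x|^{-(n-2s)}$. Hence $U_\varepsilon^2\lesssim\varepsilon^{n-2s}$ there, and the annulus contributes $O(\varepsilon^{n-2s})$.

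On $B_\delta$ we change variables $x=\varepsilon y$:
\[
\int_{B_\delta}\phi^{-2s}U_\varepsilon^2\,dx=\varepsilon^{2s}\int_{|y|<\delta/\varepsilon}\phi(\varepsilon y)^{-2s}(1+|y|^2)^{-(n-2s)}\,dy .
\]
The behaviour of $\int_{|y|<R}(1+|y|^2)^{-(n-2s)}dy$ decides the cases. The integrand decays like $|y|^{-2(n-2s)}$, which is integrable at infinity exactly when $2(n-2s)>n$, i.e. $n>4s$.

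\textbf{Case $n>4s$.} We replace $\phi(\varepsilon y)^{-2s}$ by $2^{2s}$. The error is bounded by $\varepsilon^{2s}\int_{|y|<\delta/\varepsilon}\varepsilon^2|y|^2(1+|y|^2)^{-(n-2s)}dy$, and this is at most $O(\varepsilon^{n-2s})+O(\varepsilon^{2s+2})$ (a log appears only in borderline cases). The tail $|y|>\delta/\varepsilon$ is $\varepsilon^{2s}\cdot O((\varepsilon)^{n-4s})=O(\varepsilon^{n-2s})$. This gives $c_{1,s}=2^{2s}\int_{\mathbb R^n}(1+|y|^2)^{-(n-2s)}dy$. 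The subtle point is that the weight correction $O(\varepsilon^{2s+2})$ must be absorbed in the stated error $O(\varepsilon^{n-2s})$, which needs $n-2s\le 2s+2$. When this fails I would instead keep $\phi^{-2s}$ inside the constant's expansion or accept the error as $o(\varepsilon^{2s})$; this bookkeeping is the step I expect to be most delicate. I would try to present the error as $O(\varepsilon^{\min\{n-2s,\,2s+2\}})$, which is what the later applications actually need.

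\textbf{Case $n=4s$.} Here $(1+|y|^2)^{-(n-2s)}\sim|y|^{-n}$, so the integral over $|y|<\delta/\varepsilon$ equals $|\mathbb S^{n-1}|\log(1/\varepsilon)+O(1)$. The weight error contributes $O(\varepsilon^{2s})$. This gives $c_{2,s}=2^{2s}|\mathbb S^{n-1}|$.

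\textbf{Case $2s<n<4s$.} We do not rescale. Pointwise, $\varepsilon^{-(n-2s)}U_\varepsilon(x)^2\to|x|^{-2(n-2s)}$, and this limit is locally integrable since $2(n-2s)<n$. Writing $U_\varepsilon^2=\varepsilon^{n-2s}(\varepsilon^2+|x|^2)^{-(n-2s)}$, dominated convergence gives the main term $c_{3,s}\varepsilon^{n-2s}$ with $c_{3,s}=\int\phi^{-2s}\eta^2|x|^{-2(n-2s)}dx>0$. For the error we bound $|x|^{-2(n-2s)}-(\varepsilon^2+|x|^2)^{-(n-2s)}$. Its integral over $|x|<\varepsilon$ is $O(\varepsilon^{n-2(n-2s)})$. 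Over $|x|>\varepsilon$ it is bounded by $\varepsilon^2|x|^{-2(n-2s)-2}$, which integrates to $O(\varepsilon^{4s-n})$. Multiplying by $\varepsilon^{n-2s}$ gives $O(\varepsilon^{2s})$, as claimed.
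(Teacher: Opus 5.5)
Your decomposition is the same as the paper's: rewrite the mass as $\int\phi^{-2s}\eta^2U_\varepsilon^2\,dx$, split off the annulus, rescale on $B_\delta$, sort the regimes by integrability of $(1+|y|^2)^{-(n-2s)}$, and control the weight through the second moment $\int|x|^2w_\varepsilon^2\,dx$. However, in two subregimes neither your argument nor the paper's can give the stated remainders, because those remainders are false there.

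\textbf{The case $n>4s$.} The step you flagged is not a bookkeeping issue. Your fallback of ``keeping $\phi^{-2s}$ inside the constant'' cannot work, because $c_{1,s}$ is forced to equal $\lim_{\varepsilon\to0}\varepsilon^{-2s}\int|u_\varepsilon|^2\,dV_{\mathbb B^n}=2^{2s}\int_{\mathbb R^n}(1+|y|^2)^{-(n-2s)}dy$. By convexity, $\phi^{-2s}=2^{2s}(1-|x|^2)^{-2s}\ge 2^{2s}(1+2s|x|^2)$. So the weight correction is bounded \emph{below} by $2^{2s}\,2s\,\varepsilon^{2s+2}\int_{|y|<\delta/\varepsilon}|y|^2(1+|y|^2)^{-(n-2s)}\,dy$. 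This is $\asymp\varepsilon^{2s+2}$ for $n>4s+2$ and $\asymp\varepsilon^{2s+2}|\log\varepsilon|$ for $n=4s+2$, while every other correction is $O(\varepsilon^{n-2s})$. Hence the first line of the proposition is false for $n\ge 4s+2$ (e.g.\ $n=5$, $s=\tfrac12$). The paper's proof passes over this by calling the moment term ``negligible compared with the leading term'', which only yields $o(\varepsilon^{2s})$. The correct statement is the one you proposed: remainder $O(\varepsilon^{\min\{n-2s,\,2s+2\}})$, with an extra $|\log\varepsilon|$ at $n=4s+2$. That is all the proof of Theorem~\ref{thm:Hardy-lowerbound-Ptilde} uses.

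\textbf{The case $2s<n<4s$.} Here your argument contains an actual error, which the paper's proof shares. The bound $\int_{\varepsilon<|x|<2\delta}\varepsilon^2|x|^{-2(n-2s)-2}\,dx=O(\varepsilon^{4s-n})$ holds only for $n>4s-2$, since the radial integrand is $r^{4s-n-3}$. For $n<4s-2$ (possible once $s>1$, e.g.\ $n=5$, $s=2.2$) the integral is $\asymp\varepsilon^2$, and for $n=4s-2$ it is $\asymp\varepsilon^2|\log\varepsilon|$.

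The defect has a sign: with $a=n-2s$, one has $|x|^{-2a}-(\varepsilon^2+|x|^2)^{-a}\ge a\,\varepsilon^2(2|x|^2)^{-a-1}$ for $|x|\ge\varepsilon$. So the remainder is genuinely of order $\varepsilon^{n-2s+2}\gg\varepsilon^{2s}$ (resp.\ $\varepsilon^{2s}|\log\varepsilon|$), and the third line is false for $n\le 4s-2$. The correct remainder is $O(\varepsilon^{\min\{2s,\,n-2s+2\}})$, with a logarithm at $n=4s-2$. The paper's claim $\int_{B_{\delta/\varepsilon}}(1+|y|^2)^{-(n-2s)}\,dy=c_2(\delta/\varepsilon)^{4s-n}+O(1)$ fails for the same reason.

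Apart from this, your treatment of this regime is sounder than the paper's. Dominated convergence with the full weight $\phi^{-2s}\eta^2$ gives the right constant $c_{3,s}=\int\phi^{-2s}\eta^2|x|^{-2(n-2s)}\,dx$. The paper instead factors out $\phi(0)^{-2s}$ and discards both the moment term and the annulus term, each of which is of the same order $\varepsilon^{n-2s}$ as the leading term. Note that your $c_{3,s}$ depends on $\eta$ and $\delta$, not only on $n,s$.
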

	
	\begin{proof}
		Using $u_\varepsilon=\phi^{\frac n2-s}w_\varepsilon$ and $dV_{{\mathbb B}^n}=\phi^{-n}dx$,
		\[
		\int_{\mathbb B^n}|u_\varepsilon|^2\,dV_{{\mathbb B}^n}
		=
		\int_{\mathbb R^n}|w_\varepsilon(x)|^2\,\phi(x)^{-2s}\,dx.
		\]
		Since $\operatorname{supp} w_\varepsilon\subset B_{2\delta}(0)$ and $\phi$ is smooth and positive there,
		\[
		\phi(x)^{-2s}=\phi(0)^{-2s}+O(x^2)\qquad\text{for }|x|\le 2\delta.
		\]
		Hence
		\begin{equation}\label{eq:L2-reduction-with-weight}
			\int_{\mathbb B^n}|u_\varepsilon|^2\,dV_{{\mathbb B}^n}
			=
			\phi(0)^{-2s}\int_{\mathbb R^n}|w_\varepsilon|^2\,dx
			\;+\;
			O\ \!\Bigl(\int_{\mathbb R^n}|x|^2|w_\varepsilon(x)|^2\,dx\Bigr).
		\end{equation}
		We firstly prove that:
		\begin{equation}\label{eq:L2-asymptotics-tilde}
			\int_{\mathbb R^n}|w_\varepsilon|^2\,dx
			=
			\begin{cases}
				c_{1,s}\,\varepsilon^{2s} \;+\; O(\varepsilon^{n-2s}),
				& n>4s,\\[2mm]
				c_{2,s}\,\varepsilon^{2s}|\log\varepsilon| \;+\; O(\varepsilon^{2s}),
				& n=4s,\\[2mm]
				c_{3,s}\,\varepsilon^{n-2s} \;+\; O(\varepsilon^{2s}),
				& 2s<n<4s,
			\end{cases}
		\end{equation}
		with $c_{i,s}>0,i=1,2,3$ depending only on $n,s$.
		
		Since $w_\varepsilon=\eta U_\varepsilon$ and $\eta\equiv 1$ on $B_\delta(0)$,
		\begin{equation}\label{eq:w-eps-L2-split}
			\int_{\mathbb R^n}|w_\varepsilon|^2\,dx
			=
			\int_{B_\delta(0)}|U_\varepsilon|^2\,dx
			+\int_{B_{2\delta}(0)\setminus B_\delta(0)}|\eta U_\varepsilon|^2\,dx.
		\end{equation}
		The second term is always of order $O(\varepsilon^{\,n-2s})$:
		indeed, on $B_{2\delta}(0)\setminus B_\delta(0)$, we have $|x|\ge \delta$, hence
		\[
		|U_\varepsilon(x)|
		=\varepsilon^{-\frac{n-2s}{2}}
		\Bigl(1+\frac{|x|^2}{\varepsilon^2}\Bigr)^{-\frac{n-2s}{2}}
		\sim\,\varepsilon^{-\frac{n-2s}{2}}
		\Bigl(\frac{|x|}{\varepsilon}\Bigr)^{-(n-2s)}
		\sim\,\varepsilon^{\frac{n-2s}{2}},
		\]
		and therefore
		\begin{equation}\label{eq:w-eps-annulus-bound}
			\int_{B_{2\delta}\setminus B_\delta}|\eta U_\varepsilon|^2\,dx
			=O(\varepsilon^{\,n-2s})\quad {\rm as}\ \, \varepsilon \rightarrow 0^+.
		\end{equation}
		
		For the main part, use the change of variables $x=\varepsilon y$:
		\begin{align}
			\int_{B_\delta(0)}|U_\varepsilon(x)|^2\,dx=\varepsilon^{2s}
			\int_{B_{\delta/\varepsilon}(0)}
			(1+|y|^2)^{-(n-2s)}\,dy.
			\label{eq:w-eps-L2-main}
		\end{align}
		The asymptotics of the integral as $\varepsilon\to 0$
		depend on the integrability at infinity of $(1+|y|^2)^{-(n-2s)}$:
		\[
		(1+|y|^2)^{-(n-2s)}\sim |y|^{-2(n-2s)} \quad |y|\to+\infty.
		\]
		Hence:
		\begin{itemize}
			\item If $n>4s$, so the function is integrable at infinity,
			and dominated convergence yields
			\[
			\int_{B_{\delta/\varepsilon}}(1+|y|^2)^{-(n-2s)}dy
			=\int_{\mathbb R^n}(1+|y|^2)^{-(n-2s)}dy + O(\varepsilon^{\,n-4s}).
			\]
			Plugging into \eqref{eq:w-eps-L2-main} gives
			\[
			\int_{B_\delta}|U_\varepsilon|^2dx
			= a_{n,s}\,\varepsilon^{2s} + O(\varepsilon^{\,n-2s}),
			\]
			where $a_{n,s}:=\int_{\mathbb R^n}(1+|y|^2)^{-(n-2s)}\,dy>0.$
			
			\item If $n=4s$, then $2(n-2s)=n$, so the tail behaves like $|y|^{-n}$:
			\[
			\int_{B_{\delta/\varepsilon}}(1+|y|^2)^{-(n-2s)}dy
			= c_1\,|\log\varepsilon| + O(1),
			\]
			for some $c_1>0$. Thus $\int_{B_\delta}|U_\varepsilon|^2dx
			= c_1\,\varepsilon^{2s}|\log\varepsilon| + O(\varepsilon^{2s}).$
			
			\item If $2s<n<4s$, then $2(n-2s)<n$, using polar coordinates one finds
			\[
			\int_{B_{\delta/\varepsilon}}(1+|y|^2)^{-(n-2s)}dy
			= c_2\,(\delta/\varepsilon)^{\,4s-n}+O(1),
			\]
			hence $	\int_{B_\delta}|U_\varepsilon|^2dx
			= c_{2}\,\varepsilon^{\,n-2s}+O(\varepsilon^{2s}).$
		\end{itemize}
		Combining these with \eqref{eq:w-eps-annulus-bound} and
		\eqref{eq:w-eps-L2-split} yields exactly the three regimes
		\eqref{eq:L2-asymptotics-tilde}.
		
		Finally, we show that the error term in \eqref{eq:L2-reduction-with-weight}
		is negligible. Using again the scaling $x=\varepsilon y$, we obtain
		\begin{align}
			\int_{\mathbb R^n}|x|^2|w_\varepsilon(x)|^2\,dx
			&=\int_{B_{2\delta}} |x|^2|U_\varepsilon(x)|^2\,dx
			=\varepsilon^{2s+2}\int_{B_{2\delta/\varepsilon}}
			|y|^2(1+|y|^2)^{-(n-2s)}\,dy .
			\label{eq:moment-estimate}
		\end{align}
		A similar polar-coordinate computation gives the following growth estimate:
		\begin{equation}\label{eq:moment-integral-growth}
			\int_{B_{2\delta/\varepsilon}} |y|^2(1+|y|^2)^{-(n-2s)}\,dy
			=
			\begin{cases}
				O(1), & n>4s+2,\\[1mm]
				O\bigl(|\log\varepsilon|\bigr), & n=4s+2,\\[1mm]
				O\bigl(\varepsilon^{-(4s-n+2)}\bigr), & n<4s+2,
			\end{cases}
			\qquad{\rm as}\ \,  \varepsilon\to0^+.
		\end{equation}
		Combining \eqref{eq:moment-estimate} and \eqref{eq:moment-integral-growth} we conclude
		\[	\int_{\mathbb R^n}|x|^2|w_\varepsilon(x)|^2\,dx
		=
		\begin{cases}
			O(\varepsilon^{2s+2}), & n>4s+2\\[1mm]
			O\bigl(\varepsilon^{2s+2}|\log\varepsilon|\bigr), & n=4s+2 \\[1mm]
			O(\varepsilon^{n-2s}), & n<4s+2 
		\end{cases}
		\qquad{\rm as}\ \,  \varepsilon\to0^+.\]
		In particular, in all cases the remainder term in \eqref{eq:L2-reduction-with-weight}
		is of higher order and hence negligible compared with the leading term in
		\eqref{eq:L2-asymptotics-tilde}	
	\end{proof}

    The next derivative estimate will be used to control the error terms coming from the cut-off region away from the concentration point, which is vital in proof of Proposition \ref{lem:energy-asymptotics-tilde}.

	\begin{lemma}\label{lem:derivative-bound-Ueps}
		Let $0<s<\frac n2$ and set $a=\frac{n-2s}{2}$, then for every multi-index $\alpha\in\mathbb N_0^n$ and every $\delta>0$ there
		exists $C_{\alpha,\delta}>0$ such that for all $\varepsilon\in(0,1)$ and all
		$x\in\mathbb R^n$ with $|x|\ge\delta$,
		\begin{equation}\label{eq:derivative-bound-Ueps}
			|\partial^\alpha U_\varepsilon(x)|
			\le C_{\alpha,\delta}\,\varepsilon^{\frac{n-2s}{2}}.
		\end{equation}
	\end{lemma}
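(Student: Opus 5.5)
The approach is to reduce everything to derivatives of the fixed profile $U$ by scaling. Since $U_\varepsilon(x)=\varepsilon^{-a}U(x/\varepsilon)$ with $a=\frac{n-2s}{2}$, the chain rule gives
\[
\partial^\alpha U_\varepsilon(x)=\varepsilon^{-a-|\alpha|}\,(\partial^\alpha U)(x/\varepsilon).
\]
The lemma will then follow from a pointwise decay bound for $\partial^\alpha U$ at infinity, namely
\[
|\partial^\alpha U(y)|\le C_\alpha (1+|y|)^{-2a-|\alpha|}\quad\text{for all } y\in\mathbb R^n .
\]

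To prove this decay bound, write $U(y)=g(1+|y|^2)$ with $g(t)=t^{-a}$. By induction on $|\alpha|$ (a Faà di Bruno type argument), $\partial^\alpha U$ is a finite sum of terms of the form
\[
c\,P(y)\,(1+|y|^2)^{-a-k},
\]
where $P$ is a monomial of degree $d$ and $2k-d=|\alpha|$. Indeed, each differentiation either lowers the degree of the monomial by one, or raises both $k$ and $d$ by one. In both cases the quantity $2k-d$ increases by exactly one. Each such term is bounded by
\[
C|y|^{d}(1+|y|^2)^{-a-k}\le C(1+|y|)^{d-2a-2k}=C(1+|y|)^{-2a-|\alpha|}.
\]
Alternatively, one can note that $U$ is a symbol of order $-2a$: it is a smooth function of $|y|^2$ that is homogeneous of degree $-2a$ modulo lower-order terms, so the bound is standard.

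Next, plug in $y=x/\varepsilon$ with $|x|\ge\delta$. Then $|y|\ge\delta/\varepsilon$, and
\[
|\partial^\alpha U_\varepsilon(x)|\le C_\alpha\,\varepsilon^{-a-|\alpha|}\,(|x|/\varepsilon)^{-2a-|\alpha|}=C_\alpha\,\varepsilon^{a}\,|x|^{-2a-|\alpha|}\le C_\alpha\,\delta^{-2a-|\alpha|}\,\varepsilon^{a}.
\]
This is exactly \eqref{eq:derivative-bound-Ueps} with $C_{\alpha,\delta}=C_\alpha\delta^{-(n-2s)-|\alpha|}$, and the bound is uniform in $\varepsilon\in(0,1)$. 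In fact, the restriction $\varepsilon<1$ is not even needed.

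The only step requiring care is the inductive bookkeeping in the structure lemma for $\partial^\alpha U$, i.e., checking that $2k-d$ tracks $|\alpha|$ exactly. Once that invariant is in place, the remainder of the argument is a one-line scaling computation.
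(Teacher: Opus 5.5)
Your proof is correct and follows the same route as the paper: the scaling identity $\partial^\alpha U_\varepsilon(x)=\varepsilon^{-a-|\alpha|}(\partial^\alpha U)(x/\varepsilon)$ combined with the decay bound $|\partial^\alpha U(y)|\le C_\alpha(1+|y|)^{-2a-|\alpha|}$, then evaluated at $|x|\ge\delta$. Your invariant $2k-d=|\alpha|$ is the right bookkeeping for that decay bound; the paper's single-fraction form, with only $m\ge\lceil|\alpha|/2\rceil$ and $\deg P_\alpha\le|\alpha|$, does not by itself give the exponent $-2a-|\alpha|$. You are also right that no smallness of $\varepsilon$ is needed.
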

	
	\begin{proof}
		By the chain rule, for any multi-index $\alpha$,
		\begin{equation}\label{eq:scale-derivative}
			\partial^\alpha U_\varepsilon(x)
			=\varepsilon^{-a-|\alpha|}\,(\partial^\alpha U)\!\left(\frac{x}{\varepsilon}\right).
		\end{equation}
		
		We claim that there exists $C_\alpha>0$ such that
		\begin{equation}\label{eq:decay-derivative-U}
			|\partial^\alpha U(y)|
			\le C_\alpha\,(1+|y|)^{-(n-2s+|\alpha|)}
			\qquad\forall\,y\in\mathbb R^n.
		\end{equation}
		Indeed, there exist a polynomial $P_\alpha$ on
		$\mathbb R^n$ and an integer $m=m(\alpha)\in\mathbb N_0$ such that
		\[\partial^\alpha(1+|y|^2)^{-a}
		=\frac{P_\alpha(y)}{(1+|y|^2)^{a+m}},\]
		with $\deg P_\alpha\le |\alpha|$ and $m\ge \Bigl\lceil \frac{|\alpha|}{2}\Bigr\rceil.$
		Consequently, for $|y|\ge1$,
		\[
		|\partial^\alpha U(y)|
		\le C_\alpha\,\frac{|y|^{|\alpha|}}{|y|^{2(a+m)}}
		\le C_\alpha\,|y|^{-2a-2\lceil|\alpha|/2\rceil+|\alpha|}
		\le C_\alpha\,|y|^{-(2a+|\alpha|)}
		= C_\alpha\,|y|^{-(n-2s+|\alpha|)},
		\]
		and \eqref{eq:decay-derivative-U} follows. Combining \eqref{eq:scale-derivative} and \eqref{eq:decay-derivative-U} yields
		\[
		|\partial^\alpha U_\varepsilon(x)|
		\le \varepsilon^{-a-|\alpha|}\,
		C_\alpha \Bigl(1+\Bigl|\frac{x}{\varepsilon}\Bigr|\Bigr)^{-(n-2s+|\alpha|)}
		\le C_\alpha\,\varepsilon^{-a-|\alpha|}
		\Bigl|\frac{x}{\varepsilon}\Bigr|^{-(n-2s+|\alpha|)}= C_\alpha\,|x|^{-(n-2s+|\alpha|)}\,\varepsilon^{a}
		\]
		for $\varepsilon$ small enough. Since $|x|\ge\delta$, we have $|x|^{-(n-2s+|\alpha|)}\le \delta^{-(n-2s+|\alpha|)}$,
		and therefore
		\[
		|\partial^\alpha U_\varepsilon(x)|
		\le C_{\alpha,\delta}\,\varepsilon^{a}
		=C_{\alpha,\delta}\,\varepsilon^{\frac{n-2s}{2}},
		\]
		which proves \eqref{eq:derivative-bound-Ueps}.
	\end{proof}

    \smallskip
    
	We next establish a key energy asymptotic expansion for the truncated bubbles.
	Unlike the case $s\in(0,1)$ treated in~\cite{servadei2015brezis}, we cannot rely on a pointwise singular-integral
	representation of the fractional Laplacian to carry out the estimate, since such a representation
	is no longer available in the present range of $s$.
	Instead, we work with the Fourier definition of $(-\Delta)^s$ to
	derive the desired asymptotics.

	\begin{proposition}\label{lem:energy-asymptotics-tilde}
		Let $0<s<\frac n2$.
		Then, as $\varepsilon\to0$,
		\begin{equation}\label{eq:energy-asymptotics-tilde-lemma}
			\int_{\mathbb R^n} w_\varepsilon\,(-\Delta)^s w_\varepsilon\,dx
			=
			\bigl\|(-\Delta)^{\frac s2}U\bigr\|_{L^2(\mathbb R^n)}^2
			\;+\;O(\varepsilon^{\,n-2s}).
		\end{equation}
		In particular,
		\begin{equation}\label{eq:energy-asymptotics-tilde}
			\int_{\mathbb R^n} w_\varepsilon\,(-\Delta)^s w_\varepsilon\,dx
			=
			S_{n,s}\Bigl(\int_{\mathbb R^n}|U|^{2_s^*}\,dx\Bigr)^{\!\frac{2}{2_s^*}}
			\;+\;O(\varepsilon^{\,n-2s}).
		\end{equation}
	\end{proposition}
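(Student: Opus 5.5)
Since $\widehat{w_\varepsilon}$ is available but $(-\Delta)^s$ has no pointwise singular-integral form for general $s\in(0,\tfrac n2)$, I will work entirely on the Fourier side with $(-\Delta)^{s/2}$. Write $w_\varepsilon=U_\varepsilon-(1-\eta)U_\varepsilon=:U_\varepsilon-r_\varepsilon$. By scaling, $\|(-\Delta)^{s/2}U_\varepsilon\|_{L^2}^2=\|(-\Delta)^{s/2}U\|_{L^2}^2$, and this quantity is finite because $U$ is (up to a constant) the Sobolev extremal. Expanding the quadratic form gives
\[
\int_{\mathbb R^n} w_\varepsilon(-\Delta)^s w_\varepsilon\,dx=\|(-\Delta)^{s/2}U\|_{L^2}^2-2\langle (-\Delta)^{s/2}U_\varepsilon,(-\Delta)^{s/2}r_\varepsilon\rangle+\|(-\Delta)^{s/2}r_\varepsilon\|_{L^2}^2 .
\]
It therefore suffices to show that both correction terms are $O(\varepsilon^{n-2s})$.

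For the cross term I use the equation satisfied by the bubble, $(-\Delta)^sU=c_{n,s}U^{2_s^*-1}$, so that $(-\Delta)^sU_\varepsilon=c_{n,s}U_\varepsilon^{2_s^*-1}$. This turns the pairing into $c\int U_\varepsilon^{2_s^*-1}(1-\eta)U_\varepsilon\,dx\le c\int_{|x|\ge\delta}U_\varepsilon^{2_s^*}dx=O(\varepsilon^n)$, by the tail computation in Proposition \ref{lem:L2sstar-cutoff}. This is even better than required.

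The main obstacle is the term $\|(-\Delta)^{s/2}r_\varepsilon\|^2$, because $r_\varepsilon$ is not compactly supported and decays only like $|x|^{-(n-2s)}$. My plan is to write $r_\varepsilon=\varepsilon^{\frac{n-2s}{2}}g_\varepsilon$, where $g_\varepsilon:=\varepsilon^{-\frac{n-2s}{2}}(1-\eta)U_\varepsilon$. Explicitly, $g_\varepsilon(x)=(1-\eta(x))(\varepsilon^2+|x|^2)^{-\frac{n-2s}{2}}$. This family vanishes on $B_\delta$, converges smoothly to $g_0=(1-\eta)|x|^{-(n-2s)}$, and by Lemma \ref{lem:derivative-bound-Ueps} (in the refined form $|\partial^\alpha U_\varepsilon|\le C\varepsilon^{a}|x|^{-(n-2s+|\alpha|)}$ shown in its proof) satisfies $|\partial^\alpha g_\varepsilon(x)|\le C_\alpha(1+|x|)^{-(n-2s+|\alpha|)}$ uniformly in $\varepsilon$. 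It then remains to show that $\|(-\Delta)^{s/2}g_\varepsilon\|_{L^2}$ is bounded uniformly in $\varepsilon$, and I would split this in frequency. At high frequency, $\int_{|\xi|\ge1}|\xi|^{2s}|\hat g_\varepsilon|^2\le\|g_\varepsilon\|_{\dot H^{m}}^2$ with $m=\lceil s\rceil$, and $\|g_\varepsilon\|_{\dot H^{m}}^2\le\sum_{|\alpha|=m}\|\partial^\alpha g_\varepsilon\|_{L^2}^2<\infty$ uniformly, since $2(n-2s+m)>n$. At low frequency, the Fourier transform of a smooth function decaying like $|x|^{-(n-2s)}$ behaves like $|\xi|^{-2s}$ near $0$, plus a bounded part. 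I would justify this by writing $g_\varepsilon=(\varepsilon^2+|x|^2)^{-\frac{n-2s}{2}}-\eta(\varepsilon^2+|x|^2)^{-\frac{n-2s}{2}}$. The first piece has explicit transform $C|\xi|^{-s}\varepsilon^{-s}\ldots$ (a Bessel $K$ function), bounded by $C|\xi|^{-2s}$ uniformly in $\varepsilon$. The second piece is in $L^1$ uniformly, so its transform is bounded. Hence $\int_{|\xi|\le1}|\xi|^{2s}|\hat g_\varepsilon|^2\le C\int_{|\xi|\le1}(|\xi|^{-2s}+|\xi|^{2s})\,d\xi<\infty$, because $2s<n$. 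Therefore $\|(-\Delta)^{s/2}r_\varepsilon\|^2=O(\varepsilon^{n-2s})$.

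Combining the three pieces yields \eqref{eq:energy-asymptotics-tilde-lemma}. For \eqref{eq:energy-asymptotics-tilde}, I use that $U$ attains $S_{n,s}$, i.e.\ $\|(-\Delta)^{s/2}U\|^2=S_{n,s}\|U\|_{2_s^*}^2$, which holds by Lieb's classification of extremals of \eqref{eq:upper-bound}.
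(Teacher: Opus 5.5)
Your proposal is correct, and it shares the paper's skeleton. Both arguments use scaling invariance of $\|(-\Delta)^{s/2}U_\varepsilon\|_{L^2}$. Both use the Euler--Lagrange equation of the bubble to reduce the cross term to $\int_{|x|\ge\delta}U_\varepsilon^{2_s^*}\,dx=O(\varepsilon^n)$.

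The genuine difference is the remainder $\|(-\Delta)^{s/2}r_\varepsilon\|_{L^2}^2$. The paper dominates it by the inhomogeneous norm $\|z_\varepsilon\|_{H^m}^2$ via $|\xi|^{2s}\le(1+|\xi|^2)^m$. It then estimates $\partial^\alpha z_\varepsilon$ with the uniform bound on $\{|x|\ge\delta\}$, as if $z_\varepsilon$ were supported in the annulus $B_{2\delta}\setminus B_\delta$. But $z_\varepsilon=-U_\varepsilon$ on $\{|x|\ge 2\delta\}$. So already the $L^2$ part $\int_{|x|\ge 2\delta}U_\varepsilon^2\,dx$ is infinite when $n\le 4s$. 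For $n>4s$ that route only closes if one uses the decay $|\partial^\alpha U_\varepsilon(x)|\le C\varepsilon^{\frac{n-2s}{2}}|x|^{-(n-2s+|\alpha|)}$ rather than a uniform bound.

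Your route avoids this. You factor out $\varepsilon^{\frac{n-2s}{2}}$ and work only with the homogeneous seminorm. High frequencies are controlled through $\dot H^{\lceil s\rceil}$, where $2(n-2s+\lceil s\rceil)>n$ always holds. Low frequencies are controlled through the explicit Bessel-$K$ transform of $(\varepsilon^2+|x|^2)^{-\frac{n-2s}{2}}$, plus the uniformly $L^1$ compactly supported piece. This is longer, but it is valid on the whole range $2s<n$. That matters, because the proposition is invoked for $n=4s$ and $2s<n<4s$ in the proof of Theorem~\ref{thm:Hardy-lowerbound-Ptilde}.

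Your argument also yields $r_\varepsilon\in\dot H^s(\mathbb R^n)$. This is exactly what licenses testing the Euler--Lagrange identity with $r_\varepsilon$: extend it from $C_c^\infty$ by density, using continuity of $\varphi\mapsto\int U_\varepsilon^{2_s^*-1}\varphi$ on $L^{2_s^*}\supset\dot H^s$. The paper instead tests with $z_\varepsilon\in H^s(\mathbb R^n)$, which fails for $n\le 4s$. So in your write-up, establish the remainder bound before treating the cross term.

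One small slip: the transform of $(\varepsilon^2+|x|^2)^{-\frac{n-2s}{2}}$ is $c\,\varepsilon^{s}|\xi|^{-s}K_s(\varepsilon|\xi|)$, not with $\varepsilon^{-s}$. Your uniform bound $C|\xi|^{-2s}$ is still right, since $t^sK_s(t)\le 2^{s-1}\Gamma(s)$ for all $t>0$.
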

	
	\begin{proof}
		Throughout the proof, we use the Fourier definition
		\[
		\widehat{(-\Delta)^s f}(\xi)=|\xi|^{2s}\hat f(\xi),\qquad
		\int_{\mathbb R^n} f\,(-\Delta)^s f\,dx
		=\int_{\mathbb R^n}|\xi|^{2s}|\hat f(\xi)|^2\,d\xi
		=\bigl\|(-\Delta)^{\frac s2}f\bigr\|_{L^2(\mathbb R^n)}^2.
		\]
		Set $z_\varepsilon:=w_\varepsilon-U_\varepsilon=(\eta-1)U_\varepsilon,$
		so that $z_\varepsilon\equiv0$ on $B_\delta(0)$ and
		$\operatorname{supp}(z_\varepsilon)\subset \mathbb{R}^n\setminus B_\delta(0)$.
		Then, since $w_{\varepsilon}\in C_c^{\infty}(\mathbb{R}^n),$
		\begin{equation}\label{eq:energy-expand}
			\int w_\varepsilon(-\Delta)^s w_\varepsilon
			=\bigl\|(-\Delta)^{\frac s2}U_\varepsilon\bigr\|_{L^2(\mathbb R^n)}^2
			+2\int (-\Delta)^{\frac{s}{2}} z_\varepsilon \cdot (-\Delta)^{\frac{s}{2}} U_\varepsilon
			+\bigl\|(-\Delta)^{\frac s2}z_\varepsilon\bigr\|_{L^2(\mathbb R^n)}^2.
		\end{equation}
		
		\smallskip\noindent
		\textbf{Step 1: Scaling invariance of the main term.}
		We claim that
		\begin{equation}\label{eq:Ueps-energy-invariant-fourier}
			\bigl\|(-\Delta)^{\frac s2}U_\varepsilon\bigr\|_{L^2(\mathbb R^n)}^2
			=\bigl\|(-\Delta)^{\frac s2}U\bigr\|_{L^2(\mathbb R^n)}^2 .
		\end{equation}
		Indeed, using the Fourier definition
		\[
		\widehat{(-\Delta)^{\frac s2}f}(\xi)=|\xi|^{s}\,\hat f(\xi),
		\qquad
		\|(-\Delta)^{\frac s2}f\|_{2}^{2}
		=\int_{\mathbb R^n}|\xi|^{2s}\,|\hat f(\xi)|^{2}\,d\xi,
		\]
		it suffices to compute the scaling of $\widehat{U_\varepsilon}$.
		By definition,
		\begin{align*}
			\widehat{U_\varepsilon}(\xi)
			&=(2\pi)^{-\frac{n}{2}}\int_{\mathbb R^n}\varepsilon^{-\frac{n-2s}{2}}\,U\!\left(\frac{x}{\varepsilon}\right)
			e^{-ix\cdot\xi}\,dx.
		\end{align*}
		With the change of variables $x=\varepsilon y$,
		\begin{align*}
			\widehat{U_\varepsilon}(\xi)
			&=(2\pi)^{-\frac{n}{2}}\varepsilon^{-\frac{n-2s}{2}}\int_{\mathbb R^n}U(y)\,e^{-i(\varepsilon y)\cdot\xi}\,
			\varepsilon^n\,dy
			=(2\pi)^{-\frac{n}{2}}\varepsilon^{\frac{n+2s}{2}}\int_{\mathbb R^n}U(y)\,e^{-iy\cdot(\varepsilon\xi)}\,dy=\varepsilon^{\frac{n+2s}{2}}\,\widehat U(\varepsilon\xi).
		\end{align*}
		Hence
		\begin{align*}
			\bigl\|(-\Delta)^{\frac s2}U_\varepsilon\bigr\|_{2}^{2}
			&=\int_{\mathbb R^n}|\xi|^{2s}\,|\widehat{U_\varepsilon}(\xi)|^{2}\,d\xi
			=\int_{\mathbb R^n}|\xi|^{2s}\,
			\varepsilon^{n+2s}\,|\widehat U(\varepsilon\xi)|^{2}\,d\xi.
		\end{align*}
		Now set $\zeta=\varepsilon\xi$. Then
		\begin{align*}
			\bigl\|(-\Delta)^{\frac s2}U_\varepsilon\bigr\|_{2}^{2}
			&=\int_{\mathbb R^n}\Bigl|\frac{\zeta}{\varepsilon}\Bigr|^{2s}\,
			\varepsilon^{n+2s}\,|\widehat U(\zeta)|^{2}\,\varepsilon^{-n}\,d\zeta
			=\int_{\mathbb R^n}|\zeta|^{2s}\,|\widehat U(\zeta)|^{2}\,d\zeta=\bigl\|(-\Delta)^{\frac s2}U\bigr\|_{2}^{2}.
		\end{align*}
		
		\smallskip\noindent
		\textbf{Step 2: The cross term is $O(\varepsilon^{n})$.}
		We stress that for $s>1$, it is not convenient to interpret $(-\Delta)^sU$
		pointwise. Instead we use the Fourier
		definition.
		Since $U$ is an extremal, it is a critical point of the Sobolev quotient
		\[
		\mathcal J(f):=\frac{\|(-\Delta)^{\frac{s}{2}}f\|_{2}^{2}}
		{\|f\|_{2_s^*}^{2}},
		\]
		hence there exists $\kappa_{n,s}>0$ such that the following
		Euler--Lagrange equation holds in weak form:
		for every $\varphi\in H^s(\mathbb R^n)$,
		\[	\int_{\mathbb R^n} (-\Delta)^{\frac{s}{2}}U\;
		(-\Delta)^{\frac{s}{2}}\varphi\,dx
		= \kappa_{n,s}\int_{\mathbb R^n} U^{2_s^*-1}\,\varphi\,dx.\]
		By scaling invariance, the same identity holds for $U_\varepsilon$:
		\begin{equation}\label{eq:EL-weak-Ueps}
			\int_{\mathbb R^n} (-\Delta)^{\frac{s}{2}}U_\varepsilon\;
			(-\Delta)^{\frac{s}{2}}\varphi\,dx
			= \kappa_{n,s}\int_{\mathbb R^n} U_\varepsilon^{2_s^*-1}\,\varphi\,dx,
			\quad \forall\,\varphi\in H^s(\mathbb R^n).
		\end{equation}
		Now let $z_\varepsilon:=w_\varepsilon-U_\varepsilon=(\eta-1)U_\varepsilon$. Applying \eqref{eq:EL-weak-Ueps} with $\varphi=z_\varepsilon$ gives
		\begin{equation}\label{eq:cross-term-weak}
			\int_{\mathbb R^n} (-\Delta)^{\frac{s}{2}}U_\varepsilon\;
			(-\Delta)^{\frac{s}{2}}z_\varepsilon\,dx
			=\kappa_{n,s}\int_{\mathbb R^n}U_\varepsilon^{2_s^*-1}\,z_\varepsilon\,dx
			=\kappa_{n,s}\int_{\mathbb R^n}(\eta-1)\,U_\varepsilon^{2_s^*}\,dx.
		\end{equation}
		Since $\eta\equiv1$ on $B_\delta(0)$, the integrand is supported in $\{|x|\ge\delta\}$, we estimate (for $\varepsilon$ small so that $\delta/\varepsilon\ge1$)
		\begin{align*}
			\left|\int_{\mathbb R^n}(\eta-1)\,U_\varepsilon^{2_s^*}\,dx\right|
			&\le \int_{|x|\ge\delta}|U_\varepsilon(x)|^{2_s^*}\,dx
			=\int_{|y|\ge\delta/\varepsilon}(1+|y|^2)^{-n}\,dy
			=O(\varepsilon^n).
		\end{align*}
		Combining this with \eqref{eq:cross-term-weak} yields
		\begin{equation}\label{eq:cross-term-On-revised}
			\left|	\int_{\mathbb R^n} (-\Delta)^{\frac{s}{2}}U_\varepsilon\;
			(-\Delta)^{\frac{s}{2}}z_\varepsilon\,dx\right|
			\le O(\varepsilon^n).
		\end{equation}
		
		\smallskip\noindent
		\textbf{Step 3: The remainder energy is $O(\varepsilon^{n-2s})$.}
		Choose an integer $m>\!s$. Since
		$|\xi|^{2s}\le (1+|\xi|^2)^m$, we have
		\begin{equation}\label{eq:Hs-dominates-hom}
			\bigl\|(-\Delta)^{\frac s2}z_\varepsilon\bigr\|_{L^2(\mathbb R^n)}^2
			=\int |\xi|^{2s}|\widehat z_\varepsilon(\xi)|^2\,d\xi
			\le \int (1+|\xi|^2)^m|\widehat z_\varepsilon(\xi)|^2\,d\xi
			=\|z_\varepsilon\|_{H^m(\mathbb R^n)}^2.
		\end{equation}
		On the fixed annulus $B_{2\delta}\setminus B_\delta$, the cutoff factors
		$\eta-1$ and all their derivatives are bounded. Moreover, by Lemma \ref{lem:derivative-bound-Ueps}, for every multi-index
		$\alpha$ with $|\alpha|\le m$, for $|x|\ge\delta$, one has the pointwise decay 
		\begin{equation}\label{eq:Ueps-derivative-decay}
			|\partial^\alpha U_\varepsilon(x)|
			\le C_{\alpha,\delta}\,\varepsilon^{\frac{n-2s}{2}},
		\end{equation}
		By Leibniz' rule, $\partial^\alpha z_\varepsilon
		=\sum_{\beta\le\alpha}\binom{\alpha}{\beta}
		\partial^{\alpha-\beta}(\eta-1)\,\partial^\beta U_\varepsilon,$
		hence \eqref{eq:Ueps-derivative-decay} implies $\|\partial^\alpha z_\varepsilon\|_{L^2(\mathbb R^n)}^2
		\le C_{\alpha,\delta}\,\varepsilon^{\,n-2s},\,|\alpha|\le m.$
		Summing over $|\alpha|\le m$ yields
		\begin{equation}\label{eq:zm-Hm}
			\|z_\varepsilon\|_{H^m(\mathbb R^n)}^2
			\le C_{\delta,m}\,\varepsilon^{\,n-2s}.
		\end{equation}
		Combining \eqref{eq:Hs-dominates-hom} and \eqref{eq:zm-Hm} gives
		\begin{equation}\label{eq:zeps-energy-Ons}
			\bigl\|(-\Delta)^{\frac s2}z_\varepsilon\bigr\|_{L^2(\mathbb R^n)}^2
			\le O(\varepsilon^{\,n-2s}).
		\end{equation}
		Inserting \eqref{eq:Ueps-energy-invariant-fourier}, \eqref{eq:cross-term-On-revised},
		and \eqref{eq:zeps-energy-Ons} into \eqref{eq:energy-expand}, and using that
		$n>2s$ we obtain
		\[
		\int_{\mathbb R^n} w_\varepsilon\,(-\Delta)^s w_\varepsilon\,dx
		=
		\bigl\|(-\Delta)^{\frac s2}U\bigr\|_{L^2(\mathbb R^n)}^2
		\;+\;O(\varepsilon^{\,n-2s}),
		\]
		which is \eqref{eq:energy-asymptotics-tilde-lemma}.
	\end{proof}
	
	\subsection{Proof of Theorem \ref{thm:Hardy-lowerbound-Ptilde}, \ref{prop:gap-bs-lambda011}}

    With the asymptotic estimates established above for the cut-off bubble, we are now in a position to prove Theorem~\ref{thm:Hardy-lowerbound-Ptilde} and Theorem~\ref{prop:gap-bs-lambda011}.

	\noindent \textbf{Proof of Theorem \ref{thm:Hardy-lowerbound-Ptilde}:}
	Recall
	\[
	U(x)=(1+|x|^2)^{-\frac{n-2s}{2}},\qquad
	U_\varepsilon(x)=\varepsilon^{-\frac{n-2s}{2}}U\Bigl(\frac{x}{\varepsilon}\Bigr),
	\]
	and fix $0<\delta<\frac14$. Choose $\eta\in C_c^\infty(\mathbb B^n)$ such that
	$0\le \eta\le 1$, $\eta\equiv 1$ on $B_\delta(0)$ and $\eta\equiv 0$ on
	$\mathbb B^n\setminus B_{2\delta}(0)$. Define
	\[
	w_\varepsilon:=\eta\,U_\varepsilon\in C_c^\infty(\mathbb R^n),
	\qquad
	u_\varepsilon:=\phi^{\frac n2-s}w_\varepsilon\in C_c^\infty(\mathbb B^n).
	\]
	By Lemma~\ref{lem:exact-reduction},
	\begin{equation}	\label{eq:reduce-energy}
		\int_{\mathbb B^n}(\widetilde \cP_s u_\varepsilon)u_\varepsilon\,dV_{{\mathbb B}^n}=\int_{\mathbb R^n} w_\varepsilon(-\Delta)^s w_\varepsilon\,dx,\quad
		\int_{\mathbb B^n}|u_\varepsilon|^{2_s^*}\,dV_{{\mathbb B}^n}=\int_{\mathbb R^n}|w_\varepsilon|^{2_s^*}\,dx.
	\end{equation}
	In particular, setting
	\[
	L_\varepsilon:=\Bigl(\int_{\mathbb B^n}|u_\varepsilon|^{2_s^*}\,dV_{{\mathbb B}^n}\Bigr)^{\!\frac{2}{2_s^*}}
	=\Bigl(\int_{\mathbb R^n}|w_\varepsilon|^{2_s^*}\,dx\Bigr)^{\!\frac{2}{2_s^*}},
	\]
	by Proposition \ref{lem:L2sstar-cutoff}, we have $L_\varepsilon = L_0 + O(\varepsilon^n)$ with $L_0:=\Bigl(\int_{\mathbb R^n}|U|^{2_s^*}\,dx\Bigr)^{\!\frac{2}{2_s^*}}>0,$
	and by \eqref{eq:energy-asymptotics-tilde-lemma} and  \eqref{eq:reduce-energy},
	\begin{equation}\label{eq:energy-over-B}
		\frac{\displaystyle \int_{\mathbb B^n}(\widetilde \cP_s u_\varepsilon)u_\varepsilon\,dV_{{\mathbb B}^n}}{L_\varepsilon}
		\;=\;
		S_{n,s} \;+\; O(\varepsilon^{\,n-2s}).
	\end{equation}
	By the condition that
	\begin{equation}\label{eq:main-ineq-after-test}
		S_{n,s}
		\;\le\;
		\frac{\displaystyle \int_{\mathbb B^n}(\widetilde \cP_s u_\varepsilon)u_\varepsilon\,dV_{{\mathbb B}^n}}{L_\varepsilon}
		\;+\;
		\lambda\,\frac{\displaystyle \int_{\mathbb B^n}|u_\varepsilon|^2\,dV_{{\mathbb B}^n}}{L_\varepsilon}.
	\end{equation}
	By \eqref{eq:energy-over-B}, the first fraction is $S_{n,s}+O(\varepsilon^{n-2s})$.
	The second fraction has the same order as
	$\int_{\mathbb B^n}|u_\varepsilon|^2\,dV_{{\mathbb B}^n}$.
	
	\smallskip
	\textbf{  Case $n>4s$.}
	Then \eqref{eq:L2-asymptotics-tilde} gives
	\[
	\frac{\int_{\mathbb B^n}|u_\varepsilon|^2\,dV_{{\mathbb B}^n}}{L_\varepsilon}
	=
	\widetilde c_{n,s}\,\varepsilon^{2s}+O(\varepsilon^{n-2s})
	\qquad\text{with }\widetilde c_{n,s}>0,
	\]
	whereas $O(\varepsilon^{n-2s})=o(\varepsilon^{2s})$ because $n-2s>2s$.
	Thus \eqref{eq:main-ineq-after-test} becomes $S_{n,s}
	\le
	S_{n,s}
	+\lambda\,\widetilde c_{n,s}\,\varepsilon^{2s}
	+o(\varepsilon^{2s}).$
	If $\lambda<0$, the right-hand side is $<S_{n,s}$ for $\varepsilon$ small,
	a contradiction. Hence $\lambda\ge 0$.
	
	\smallskip
	\textbf{  Case $n=4s$.}
	Then
	\[
	\frac{\int_{\mathbb B^n}|u_\varepsilon|^2\,dV_{{\mathbb B}^n}}{L_\varepsilon}
	=
	\widetilde c_{n,s}\,\varepsilon^{2s}|\log\varepsilon|+O(\varepsilon^{2s}),
	\qquad \widetilde c_{n,s}>0,
	\]
	while the energy error is $O(\varepsilon^{n-2s})=O(\varepsilon^{2s})$.
	Therefore the logarithmic factor dominates, and the same contradiction argument
	shows $\lambda\ge 0$.
	
	\textbf{(iii) Case $2s<n<4s$.}
	In this regime, both the energy term and the $L^2$-term are of order
	$\varepsilon^{n-2s}$. More precisely, one has $\int_{\mathbb B^n}|u_\varepsilon|^2\,dV_{\mathbb B^n}
	=
	c_{n,s}\,\varepsilon^{n-2s}+O(\varepsilon^{2s}),\,c_{n,s}>0.$
	Plugging these expansions into \eqref{eq:main-ineq-after-test}, we obtain $S_{n,s}
	\le
	S_{n,s}
	+O(\varepsilon^{n-2s})
	+\lambda\,c_{n,s}\,\varepsilon^{n-2s}
	+O(\varepsilon^{2s}).$
	Combining this estimate with \cite[Theorem~1.8]{lu2023explicit}, we can only conclude that $\lambda \ge -\widetilde{\lambda}_{0,s}^{\mathrm{conf}}.$
	Equivalently, there exists a constant
	\(
	\widetilde{\lambda}_{s}^{\mathrm{conf}}
	\in(0,\widetilde{\lambda}_{0,s}^{\mathrm{conf}}]
	\)
	such that $\widetilde\Lambda_{n,s}^{\mathrm{HS}}
	\;\ge\;
	-\widetilde{\lambda}_{s}^{\mathrm{conf}}.$
	This completes the proof.
	\hfill$\Box$\medskip

	\noindent \textbf{Proof of Theorem \ref{prop:gap-bs-lambda011}:}
	
	(i) Applying \cite[Lemma~5.2]{lu2023explicit}  with $\gamma=s$ and $\lambda=0$, we obtain
	\begin{equation}\label{eq:lemma52-at-zero}
		2^{2s}\,\frac{\Gamma\bigl(\frac{3+2s}{4}\bigr)^2}{\Gamma\bigl(\frac{3-2s}{4}\bigr)^2}
		=
		\frac{\Gamma\bigl(s+\frac12\bigr)^2}{\Gamma\bigl(\frac12\bigr)^2}
		+\frac{\sin(\pi s)}{\pi}\,\Gamma\bigl(s+\tfrac12\bigr)^2.
	\end{equation}
	Since $\Gamma(\frac12)=\sqrt{\pi}$, identity \eqref{eq:lemma52-at-zero} becomes
	\begin{equation}\label{eq:lambda0-in-Cs}
		\lambda_{0,s}^{\mathrm{conf}}
		=\frac{1+\sin(\pi s)}{\pi}\,\Gamma\bigl(s+\tfrac12\bigr)^2\ge0.
	\end{equation}
	
	We now compare $\lambda_{0,s}^{\mathrm{conf}}$ with $b_s$.
	
	\smallskip
	\noindent\textbf{Case 1: $\sin(\pi s)\le 0$.}
	Then $b_s=0$, hence $b_s\le\lambda_{0,s}^{\mathrm{conf}}$.
	Moreover, \eqref{eq:lambda0-in-Cs} yields the second line of \eqref{eq:gap-explicit}.
	
	\smallskip
	\noindent\textbf{Case 2: $\sin(\pi s)>0$.}
	Then $b_s=\frac{\sin(\pi s)}{\pi}\Gamma\bigl(s+\tfrac12\bigr)^2$. Subtracting from \eqref{eq:lambda0-in-Cs} gives
	\[
	\lambda_{0,s}^{\mathrm{conf}}-b_s
	=\Bigl(\frac{1+\sin(\pi s)}{\pi}-\frac{\sin(\pi s)}{\pi}\Bigr)\Gamma\bigl(s+\tfrac12\bigr)^2
	=\frac{\Gamma\bigl(s+\tfrac12\bigr)^2}{\pi}>0,
	\]
	which proves both $b_s<\lambda_{0,s}^{\mathrm{conf}}$ and \eqref{eq:gap-explicit}.
	
	(ii) We define the spectral multiplier
	\[	M_s(\beta):=\Bigl|\Gamma\Bigl(s+\tfrac12+\bi\beta\Bigr)\Bigr|^2,\quad \beta\ge 0.\]
	Using the classical asymptotic formula $\lim_{|\lambda|\to\infty}\, \bigl|\Gamma(a+\bi\lambda)\bigr|\,
	e^{\frac{\pi}{2}|\lambda|}\,|\lambda|^{\frac12-a}
	=\sqrt{2\pi},$
	we know that $M_s(\beta)\to0$ as $\beta\to\infty$. By the fact that $|\Gamma(a+i\lambda)|\le |\Gamma(a)|$,
	\begin{equation}\label{rema}
		\sup_{\beta\ge 0} M_s(\beta)=M_s(0)\in(0,\infty).
	\end{equation}
	Thus, by the spectral calculus, $\|\,M_s(A)\,\|_{L^2\to L^2}\le M_s(0).$
	By identity \eqref{eq:P-vs-tildeP},
	\begin{equation}\label{eq:P-vs-Ptilde-116}
		\cP_s
		=\widetilde \cP_s
		+\frac{\sin(s\pi)}{\pi}\,M_s(A),\  s\in \left(0,\frac{n}{2}\right)\setminus \mathbb{N},
	\end{equation}
	where $M_s(A)$ is defined by functional calculus and is a bounded, self-adjoint,
	nonnegative operator.
	
	\smallskip
	\noindent\textbf{Case 1: $\sin(s\pi)\le 0$.}
	Then the last term in \eqref{eq:P-vs-Ptilde-116} is nonpositive, hence
	\[
	\langle \widetilde \cP_s u,u\rangle_{L^2}
	\;\ge\;
	\langle \cP_s u,u\rangle_{L^2}
	\qquad\text{for all }u\in C_c^\infty(\mathbb B^n).
	\]
	Now apply Theorem~\ref{thm:Hardy-lowerbound-Ptilde} to conclude the corresponding lower
	bound for $\lambda$.
	
	\smallskip
	\noindent\textbf{Case 2: $\sin(s\pi )> 0$.}
	Since $M_s(A)\le M_s(0)\,\mathrm{Id}$ on $L^2$, we have from
	\eqref{eq:P-vs-Ptilde-116}:
	\[
	\langle \cP_s u,u\rangle_{L^2}
	\le
	\langle \widetilde \cP_s u,u\rangle_{L^2}
	+\frac{\sin(s\pi)}{\pi}\,M_s(0)\,\|u\|_{L^2}^2
	=
	\langle \widetilde \cP_s u,u\rangle_{L^2}
	+b_s\,\|u\|_{L^2}^2.
	\]
	Equivalently, $\langle \widetilde \cP_s u,u\rangle_{L^2}
	\ge
	\langle \cP_s u,u\rangle_{L^2}
	-b_s\,\|u\|_{L^2}^2.$
	Thus Theorem~\ref{thm:Hardy-lowerbound-Ptilde} applies with the shifted coefficient
	$\lambda':=\lambda+b_s$. Reading off the conclusion gives exactly (b), i.e.
	\[
	\lambda \ge -b_s \quad\text{when } n\ge 4s,
	\qquad
	\lambda \ge -b_s-\widetilde \lambda_s^{\mathrm{conf}} \quad\text{when } 2s<n<4s.
	\]
	Combining this with Lemma~\ref{prop:bottom-spectrum-quadratic-form}, we obtain the desired conclusion.\hfill$\Box$

    \section{Fractional Brezis--Nirenberg Problems}

In this section, we study the attainability of the fractional Poincar\'e--Sobolev levels associated with $\cP_s$ and $\widetilde{\cP}_s$, and, as an application, we establish existence results for the corresponding fractional Brezis--Nirenberg problems.

We first introduce the natural functional settings adapted to these operators. To handle the nonlocal difficulties---in particular, the lack of direct integration-by-parts identities and pointwise formulas, we develop suitable fractional energy estimates via the pseudodifferential operator framework on manifolds. These estimates play a central role in the attainability analysis.

We then show that once $\lambda$ exceeds the spectral bottom of the corresponding operator, the associated Poincar\'e--Sobolev level drops to $-\infty$. A key ingredient is the off-diagonal exponential decay estimate (see Proposition \ref{lem:offdiag-exp-decay-Ptilde}). To prove this proposition, we exploit the fact that for fractional operators the most accessible information is encoded on the Fourier side through explicit multipliers, whereas direct control in physical space is less immediate. We therefore combine the Schwartz kernel theorem, which provides a distributional kernel representation, with the Harish--Chandra asymptotic expansion to derive the required off-diagonal decay.

	\subsection{Energy Spaces}
	
	Now we can study fractional Brezis--Nirenberg type equations (\ref{eq:BN-Hn-frac}) and (\ref{eq:BN-Hn-frac2}) on the whole hyperbolic space
	$\mathbb H^n$ driven by $\cP_s$ and $\widetilde\cP_s$, namely
	\[\cP_s u=\lambda u+|u|^{p-1}u
	\quad\text{in }\mathbb H^n,\quad 	\widetilde\cP_s u=\lambda u+|u|^{p-1}u
	\quad\text{in }\mathbb H^n\]
	for  
	$1<p\le 2_s^*-1.$
From the bottom spectrum
	\[
	\lambda_{0,s}^{\mathrm{conf}}
	:=\inf\sigma(P_s)=m_s(0)\ge0,\quad \widetilde \lambda_{0,s}^{\mathrm{conf}}
	:=\inf\sigma(\widetilde P_s)=\widetilde m_s(0)>0
	\]
 and by \cite[Theorem E.8]{KellerLenzWojciechowski2021}, we obtain  the sharp fractional Poincar\'e
	inequality
	\begin{equation}\label{eq:conf-Poincare}
		\lambda_{0,s}^{\mathrm{conf}}\,\|u\|_{L^2(\mathbb H^n)}^2
		\le \langle u,\cP_s u\rangle_{L^2(\mathbb H^n)},\ \ \widetilde \lambda_{0,s}^{\mathrm{conf}}\|u\|_{L^2(\mathbb H^n)}^2
		\le \langle u,\widetilde\cP_s u\rangle_{L^2(\mathbb H^n)}
		\qquad\text{for all }u\in C_c^\infty(\mathbb H^n),
	\end{equation}
	and hence quadratic forms
	\[
	\mathcal E_{\lambda,s}(u)
	:=\langle (\cP_s-\lambda)u,u\rangle_{L^2(\mathbb H^n)}
	=\langle \cP_s u,u\rangle_{L^2(\mathbb H^n)}-\lambda\|u\|_2^2
	\]
	and
	\[
	{\widetilde{\mathcal E}}_{\lambda,s}(u)
	:=\langle (\widetilde\cP_s-\lambda)u,u\rangle_{L^2(\mathbb H^n)}
	=\langle \widetilde\cP_s u,u\rangle_{L^2(\mathbb H^n)}-\lambda\|u\|_2^2
	\]
	are nonnegative whenever $\lambda\le \lambda_{0,s}^{\mathrm{conf}}$ and
	$\lambda\le \widetilde\lambda_{0,s}^{\mathrm{conf}}$, respectively.
	
	For $u,v\in C_c^\infty(\mathbb H^n)$, we set
	\[
	\langle u,v\rangle_{\lambda,s}
	:=\langle (\cP_s-\lambda)u,v\rangle_{L^2(\mathbb H^n)}
	=\int_{\mathbb H^n}v\ (\cP_s-\lambda)u\,dV_{\mathbb H^n},
	\qquad
	\|u\|_{\lambda,s}^2:=\langle u,u\rangle_{\lambda,s}=\mathcal E_{\lambda,s}(u),
	\]
	and
	\[
	\langle u,v\rangle_{\lambda,s,\sim}
	:=\langle (\widetilde\cP_s-\lambda)u,v\rangle_{L^2(\mathbb H^n)}
	=\int_{\mathbb H^n} v\ (\widetilde\cP_s-\lambda)u\,dV_{\mathbb H^n},
	\qquad
	\|u\|_{\lambda,s,\sim}^2:=\langle u,u\rangle_{\lambda,s,\sim}
	={\widetilde{\mathcal E}}_{\lambda,s}(u).
	\]
	
	By \cite[Theorem 1.3 and Theorem 1.8]{lu2023explicit}, for every $\lambda\le \lambda_{0,s}^{\mathrm{conf}}$, the map
	$u\mapsto\|u\|_{\lambda,s}$ defines a norm on $C_c^\infty(\mathbb H^n)$, and  for every $\lambda\le \widetilde \lambda_{0,s}^{\mathrm{conf}}$, the map
	$u\mapsto\|u\|_{\lambda,s,\sim}$ also defines a norm on $C_c^\infty(\mathbb H^n)$

    The next two lemmas show that these norms are equivalent to the standard spectral fractional Sobolev norm.

	\begin{lemma}\label{lem:domain-tildePs}
		Let $s\in (0,\frac{n}{2})$ and $\widetilde m_s(\beta)$ be defined in (\ref{mstl}), then there exist constants $c_1,c_2>0$ (depending only on $s$) such that
		for all $\beta\in\mathbb R$,
		\[	c_1\,(\beta^2+1)^{s}\ \le\ \widetilde m_s(\beta)\ \le\ c_2\,(\beta^2+1)^{s}.\]
	\end{lemma}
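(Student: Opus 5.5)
The plan is to treat the two-sided bound as a statement about a continuous, strictly positive, even function of $\beta$ whose growth at infinity is exactly $|\beta|^{2s}$. It therefore suffices to check two things: a compact range $|\beta|\le R$, and the asymptotic regime $|\beta|\to\infty$. Since $\overline{\Gamma(z)}=\Gamma(\bar z)$, the multiplier $\widetilde m_s(\beta)$ is even in $\beta$. So we may restrict to $\beta\ge0$ and use $\beta^2+1\simeq(1+\beta)^2$ there.

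\textbf{Compact range.} Because $s>0$, neither $s+\tfrac12+\bi\beta$ nor $\tfrac12+\bi\beta$ is ever a pole of $\Gamma$ (their real parts are positive). Also $\Gamma$ has no zeros, so $\widetilde m_s$ is continuous and strictly positive on $\mathbb R$. The ratio $\widetilde m_s(\beta)/(\beta^2+1)^s$ is then continuous and positive on $[0,R]$, hence bounded above and below by positive constants there. As a consistency check, at $\beta=0$ the ratio equals $\widetilde\lambda_{0,s}^{\mathrm{conf}}>0$, from \eqref{la0s}.

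\textbf{Large $\beta$.} We use the asymptotic formula already recalled in the proof of Theorem~\ref{prop:gap-bs-lambda011}:
\[
\lim_{|\beta|\to\infty}|\Gamma(a+\bi\beta)|\,e^{\frac{\pi}{2}|\beta|}\,|\beta|^{\frac12-a}=\sqrt{2\pi},\qquad a>0.
\]
Apply it with $a=s+\tfrac12$ and with $a=\tfrac12$. In the quotient the exponential factors $e^{-\frac\pi2|\beta|}$ cancel, and we get
\[
\lim_{|\beta|\to\infty}\frac{|\Gamma(s+\tfrac12+\bi\beta)|}{|\Gamma(\tfrac12+\bi\beta)|\,|\beta|^{s}}=1 .
\]
Hence $\widetilde m_s(\beta)/|\beta|^{2s}\to1$. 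There is therefore some $R>0$ with $\tfrac12|\beta|^{2s}\le\widetilde m_s(\beta)\le2|\beta|^{2s}$ for $|\beta|\ge R$. For $|\beta|\ge R\ge1$ we also have $|\beta|^{2s}\le(\beta^2+1)^s\le2^s|\beta|^{2s}$, which converts this into the claimed comparison with $(\beta^2+1)^s$.

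Combining the two regimes gives constants $c_1,c_2>0$ depending only on $s$. An alternative to the asymptotic formula is to use $|\Gamma(\tfrac12+\bi\beta)|^2=\pi/\cosh(\pi\beta)$ together with Stirling for the numerator, but the cited formula suffices.

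The only point that needs care is the uniformity of the large-$\beta$ step. We need the Stirling-type limit for a fixed real part $a$, which holds for every $a>0$, so no issue arises for large $s$ as long as $s<\tfrac n2$ is fixed. I expect no real obstacle beyond invoking this asymptotic correctly.
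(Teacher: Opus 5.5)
Your proposal is correct and follows essentially the paper's route: evenness and strict positivity of $\widetilde m_s$, Stirling asymptotics for $|\Gamma(a+\bi\beta)|$ with $a=s+\tfrac12$ and $a=\tfrac12$ to get $\widetilde m_s(\beta)\sim|\beta|^{2s}$ for large $|\beta|$, and continuity and positivity on a compact $\beta$-interval. The only cosmetic difference is that you use the exact limit at fixed real part, where the paper uses two-sided Stirling bounds in a vertical strip; this is slightly simpler here, since only two fixed real parts occur.
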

	
	\begin{proof}
		First note that $\widetilde m_s$ is even and strictly positive on $\mathbb R$, since
		$\Gamma$ has no zeros and $\Gamma(\tfrac12+\bi\beta)\neq 0$ for all $\beta\in\mathbb R$. We use the uniform Stirling estimate in vertical strips: for each fixed $\sigma>0$,
		there exist $T\ge 1$ and constants $C_\pm=C_\pm(\sigma)>0$ such that for all
		$\sigma'\in[\sigma,\sigma+1]$ and all $|t|\ge T$,
		\begin{equation}\label{eq:stirling-strip-tilde}
			C_-\,|t|^{\sigma'-\frac12}e^{-\frac{\pi}{2}|t|}
			\ \le\ |\Gamma(\sigma'+\bi t)|\
			\le\ C_+\,|t|^{\sigma'-\frac12}e^{-\frac{\pi}{2}|t|}.
		\end{equation}
		Apply \eqref{eq:stirling-strip-tilde} with $\sigma'=s+\frac12$ and $\sigma'=\frac12$.
		For $|\beta|\ge T$ this gives $	\frac{\bigl|\Gamma(s+\frac12+\bi\beta)\bigr|}{\bigl|\Gamma(\frac12+\bi\beta)\bigr|}
		\sim |\beta|^{s},$
		hence
		\[
		\widetilde m_s(\beta)
		=\Bigl(\frac{\bigl|\Gamma(s+\frac12+\bi\beta)\bigr|}{\bigl|\Gamma(\frac12+\bi\beta)\bigr|}\Bigr)^{\!2}
		\sim|\beta|^{2s},
		\qquad |\beta|\ge T.
		\]
		Since $(1+\beta^2)^s\sim |\beta|^{2s}$ for $|\beta|\gg 1$, we obtain constants
		$C_1,C_2>0$ such that
		\begin{equation}\label{eq:mtilde-1beta2-large}
			C_1(1+\beta^2)^s\le \widetilde m_s(\beta)\le C_2(1+\beta^2)^s,
			\qquad |\beta|\ge T.
		\end{equation}
		
		On the compact set $\{|\beta|\le T\}$, the function $\widetilde m_s$ is continuous and strictly
		positive, and $(1+\beta^2)^s$ is also bounded above and below by positive constants on $\{|\beta|\le T\}$,
		combining with \eqref{eq:mtilde-1beta2-large} yields global constants (still denoted $C_1,C_2$) such that
		\[	C_1(1+\beta^2)^s\le \widetilde m_s(\beta)\le C_2(1+\beta^2)^s,
		\qquad \forall\,\beta\in\mathbb R,\]
		which is the desired estimate.
	\end{proof}\medskip
	
	By Lemma~\ref{lem:domain-tildePs} we have $\mathcal D(\widetilde{\cP}_s)=H^{2s}(\mathbb H^n),$
	where $H^{2s}(\mathbb H^n)$ is given in (\ref{fracaa}). Consequently, for every $\lambda<\widetilde\lambda_{0,s}^{\mathrm{conf}}$, the quadratic form
	\[
	\|u\|_{\lambda,s,\sim}^2:=\bigl\langle (\widetilde{\cP}_s-\lambda)u,u\bigr\rangle_{L^2(\mathbb H^n)}
	\]
	defines a norm on $H^{s}(\mathbb H^n)$ which is equivalent to the standard $H^{s}$-norm.
	
	Similarly, using the high--frequency asymptotics of $m_s$ (via Stirling's formula)
	together with the fact that $m_s$ is bounded on compact $\beta$-intervals, one
	obtains the global two--sided comparison
	\begin{equation}\label{eq:1+ms-comparable}
		1+m_s(\beta)^2\ \sim\ 1+(\beta^2+1)^{2s}
		\quad{\rm for}\ \,  \beta\in\mathbb R,
	\end{equation}
	with implicit constants depending only on $s$ and $\rho$. In particular, $	\mathcal D(\cP_s)=H^{2s}(\mathbb H^n).$
	We emphasize that, in general, one cannot upgrade \eqref{eq:1+ms-comparable} to
	$m_s(\beta)\sim(\beta^2+\rho^2)^s$ for all $\beta\in\mathbb R$, for instance, this
	fails in the exceptional cases $s=\frac32+2k$, where $m_s(\beta)\sim\beta^2$ as
	$\beta\to 0$.

	\begin{lemma}\label{lem:equiv-norm-Ps}
		Let $s\in (0,\frac{n}{2})\setminus \mathbb{N},$ then for every $\lambda<\lambda_{0,s}^{\mathrm{conf}}$, the quadratic form
		\[
		\|u\|_{\lambda,s}=\sqrt{\langle (\cP_s-\lambda)u,u\rangle_{L^2(\mathbb H^n)}},\quad \forall u\in C_c^{\infty}(\mathbb{H}^n)
		\]
		can extend to a norm on $H^s(\mathbb H^n)$ and it is equivalent to the standard $H^s$-norm.
	\end{lemma}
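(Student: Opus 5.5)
The plan is to reduce everything to a two-sided pointwise bound on the Helgason--Fourier side. Concretely, we aim to find constants $0<c\le C$, depending on $s$, $\lambda$ and $\rho$, such that
\[
c\,(1+\beta^2)^s\;\le\; m_s(\beta)-\lambda\;\le\; C\,(1+\beta^2)^s\qquad\text{for all }\beta\in\mathbb R .
\]
Given this, Plancherel gives, for $u\in C_c^\infty(\mathbb H^n)$,
\[
\|u\|_{\lambda,s}^2=\int_{\mathbb R}\int_{\mathbb S^{n-1}}\bigl(m_s(\beta)-\lambda\bigr)|\hat u(\beta,\theta)|^2\,\frac{d\sigma(\theta)\,d\beta}{|c(\beta)|^2}.
\]
The weight $(1+\beta^2)^s$ is comparable to $1+(\beta^2+\rho^2)^s$, which is the weight in the spectral $H^s$-norm (the analogue of \eqref{fracaa} with $2s$ replaced by $s$). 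Hence $\|u\|_{\lambda,s}\sim\|u\|_{H^s(\mathbb H^n)}$ on $C_c^\infty(\mathbb H^n)$. Since $C_c^\infty$ is dense in $H^s(\mathbb H^n)$, the form extends uniquely by continuity to a norm on $H^s(\mathbb H^n)$ with the same equivalence constants.

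\textbf{Step 1: continuity of the multiplier.} Write
\[
m_s(\beta)=2^{2s}\,\bigl|\Gamma\bigl(\tfrac{3+2s}{4}+\tfrac{\bi}{2}\beta\bigr)\bigr|^2\,\bigl|1/\Gamma\bigl(\tfrac{3-2s}{4}+\tfrac{\bi}{2}\beta\bigr)\bigr|^2 .
\]
Here $1/\Gamma$ is entire and $\tfrac{3+2s}{4}>0$, so $m_s$ is continuous and nonnegative on $\mathbb R$. This formulation also covers the exceptional values $s=\tfrac32+2k$, where the denominator has a pole at $\beta=0$ and $m_s(0)=0$.

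\textbf{Step 2: high-frequency behaviour.} Apply the uniform Stirling estimate \eqref{eq:stirling-strip-tilde}, with fixed real parts $a=\tfrac{3+2s}{4}$ and $b=\tfrac{3-2s}{4}$ and imaginary part $t=\beta/2$. For large $|t|$ and fixed real part, Stirling holds with any real part, including a negative one. The exponential factors cancel, and
\[
m_s(\beta)\sim 2^{2s}|\beta/2|^{2(a-b)}=|\beta|^{2s}\qquad\text{for }|\beta|\ge T .
\]
This gives the upper bound for $|\beta|\ge T$, since $-\lambda\le|\lambda|\le|\lambda|(1+\beta^2)^s$. It also gives a lower bound there: after enlarging $T$ so that $c_0|\beta|^{2s}\ge 2|\lambda|$, we get
\[
m_s(\beta)-\lambda\ge \tfrac{c_0}{2}|\beta|^{2s}\gtrsim(1+\beta^2)^s .
\]

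\textbf{Step 3: bounded frequencies.} On $\{|\beta|\le T\}$ the upper bound follows from continuity. For the lower bound we use \eqref{eq:bottom-spectrum-by-symbol}, i.e. $\inf_\beta m_s(\beta)=\lambda_{0,s}^{\mathrm{conf}}$. This gives
\[
m_s(\beta)-\lambda\;\ge\;\lambda_{0,s}^{\mathrm{conf}}-\lambda\;>\;0,
\]
and therefore
\[
m_s(\beta)-\lambda\ge(\lambda_{0,s}^{\mathrm{conf}}-\lambda)(1+T^2)^{-s}(1+\beta^2)^s .
\]
Combining this with Step 2 yields the desired global two-sided bound.

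The hypothesis $\lambda<\lambda_{0,s}^{\mathrm{conf}}$ is exactly what Step 3 needs: when $m_s(0)=0$ the symbol degenerates like $\beta^2$ near zero, so positivity at low frequencies comes only from the strict spectral gap. The main obstacle is making Step 2 rigorous when $b=\tfrac{3-2s}{4}\le 0$. There I would first use the reflection formula, or the recursion $\Gamma(z+1)=z\Gamma(z)$ applied finitely many times, to shift $b$ into a positive strip. This introduces only polynomial factors $|b+j+\bi\beta/2|^2\sim\beta^2/4$, after which \eqref{eq:stirling-strip-tilde} applies directly.
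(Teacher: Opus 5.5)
Your proposal is correct and is essentially the paper's argument: a two-sided pointwise bound on the Helgason--Fourier symbol, where the strict gap $m_s\ge\lambda_{0,s}^{\mathrm{conf}}>\lambda$ controls low frequencies and Stirling controls high ones, followed by Plancherel and density of $C_c^\infty(\mathbb H^n)$. The paper only organizes it differently: it first proves $m_s-\lambda\sim 1+m_s$ with the explicit constant $k=\min\{\frac{\lambda_{0,s}^{\mathrm{conf}}-\lambda}{1+\lambda_{0,s}^{\mathrm{conf}}},\frac12\}$ and then relies on the Stirling-based comparison \eqref{eq:1+ms-comparable}, whereas you split frequencies and carry out the Stirling step yourself, including the case $b=\frac{3-2s}{4}\le 0$ that the strip estimate \eqref{eq:stirling-strip-tilde} (stated only for $\sigma>0$) does not literally cover.
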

	
	\begin{proof}
		By Plancherel formula, one has 
		\begin{equation}\label{eq:form-spectral}
			\|u\|_{\lambda,s}^2
			=\langle (\cP_s-\lambda)u,u\rangle_2
			=\int_0^\infty\!\!\int_{\mathbb S^{n-1}}
			\bigl(m_s(\beta)-\lambda\bigr)\,|\hat u(\beta,\theta)|^2\,
			\frac{d\sigma(\theta)\,d\beta}{|c(\beta)|^2}.
		\end{equation}
		
		Set $\lambda_0:=\lambda_{0,s}^{\mathrm{conf}}$ and $\delta:=\lambda_0-\lambda>0$.
		Since $m_s(\beta)\ge \lambda_0\ge0$ for all $\beta$, we have $m_s(\beta)-\lambda\ge \delta$.
		Moreover,
		\[	m_s(\beta)-\lambda \le m_s(\beta)+|\lambda|
		\le (1+|\lambda|)\,(1+m_s(\beta))\qquad\forall\,\beta\in \mathbb{R}.\]	
		To get the bound $1+m_s(\beta)$ from above, set
		\[
		k:=\min\left\{\frac{\lambda_0-\lambda}{1+\lambda_0},\frac{1}{2}\right\}>0.
		\]
		Since $m_s(\beta)\ge \lambda_0$ for all $\beta$, we have
		\[
		m_s(\beta)-\lambda - k\bigl(1+m_s(\beta)\bigr)\ge 0\ \, \Leftrightarrow\ \,  \lambda_0\ge \frac{\lambda+k}{1-k}\ \, \Leftrightarrow\ \, k\le \frac{\lambda_0-\lambda}{1+\lambda_0},
		\]
		and hence
		\[
		m_s(\beta)-\lambda \ge k\bigl(1+m_s(\beta)\bigr),\qquad \forall\,\beta\in\mathbb R.
		\]
		Therefore there exists $C_\lambda>0$ such that
		\[
		C_\lambda^{-1}\,(1+m_s(\beta))\le m_s(\beta)-\lambda \le C_\lambda\,(1+m_s(\beta)),
		\qquad \forall\,\beta\in\mathbb R.
		\]
		Plugging the pointwise comparison into \eqref{eq:form-spectral} and using the
		definition \eqref{fracaa} of $H^s(\mathbb H^n)$ gives
		\[
		\|u\|_{\lambda,s}^2\ \sim\ \|u\|_{H^s(\mathbb H^n)}^2,\qquad u\in C_c^\infty(\mathbb H^n).
		\]
		In particular, $\|\cdot\|_{\lambda,s}$ is positive definite on $C_c^\infty$.
		Since $C_c^\infty(\mathbb H^n)$ is dense in $H^s(\mathbb H^n)$, the norm extends uniquely to
		$H^s(\mathbb H^n)$ and remains equivalent to the standard $H^s$-norm.
	\end{proof}\medskip

	Hence, throughout the paper, we will simply write $\|\cdot\|_{\lambda,s}$
	to denote either of the equivalent norms associated with $\cP_s$ or $\widetilde{\cP}_s$,
	whenever the distinction is immaterial.

	\begin{definition}\label{def:Hs-lambda-conf}
For $s>0$ and $\lambda\in\mathbb R$, we define the energy spaces
$\mathcal H^{s}_{\lambda}(\mathbb H^n)$ and $\mathcal H^{s}_{\lambda,\sim}(\mathbb H^n)$
as the completions of $C_c^\infty(\mathbb H^n)$ with respect to the norms
$\|\cdot\|_{\lambda,s}$ and $\|\cdot\|_{\lambda,s,\sim}$, respectively, i.e.
\[
\mathcal H^{s}_{\lambda}(\mathbb H^n)
:=\overline{C_c^\infty(\mathbb H^n)}^{\,\|\cdot\|_{\lambda,s}},
\qquad
\mathcal H^{s}_{\lambda,\sim}(\mathbb H^n)
:=\overline{C_c^\infty(\mathbb H^n)}^{\,\|\cdot\|_{\lambda,s,\sim}}.
\]
\end{definition}

    \begin{remark}\label{rem:Hs-lambda-equals-Hs}
Let $ \lambda_{0,s}^{\mathrm{conf}}$ and $\widetilde \lambda_{0,s}^{\mathrm{conf}}$ denote the bottom spectral values associated with
$\cP_s$ and $\widetilde\cP_s$, respectively. By Lemma \ref{lem:equiv-norm-Ps}, if $\lambda<\lambda_{0,s}^{\mathrm{conf}}$, then $\mathcal H^{s}_{\lambda}(\mathbb H^n)=H^s(\mathbb H^n).$
By Lemma \ref{lem:domain-tildePs}, if $\lambda<\widetilde\lambda_{0,s}^{\mathrm{conf}}$, then $\mathcal H^{s}_{\lambda,\sim}(\mathbb H^n)=H^s(\mathbb H^n).$
In both cases, the corresponding energy norm is equivalent to the standard $H^s(\mathbb H^n)$ norm.
\end{remark}

\begin{lemma}\label{lem:critical-compact-support-and-weak-Hs}
Let
\[
\lambda=\lambda_{0,s}^{\mathrm{conf}}
\quad\text{for }\mathcal H^s_\lambda(\mathbb H^n),
\qquad
\lambda=\widetilde\lambda_{0,s}^{\mathrm{conf}}
\quad\text{for }\mathcal H^s_{\lambda,\sim}(\mathbb H^n).
\]
Then the following hold.

\smallskip
\noindent\textnormal{(i)}
If \(u\) belongs to the corresponding space and
\(\operatorname{supp}u\Subset\mathbb H^n\), then
\(u\in H^s(\mathbb H^n)\).

\smallskip
\noindent\textnormal{(ii)}
Let \(\{v_j\}\) be a sequence in the corresponding space such that
\(v_j\rightharpoonup0\) weakly there. If, for some compact set
\(K\Subset\mathbb H^n\),
\[
\operatorname{supp}v_j\subset K
\qquad\text{for all }j,
\]
then
\[
v_j\rightharpoonup0
\qquad\text{weakly in }H^s(\mathbb H^n).
\]
\end{lemma}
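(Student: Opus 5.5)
At the critical value $\lambda=\lambda_{0,s}^{\mathrm{conf}}$ (resp.\ $\widetilde\lambda_{0,s}^{\mathrm{conf}}$) the energy norm is degenerate only at low frequency. The multiplier $m_s(\beta)-\lambda$ (resp.\ $\widetilde m_s(\beta)-\widetilde\lambda_{0,s}^{\mathrm{conf}}$) vanishes at $\beta=0$ but is comparable to $(1+\beta^2)^s$ for $|\beta|\ge 1$, by Lemma~\ref{lem:domain-tildePs} and \eqref{eq:1+ms-comparable}. So the plan is to split the $H^s$ norm into high and low Helgason frequencies. The high part is controlled by the energy norm; the low part must be controlled by the $L^2$ norm, which is where compact support enters.

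Concretely, I would first establish a \emph{localized Poincar\'e inequality}: for every compact $K\Subset\mathbb H^n$ there is $C_K$ with
\[
\|u\|_{L^2(\mathbb H^n)}^2\le C_K\,\|u\|_{\lambda,s}^2\qquad \forall\,u\in C_c^\infty(\mathbb H^n),\ \operatorname{supp}u\subset K.
\]
The intended route uses the Helgason transform. If $\operatorname{supp}u\subset K$, then
\[
|\hat u(\beta,\theta)|\le \|u\|_{L^2}\,\|h_{\beta,\theta}\|_{L^2(K)}\le C_K\|u\|_{L^2}
\]
uniformly for $|\beta|\le 1$. The low-frequency mass is then
\[
\int_{|\beta|\le\tau}\int_{\mathbb S^{n-1}}|\hat u|^2\,\frac{d\sigma\,d\beta}{|c(\beta)|^2}\le C_K\|u\|_2^2\int_0^\tau |c(\beta)|^{-2}\,d\beta .
\]
Since $|c(\beta)|^{-2}\sim\beta^2$ near $0$, this is $O(\tau^3)\|u\|_2^2$. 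Choosing $\tau$ small absorbs it into the left side. The remaining frequencies $|\beta|\ge\tau$ satisfy $m_s(\beta)-\lambda\ge c_\tau>0$, because $m_s$ attains its minimum only at $\beta=0$ (from the product formula). This gives $\|u\|_2^2\lesssim \|u\|_{\lambda,s}^2+O(\tau^3)\|u\|_2^2$, and hence the claim. The exceptional cases where $m_s(\beta)-\lambda\sim\beta^2$ or faster near $0$ are harmless here, since we never need a lower bound near $\beta=0$.

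Combining the localized Poincar\'e inequality with the high-frequency comparison yields
\[
\|u\|_{H^s}^2\le C_K'\|u\|_{\lambda,s}^2,
\]
while $\|u\|_{\lambda,s}\le C\|u\|_{H^s}$ always holds. For (i), the difficulty is that $u$ is only an element of the completion, so I need approximants $u_j\in C_c^\infty$ with $u_j\to u$ in $\|\cdot\|_{\lambda,s}$ and with supports in a fixed compact set. I would take a cutoff $\chi\in C_c^\infty$ with $\chi\equiv1$ near $\operatorname{supp}u$ and replace $u_j$ by $\chi u_j$. The \textbf{main obstacle} is to show that $\chi u_j\to \chi u=u$ in the energy norm, i.e.\ that multiplication by $\chi$ is bounded on $\mathcal H^s_\lambda$. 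I would use the pseudodifferential calculus of Section~\ref{pseduo}: the commutator $[\cP_s,\chi]$ has order $2s-1$ with compactly supported symbol, which bounds $\|\chi v\|_{\lambda,s}$ by $\|v\|_{\lambda,s}$ plus a localized $L^2$ term. That localized term I would control by a weighted version of the low-frequency estimate, using the fact that $\chi v$ has compact support. Once this is done, $\chi u_j$ is Cauchy in $H^s$ by the estimate above, so its limit, which is $u$, lies in $H^s$.

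For (ii), the argument is as follows. By the uniform boundedness principle, $\{v_j\}$ is bounded in the energy space, and by the uniform estimate with fixed $K$, it is bounded in $H^s$. Any $H^s$-weak subsequential limit $v$ is also a distributional limit. Pairing with test functions $\varphi$ such that $(\cP_s-\lambda)\varphi$ lies in the dual of the energy space shows $\langle v,(\cP_s-\lambda)\varphi\rangle=0$ for all $\varphi\in C_c^\infty$; since $v$ has compact support, this forces $v=0$. Hence the whole sequence converges weakly to $0$ in $H^s$. The argument for $\widetilde\cP_s$ is identical.
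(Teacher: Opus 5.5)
Your localized Poincar\'e inequality is correct, and it uses a genuinely different input from the paper's. The paper obtains $L^2(K)$-control from the Lu--Yang Poincar\'e--Sobolev inequality $\|w\|_{L^q}\le C\|w\|_{\lambda,s}$ ($q>2$) followed by H\"older on $K$. You get it from the Helgason side alone:
\begin{itemize}
\item[(a)] $|h_{\beta,\theta}|$ is independent of real $\beta$ and bounded on $K$;
\item[(b)] $|c(\beta)|^{-2}=O(\beta^2)$;
\item[(c)] the multiplier has a strict minimum at $\beta=0$ by the product formula, since $(a+k)^2-(b+k)^2=s(2k+\tfrac32)>0$ for $a=\tfrac{3+2s}{4}$, $b=\tfrac{3-2s}{4}$.
\end{itemize}
Your identification of the weak limit in (ii), by pairing with $(\cP_s-\lambda)\varphi$, is also sound.

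The gap is the step you yourself flag in (i), and the mechanism you propose does not close it. Write
\[
\|\chi v\|_{\lambda,s}^2=\langle v,\chi^2 v\rangle_{\lambda,s}+\langle[\cP_s,\chi]v,\chi v\rangle_{L^2}.
\]
The first term is circular. Proposition~\ref{lem:commutator-Ps} bounds the second only by $\|v\|_{\widetilde H^{s-1}}\|\chi v\|_{\widetilde H^{s}}$. But $\|v\|_{\widetilde H^{s-1}}$ is a global norm dominating the low-frequency mass $\int_{|\beta|\le1}\int_{\mathbb S^{n-1}}|\hat v|^2\,d\sigma\,d\beta/|c(\beta)|^2$, which is exactly what the critical energy fails to control. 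Applying your compact-support estimate to $\chi v$, or to a larger cutoff, only reproduces the quantity you are trying to bound.

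What is missing is an estimate for $v\in C_c^\infty(\mathbb H^n)$ with \emph{arbitrary} support:
\[
\|\chi v\|_{H^s(\mathbb H^n)}\le C_\chi\,\|v\|_{\lambda,s}.
\]
One proof: split $v=v_{\mathrm{low}}+v_{\mathrm{high}}$ at $|\beta|=1$. The high part satisfies $\|v_{\mathrm{high}}\|_{H^s}\lesssim\|v\|_{\lambda,s}$. The low part is smooth, and by \eqref{inverse} and Cauchy--Schwarz,
\[
\sup_{x\in\operatorname{supp}\chi}|\partial^\alpha v_{\mathrm{low}}(x)|
\le C_{\alpha,\chi}\,\|v\|_{\lambda,s}\Bigl(\int_{|\beta|\le1}\frac{|c(\beta)|^{-2}}{m_s(\beta)-\lambda}\,d\beta\Bigr)^{1/2},
\]
hence $\|\chi v_{\mathrm{low}}\|_{H^s}\lesssim\|v\|_{\lambda,s}$. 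The integral is finite only because $|c(\beta)|^{-2}\sim\beta^2$ \emph{and} $m_s(\beta)-\lambda\gtrsim\beta^2$ near $0$. The latter holds because
\[
\log\frac{m_s(\beta)}{m_s(0)}=\frac{\beta^2}{4}\sum_{k\ge0}\bigl[(b+k)^{-2}-(a+k)^{-2}\bigr]+O(\beta^4),
\]
where the sum is positive, and $m_s(\beta)\sim c\beta^2$ in the exceptional cases $s=\tfrac32+2k$; the same holds for $\widetilde m_s$. So a lower bound near $\beta=0$ is needed at this step, even though your localized inequality did not need one.

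With this estimate, (i) follows at once. The estimate extends by continuity to the completion, and $u=\chi u\in H^s$ whenever $\chi\equiv1$ near $\operatorname{supp}u$. Your localized inequality is then the special case $\operatorname{supp}v\subset\{\chi=1\}$.

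The same estimate also supplies two facts you use tacitly:
\begin{itemize}
\item[(a)] the continuous embedding $\mathcal H^s_\lambda\hookrightarrow\mathcal D'(\mathbb H^n)$, without which $\operatorname{supp}u$ and $\chi u=u$ have no meaning;
\item[(b)] the $H^s$-bound for the non-smooth $v_j$ in (ii).
\end{itemize}
The paper's own proof does not address this point either; it simply asserts that the approximants may be taken supported in $K$.
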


\begin{proof}
We only prove the statement for $\mathcal H^s_\lambda(\mathbb H^n)$ with
$\lambda=\lambda_{0,s}^{\mathrm{conf}}$, since the proof for
$\mathcal H^s_{\lambda,\sim}(\mathbb H^n)$ with
$\lambda=\widetilde\lambda_{0,s}^{\mathrm{conf}}$ is identical (replacing
$\cP_s$ by $\widetilde{\cP}_s$ throughout).

\medskip
\noindent\textbf{\textnormal{(i)}}
Let $u\in \mathcal H^s_\lambda(\mathbb H^n)$ with $\operatorname{supp}u\Subset \mathbb H^n$.
By definition, there exists a sequence $\{u_m\}\subset C_c^\infty(\mathbb H^n)$ such that
\[
u_m\to u \quad \text{in }\mathcal H^s_\lambda(\mathbb H^n).
\]
Since $\operatorname{supp}u\Subset \mathbb H^n$, we may choose a compact set
$K\Subset \mathbb H^n$ with $\operatorname{supp}u\subset K$, and we may assume
\[
\operatorname{supp}u_m\subset K,\qquad \forall m.
\]

By \cite[Theorem 1.3]{lu2023explicit}, there exists $q>2$ and $C>0$ such that for all $w\in C_c^\infty(\mathbb H^n)$, $\|w\|_{L^q(\mathbb H^n)} \le C\,\|w\|_{\lambda,s}.$
Applying this to $w=u_m-u_\ell$, and using $u_m\to u$ in $\mathcal H^s_\lambda$, we obtain that $\{u_m\}$ is Cauchy in $L^q(\mathbb H^n)$. Since all $u_m$ are supported in the fixed compact set $K$, Hölder's inequality yields
\[
\|u_m-u_\ell\|_{L^2(\mathbb H^n)}
=\|u_m-u_\ell\|_{L^2(K)}
\le |K|^{\frac12-\frac1q}\|u_m-u_\ell\|_{L^q(K)}
\to 0.
\]
Hence $\{u_m\}$ is Cauchy in $L^2(\mathbb H^n)$.

We now compare the critical energy norm with the $H^s$-norm. Let $m_s(\beta)$ denote the spectral multiplier of $\cP_s$; then
$m_s(0)=\lambda_{0,s}^{\mathrm{conf}}=\lambda$, and by \eqref{eq:1+ms-comparable} one has
\[
m_s(\beta)-\lambda \sim (\beta^2+\rho^2)^s \quad\text{for large }|\beta|,
\]
while adding the $L^2$ term restores the low frequencies. Equivalently, there exists $C_1,C_2>0$ such that
\[
C_1\Bigl(1+(\beta^2+\rho^2)^s\Bigr)
\le 1+\bigl(m_s(\beta)-\lambda\bigr)
\le C_2\Bigl(1+(\beta^2+\rho^2)^s\Bigr),
\qquad \forall \beta\in\mathbb R.
\]
Therefore, for every $w\in C_c^\infty(\mathbb H^n)$, $\|w\|_{H^s(\mathbb H^n)}^2
\sim \|w\|_{L^2(\mathbb H^n)}^2+\|w\|_{\lambda,s}^2.$
Applying this to $w=u_m-u_\ell$ and using that $\{u_m\}$ is Cauchy both in
$L^2$ and in $\mathcal H^s_\lambda$, we conclude that $\{u_m\}$ is Cauchy in
$H^s(\mathbb H^n)$. Hence $u_m\to \tilde u$ in $H^s(\mathbb H^n)$ for some
$\tilde u\in H^s(\mathbb H^n)$. By \cite[Theorem 1.3]{lu2023explicit} and \cite[Theorem 6.1]{bruno2025blow}, we obtain the limit is unique, so $\tilde u=u$. Therefore, $u\in H^s(\mathbb H^n).$

\medskip
\noindent\textbf{\textnormal{(ii)}}
Assume $v_n\rightharpoonup 0$ weakly in $\mathcal H^s_\lambda(\mathbb H^n)$ and
$\operatorname{supp}v_n\subset K\Subset \mathbb H^n$ for all $n$.
Weak convergence implies boundedness, so $\sup_n \|v_n\|_{\lambda,s}<\infty.$
By \cite[Theorem 1.3]{lu2023explicit} and the common compact support, there exists $q>2$ such that
\[
\sup_n \|v_n\|_{L^q(\mathbb H^n)} \le C \sup_n \|v_n\|_{\lambda,s}<\infty,
\]
and hence
\[
\sup_n \|v_n\|_{L^2(\mathbb H^n)}
=\sup_n \|v_n\|_{L^2(K)}
\le |K|^{\frac12-\frac1q}\sup_n \|v_n\|_{L^q(K)}
<\infty.
\]
Using again
\[
\|w\|_{H^s(\mathbb H^n)}^2
\sim \|w\|_{L^2(\mathbb H^n)}^2+\|w\|_{\lambda,s}^2
\quad\text{for compactly supported }w,
\]
we infer that $\{v_n\}$ is bounded in $H^s(\mathbb H^n)$.

To prove the weak convergence of the whole sequence in $H^s(\mathbb H^n)$, it suffices to show that every subsequence admits a further subsequence converging weakly to $0$ in $H^s(\mathbb H^n)$.

Let $\{v_{n_j}\}$ be an arbitrary subsequence of $\{v_n\}$. Since $\{v_n\}$ is bounded in $H^s(\mathbb H^n)$, so is $\{v_{n_j}\}$; hence, there exist a further subsequence (still denoted by $\{v_{n_j}\}$) and some $w\in H^s(\mathbb H^n)$ such that
\[
v_{n_j}\rightharpoonup w \qquad\text{weakly in }H^s(\mathbb H^n).
\]
In particular, $v_{n_j}\to w$ in $\mathcal D'(\mathbb H^n)$. On the other hand, since $v_n\rightharpoonup0$ in $\mathcal H^s_\lambda(\mathbb H^n)$, we also have $v_{n_j}\to0$ in $\mathcal D'(\mathbb H^n)$. Therefore $w=0$. As the chosen subsequence $\{v_{n_j}\}$ was arbitrary, every subsequence of $\{v_n\}$ admits a further subsequence converging weakly to $0$ in $H^s(\mathbb H^n)$. Hence $v_n\rightharpoonup0$ weakly in $H^s(\mathbb H^n).$
This proves \textnormal{(ii)}.
\end{proof}

\smallskip

	Next, we begin to study the attainability of fractional Sobolev levels associated with $\cP_s-\lambda$ and $\widetilde \cP_s-\lambda$ 
	\begin{equation}\label{hnsp}
		H_{n,s,p}(\lambda)
		:=\inf_{u\in \mathcal H^{s}_{\lambda}(\mathbb H^n)\setminus\{0\}}
		\frac{\mathcal E_{\lambda,s}(u)}{\|u\|_{L^{p+1}(\mathbb H^n)}^2}
		\in(0,\infty),\quad \lambda\le\lambda_{0,s}^{\mathrm{conf}},
	\end{equation}
	and
	\[	\widetilde H_{n,s,p}(\lambda)
	:=\inf_{u\in \mathcal H^{s}_{\lambda}(\mathbb H^n)\setminus\{0\}}
	\frac{{\widetilde{\mathcal E}}_{\lambda,s}(u)}{\|u\|_{L^{p+1}(\mathbb H^n)}^2}
	\in(0,\infty),\quad \lambda\le\widetilde \lambda_{0,s}^{\mathrm{conf}}.\]
	In the critical case \(p+1=2_s^*:=\frac{2n}{n-2s}\), we note that
\[
H_{n,s,p}(\lambda)=H_{n,s}(\lambda)
\qquad\text{and}\qquad
\widetilde H_{n,s,p}(\lambda)=\widetilde H_{n,s}(\lambda),
\]
where $H_{n,s}(\lambda)$ and $\widetilde H_{n,s}(\lambda)$ are defined in \eqref{eq:Sns-Hn-lambda} and \eqref{eq:Sns-Hn-lambda111}. For convenience, we set
	\[	I_{\lambda,s}(u):=\frac{\mathcal E_{\lambda,s}(u)}{\|u\|_{L^{p+1}(\mathbb H^n)}^2},\quad 	\widetilde I_{\lambda,s}(u):=\frac{{\widetilde{\mathcal E}}_{\lambda,s}(u)}{\|u\|_{L^{p+1}(\mathbb H^n)}^2}
	\qquad u\in C_c^{\infty}(\mathbb H^n)\setminus\{0\}.\]
	
	Throughout, we write $\|u\|_{q}:=\|u\|_{L^{q}(\mathbb H^n)}$ for brevity. It is standard to work on
	the Nehari manifold
	\[	\mathcal N_{\lambda,s}
	:=\Bigl\{u\in C_c^{\infty}(\mathbb H^n)\setminus\{0\}:
	\mathcal E_{\lambda,s}(u)=\|u\|_{p+1}^{p+1}\Bigr\}\]
	and
	\[	\widetilde{\mathcal N}_{\lambda,s}
	:=\Bigl\{u\in C_c^{\infty}(\mathbb H^n)\setminus\{0\}:
	\widetilde{	\mathcal E}_{\lambda,s}(u)=\|u\|_{p+1}^{p+1}\Bigr\}.\]
	Note that for every $u\neq0$, there exists a unique $t(u)>0$ such that $t(u)u\in\mathcal N_{\lambda,s}$,
	namely $$t(u)^{p-1}=\mathcal E_{\lambda,s}(u)/\|u\|_{p+1}^{p+1}.$$
	Moreover, for $u\in\mathcal N_{\lambda,s}$ we have
	\begin{equation}\label{eq:I-on-Nehari}
		I_{\lambda,s}(u)=\|u\|_{p+1}^{p-1}
		=\mathcal E_{\lambda,s}(u)^{\frac{p-1}{p+1}}.
	\end{equation}
	Hence
	\[	H_{n,s,p}(\lambda)=\inf_{u\in\mathcal N_{\lambda,s}} I_{\lambda,s}(u)
	=\inf_{u\in\mathcal N_{\lambda,s}} \|u\|_{p+1}^{p-1}.\]

    The next lemma records the isometry invariance of $\cP_s$ and $\widetilde{\cP}_s$, and in particular the invariance of their associated quadratic forms.

	\begin{lemma}\label{lem:isom-inv-Ps}
		Let $\mathcal T$ be an isometry on $\mathbb{H}^n$ and define $(U_{\mathcal T}u)(x):=u(\mathcal T x).$
		Then, for all
		$u\in C_c^\infty(\mathbb H^n)$,
		\[
		\cP_s(U_{\mathcal T}u)=U_{\mathcal T}(\cP_su),\quad \widetilde\cP_s(U_{\mathcal T}u)=U_{\mathcal T}(\widetilde\cP_su).
		\]
		Consequently,
		\[
		\langle U_{\mathcal T}u,\cP_s(U_{\mathcal T}u)\rangle_{2}
		=\langle u,\cP_su\rangle_{2},\quad \langle U_{\mathcal T}u,\widetilde\cP_s(U_{\mathcal T}u)\rangle_{2}
		=\langle u,\widetilde\cP_su\rangle_{2}.
		\]
	\end{lemma}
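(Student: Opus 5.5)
The plan is to reduce everything to the fact that the Laplace--Beltrami operator commutes with isometries, and then to pass this commutation through the functional calculus. Let $\mathcal T$ be an isometry of $(\mathbb H^n,g_{\mathbb H^n})$. Since $\mathcal T$ preserves the volume measure $dV_{\mathbb H^n}$, the map $U_{\mathcal T}$ is unitary on $L^2(\mathbb H^n)$, with inverse $U_{\mathcal T^{-1}}$. It also maps $C_c^\infty(\mathbb H^n)$ onto itself, because isometries are diffeomorphisms.

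The first step is the local identity $\Delta_{\mathbb H^n}(u\circ\mathcal T)=(\Delta_{\mathbb H^n}u)\circ\mathcal T$ for $u\in C_c^\infty$. This is the naturality of the Laplace--Beltrami operator under isometries: in local coordinates $\Delta_g$ is built only from $g$, and $\mathcal T^*g=g$. Since $C_c^\infty$ is a core for the self-adjoint operator $-\Delta_{\mathbb H^n}$ (essential self-adjointness, used in Lemma~\ref{lem:diag-laplace}), I then deduce the operator identity
\[U_{\mathcal T}^{-1}(-\Delta_{\mathbb H^n})U_{\mathcal T}=-\Delta_{\mathbb H^n},\]
including equality of domains. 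The argument is that the left side is self-adjoint, agrees with $-\Delta_{\mathbb H^n}$ on the core $C_c^\infty$, and self-adjoint extensions of an essentially self-adjoint operator are unique.

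The second step is the general principle that for a unitary $V$ and a self-adjoint $A$, one has $V^{-1}\Phi(A)V=\Phi(V^{-1}AV)$ for every Borel function $\Phi$. This holds because the spectral measure of $V^{-1}AV$ is $V^{-1}E_AV$. I apply it with $\Phi(\lambda)=m_s\bigl(\sqrt{\lambda-\rho^2}\bigr)$ and with $\Phi(\lambda)=\widetilde m_s\bigl(\sqrt{\lambda-\rho^2}\bigr)$ on $[\rho^2,\infty)$. This is exactly how $\cP_s$ and $\widetilde\cP_s$ are defined through $\cA=\sqrt{-\Delta_{\mathbb H^n}-\rho^2}$ in \eqref{all pass} and \eqref{specps}. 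The result is $U_{\mathcal T}^{-1}\cP_sU_{\mathcal T}=\cP_s$, and likewise for $\widetilde\cP_s$, with equal domains. Since $C_c^\infty\subset H^{2s}=\mathcal D(\cP_s)$, as noted after Lemma~\ref{lem:domain-tildePs}, applying this to $u\in C_c^\infty$ gives the commutation $\cP_s(U_{\mathcal T}u)=U_{\mathcal T}(\cP_su)$.

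For the quadratic forms, I write
\[\langle U_{\mathcal T}u,\cP_sU_{\mathcal T}u\rangle=\langle U_{\mathcal T}u,U_{\mathcal T}\cP_su\rangle=\langle u,\cP_su\rangle,\]
where the last equality uses that $U_{\mathcal T}$ is unitary, i.e.\ the change of variables $y=\mathcal Tx$ together with invariance of $dV_{\mathbb H^n}$. The same computation applies to $\widetilde\cP_s$.

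The only genuinely delicate point is the domain bookkeeping in the first step, namely passing from the core to the self-adjoint operator. This is handled by uniqueness of the self-adjoint extension on the complete manifold. Alternatively, I could avoid functional calculus by using the Helgason transform: an isometry acts on $\hat u(\beta,\theta)$ by a unitary that preserves each $\beta$-fibre, so it commutes with every radial multiplier.
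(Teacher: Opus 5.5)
Your proposal is correct and follows the paper's proof. In both, naturality of $\Delta_{\mathbb H^n}$ under isometries gives $U_{\mathcal T}^{-1}(-\Delta_{\mathbb H^n})U_{\mathcal T}=-\Delta_{\mathbb H^n}$, this identity passes through the functional calculus defining $\cP_s$ and $\widetilde\cP_s$ as Borel functions of $\cA$, and the quadratic-form identity follows from unitarity of $U_{\mathcal T}$. Your additional steps make explicit what the paper leaves implicit: extending the operator identity from the core $C_c^\infty$ by uniqueness of the self-adjoint extension, and the identity $V^{-1}\Phi(A)V=\Phi(V^{-1}AV)$ for unitary $V$.
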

	
	\begin{proof}
		Since $\mathcal T$ is an isometry, 
		and $dV_{\mathbb H^n}$ is invariant under $\mathcal T$, hence $U_{\mathcal T}$
		is unitary on $L^2(\mathbb H^n)$ and preserves $L^p$ norms by change of variables.
		Moreover, the Laplace--Beltrami operator is invariant under isometries, i.e.
		$\Delta_{\mathbb H^n}(u\circ\mathcal T)=(\Delta_{\mathbb H^n}u)\circ\mathcal T$.
		Therefore $U_{\mathcal T}^{-1}(-\Delta_{\mathbb H^n})U_{\mathcal T}=-\Delta_{\mathbb H^n}$.
		Since on $\mathbb H^n$,  $\cP_s$ and  $\widetilde \cP_s$ are given by
		spectral calculus as a Borel function of $	\cA^2:=-\Delta_{\mathbb H^n}-\rho^2,$
		the functional calculus implies
		$U_{\mathcal T}^{-1}\cP_sU_{\mathcal T}=\cP_s$ and $U_{\mathcal T}^{-1}\widetilde\cP_sU_{\mathcal T}=\widetilde\cP_s$,
		which is equivalent to
		\[
		\cP_s(u\circ\mathcal T)=(\cP_su)\circ\mathcal T,\quad \widetilde\cP_s(u\circ\mathcal T)=(\widetilde\cP_su)\circ\mathcal T.
		\]
		Finally, the energy identity follows from unitarity:
		\[
		\langle U_{\mathcal T}u,\cP_s(U_{\mathcal T}u)\rangle_{2}
		=\langle U_{\mathcal T}u,U_{\mathcal T}(P_su)\rangle_{2}
		=\langle u,\cP_su\rangle_{2}
		\]
		and
		\[
		\langle U_{\mathcal T}u,\widetilde\cP_s(U_{\mathcal T}u)\rangle_{2}
		=\langle U_{\mathcal T}u,U_{\mathcal T}(\widetilde P_su)\rangle_{2}
		=\langle u,\widetilde\cP_su\rangle_{2},
		\]
        which completes the proof.
	\end{proof}
	
	\subsection{Pseudodifferential Operators and Energy Asymptotic}\label{pseduo}

   In this subsection, we establish the pseudodifferential tools needed for the energy asymptotic analysis.
We first study the multiplier \(m_\gamma\), proving strip holomorphy and symbol-type derivative bounds, which place \(\cP_\gamma\) and \(\widetilde{\cP}_\gamma\) within a standard pseudodifferential framework.
As a consequence, we obtain Sobolev mapping properties between spectral spaces of different orders.
We then derive commutator estimates with compactly supported cutoffs, showing that these commutators are bounded lower-order operators.
Finally, combined with weak convergence in the energy space, these estimates yield an asymptotic localization identity: two natural cut-off localizations of the same nonlocal quadratic form are equivalent up to \(o(1)\)-errors.
These tools form the technical basis for the concentration analysis and the variational arguments in the sequel.

We start from recalling the fractional Bessel potential spaces, see \cite{TaylorPDO}. Fix $s\in\mathbb R$ and $\mu>4\rho^2$ sufficiently large. Define the fractional Bessel potential space by
	\[
	\tilde{H}^{s}(\mathbb H^n):=(\mu I-\Delta_{\mathbb H^n})^{-s/2}L^2(\mathbb H^n),
	\qquad 
	\|u\|_{\tilde{H}^{s}(\mathbb H^n)}:=\bigl\|(\mu I-\Delta_{\mathbb H^n})^{s/2}u\bigr\|_{L^2(\mathbb H^n)}.
	\]
    It is standard (and easy to check by the spectral theorem) that for any $\mu_1,\mu_2>4\rho^2$
the norms $\|(\mu_1 I-\Delta_{\mathbb H^n})^{s/2}u\|_{L^2(\mathbb H^n)}$ and
$\|(\mu_2 I-\Delta_{\mathbb H^n})^{s/2}u\|_{L^2(\mathbb H^n)}$ are equivalent; hence the space
$\tilde H^{s}(\mathbb H^n)$ is independent of the particular choice of $\mu$, and we fix $\mu$
once for all.
	Endow $\tilde{H}^{s}(\mathbb H^n)$ with the inner product
	\[	\langle u,v\rangle_{\tilde{H}^{s}}
	:=\Big\langle (\mu I-\Delta_{\mathbb H^n})^{s/2}u,\,
	(\mu I-\Delta_{\mathbb H^n})^{s/2}v
	\Big\rangle_{L^2(\mathbb H^n)}.\]
	Equivalently, by Plancherel and the spectral resolution of $-\Delta_{\mathbb H^n}$,
	\[
	\langle u,v\rangle_{\tilde{H}^{s}}
	=\int_{\mathbb R}\!\int_{\mathbb S^{n-1}}
	(\mu+\beta^2+\rho^2)^{s}\,
	\hat u(\beta,\theta)\,\overline{\hat v(\beta,\theta)}\,
	\frac{d\theta\,d\beta}{|c(\beta)|^{2}}.
	\]
	Thus, by the definition of (\ref{fracaa}), 
	\[\tilde{H}^{s}(\mathbb H^n)={H}^{s}(\mathbb H^n),\quad s\ge 0.\]
    
	We recall that $\cP_s$ with $s\in (0,\frac{n}{2})\setminus {\mathbb N}$ on $\mathbb H^n$
	defined by spectral calculus as
	\begin{equation}\label{eq:Pgamma_multiplier_def}
		\cP_s = m_s\!\bigl(\cA\bigr)\quad {\rm with}\ \, \cA=\sqrt{-\Delta_{\mathbb H^n}-\rho^2}  
	\end{equation}
	where $\operatorname{Spec}(\cA^2)=[0,\infty)$ and, in the Helgason--Fourier representation,
	\begin{equation}\label{eq:m_gamma}
		m_s(\beta)
		=2^{2s}\,
		\frac{\bigl|\Gamma\bigl(\frac{3+2s}{4}+\frac{\bi}{2}\beta\bigr)\bigr|^2}
		{\bigl|\Gamma\bigl(\frac{3-2s}{4}+\frac{\bi}{2}\beta\bigr)\bigr|^2},
		\qquad \beta\in\mathbb R.
	\end{equation}
	
	\begin{lemma}
		\label{lem:mgamma-holo-symbol}
		Let  
		\[
		\gamma>0,\qquad a=\frac{3+2\gamma}{4}>0,\qquad b=\frac{3-2\gamma}{4}
		\]
		and 
		\begin{equation}\label{eq:mgamma-holo-def}
			m_\gamma(\zeta)
			:=2^{2\gamma}\,
			\frac{\Gamma\!\bigl(a+\frac{\bi}{2}\zeta\bigr)\,\Gamma\!\bigl(a-\frac{\bi}{2}\zeta\bigr)}
			{\Gamma\!\bigl(b+\frac{\bi}{2}\zeta\bigr)\,\Gamma\!\bigl(b-\frac{\bi}{2}\zeta\bigr)}\quad \text{for $\zeta\in\mathbb C$}.
		\end{equation}
		Then $m_\gamma$ is holomorphic in every strip $|{\rm Im}(\zeta)|<\sigma$ with $\sigma<\epsilon$,
		where one may take $\epsilon:=2a=\frac{3+2\gamma}{2}.$
		Moreover, for each $\sigma<\epsilon$ and each $k\in\mathbb N_0$ there exists a constant
		$C_{k,\gamma,\sigma}>0$ such that
		\begin{equation}\label{eq:mgamma-symbol}
			|\partial_\zeta^k m_\gamma(\zeta)|
			\le C_{k,\gamma,\sigma}\,(1+|\zeta|)^{2\gamma-k},
			\qquad |{\rm Im}(\zeta)|<\sigma,
		\end{equation}
where ${\rm Im}(\zeta)=\zeta_2$ if $\zeta=\zeta_1+\bi\zeta_2$, $\zeta_1,\zeta_2\in\R$. 
	\end{lemma}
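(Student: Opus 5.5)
The proof has two parts: locate the poles of $m_\gamma$ to get holomorphy in the strip, then use Stirling's formula in \emph{sectors} (not just strips) so that Cauchy estimates on discs of radius proportional to $|\zeta|$ produce the decay $(1+|\zeta|)^{-k}$ for each derivative.

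\textbf{Step 1: holomorphy.} Since $1/\Gamma$ is entire, the denominator factors $\Gamma(b\pm\frac{\bi}{2}\zeta)^{-1}$ cause no singularities, even when $b\in-\mathbb N_0$; they only produce zeros. The numerator $\Gamma(a+\frac{\bi}{2}\zeta)\Gamma(a-\frac{\bi}{2}\zeta)$ has poles exactly where $a\pm\frac{\bi}{2}\zeta=-j$ with $j\in\mathbb N_0$, that is, at $\zeta=\pm 2\bi(a+j)$. All of these lie on the imaginary axis with $|{\rm Im}\,\zeta|\ge 2a=\epsilon$. Hence $m_\gamma$ is holomorphic on $\{|{\rm Im}\,\zeta|<\epsilon\}$, and in particular on a neighbourhood of the closed strip $\{|{\rm Im}\,\zeta|\le\sigma'\}$ for any $\sigma<\sigma'<\epsilon$.

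\textbf{Step 2: asymptotics in sectors.} Fix $\theta\in(0,\frac{\pi}{2})$ and set
\[
\Sigma_\pm:=\{\zeta:\ |\arg(\pm\zeta)|<\theta\}.
\]
For $\zeta\in\Sigma_+$ with $|\zeta|\ge T$, the points $w_\pm:=\pm\frac{\bi}{2}\zeta$ stay in a sector $|\arg w_\pm|\le \frac{\pi}{2}+\theta<\pi$, which is uniformly away from the negative real axis. There the uniform Stirling formula gives, with a remainder holomorphic in $w$,
\[
\frac{\Gamma(w+a)}{\Gamma(w+b)}=w^{a-b}\bigl(1+r(w)\bigr),\qquad r(w)=O(|w|^{-1}),
\]
and $a-b=\gamma$. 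Multiplying the two factors and choosing principal branches, we get
\[
m_\gamma(\zeta)=2^{2\gamma}\bigl(\tfrac{\zeta^2}{4}\bigr)^{\gamma}\bigl(1+R(\zeta)\bigr)=(\zeta^2)^\gamma\bigl(1+R(\zeta)\bigr).
\]
Here $R$ is holomorphic on $\Sigma_+\cap\{|\zeta|>T\}$ with $|R(\zeta)|\le C|\zeta|^{-1}$. Since $m_\gamma$ is even, $\Sigma_-$ is handled identically. The branch bookkeeping for $(\zeta^2)^\gamma$ requires only that $\zeta^2$ avoid the negative real axis, which holds in $\Sigma_\pm$ since $\theta<\frac{\pi}{2}$.

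\textbf{Step 3: derivative bounds.} For large $|{\rm Re}\,\zeta|$ with $|{\rm Im}\,\zeta|<\sigma$, the disc $D(\zeta,c|\zeta|)$ lies inside $\Sigma_\pm\cap\{|\zeta|>T\}$ for a small fixed $c=c(\theta)$. On this disc, $|(\zeta^2)^\gamma|\lesssim|\zeta|^{2\gamma}$, so $|m_\gamma|\lesssim|\zeta|^{2\gamma}$. Cauchy's estimate on the disc then gives
\[
|\partial^k_\zeta m_\gamma(\zeta)|\le k!\,(c|\zeta|)^{-k}\sup_{D(\zeta,c|\zeta|)}|m_\gamma|\lesssim |\zeta|^{2\gamma-k}.
\]
On the remaining compact part $\{|{\rm Re}\,\zeta|\le T',\ |{\rm Im}\,\zeta|\le\sigma\}$, I apply Cauchy estimates on discs of fixed radius $\sigma'-\sigma$, which lie inside the pole-free strip from Step 1. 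This gives uniform bounds there, and $(1+|\zeta|)^{2\gamma-k}$ is bounded below on this set. Combining the two regions yields \eqref{eq:mgamma-symbol}.

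The main obstacle is Step 2. Strip-uniform Stirling bounds alone, of the kind used in Lemma~\ref{lem:domain-tildePs}, control $|m_\gamma|$ but give no gain under differentiation. I would justify the sector version from the standard expansion $\log\Gamma(z)=(z-\frac12)\log z-z+\frac12\log 2\pi+O(|z|^{-1})$, valid uniformly in $|\arg z|\le\pi-\delta$. Subtracting this expansion at $z=w+a$ and $z=w+b$ gives
\[
\log\frac{\Gamma(w+a)}{\Gamma(w+b)}=\gamma\log w+O(|w|^{-1}),
\]
and this difference is holomorphic in $w$ on the sector, which is what the Cauchy-estimate argument needs.
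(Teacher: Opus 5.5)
Your proof is correct. Step~1 is the same as the paper's holomorphy argument, but your derivative bounds take a genuinely different route.

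The paper stays inside the strip. It factors $m_\gamma=2^{2\gamma}R_+R_-$ with $R_\pm=\Gamma(z_\pm+\gamma)/\Gamma(z_\pm)$ and $z_\pm=b\pm\frac{\bi}{2}\zeta$, and chooses a holomorphic branch of $\log R_\pm$ for large $|\zeta|$. It then bounds $\partial_\zeta^{j}\partial_\zeta\log R_\pm$ by $(1+|\zeta|)^{-j-1}$, using the vertical-strip asymptotics $\psi(z)=\log z+O(1/z)$ and $\psi^{(j)}(z)=O(|z|^{-j})$ of the digamma and polygamma functions. Fa\`a di Bruno's formula turns this into $|\partial_\zeta^k R_\pm|\lesssim |R_\pm|(1+|\zeta|)^{-k}$, and Leibniz' rule finishes.

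You instead leave the strip. The zeroth-order Stirling asymptotics for $\Gamma(w+a)/\Gamma(w+b)$, uniform in $|\arg w|\le\pi-\delta$, give $|m_\gamma|\lesssim|\zeta|^{2\gamma}$ on sectors around the real axis. Cauchy estimates on discs of radius $c|\zeta|$ then give the $|\zeta|^{-k}$ gain for every $k$ at once.

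What each route buys:
\begin{itemize}
\item Your route needs no information on derivatives of $\Gamma$: no polygamma asymptotics, no Bell-polynomial bookkeeping, no branch of $\log R_\pm$.
\item Its price is a sup bound on a full sector rather than a strip. This costs nothing here, because all poles of $m_\gamma$ lie on the imaginary axis, outside $\Sigma_\pm$.
\item The paper's route uses only data along the strip itself, which is the setting in which the symbol class $\mathcal S^m_\sigma$ is later applied.
\end{itemize}

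One minor point: holomorphy of the remainder $R$, stressed in your last paragraph, is not actually needed. The Cauchy estimate uses only holomorphy of $m_\gamma$ on the disc, which follows from Step~1, together with the sup bound there.
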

	
	\begin{proof}
		The only possible singularities of \eqref{eq:mgamma-holo-def} come from the poles of
		$\Gamma\!\bigl(a\pm \frac{\bi}{2}\zeta\bigr)$, which occur precisely when
		$a\pm \frac{\bi}{2}\zeta\in\{0,-1,-2,\dots\}$, i.e. $\zeta=\pm 2\bi(a+k),\,k\in\mathbb N_0.$
		Hence $m_\gamma$ is holomorphic in the strip $|{\rm Im}(\zeta)|<2a=:\epsilon$.
		Poles of $\Gamma\!\bigl(b\pm \frac{\bi}{2}\zeta\bigr)$, if any, produce zeros of
		$m_\gamma$ and therefore do not affect holomorphy.
		
		Fix $\sigma<\epsilon$ and write
		\[
		m_\gamma(\zeta)=2^{2\gamma}\,R_+(\zeta)\,R_-(\zeta),
		\qquad
		R_\pm(\zeta):=\frac{\Gamma\!\bigl(a\pm \frac{\bi}{2}\zeta\bigr)}
		{\Gamma\!\bigl(b\pm \frac{\bi}{2}\zeta\bigr)}.
		\]
		Note that $a=b+\gamma$. For $|{\rm Im}(\zeta)|<\sigma$ and $\pm$ fixed, set
		\[
		z_\pm:=b\pm \frac{\bi}{2}\zeta,
		\qquad\text{so that}\qquad
		R_\pm(\zeta)=\frac{\Gamma(z_\pm+\gamma)}{\Gamma(z_\pm)}.
		\]
		
		Uniform Stirling estimates in vertical strips yield
		\[
		|R_\pm(\zeta)|\le C_{\gamma,\sigma}\,(1+|\zeta|)^{\gamma},
		\qquad |{\rm Im}(\zeta)|<\sigma,
		\]
		and therefore
		\[
		|m_\gamma(\zeta)|\le C_{\gamma,\sigma}\,(1+|\zeta|)^{2\gamma},
		\qquad |{\rm Im}(\zeta)|<\sigma.
		\]
		
		We next estimate derivatives for $|\zeta|\to\infty$ inside the strip.
		Choose $R_0\ge 1$ so large that for all $\zeta$ with $|{\rm Im}(\zeta)|<\sigma$ and
		$|\zeta|\ge R_0$, the points $z_\pm$ stay at a positive distance from the poles
		$\{0,-1,-2,\dots\}$ of the digamma and polygamma functions.
		On the region
		\[
		\Omega_{R_0,\sigma}:=\{\zeta\in\mathbb C:\ |{\rm Im}(\zeta)|<\sigma,\ |\zeta|\ge R_0\},
		\]
		we may choose a holomorphic branch of $\log R_\pm$ and use
		$\frac{d}{dz}\log\Gamma(z)=\psi(z)$ to compute
		\[
		\partial_\zeta\log R_\pm(\zeta)
		=\pm \frac{\bi}{2}\Bigl(\psi(z_\pm+\gamma)-\psi(z_\pm)\Bigr).
		\]
		By the uniform asymptotic expansions in vertical strips,
		\[
		\psi(z)=\log z+O\!\left(\frac1{z}\right),
		\qquad
		\psi^{(j)}(z)=O\!\left(\frac1{|z|^{j}}\right)\quad (j\ge 1),
		\qquad |z|\to\infty,
		\]
		and $\log(z+\gamma)-\log z = O(1/z)$, we obtain
		\[
		\psi(z_\pm+\gamma)-\psi(z_\pm)=O\!\left(\frac1{|z_\pm|}\right)
		=O\!\left(\frac1{|\zeta|}\right)
		\quad{\rm for}\ \,  |\zeta|\to\infty,\quad \zeta\in\Omega_{R_0,W}.
		\]
		Moreover, for every $j\ge 1$, the polygamma bounds yield
		\[
		\partial_\zeta^{\,j}\Bigl(\psi(z_\pm+\gamma)-\psi(z_\pm)\Bigr)
		=O\!\left(\frac1{|\zeta|^{j+1}}\right),
		\qquad |\zeta|\to\infty,\quad \zeta\in\Omega_{R_0,W}.
		\]
		Consequently, for all $j\in\mathbb N_0$,
		\[
		\Bigl|\partial_\zeta^{\,j}\bigl(\partial_\zeta\log R_\pm(\zeta)\bigr)\Bigr|
		\le C_{j,\gamma,\sigma}\,(1+|\zeta|)^{-j-1},
		\qquad \zeta\in\Omega_{R_0,\sigma}.
		\]
		Since $R_\pm=\exp(\log R_\pm)$, Fa\`a di Bruno's formula (Bell polynomials)
		implies that for every $k\in\mathbb N_0$,
		\[
		|\partial_\zeta^{\,k}R_\pm(\zeta)|
		\le C_{k,\gamma,\sigma}\,|R_\pm(\zeta)|\,(1+|\zeta|)^{-k}
		\le C_{k,\gamma,\sigma}\,(1+|\zeta|)^{\gamma-k},
		\qquad \zeta\in\Omega_{R_0,\sigma}.
		\]
		
		Consider the compact set
		\[
		K_{R_0,\sigma}:=\big\{\zeta\in\mathbb C:\ |{\rm Im}(\zeta)|\le \sigma,\ |\zeta|\le R_0\big\}.
		\]
		Since $m_\gamma$ (hence $R_\pm$) is holomorphic in a neighbourhood of
		$K_{R_0,W}$, each derivative $\partial_\zeta^{\,k}R_\pm$ is continuous and
		bounded on $K_{R_0,\sigma}$. Thus there exists $C_{k,\gamma,\sigma}>0$ such that
		\[
		|\partial_\zeta^{\,k}R_\pm(\zeta)|
		\le C_{k,\gamma,\sigma}\,(1+|\zeta|)^{\gamma-k},
		\qquad \zeta\in K_{R_0,\sigma}.
		\]
		Combining this with the estimate on $\Omega_{R_0,\sigma}$ yields, for all
		$\zeta$ with $|{\rm Im}(\zeta)|<\sigma$,
		\[
		|\partial_\zeta^{\,k}R_\pm(\zeta)|
		\le C_{k,\gamma,\sigma}\,(1+|\zeta|)^{\gamma-k}.
		\]
		Finally, by Leibniz' rule and the previous bounds,
		\[
		|\partial_\zeta^{\,k}m_\gamma(\zeta)|
		\le C_{k,\gamma,W}\sum_{j=0}^k
		(1+|\zeta|)^{\gamma-j}(1+|\zeta|)^{\gamma-(k-j)}
		\le C_{k,\gamma,\sigma}\,(1+|\zeta|)^{2\gamma-k},
		\qquad |{\rm Im}(\zeta)|<\sigma,
		\]
		which proves \eqref{eq:mgamma-symbol}.
	\end{proof}

	\begin{remark}
		The above conclusion also holds for the multiplier $\widetilde m_{\gamma}$ (see \ref{mstl}), the proof is entirely analogous and will be omitted.
	\end{remark}

    The symbol estimates obtained above immediately yield the Sobolev mapping properties of the fractional operators. In particular, both $\cP_\gamma$ and $\widetilde{\cP}_\gamma$ act as operators of order \(2\gamma\) on the hyperbolic Bessel potential scale.

	\begin{proposition}\label{prop:Ps_mapping_Hsp}
		Let $\gamma\in\bigl(0,\frac{n}{2}\bigr)$, then both operators
		$\cP_\gamma$ and $\widetilde \cP_\gamma$ extend by continuity to bounded linear maps
		\begin{equation}\label{eq:Pgamma_mapping}
			\widetilde H^{\,s}(\mathbb H^n)\longrightarrow
			\widetilde H^{\,s-2\gamma}(\mathbb H^n),
			\qquad \forall\,s\in\mathbb R.
		\end{equation}
	\end{proposition}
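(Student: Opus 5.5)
The approach is to work entirely on the Helgason--Fourier side, where everything is a multiplier. Both $\cP_\gamma=m_\gamma(\cA)$ and $\widetilde\cP_\gamma=\widetilde m_\gamma(\cA)$ are functions of $\cA$, and so is $(\mu I-\Delta_{\mathbb H^n})^{t/2}$: its multiplier is $(\mu+\beta^2+\rho^2)^{t/2}$. Hence all of these operators commute, and boundedness $\widetilde H^{s}\to\widetilde H^{s-2\gamma}$ reduces to $L^2$-boundedness of one composite multiplier. Concretely, for $u\in C_c^\infty(\mathbb H^n)$ the Plancherel identity gives
\[
\|\cP_\gamma u\|_{\widetilde H^{s-2\gamma}}^2=\int_{\mathbb R}\int_{\mathbb S^{n-1}}(\mu+\beta^2+\rho^2)^{s-2\gamma}\,m_\gamma(\beta)^2\,|\hat u(\beta,\theta)|^2\,\frac{d\sigma(\theta)\,d\beta}{|c(\beta)|^2}.
\]
Therefore it suffices to prove the pointwise bound
\[
\sup_{\beta\in\mathbb R}\frac{m_\gamma(\beta)^2}{(\mu+\beta^2+\rho^2)^{2\gamma}}<\infty,
\]
and the analogous bound for $\widetilde m_\gamma$.

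This pointwise bound follows directly from Lemma~\ref{lem:mgamma-holo-symbol} with $k=0$ on the real axis, which gives $|m_\gamma(\beta)|\le C(1+|\beta|)^{2\gamma}$. Since $\mu>0$, we have $(1+|\beta|)^{2\gamma}\le C'(\mu+\beta^2+\rho^2)^{\gamma}$. For $\widetilde m_\gamma$, use either the remark following that lemma or, more directly, the upper bound $\widetilde m_\gamma(\beta)\le c_2(\beta^2+1)^\gamma$ from Lemma~\ref{lem:domain-tildePs}. Combining these gives
\[
\|\cP_\gamma u\|_{\widetilde H^{s-2\gamma}}\le C\|u\|_{\widetilde H^{s}},
\]
and the same for $\widetilde\cP_\gamma$, valid for all $s\in\mathbb R$ with $C$ independent of $s$-dependent quantities other than the shift. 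One point to note is that the statement makes no claim for $\gamma\in\mathbb N$ with respect to $\cP_\gamma$, where $b=\tfrac{3-2\gamma}{4}$ may hit poles. Those only produce zeros of the multiplier, so the upper bound is unaffected; alternatively one can argue via $\cP_k=\widetilde\cP_k$ there.

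The last step is extension by continuity. I need $C_c^\infty(\mathbb H^n)$ to be dense in $\widetilde H^{s}(\mathbb H^n)$ for every real $s$. For $s\ge0$ this is the identification $\widetilde H^s=H^s$ together with density on a complete manifold of bounded geometry. For $s<0$, $\widetilde H^s$ is the completion of $L^2$ under the weaker norm, and $C_c^\infty$ is dense in $L^2$, which is in turn dense in $\widetilde H^s$. Once density is in hand, the operator extends uniquely and the extension agrees with the spectral definition on the domain. I expect this density and well-definedness bookkeeping for negative $s$ to be the only slightly delicate point, since the multiplier estimate itself is immediate from the lemmas already proved. The cleanest route is to define the extension directly as the multiplier operator
\[
\hat u\mapsto m_\gamma\,\hat u,
\]
acting on the Fourier-side realization of $\widetilde H^s$, namely $L^2$ weighted by $(\mu+\beta^2+\rho^2)^s$. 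There it is bounded into the space weighted by $(\mu+\beta^2+\rho^2)^{s-2\gamma}$ by the estimate above, which sidesteps the density question altogether.
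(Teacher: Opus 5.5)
Your argument is correct, and its first step is the same as the paper's. Both conjugate by Bessel potentials and reduce the claim to $L^2$-boundedness of the single multiplier $\Psi(\cA)$, with $\Psi(\beta)=(\mu+\rho^2+\beta^2)^{-\gamma}m_\gamma(\beta)$ (respectively $\widetilde m_\gamma$).

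The routes diverge at how that boundedness is obtained. The paper verifies that $\Psi$ lies in the strip-holomorphic symbol class $\mathcal S^0_\sigma$, using holomorphy of $m_\gamma$ in a strip and the derivative bounds of Lemma~\ref{lem:mgamma-holo-symbol} for every $k$. It then invokes Taylor's multiplier theorem on manifolds of bounded geometry. You instead use Plancherel, i.e.\ the spectral theorem, under which the $L^2$ operator norm of $\Psi(\cA)$ is controlled by $\sup_\beta|\Psi(\beta)|$. So you need only the $k=0$ real-axis bound $|m_\gamma(\beta)|\lesssim(1+|\beta|)^{2\gamma}$, or Lemma~\ref{lem:domain-tildePs} for $\widetilde m_\gamma$.

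For the $L^2$-based scale $\widetilde H^s$ in the statement, this is fully sufficient and more elementary. Strip holomorphy and symbol-type derivative bounds matter only for two things. One is $L^p$ bounds with $p\neq 2$, where the exponential volume growth of $\mathbb H^n$ makes holomorphy in a strip necessary. The other is the pseudodifferential calculus the paper reuses in the commutator estimate, Proposition~\ref{lem:commutator-Ps}, which is what its heavier route prepares for. Your handling of the extension for $s<0$ is also more explicit than the paper's, which does not address it.

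Two small corrections. First, the statement does cover integer $\gamma$ for $\cP_\gamma$, since the paper takes the Gamma-ratio formula as the definition of $\cP_s$ for all $s>0$. Nothing special happens there: the only real-axis pole of $\Gamma(b\pm\frac{\bi}{2}\beta)$ occurs at $\beta=0$ when $\gamma\in\frac32+2\mathbb N_0$, and it produces a zero of $m_\gamma$. Second, your constant is in fact independent of $s$ altogether.
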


	\begin{proof}
		By definition of $\tilde{H}^{s}$, \eqref{eq:Pgamma_mapping} is equivalent to the
		boundedness on $L^2(\mathbb H^n)$ of the conjugated operator
		\begin{equation}\label{eq:Tp_def}
			\bT_{\lambda}:=
			(\lambda I-\Delta_{\mathbb H^n})^{\frac{s-2\gamma}{2}}\,
			\cP_\gamma\,
			(\lambda I-\Delta_{\mathbb H^n})^{-\frac{s}{2}}
			:L^2(\mathbb H^n)\to L^2(\mathbb H^n).
		\end{equation}
	Indeed, if $u=(\lambda I-\Delta_{\mathbb H^n})^{-s/2}f$ with $f\in L^2$, then
		\[
		\|\cP_\gamma u\|_{\tilde{H}^{s-2\gamma}}
		=\bigl\|(\lambda I-\Delta_{\mathbb H^n})^{\frac{s-2\gamma}{2}}\cP_\gamma(\lambda I-\Delta_{\mathbb H^n})^{-\frac{s}{2}}f\bigr\|_{L^2}
		=\|\bT_\lambda f\|_{L^2}.
		\]
		
		Note that 
		\begin{equation}\label{eq:lambda_shift}
			\lambda I-\Delta_{\mathbb H^n} = (\lambda+\rho^2)I+\cA^2,
		\end{equation}
		see \cite[Scetion 5]{TaylorPDO}. Using \eqref{eq:Pgamma_multiplier_def} and \eqref{eq:lambda_shift}, we may
		view every factor in \eqref{eq:Tp_def} as a function of the same
		self-adjoint operator $\sqrt{-\cL}$.  In spectral variable $\zeta\ge0$
		(corresponding to $\sqrt{-\cL}$), we obtain
		\[	\bT_\lambda = \Psi_\lambda\!\bigl(\cA\bigr),
		\qquad
		\Psi_\lambda(\zeta):=(\lambda+\rho^2+\zeta^2)^{-\gamma}\,m_\gamma(\zeta).\]
		
		{\it Let $\mathcal{S}_\sigma^m$ denote the standard strip-holomorphic symbol class appearing in
		the $L^p$ multiplier theorem on noncompact manifolds  with bounded geometry: namely $\Phi\in \mathcal{S}_\sigma^m$ if $\Phi$ is an even function and extends holomorphically to
		$\{\zeta\in\mathbb C: |{\rm Im}(\zeta)|<\sigma\}$ and satisfies
		$|\partial_\zeta^k\Phi(\zeta)|\le C_k(1+|\zeta|)^{m-k}$ there. } By \cite[Chapter IV, Proposition 2.4]{TaylorPDO}, it is enough to show that there exists $\sigma> 0$ such that
		\begin{equation}\label{eq:Psi_in_S0}
			\Psi_\lambda\in \mathcal{S}_\sigma^{0}.
		\end{equation}
		Indeed, taking sufficiently small $\epsilon>0,$ for $\sigma=\frac{\epsilon}{2}<\epsilon$, the functions $\zeta\mapsto (\lambda+\rho^2+\zeta^2)^{-\gamma}$
		are holomorphic in $|{\rm Im}(\zeta)|<W$, and a direct differentiation shows
		\[
		\bigl|\partial_\zeta^k(\lambda+\rho^2+\zeta^2)^{-\gamma}\bigr|
		\le C_{k,\gamma,\lambda,\rho,\sigma}\,(1+|\zeta|)^{-2\gamma-k},
		\qquad |{\rm Im}(\zeta)|<\sigma,
		\]
		so $(\lambda+\rho^2+\zeta^2)^{-\gamma}\in \mathcal{S}_{\frac{\epsilon}{2}}^{-2\gamma}$.
		
		By Lemma \ref{lem:mgamma-holo-symbol}, the multiplier $m_\gamma$ in \eqref{eq:m_gamma} is even and  extends holomorphically
		to every strip $|{\rm Im}(\zeta)|<\sigma<\epsilon$ and satisfies
		\[	|\partial_\zeta^k m_\gamma(\zeta)|
		\le C_{k,\gamma,\rho,\sigma}\,(1+|\zeta|)^{2\gamma-k},
		\qquad |{\rm Im}(\zeta)|<\sigma.\]
		Hence $m_\gamma\in \cS_{\frac{\epsilon}{2}}^{2\gamma}$.
		Multiplying the two symbols yields \eqref{eq:Psi_in_S0}, see \cite[Chapter 18]{hormander2007analysis}
		\[
		\Psi_\lambda(\zeta)=(\lambda+\rho^2+\zeta^2)^{-\gamma}m_\gamma(\zeta)
		\in \cS_{\frac{\epsilon}{4}}^{0}.
		\]
		This completes the proof for $\cP_{\gamma}$. The argument for $\widetilde \cP_{\gamma}$ is entirely analogous.
	\end{proof}\medskip

The next result provides the required commutator estimate: the commutator with a smooth compactly supported multiplier is of lower order and is bounded between the corresponding hyperbolic Bessel potential spaces.

	\begin{proposition}\label{lem:commutator-Ps}
		Let $s\in\bigl(0,\frac n2\bigr)$
		and  the commutator
		\[
		[\cP_s,\psi]:=\cP_s\circ M_\psi - M_\psi\circ \cP_s
		\quad\, {\rm with}\ \,  M_\psi u:=\psi u
		\]
		for $\psi\in C_c^\infty(\mathbb H^n)$. 
		Then $[\cP_s,\psi]$ extends by continuity to a bounded operator
		\begin{equation}\label{eq:comm-bdd-final}
			[\cP_s,\psi]:\widetilde H^{\,s-1}(\mathbb H^n)\longrightarrow \widetilde H^{-s}(\mathbb H^n),
			\qquad
			\|[\cP_s,\psi]f\|_{\widetilde H^{-s}}
			\le C_\psi\,\|f\|_{\widetilde H^{\,s-1}},
		\end{equation}
		for some constant $C_\psi>0$.
		Moreover, the same conclusion holds with $\cP_s$ replaced by $\widetilde{\cP}_s$.
	\end{proposition}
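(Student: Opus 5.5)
The plan is to place $\cP_s$ (and $\widetilde\cP_s$) in a pseudodifferential class of order $2s$ on $\mathbb H^n$ and then apply the standard symbolic calculus, under which the commutator with a multiplication operator drops one order. By Lemma~\ref{lem:mgamma-holo-symbol}, $m_s$ is even, extends holomorphically to a strip $|{\rm Im}\,\zeta|<\sigma$, and satisfies the symbol bounds $|\partial_\zeta^k m_s|\le C_k(1+|\zeta|)^{2s-k}$. The functional calculus for strip-holomorphic symbols on manifolds with bounded geometry (the result of \cite[Chapter IV]{TaylorPDO} already used in Proposition~\ref{prop:Ps_mapping_Hsp}) then gives a decomposition
\[
\cP_s=m_s(\cA)=\bA+\bK,
\]
with the following properties. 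First, $\bA$ is a properly supported classical pseudodifferential operator of order $2s$, with Schwartz kernel supported in $\{d(x,y)\le 1\}$. Second, $\bK$ has a kernel that is smooth and exponentially decaying off the diagonal; the decay is at rate $e^{-\sigma d(x,y)}$, coming from the strip width. I would obtain the splitting by writing $m_s(\cA)=\int \hat m_s(t)\cos(t\cA)\,dt$ and cutting at $|t|\le 1$. Finite propagation speed of $\cos(t\cA)$ localizes the near part, while strip-holomorphy makes $\hat m_s(t)$ decay like $e^{-\sigma|t|}$ and smooth away from $t=0$.

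With this decomposition I would handle the two pieces separately.

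For the near part, $[\bA,\psi]$ is a properly supported pseudodifferential operator of order $2s-1$ whose kernel lives in a compact neighbourhood of $\operatorname{supp}\psi$, because the kernel of $\bA$ is supported within distance $1$ of the diagonal. The local symbolic calculus in finitely many coordinate charts covering this compact set then shows that $[\bA,\psi]$ maps $\widetilde H^{s-1}$ to $\widetilde H^{-s}$; the order count is $(s-1)-(2s-1)=-s$. Here I use that the hyperbolic Bessel spaces $\widetilde H^t$ agree locally with the Euclidean $H^t$, uniformly on compacts, since $\mathbb H^n$ has bounded geometry.

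For the far part, $\bK$ maps $\widetilde H^{t}\to\widetilde H^{t'}$ for all $t,t'$. This follows from Proposition~\ref{prop:Ps_mapping_Hsp}-type reasoning: $\bK=m_s(\cA)-\bA$, and $\bA$ is smoothing off the diagonal. Alternatively, one uses Schur's test on $(\mu-\Delta)^{k}\bK(\mu-\Delta)^{k}$, whose kernel is still exponentially decaying, and the exponential decay beats the volume growth $e^{(n-1)r}$ provided $\sigma>\rho$. Then $[\bK,\psi]=\bK\psi-\psi\bK$ is bounded trivially, because multiplication by $\psi$ is bounded on every $\widetilde H^t$.

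The main obstacle is the far part. The strip width $\epsilon=\tfrac{3+2s}{2}$ from Lemma~\ref{lem:mgamma-holo-symbol} need not exceed $\rho=\tfrac{n-1}{2}$, so Schur's test with a global kernel bound may fail. To get around this I would exploit the compact support of $\psi$ directly. In $\psi\bK$, the kernel $\psi(x)K(x,y)$ is integrable in $y$ uniformly once $x\in\operatorname{supp}\psi$, since $K$ is $L^2$ in $y$ when $\sigma>0$ in the Kunze--Stein sense via the Plancherel measure. The adjoint term $\bK\psi$ is handled by duality. Alternatively, it suffices to note that $[\bK,\psi]$ can be treated through $\psi\bK$ and $\bK\psi$ separately, each of which is bounded $L^2\to\widetilde H^{N}$ by the spectral calculus bound for $(\mu-\Delta)^N\bK$ composed with a compactly supported cutoff. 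The same argument applies verbatim to $\widetilde\cP_s$, using the remark after Lemma~\ref{lem:mgamma-holo-symbol}.
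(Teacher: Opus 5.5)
Your argument is essentially the paper's. The paper places $\cP_s$ (and $\widetilde{\cP}_s$) in Taylor's class $\Psi^{2s}_\sigma(\mathbb H^n)$, meaning pseudodifferential near the diagonal and exponentially decaying off it. It then gets $[\cP_s,\psi]\in\Psi^{2s-1}_\sigma$ from the global symbolic calculus and concludes with the global Sobolev mapping theorem at $t=s-1$. You instead unpack that class through the splitting $m_s(\cA)=\bA+\bK$ and use the symbolic calculus only on a compact neighbourhood of $\operatorname{supp}\psi$, which is fine.

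The far-part ``obstacle'' is spurious, though. The Schur/Kunze--Stein discussion, and the justification via $\bA$ being smoothing off the diagonal, should be replaced by a spectral-calculus observation. Take a smooth cutoff $\chi\in C_c^\infty(-1,1)$ in $t$ with $\chi\equiv1$ near $0$. Then $\bA=a(\cA)$ with $a=\mathcal F^{-1}(\chi\hat m_s)$, so $\bK=(m_s-a)(\cA)$, where $m_s-a=\mathcal F^{-1}\bigl((1-\chi)\hat m_s\bigr)$ is rapidly decreasing on $\mathbb R$, because the Fourier transform of any symbol is smooth and rapidly decreasing away from $t=0$. The spectral theorem alone therefore gives $\bK:\widetilde H^{t}\to\widetilde H^{t'}$ for all $t,t'$, with no strip width or kernel bound needed. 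Finally, $[\bK,\psi]$ is bounded because $\psi$ acts boundedly on every $\widetilde H^{t}$.
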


	\begin{proof}
		From the proof of Proposition \ref{prop:Ps_mapping_Hsp}, for any $\sigma>0$ small enough one has $m_s\in \cS_{\sigma}^{2s}.$
		By \cite[Chapter IV, Proposition 1.2]{TaylorPDO}, it is equivalent to  $\cP_s\in \Psi_\sigma^{2s}(\mathbb H^n),$ where the space $\Psi_W^{m}(\mathbb H^n)$, of a class of operators  whose Schwartz kernels behave like those of operators in
        $\cS^m_{0,1}$ near the diagonal, in a uniform fashion, and away from the diagonal decay like $d(x,y)^{-k}e^{-W d(x,y)},\ \forall k$  as do all derivatives.  See   \cite[Page 72]{TaylorPDO}. 
		
		Multiplication by $\psi$ is a zero-order operator, $M_\psi\in\Psi_\sigma^{0}(\mathbb H^n)$.
		By the standard symbolic calculus for pseudodifferential operators, the commutator satisfies
		\[
		[\cP_s,M_\psi]\in \Psi_\sigma^{2s-1}(\mathbb H^n).
		\]
		Indeed, if $p_s(x,\xi)$ denotes the full symbol of $\cP_s$, then the symbol of the
		commutator has an asymptotic expansion beginning with
		\[
		\sigma([\cP_s,M_\psi])(x,\xi)\sim
		\sum_{|\alpha|\ge1}\frac{1}{\alpha!}\,\partial_\xi^\alpha p_s(x,\xi)\,D_x^\alpha \psi(x),
		\]
		so the principal term is of order $2s-1$.
		
		Finally, we use the Sobolev mapping property for pseudo-differential operators (\cite[Proposition~4.5]{TaylorPDO}): for $\bT\in\Psi_{\sigma}^{m}(\mathbb H^n)$ and any $t\in\mathbb R$,
		\begin{equation}\label{eq:Psi_mapping}
			\bT:\tilde H^{t}(\mathbb H^n)\longrightarrow \tilde H^{t-m}(\mathbb H^n)
			\quad\text{bounded}.
		\end{equation}
		
		Taking $\bT=[\cP_s,\psi]$, $m=2s-1$ and $t=s-1$ in \eqref{eq:Psi_mapping}, we obtain
		\[
		[\cP_s,\psi]:\tilde H^{s-1}(\mathbb H^n)\to \tilde H^{-s}(\mathbb H^n),
		\]
		together with the estimate \eqref{eq:comm-bdd-final}. The constant $C_\psi$
		depends only on  $\psi$.
	\end{proof}\medskip

We now establish an asymptotic localization identity for weakly convergent sequences.
Combining the commutator estimate with boundedness in the energy space, we show that for a compactly supported cut-off, the two natural localizations of the same nonlocal quadratic form are equivalent up to an \(o(1)\)-error.
This fact will be used in the concentration--compactness analysis.

	\begin{lemma}\label{lem:localization-Ps}
		Let $\{v_j\}\subset C_c^\infty(\mathbb H^n)$ be a bounded sequence and assume that
		$v_j\rightharpoonup 0$ weakly in $\mathcal H^s_\lambda(\mathbb H^n)$.
		Then, for every $\psi\in C_c^\infty(\mathbb H^n)$, the following hold as $j\to\infty$:
		\begin{enumerate}
			\item[(i)] Let $s\in\bigl(0,\frac n2\bigr)\setminus\mathbb N$ and
			$\lambda\le\lambda_{0,s}^{\mathrm{conf}}$. Then
			\[
			\langle v_j,\psi^2 v_j\rangle_{\lambda,s}
			=\mathcal E_{\lambda,s}(\psi v_j)+o(1).
			\]
			
			\item[(ii)] Let $s\in\bigl(0,\frac n2\bigr)$ and
			$\lambda\le\widetilde\lambda_{0,s}^{\mathrm{conf}}$. Then
			\[
			\langle v_j,\psi^2 v_j\rangle_{\lambda,s,\sim}
			=\widetilde{\mathcal E}_{\lambda,s}(\psi v_j)+o(1).
			\]
		\end{enumerate}
	\end{lemma}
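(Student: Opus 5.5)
The plan is to reduce both parts to one commutator identity and then use the compactness that comes with lower-order operators on a fixed compact set. Write $P$ for $\cP_s$ in part (i) and for $\widetilde\cP_s$ in part (ii); the two cases are identical after this substitution. Since $\psi$ is real and multiplication by $\psi$ is symmetric on $L^2(\mathbb H^n)$, for $v\in C_c^\infty(\mathbb H^n)$ we have
\[
\langle (P-\lambda)v,\psi^2 v\rangle_{2}-\langle (P-\lambda)(\psi v),\psi v\rangle_{2}
=\langle \psi P v-P(\psi v),\psi v\rangle_{2}
=-\langle [P,\psi]v,\psi v\rangle_{2}.
\]
The $\lambda$-terms cancel exactly. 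The claim therefore reduces to showing $\langle [P,\psi]v_j,\psi v_j\rangle_2\to0$.

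Next I localize. Choose $\chi\in C_c^\infty(\mathbb H^n)$ with $\chi\equiv1$ on a neighbourhood of $\operatorname{supp}\psi$, so $\psi v_j=\psi\chi v_j$. By Proposition~\ref{lem:commutator-Ps}, $[P,\psi]\in\Psi^{2s-1}_\sigma$. I decompose
\[
[P,\psi]v_j=[P,\psi](\chi v_j)+[P,\psi]((1-\chi)v_j).
\]
The second piece involves the Schwartz kernel of $[P,\psi]$ between $\operatorname{supp}\psi$-related sets and $\operatorname{supp}(1-\chi)$. Since $[P,\psi]=P\psi-\psi P$ and the supports are disjoint, this kernel is off-diagonal. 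By the kernel description of $\Psi^{m}_\sigma$ (smooth off the diagonal with exponential decay), it is smoothing and $L^2$-bounded with smooth output. Tested against $\psi v_j$ with compact support, it gives a compact operator, since $\operatorname{supp}\psi$ is compact.

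The main term is $\langle [P,\psi](\chi v_j),\psi v_j\rangle_2$, which I bound by
\[
\|[P,\psi](\chi v_j)\|_{\widetilde H^{-s}}\,\|\psi v_j\|_{\widetilde H^{s}}
\le C\,\|\chi v_j\|_{\widetilde H^{s-1}}\,\|\psi v_j\|_{\widetilde H^{s}}.
\]
By Lemma~\ref{lem:critical-compact-support-and-weak-Hs}, the compactly supported sequences $\chi v_j,\psi v_j$ are bounded in $H^s(\mathbb H^n)$ and converge weakly to $0$ there. For $\lambda<\lambda_{0,s}^{\mathrm{conf}}$ the same follows directly from Lemma~\ref{lem:equiv-norm-Ps}, together with boundedness of multiplication by smooth compactly supported functions on $H^s$, which is a zero-order operator.

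This step is the main obstacle: one must justify that $\chi v_j$ is bounded in $H^s$ and not merely in the energy space, since at the endpoint $\lambda=\lambda_{0,s}^{\mathrm{conf}}$ the energy norm does not control $L^2$. I would use the argument of Lemma~\ref{lem:critical-compact-support-and-weak-Hs}(i). The Sobolev-type bound from \cite{lu2023explicit} applies to $v_j$, giving boundedness in $L^q(\mathbb H^n)$ and hence in $L^2(\operatorname{supp}\chi)$. Then a local G\aa rding-type estimate $\|\chi v\|_{H^s}^2\lesssim \mathcal E_{\lambda,s}(v)+\|\tilde\chi v\|_{L^2}^2$ is obtained by applying the symbol comparison $1+(m_s-\lambda)\sim1+(\beta^2+\rho^2)^s$ after commuting $\chi$ through, with the commutator errors controlled by the same lower-order calculus.

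Given boundedness, Rellich compactness on the compact set $\operatorname{supp}\chi$ (via local charts and bounded geometry) gives $H^s\hookrightarrow H^{s-1}$ compactly for functions supported there. Hence $\chi v_j\to0$ strongly in $\widetilde H^{s-1}$, because the weak limit is $0$. The main term is then $o(1)\cdot O(1)=o(1)$.

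The smoothing term is handled the same way. A smoothing operator with kernel in $C^\infty(K\times\mathbb H^n)$, integrable in $y$ thanks to exponential decay beating the volume growth $e^{2\rho d}$ (which one may have to check against $\sigma$; otherwise I would rely on the $L^2$ boundedness plus local compactness), is compact from the energy space into $L^2(K)$. It therefore sends the weakly null $v_j$ to $0$ strongly in $L^2(K)$, and pairing with the $L^2$-bounded $\psi v_j$ gives $o(1)$. Combining the two pieces gives both (i) and (ii).
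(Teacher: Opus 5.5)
Your argument is correct in substance and uses the same ingredients as the paper's proof: the identity that reduces everything to a commutator term (the $\lambda$-terms cancel exactly), the bound $[P,\psi]:\widetilde H^{s-1}\to\widetilde H^{-s}$ of Proposition~\ref{lem:commutator-Ps}, Lemma~\ref{lem:critical-compact-support-and-weak-Hs}(ii) to get weak $H^s$ convergence of cut-offs, and Rellich compactness (which the paper phrases as compactness of $\cA_\lambda^{s-1}M_\psi\cA_\lambda^{-s}$ on $L^2$).

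What differs is where the commutator sits. The paper uses self-adjointness to write the error as $-\langle v_j,[\cP_s,\psi](\psi v_j)\rangle_2$. The commutator then acts on the compactly supported $\psi v_j$ and is paired with $v_j$ in $\widetilde H^{s}\times\widetilde H^{-s}$. This needs no near/far splitting and no kernel decay. Its price is the \emph{global} bound $\|v_j\|_{H^s}\lesssim\|v_j\|_{\lambda,s}$, which holds for $\lambda<\lambda_{0,s}^{\mathrm{conf}}$ but fails at $\lambda=\lambda_{0,s}^{\mathrm{conf}}$. At that endpoint the paper also asserts without proof that $\eta v_j\rightharpoonup 0$ in $\mathcal H^s_\lambda$.

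Letting $[P,\psi]$ act on $v_j$ forces your split $v_j=\chi v_j+(1-\chi)v_j$ and an extra off-diagonal term $-\langle P((1-\chi)v_j),\psi^2 v_j\rangle_2$. In exchange, your main term only needs \emph{local} $H^s$ bounds on $\chi v_j$ and $\psi v_j$, and you correctly single out their justification at the endpoint as the real issue. A spectral argument does this more cleanly than a localized coercivity estimate with commutators. Split $\hat v$ at $|\beta|=1$. The high-frequency part lies in $H^s$ with norm $\lesssim\|v\|_{\lambda,s}$, because the multiplier minus $\lambda$ is $\gtrsim(1+\beta^2)^s$ there. For $|\beta|<1$, the facts $|c(\beta)|^{-2}\sim\beta^2$, $m_s(\beta)-\lambda_{0,s}^{\mathrm{conf}}\sim\beta^2$ and $\int_{\mathbb S^{n-1}}|h_{\beta,\theta}(x)|^2\,d\sigma(\theta)=1$ give, by Cauchy--Schwarz in the inversion formula, a smooth piece bounded with all derivatives on compact sets by $C\|v\|_{\lambda,s}$.

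One claim in your far-field term needs correcting. The decay actually available (Proposition~\ref{lem:offdiag-exp-decay-Ptilde}) is $|k_s(r)|\lesssim e^{-\rho r}r^{-N}$. This does not beat the volume growth $e^{2\rho r}$, so the kernel of $M_\psi P M_{1-\chi}$ need not be integrable in $y$. It is square integrable, however: $\sup_{x}\int |K(x,y)|^2(1-\chi(y))^2\,dV(y)<\infty$. So $M_\psi PM_{1-\chi}$ is Hilbert--Schmidt, hence compact on $L^2$, and the far term vanishes in the limit whenever $v_j\rightharpoonup0$ in $L^2$, i.e.\ for $\lambda<\lambda_{0,s}^{\mathrm{conf}}$.

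At $\lambda=\lambda_{0,s}^{\mathrm{conf}}$ the $v_j$ need not be bounded in $L^2$, so compactness from the energy space into $L^2(\operatorname{supp}\psi)$ still needs an argument. Duality is one way. The function $g=(1-\chi)P(\psi f)$ is dominated by a radial $G$ with $G(r)\lesssim e^{-\rho r}r^{-N}\|f\|_{2}$. The product formula for spherical functions then gives $\int_{\mathbb S^{n-1}}|\hat g(\beta,\theta)|^2\,d\sigma\le\bigl(\int G\,\Phi_0\,dV\bigr)^2<\infty$. Together with $|c(\beta)|^{-2}\sim m_s(\beta)-\lambda_{0,s}^{\mathrm{conf}}\sim\beta^2$ near $\beta=0$, this puts $g$ in the dual of the energy space, and compactness follows by truncating in $x$.

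The paper hides the same far-field interaction inside its global pairing and does not treat it at the endpoint either. So this is a point you share with the paper, not a gap you introduce.
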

	
	\begin{proof}
		(i) Expanding the difference,
		\begin{align*}
			\mathcal E_{\lambda,s}(\psi v_j)-\langle v_j,\psi^2 v_j\rangle_{\lambda,s}
			&=\langle \psi v_j,\cP_s(\psi v_j)\rangle_2-\langle v_j,\cP_s(\psi^2 v_j)\rangle_2\\
			&=\langle v_j,\psi \cP_s(\psi v_j)-P_s(\psi^2 v_j)\rangle_2
			=-\langle v_j,[\cP_s,\psi](\psi v_j)\rangle_2.
		\end{align*}
		
		We estimate this duality pairing in $\tilde{H}^s\times \tilde{H}^{-s}$:
		\[
		|\langle v_j,[\cP_s,\psi](\psi v_j)\rangle_2|
		\le \|v_j\|_{H^s}\,\|[\cP_s,\psi](\psi v_j)\|_{\tilde{H}^{-s}}\le  \|v_j\|_{\mathcal H^s_\lambda(\mathbb H^n)}\,\|[\cP_s,\psi](\psi v_j)\|_{\tilde{H}^{-s}}.
		\]
		By Lemma \ref{lem:commutator-Ps} with $f=\psi v_j$,
		\[
		\|[\cP_s,\psi](\psi v_j)\|_{\tilde{H}^{-s}}\le C_\psi\,\|\psi v_j\|_{\tilde{H}^{s-1}}.
		\]

        Choose $\eta\in C_c^\infty(\mathbb H^n)$ such that
\[
\eta\equiv 1 \quad\text{on a neighborhood of }\operatorname{supp}\psi,
\]
and define $w_j:=\eta v_j .$ Then $\operatorname{supp}w_j\subset \operatorname{supp}\eta=:K\Subset \mathbb H^n$ for all $j$, so $\{w_j\}$ has a common compact support. Moreover, $\psi w_j=\psi(\eta v_j)=(\psi\eta)v_j=\psi v_j,$
hence
\[
\|\psi v_j\|_{\tilde H^{\,s-1}(\mathbb H^n)}
=\|\psi w_j\|_{\tilde H^{\,s-1}(\mathbb H^n)}.
\]
Therefore, the commutator estimate becomes
\[
\|[\cP_s,\psi](\psi v_j)\|_{\tilde H^{-s}(\mathbb H^n)}
\le C_\psi\,\|\psi v_j\|_{\tilde H^{\,s-1}(\mathbb H^n)}
= C_\psi\,\|\psi w_j\|_{\tilde H^{\,s-1}(\mathbb H^n)}.
\]

Since $v_j\rightharpoonup 0$ weakly in the critical energy space and multiplication by a fixed cutoff, we also have
\[
w_j=\eta v_j \rightharpoonup 0
\quad\text{weakly in }\mathcal H^s_\lambda(\mathbb H^n).
\]
By Remark \ref{rem:Hs-lambda-equals-Hs} and Lemma~\ref{lem:critical-compact-support-and-weak-Hs} \textnormal{(ii)}, it follows that for $\lambda\le \lambda_{0,s}^{\mathrm{conf}},$
\begin{equation}\label{keycon}
    w_j\rightharpoonup 0
\quad\text{weakly in }H^s(\mathbb H^n).
\end{equation}

		Let $\cA_\lambda:=(\lambda I-\Delta_{\mathbb H^n})^{1/2}$, so that
		$\|u\|_{\widetilde H^{t}}=\|\cA_\lambda^{t}u\|_{L^2}$ for all $t\in\mathbb R$.
		Let $\{w_j\}\subset \widetilde H^{s}(\mathbb H^n)$ be bounded and set
		$f_j:=\cA_\lambda^{s}w_j\in L^2(\mathbb H^n)$. Then
		\[
		\|\psi w_j\|_{\widetilde H^{s-1}}
		=\|\cA_\lambda^{s-1}(\psi w_j)\|_{L^2}
		=\|\bK_\psi f_j\|_{L^2},
		\]
		where
		\[
		\bK_\psi:=\cA_\lambda^{\,s-1}M_\psi \cA_\lambda^{-s}
		=\cA_\lambda^{\,s-1}M_\psi \cA_\lambda^{-(s-1)}\,\cA_\lambda^{-1}
		:L^2(\mathbb H^n)\to L^2(\mathbb H^n).
		\]
		
		\smallskip
		\noindent We claim that $\bK_\psi$ is compact on $L^2(\mathbb H^n)$.
		Indeed, $\cA_\lambda^{-1}=(\lambda I-\Delta_{\mathbb H^n})^{-1/2}$ is an elliptic
		pseudodifferential operator of order $-1$, hence
		\[
		\cA_\lambda^{-1}:L^2(\mathbb H^n)\longrightarrow H^1(\mathbb H^n)
		\quad\text{bounded}.
		\]
		Set
		\[
		\bB_\psi:=\cA_\lambda^{\,s-1}M_\psi \cA_\lambda^{-(s-1)}\in\Psi^{0}(\mathbb H^n),
		\qquad\text{so that}\qquad
		\bK_\psi=\bB_\psi\,\cA_\lambda^{-1}.
		\]
		
		Choose $\chi\in C_c^\infty(\mathbb H^n)$ such that
		$\chi\equiv 1$ on a neighbourhood of $\operatorname{supp}\psi$, and set $K:=\operatorname{supp}\chi\Subset\mathbb H^n$.
		Since $M_\psi=\chi M_\psi$, we have
		\[
		\bB_\psi
		=\cA_\lambda^{\,s-1}\chi M_\psi \cA_\lambda^{-(s-1)}
		=(\chi \cA_\lambda^{\,s-1}\chi)\,M_\psi\,\cA_\lambda^{-(s-1)}.
		\]
		In particular, for every $f\in L^2(\mathbb H^n)$,
		\[
		\bK_\psi f
		=\bB_\psi(\cA_\lambda^{-1}f)
		=(\chi \cA_\lambda^{\,s-1}\chi)\,M_\psi\,\cA_\lambda^{-s}f,
		\]
		and the operator $\chi \cA_\lambda^{\,s-1}\chi$ is properly supported with Schwartz kernel
		supported in $K\times K$. Therefore,
		\[
		\operatorname{supp}(\bK_\psi f)\subset K,
		\qquad \forall\,f\in L^2(\mathbb H^n).
		\]
		
		Combining the order $0$ boundedness of $\bB_\psi$ with the regularizing property of $\cA_\lambda^{-1}$,
		we obtain a bounded map $\bK_\psi:\ L^2(\mathbb H^n)\rightarrow H^1_{\mathrm{loc}}(\mathbb H^n).$
		Together with $\operatorname{supp}(\bK_\psi f)\subset K$, this yields
		\[
		\bK_\psi:\ L^2(\mathbb H^n)\longrightarrow H^{1}(K)
		\quad\text{bounded}.
		\]
		Since the Rellich--Kondrachov theorem gives the compact embedding
		$H^{1}(K)\hookrightarrow\hookrightarrow L^2(K)$, we obtain $	\bK_\psi:\ L^2(\mathbb H^n)\rightarrow L^2(\mathbb H^n)$
		is compact. By (\ref{keycon}),
		$f_j\rightharpoonup 0$ weakly in $L^2(\mathbb H^n)$ as $j\to+\infty$, it follows that $\bK_\psi f_j\to 0$ strongly in $L^2$. Hence
		\[
		\|\psi w_j\|_{\tilde H^{s-1}(\mathbb H^n)}\longrightarrow 0
		\quad{\rm as}\ \,  j\to+\infty.
		\]
		Since $\{v_j\}$ is bounded in $\mathcal H^s_\lambda(\mathbb H^n)$, it follows that
		$\langle v_j,[\cP_s,\psi](\psi v_j)\rangle_2\to0$, proving the claim.\smallskip
		
		(ii) The proof is entirely analogous to that of (i), and we omit the details.
	\end{proof}

	\subsection{Proof of Theorem \ref{mainthe2}, \ref{mainthe}}

    In this subsection, we complete the proofs of Theorems~\ref{mainthe2} and~\ref{mainthe}.\medskip

	\noindent{\bf Proof of Theorem \ref{mainthe}. }
	Since $\lambda\le\lambda_{0,s}^{\mathrm{conf}}$, we have $H_{n,s,p}(\lambda)>0$  by (\ref{hnsp}).
	Therefore, it suffices to establish the existence of minimizers for $H_{n,s,p}(\lambda)$.
	
	Let $\{u_j\}\subset\mathcal N_{\lambda,s}$ be a minimizing sequence such that
	\[
	I_{\lambda,s}(u_j)\to H_{n,s,p}(\lambda)\quad {\rm as}\ j\to+\infty.
	\]
	Then by \eqref{eq:I-on-Nehari},
	\begin{equation}\label{eq:mass-limit}
		\|u_j\|_{p+1}^{p+1}=\mathcal E_{\lambda,s}(u_j)
		=I_{\lambda,s}(u_j)^{\frac{p+1}{p-1}}
		\to H_{n,s,p}(\lambda)^{\frac{p+1}{p-1}}.
	\end{equation}
	In particular, $\{u_j\}$ is bounded in $\mathcal H^s_\lambda(\mathbb H^n)$.
	
	For $R>0$, define the concentration function
	\[
	Q_j(R):=\sup_{z_0\in\mathbb R^{n-1}}
	\int_{B(z_0,R)} |u_j|^{p+1}\,dV_{\mathbb H^n},
	\qquad
	B(z_0,R):=\{(r,z)\in\mathbb H^n:\ r^2+|z-z_0|^2<R^2\}.
	\]
	Fix any number $0<\delta<H_{n,s,p}(\lambda)^{\frac{p+1}{p-1}}.$
	By continuity of $R\mapsto Q_j(R)$ and \eqref{eq:mass-limit}, for each $j$, we can
	choose $z_j\in\mathbb R^{n-1}$ and $R_j>0$ such that
	\begin{equation}\label{eq:delta-normalization}
		\delta
		=\int_{B(z_j,R_j)}|u_j|^{p+1}\,dV_{\mathbb H^n}
		=\sup_{z_0\in\mathbb R^{n-1}}
		\int_{B(z_0,R_j)}|u_j|^{p+1}\,dV_{\mathbb H^n}.
	\end{equation}
	Let $\mathcal T_j$ be a hyperbolic isometry sending $B(0,1)$ onto
	$B(z_j,R_j)$: $\mathcal T_j(r,z)=(R_{j} r,z_j+R_{j}z)$
	and define $v_j:=u_j\circ \mathcal T_j$. By Lemma \ref{lem:isom-inv-Ps},
	$\{v_j\}\subset\mathcal N_{\lambda,s}$ is still minimizing and satisfies
	\begin{equation}\label{eq:delta-fixed}
		\delta=\int_{B(0,1)}|v_j|^{p+1}\,dV_{\mathbb H^n}
		=\sup_{z_0\in\mathbb R^{n-1}}
		\int_{B(z_0,1)}|v_j|^{p+1}\,dV_{\mathbb H^n}.
	\end{equation}
	
	By Ekeland's variational principle, we may assume that $\{v_j\}$ is a
	Palais--Smale sequence for $I_{\lambda,s}$ on $\mathcal N_{\lambda,s}$.
	Equivalently, 
	\begin{equation}\label{eq:PS-identity}
		\langle v_j,\varphi\rangle_{\lambda,s}
		=\int_{\mathbb H^n}|v_j|^{p-1}v_j\,\varphi\,dV_{\mathbb H^n}+o(1)
		\qquad\text{for all }\varphi\in \mathcal H^s_\lambda(\mathbb H^n),
	\end{equation}
	where $o(1)\to0$ as $j\to\infty$ uniformly for $\varphi$ in bounded sets.
	Since $\{v_j\}$ is bounded in $\mathcal H^s_\lambda(\mathbb H^n)$ and as $j\to\infty$,  up to a
	subsequence,
	\[
	v_j\rightharpoonup v \ \text{in }\mathcal H^s_\lambda(\mathbb H^n),
	\qquad
	v_j\to v\ \text{a.e. in }\, \mathbb H^n,
	\]
	and $v_j\to v$ in $L^q_{\mathrm{loc}}(\mathbb H^n)$ for every $q<2_s^*$.
	
	We first claim that $v\not\equiv0$. Assume by contradiction that $v\equiv0$. We claim that, for every $z_0\in\mathbb{R}^{n-1}$ and every $\phi\in C_c^\infty(B(z_0,1))$ with
	$0\le \phi\le1$, one has
	\begin{equation}\label{eq:vanishing-phi-critical}
		\int_{\mathbb H^n} |\phi v_j|^{p+1}\,dV_{\mathbb H^n}\longrightarrow 0
		\quad\text{as }j\to\infty .
	\end{equation}
	
	Testing the Palais--Smale identity \eqref{eq:PS-identity}
	with $\varphi=\phi^2 v_j$ yields
	\begin{equation}\label{eq:test-phi2-new}
		\langle v_j,\phi^2 v_j\rangle_{\lambda,s}
		=\int_{\mathbb H^n}|v_j|^{p-1}(\phi v_j)^2\,dV_{\mathbb H^n}+o(1)\quad {\rm as}\ j\to+\infty.
	\end{equation}
	By Lemma \ref{lem:localization-Ps}, one has 
	\begin{equation}\label{eq:localization-new}
		\langle v_j,\phi^2 v_j\rangle_{\lambda,s}
		=\mathcal E_{\lambda,s}(\phi v_j)+o(1),
	\end{equation}
	which, combining \eqref{eq:test-phi2-new}--\eqref{eq:localization-new}, implies that   
	\begin{equation}\label{eq:Ephi-identity}
		\mathcal E_{\lambda,s}(\phi v_j)
		=\int_{\mathbb H^n}|v_j|^{p-1}(\phi v_j)^2\,dV_{\mathbb H^n}+o(1)\quad {\rm as}\ j\to+\infty.
	\end{equation}
	By the definition of $H_{n,s,p}(\lambda)$, see (\ref{hnsp}), we have $H_{n,s,p}(\lambda)\,\|\phi v_j\|_{p+1}^2\le \mathcal E_{\lambda,s}(\phi v_j).$
	Using \eqref{eq:Ephi-identity} and H\"older's inequality, we get
	\begin{align*}
		H_{n,s,p}(\lambda)\,\|\phi v_j\|_{p+1}^2
		&\le \int_{\mathbb H^n}|v_j|^{p-1}(\phi v_j)^2\,dV_{\mathbb H^n}+o(1)\\
		&\le \|\phi v_j\|_{p+1}^2
		\left(\int_{B(z_0,1)}|v_j|^{p+1}\,dV_{\mathbb H^n}\right)^{\!\frac{p-1}{p+1}}
		+o(1)\quad {\rm as}\ j\to+\infty.
	\end{align*}
	If $\|\phi v_j\|_{p+1}\not\to0$, dividing by $\|\phi v_j\|_{p+1}^2$ yields
	\[
	H_{n,s,p}(\lambda)
	\le \left(\int_{B(z_0,1)}|v_j|^{p+1}\,dV_{\mathbb H^n}\right)^{\!\frac{p-1}{p+1}}
	+o(1),
	\]
	hence
	\begin{equation}\label{eq:lower-mass-unit}
		\liminf_{j\to\infty}\int_{B(z_0,1)}|v_j|^{p+1}\,dV_{\mathbb H^n}
		\ge H_{n,s,p}(\lambda)^{\frac{p+1}{p-1}}.
	\end{equation}
	Recall that in \eqref{eq:delta-fixed} we fixed $0<\delta<
	H_{n,s,p}(\lambda)^{\frac{p+1}{p-1}}$, this contradicts \eqref{eq:lower-mass-unit}. Therefore necessarily
	$\|\phi v_j\|_{p+1}\to0$, which is exactly \eqref{eq:vanishing-phi-critical}.
	
	We now deduce a nontrivial concentration property near $e_1:=(1,0,\cdots,0)$. For every $R>0$,
	\begin{equation}\label{eq:514-analogue}
		\liminf_{j\to\infty}\int_{B(e_1,R)}|v_j|^{p+1}\,dV_{\mathbb H^n}>0,
	\end{equation}
	where $B(e_1,R)$ is the Euclidean ball of radius $R$ centered at $(1,0)$.
	
	Fix $R>0$. If $R\ge1$, then by the
	normalization \eqref{eq:delta-fixed} and (\ref{eq:vanishing-phi-critical}) we obtain \eqref{eq:514-analogue}. Assume now $0<R<1$. Suppose by contradiction that
	\begin{equation}\label{eq:small-ball-vanish}
		\int_{B(e_1,R)}|v_j|^{p+1}\,dV_{\mathbb H^n}\longrightarrow0\quad {\rm as}\ \, j\to+\infty.
	\end{equation}
	
	Next consider the annulus $	A:=\overline{B(e_1,1)\setminus B(e_1,R)}.$
	Since $A$ is compact in the Euclidean topology, it can be covered by finitely
	many unit balls $B(z_\ell,1)$, $\ell=1,\dots,L$, with $z_\ell\in\mathbb{R}^{n-1}$.
	For each $\ell$ choose $\phi_\ell\in C_c^\infty(B(z_\ell,1))$ such that
	$0\le\phi_\ell\le1$ and $\phi_\ell\equiv1$ on a slightly smaller ball.
	Applying \eqref{eq:vanishing-phi-critical} to each $\phi_\ell$,
	we obtain
	\[
	\int_{B(z_\ell,1)}|\phi_\ell v_j|^{p+1}\,dV_{\mathbb H^n}\to0
	\qquad\text{for every }\ell=1,\dots,L.
	\]
	Since $\{\phi_\ell\}_{\ell=1}^L$ dominates $A$, we deduce
	\[
	\int_{A}|v_j|^{p+1}\,dV_{\mathbb H^n}
	\le \sum_{\ell=1}^L \int_{B(z_\ell,1)}|\phi_\ell v_j|^{p+1}\,dV_{\mathbb H^n}
	\xrightarrow[]{j\to\infty}0.
	\]
	Combining this with \eqref{eq:small-ball-vanish} yields
	\[
	\int_{B(e_1,1)}|v_j|^{p+1}\,dV_{\mathbb H^n}
	=\int_{B(e_1,R)}|v_j|^{p+1}\,dV_{\mathbb H^n}
	+\int_{A}|v_j|^{p+1}\,dV_{\mathbb H^n}
	\longrightarrow 0,
	\]
	which contradicts the normalization \eqref{eq:delta-fixed}.
	Hence \eqref{eq:small-ball-vanish} fails and  \eqref{eq:514-analogue}
	follows. Then it is impossible for $p+1<2_s^*$, because we assumed $v=0.$

    Next, we prove the case of $p+1=2_s^*.$
	
	Fix $R_0\in (0,1)$ and $\psi\in C_c^\infty(B(e_1,R_0))$ with $\psi\equiv1$ on
	$B(e_1,R_0/2)$ and $0\le\psi\le1$. Similarly, we have
	\begin{equation}
\liminf_{j\to\infty}\int_{B(e_1,R_0)}|v_j|^{2_s^*}\,dV_{\mathbb H^n}
		\ge H_{n,s}(\lambda)^{\frac{p+1}{p-1}},
	\end{equation}
	and hence 
	\begin{equation}
		\int_{\mathbb{H}^n\setminus B(e_1,R_0)}|v_j|^{2_s^*}\,dV_{\mathbb H^n}\rightarrow 0.
	\end{equation}
	Proceeding as in
	\eqref{eq:test-phi2-new}--\eqref{eq:localization-new} (with $\phi$ replaced by
	$\psi$), we obtain that 
	\begin{equation}\label{eq:Epsi}
		\mathcal E_{\lambda,s}(\psi v_j)
		=\int_{\mathbb H^n}|v_j|^{2_s^*-2}(\psi v_j)^2\,dV_{\mathbb H^n}+o(1)
		=\int_{\mathbb H^n}|\psi v_j|^{2_s^*}\,dV_{\mathbb H^n}+o(1),
	\end{equation}
	where in the last equality we used $\operatorname{supp}\psi\subset B(e_1,R_0)$ together.
	
	Consequently,
	\[
	I_{\lambda,s}(\psi v_j)
	=\frac{\mathcal E_{\lambda,s}(\psi v_j)}{\|\psi v_j\|_{2_s^*}^2}
	\longrightarrow H_{n,s}(\lambda).
	\]
	Moreover, since $\|\psi v_j\|_{2}\to0$,
	\[H_{n,s}(\lambda)=\lim_{j\to\infty}\frac{\int_{\mathbb{H}^n}\psi v_j\cP_s(\psi v_j)dV_{\mathbb H^n}}{\|\psi v_j\|_{p}^2}\ge H_{n,s}(0),\]
	which contradict with $H_{n,s}(\lambda)<H_{n,s}(0), \lambda \in \mathcal{G}_{n,s}\!\big[H_{n,s}\big]$. 
	Therefore $v\not\equiv0$, which ends the proof.\hfill$\Box$\medskip

	\noindent\textbf{Proof of Theorem~\ref{mainthe2}.}
	The proof is the same as that of Theorem~\ref{mainthe}, except for the
	localization step. More precisely, the test-function identity
	\eqref{eq:test-phi2-new} remains valid when $\cP_s$ is replaced by
	$\widetilde{\cP}_s$, as established in Lemma~\ref{lem:localization-Ps}.
	With this modification, the rest of the argument carries over verbatim from the
	$\cP_s$--case, and we omit the details.
	\qed

	\subsection{Proof of Theorem \ref{prop:strict-gap-fractional}, \ref{thm:strict-gap-Bplus-Bzero}}
	
    In this subsection, we analyze the monotonicity, attainability, and strict-gap regimes of the Poincar\'e--Sobolev levels associated with \(\cP_s\) and \(\widetilde{\cP}_s\), and prove Theorem~\ref{prop:strict-gap-fractional} and Theorem~\ref{thm:strict-gap-Bplus-Bzero}. 
The key step is to show the threshold phenomenon that, once \(\lambda\) is above the bottom of the corresponding spectrum, the level drops instantly to \(-\infty\). 
For nonlocal operators, this requires delicate control of far-interaction terms. To obtain this off-diagonal decay, we combine the Schwartz kernel theorem (yielding a distributional kernel representation) with the Harish--Chandra asymptotic expansion.

Define the operator $\bA_s:\ C_c^\infty(\mathbb B^n)\longrightarrow \mathcal D'(\mathbb B^n)$
	by the duality pairing
	\[
	\langle \bA_sf,\varphi\rangle
	:=\int_{\mathbb B^n} (\widetilde{\cP}_s f)(x)\,\varphi(x)\,dV_{\mathbb B^n}(x)
	\quad{\rm for}\ \,  f,\varphi\in C_c^\infty(\mathbb B^n).
	\]
	In particular, $\bA_s$ is linear by construction.
	
	\begin{lemma}
		\label{lem:seq-continuity-Ptilde}
		 The operator $\bA_s$ is  continuous in the following sense: if $f_j\to f$ in
		$C_c^\infty(\mathbb B^n)$, then for every $\varphi\in C_c^\infty(\mathbb B^n)$,
		\[
		\langle \bA_sf_j,\varphi\rangle\longrightarrow \langle \bA_sf,\varphi\rangle \quad {\rm as}\ j\to+\infty.
		\]
	\end{lemma}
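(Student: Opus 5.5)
We argue by a direct $L^2$ estimate. For $f,\varphi\in C_c^\infty(\mathbb B^n)$, the pairing $\langle \bA_s f,\varphi\rangle$ is the $L^2(\mathbb B^n)$ inner product of $\widetilde\cP_s f$ with $\varphi$. By Plancherel for the Helgason--Fourier transform and the multiplier representation \eqref{mstl},
\[
\langle \bA_s f,\varphi\rangle=\int_{\mathbb R}\int_{\mathbb S^{n-1}}\widetilde m_s(\beta)\,\hat f(\beta,\theta)\,\overline{\hat{\bar\varphi}(\beta,\theta)}\,\frac{d\sigma(\theta)\,d\beta}{|c(\beta)|^2}.
\]
By linearity it suffices to show that $f_j\to0$ in $C_c^\infty(\mathbb B^n)$ implies $\langle \bA_s f_j,\varphi\rangle\to0$. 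Apply Cauchy--Schwarz on the Fourier side, splitting $\widetilde m_s=\widetilde m_s^{1/2}\widetilde m_s^{1/2}$. This gives
\[
|\langle \bA_s f_j,\varphi\rangle|\le \|f_j\|_{\widetilde H^{s}}\,\|\varphi\|_{\widetilde H^{s}},
\]
up to constants coming from Lemma~\ref{lem:domain-tildePs}, which yields $\widetilde m_s(\beta)\le c_2(1+\beta^2)^s\lesssim(\mu+\beta^2+\rho^2)^s$. Alternatively one can invoke Proposition~\ref{prop:Ps_mapping_Hsp} directly, since $\widetilde\cP_s:\widetilde H^{2s}\to L^2$ is bounded. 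In that case $|\langle\bA_s f_j,\varphi\rangle|\le \|\widetilde\cP_s f_j\|_{L^2}\|\varphi\|_{L^2}\le C\|f_j\|_{\widetilde H^{2s}}\|\varphi\|_{L^2}$.

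It therefore remains to show that $f_j\to0$ in $C_c^\infty(\mathbb B^n)$ forces $\|f_j\|_{\widetilde H^{2s}}\to0$. Convergence in $C_c^\infty$ means all supports lie in a fixed compact $K\Subset\mathbb B^n$ and $\partial^\alpha f_j\to0$ uniformly for every $\alpha$. Choose an integer $k\ge s$; then by the spectral definition $\|f_j\|_{\widetilde H^{2s}}\le \|f_j\|_{\widetilde H^{2k}}\simeq\|(\mu-\Delta_{\mathbb B^n})^k f_j\|_{L^2}$. Since $(\mu-\Delta_{\mathbb B^n})^k$ is a differential operator with smooth coefficients, which are bounded on $K$ in the explicit ball coordinates, we get $\|(\mu-\Delta_{\mathbb B^n})^kf_j\|_{L^2(K,dV)}\le C_K\sum_{|\alpha|\le 2k}\sup_K|\partial^\alpha f_j|\,\mathrm{Vol}(K)^{1/2}\to0$. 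Combining the two displays gives the claim.

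The argument has essentially no obstacle. The only point to check is that $\|\cdot\|_{\widetilde H^{2k}}$ for integer $k$ is controlled by $L^2$ norms of $(\mu-\Delta)^k f$, which holds by the functional calculus identity $\widetilde H^{2k}=(\mu-\Delta)^{-k}L^2$. One should also confirm that the real/complex conjugation convention in the pairing is harmless, which it is since $\widetilde\cP_s$ is real and self-adjoint.
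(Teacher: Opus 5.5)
Your proof is correct and follows the same route as the paper: bound $|\langle \bA_s(f_j-f),\varphi\rangle|$ by Cauchy--Schwarz against $\|\widetilde\cP_s(f_j-f)\|_{L^2}$, control this by a Sobolev norm of $f_j-f$ via the mapping property of $\widetilde\cP_s$ (the paper uses $H^m\to H^{m-2s}\hookrightarrow L^2$ with an integer $m>2s$), and note that convergence in $C_c^\infty(\mathbb B^n)$ forces convergence in that Sobolev norm. Two steps the paper leaves implicit are correctly supplied in your version: the explicit justification of that last step through the differential operator $(\mu-\Delta_{\mathbb B^n})^k$ on the fixed compact $K$, and the alternative Fourier-side bound that uses only $\widetilde m_s(\beta)\le c_2(1+\beta^2)^s$ from Lemma~\ref{lem:domain-tildePs}.
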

	
	\begin{proof}
		Let $f_j\to f$  in $C_c^\infty(\mathbb B^n)$ as $j\to+\infty$, then there exists a compact set $K\Subset \mathbb B^n$ such that
		$\operatorname{supp} f_j\subset K$ for all $j$, and for every integer $\ell\ge0$,
		\[
		\max_{|\alpha|\le \ell}\ \sup_{x\in K}\bigl|\partial^\alpha(f_j-f)(x)\bigr|
		\longrightarrow 0 .
		\]
		In particular, for fixed integer $m>2s$ we have
		\[
		\|f_j-f\|_{H^{m}(\mathbb B^n)}\longrightarrow 0\quad {\rm as}\ j\to+\infty.
		\]
		By Lemma \ref{prop:Ps_mapping_Hsp},
		\[
		\|\widetilde{\cP}_s(f_j-f)\|_{H^{m-2s}(\mathbb B^n)}
		\le C_m\,\|f_j-f\|_{H^{m}(\mathbb B^n)} \longrightarrow 0.
		\]
		Since $m-2s>0$, we have the continuous embedding
		$H^{m-2s}(\mathbb B^n)\hookrightarrow L^2(\mathbb B^n)$, hence
		\[
		\|\widetilde{\cP}_s(f_j-f)\|_{L^2(\mathbb B^n)}\longrightarrow 0.
		\]
		Fix any $\varphi\in C_c^\infty(\mathbb B^n)\subset L^2(\mathbb B^n)$. Then by
		Cauchy--Schwarz,
		\[
		\bigl|\langle \bA_s f_j-\bA_sf,\varphi\rangle\bigr|
		=\left|\int_{\mathbb B^n} \bigl(\widetilde{\cP}_s(f_j-f)\bigr)(x)\,\varphi(x)\,dV_{\mathbb B^n}(x)\right|
		\le \|\widetilde{\cP}_s(f_j-f)\|_{L^2}\,\|\varphi\|_{L^2}
		\longrightarrow 0.
		\]
		This proves $\langle \bA_sf_j,\varphi\rangle\to \langle \bA_sf,\varphi\rangle$ for every
		$\varphi\in C_c^\infty(\mathbb B^n)$, i.e. $\bA_sf_j\rightharpoonup \bA_s f$ in
		$\mathcal D'(\mathbb B^n)$.
	\end{proof}\medskip
	
	Now  all assumptions of the Schwartz kernel theorem corresponding to smooth manifolds are satisfied, and there exists
	a unique distribution $K_s\in \mathcal D'(\mathbb B^n\times \mathbb B^n)$ such that
	\[
	\langle \widetilde{\cP}_s f,\varphi\rangle=\langle K_s,\,f\otimes \varphi\rangle,
	\qquad f,\varphi\in C_c^\infty(\mathbb B^n),
	\]
	where $(f\otimes \varphi)(x,y):=\varphi(x)f(y).$ 
	Equivalently,
	\[
	\int_{\mathbb B^n} (\widetilde{\cP}_s f)(x)\,\varphi(x)\,dV_{\mathbb B^n}(x)
	=
	\iint_{\mathbb B^n\times\mathbb B^n} K_s(x,y)\,\varphi(x)\,f(y)\,dV_{\mathbb B^n}(x)\,dV_{\mathbb B^n}(y),
	\]
	with the right-hand side interpreted as the pairing of  $K_s$
	with the test function $(x,y)\mapsto \varphi(x)f(y)$.
	
	Let $G=\mathrm{Isom}(\mathbb B^n)$ act on functions by $(L_g f)(x):=f(g^{-1}x),\,g\in G.$
	By Lemma \ref{lem:isom-inv-Ps},
	\[
	\widetilde{\cP}_s L_g = L_g \widetilde{\cP}_s \qquad \text{for all } g\in G .
	\]
	The commutation relation implies that $K_s$ is $G$-invariant in the distributional sense:
	\[
	(g\times g)^*K_s=K_s \quad \text{in } \mathcal D'(\mathbb B^n\times\mathbb B^n),
	\]
	i.e. $\langle K_s,\Phi\rangle=\langle K_s,\Phi\circ(g\times g)\rangle$ for all
	$\Phi\in C_c^\infty(\mathbb B^n\times\mathbb B^n)$.
	Consequently, $K_s$ is radial: it depends only on the geodesic distance
	$d(x,y)$. In particular, there exists a radial distribution
	$k_s$ on $[0,\infty)$ such that, for every $f,\varphi\in C_c^\infty(\mathbb B^n)$,
	\[
	\int_{\mathbb B^n} (\widetilde{\cP}_s f)(x)\,\varphi(x)\,dV_{\mathbb B^n}(x)
	=\iint_{\mathbb B^n\times\mathbb B^n} k_s\!\bigl(d(x,y)\bigr)\,\varphi(x)\,f(y)\,
	dV_{\mathbb B^n}(x)\,dV_{\mathbb B^n}(y).
	\]
	
	Recall that the M\"obius transformation $T_x$ is an isometry of $(\mathbb B^n,g_{\mathbb B})$
	and satisfies $T_x(x)=0$. Hence,
	\[
	d(x,y)=d\bigl(T_x(x),T_x(y)\bigr)=d\bigl(0,T_x(y)\bigr).
	\]
	Define the radial function  on $\mathbb B^n$ by $\kappa_s(z):=k_s\!\bigl(d(0,z)\bigr),\,z\in\mathbb B^n.$
	Then, using the hyperbolic convolution (\ref{sjak}), we obtain that	for every $\varphi\in C_c^\infty(\mathbb B^n)$
	\begin{align*}
		\int_{\mathbb B^n} (\widetilde{\cP}_s f)(x)\,\varphi(x)\,dV_{\mathbb B^n}(x)
		=\iint_{\mathbb B^n\times\mathbb B^n} k_s\!\bigl(T_x(y))\bigr)\,\varphi(x)\,f(y)\,
		dV_{\mathbb B^n}(x)\,dV_{\mathbb B^n}(y).
	\end{align*}
	By the Helgason fourier transform, we can get
	\begin{equation}\label{eq:spherical-transform-kappa}
		\widehat{\kappa_s}(\beta)=\widetilde m_s(\beta)\quad{\rm for}\ \,  \beta\in\R.
	\end{equation}
	in the following sense: for every test function $\psi\in C_c^\infty(\mathbb B^n)$,
	\begin{equation}\label{eq:kappa-distributional-pairing}
		\langle \kappa_s,\psi\rangle
		=
		\int_{\R}\widetilde m_s(\beta)\,
		\Bigl(\int_{\mathbb B^n}\psi(x)\,\Phi_\beta(x)\,dV_{\mathbb B^n}(x)\Bigr)\,
		\frac{d\beta}{|c(\beta)|^2}.
	\end{equation}
	
	Define spherical function (see \cite{LiuPeng2009,lu2023explicit}) $\Phi_\beta(x):=\int_{\mathbb{S}^{n-1}}h_{-\beta,\theta}(x) d\sigma(\theta),$
    where $h_{-\beta,\theta}$ is defined in (\ref{def-h}). 
	Then $\Phi_\beta(0)=1,$ $\Phi_\beta(x)$ is ridial and has the explicit Legendre representation
	\[	\Phi_\beta(x)
	:=
	2^{\frac{n-2}{2}}\,\Gamma\!\Bigl(\frac n2\Bigr)\,
	(\sinh d(x,0))^{\frac{2-n}{2}}\,
	P_{-\frac12+\bi\beta}^{\,\frac{2-n}{2}}(\cosh d(x,0)),\]
	where $P_\nu^\mu$ is the associated Legendre function of the first kind. It is known that $P_\nu^\mu$  can be defined via hypergeometric function:
	\[	P_\nu^{\mu}(z)
	=
	\frac{1}{\Gamma(1-\mu)}
	\left(\frac{z+1}{z-1}\right)^{\frac{\mu}{2}}
	\,{}_2F_1\!\left(-\nu,\nu+1;1-\mu;\frac{1-z}{2}\right),\quad \mu\ne 1,2,3\cdots\]
	where ${}_2F_1(a,b;c;z)$ is a hypergeometric function
	\[	{}_2F_1(a,b;c;z)
	=
	\sum_{k=0}^{\infty}
	\frac{(a)_k(b)_k}{(c)_k}\,\frac{z^k}{k!},
	\qquad c\neq 0,-1,-2,\dots.\]
	
	The next proposition provides the crucial off-diagonal exponential decay estimate for \(\cP_s\) and \(\widetilde{\cP}_s\), which controls long-range interactions between separated supports.

	\begin{proposition}\label{lem:offdiag-exp-decay-Ptilde}
		Let $n\ge2, s\in\bigl(0,\frac n2\bigr)$ and $\rho=\frac{n-1}{2}$.  Then for all $f,g\in C_c^\infty(\mathbb B^n)$ with
		\[
		\operatorname{dist}(\operatorname{supp} f,\operatorname{supp} g)\ge R>1,
		\]
		there exists a constant $C=C(s,n,f,g)>0$ such that
		\[\bigl|\langle \widetilde{\cP}_s f,\,g\rangle_{L^2(\mathbb B^n)}\bigr|
		\le C\,e^{-\rho R},\quad\  	\bigl|\langle {\cP}_s f,\,g\rangle_{L^2(\mathbb B^n)}\bigr|
		\le C\,e^{-\rho R}.\]
	\end{proposition}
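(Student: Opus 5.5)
The plan is to work with the radial convolution kernel $k_s$ built just above, and to show that away from the diagonal it is a genuine smooth function with $|k_s(r)|\lesssim e^{-2\rho r}$, or at least $e^{-\rho r}$ up to polynomial factors, as $r\to\infty$. Given such a bound, the estimate follows directly. If $\operatorname{dist}(\operatorname{supp} f,\operatorname{supp} g)\ge R>1$, then the pairing $f\otimes g$ avoids a neighbourhood of the diagonal. Hence
\[
\langle \widetilde{\cP}_s f,g\rangle=\iint k_s(d(x,y))\,g(x)f(y)\,dV\,dV ,
\]
and this is bounded by $\sup_{r\ge R}|k_s(r)|\,\|f\|_{L^1}\|g\|_{L^1}\le C(f,g)\,e^{-\rho R}$. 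The same reasoning then applies to $\cP_s$.

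To obtain the decay of $k_s$, I will use the inversion formula \eqref{eq:kappa-distributional-pairing}. It gives, for $r>0$,
\[
k_s(r)=\int_{\mathbb R}\widetilde m_s(\beta)\,\Phi_\beta(r)\,|c(\beta)|^{-2}\,d\beta .
\]
This integral diverges at high frequency, so it must be understood in the oscillatory sense. I will make it rigorous for $r\ge1$ in the standard way. First, insert the Harish--Chandra expansion
\[
\Phi_\beta(r)=c(\beta)e^{(\bi\beta-\rho)r}\Bigl(1+\sum_{k\ge1}\Gamma_k(\beta)e^{-2kr}\Bigr)+c(-\beta)e^{(-\bi\beta-\rho)r}(\cdots).
\]
This series converges uniformly for $r\ge1$, with coefficients of at most polynomial growth in $\beta$. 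Using evenness of $\widetilde m_s$, this reduces $k_s(r)$ to $e^{-\rho r}$ times integrals of the form
\[
\int_{\mathbb R}\widetilde m_s(\beta)\,\frac{\Gamma_k(\beta)}{c(-\beta)}\,e^{\bi\beta r}\,d\beta .
\]
Then, by Lemma~\ref{lem:mgamma-holo-symbol} and the subsequent remark for $\widetilde m_s$, the multiplier $\widetilde m_s$ is holomorphic in the strip $|\operatorname{Im}\zeta|<s+\tfrac12$ and satisfies symbol bounds there. The factor $1/c(-\beta)$ is likewise holomorphic in a strip above the real axis; its poles come from $\Gamma(-\bi\beta)$, with the $\beta=0$ singularity cancelled after symmetrization. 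Two operations then finish the job: integrating by parts in $\beta$ sufficiently many times removes the polynomial growth, using the symbol estimates; shifting the contour to $\operatorname{Im}\beta=\sigma>0$ produces an extra factor $e^{-\sigma r}$. Together these give $|k_s(r)|\le C e^{-\rho r}$ (in fact better) for $r\ge1$, uniformly. Smoothness of $k_s$ away from $0$ follows from the same representation.

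For $\cP_s$, I will use the identity \eqref{eq:P-vs-tildeP}, which gives
\[
\cP_s=\widetilde\cP_s+\tfrac{\sin\pi s}{\pi}M_s(\cA), \qquad M_s(\beta)=|\Gamma(s+\tfrac12+\bi\beta)|^2 .
\]
The multiplier $M_s$ is holomorphic in $|\operatorname{Im}\beta|<s+\tfrac12$ and decays like $e^{-\pi|\beta|}$, so its kernel is handled by the same contour-shift argument, even more easily. Alternatively, $m_s$ itself satisfies Lemma~\ref{lem:mgamma-holo-symbol} with strip width $\tfrac{3+2s}{2}$, and the argument can be repeated for it verbatim.

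The main obstacle is justifying the kernel representation off the diagonal in the presence of the high-frequency growth $\widetilde m_s\sim|\beta|^{2s}$. The approach I will try first is to write $\widetilde m_s(\beta)=(\beta^2+\rho^2+1)^N\,q(\beta)$ with $q$ a symbol of order $2s-2N<-n$. The kernel of $q(\cA)$ is then continuous and decays as above. The factor $(1-\Delta)^N$ is local, so it does not change the pairing when the supports are disjoint, and it can be moved onto $f$. A second delicate point is the region near $\beta=0$ during the contour shift, where the singularity of $c(\beta)^{-1}$ must be controlled. There I will use that $|c(\beta)|^{-2}$ vanishes to order $\beta^2$, and symmetrize before shifting. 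If that proves technical, I will instead quote Taylor's description of $\Psi^{2s}_\sigma$ kernels used in Proposition~\ref{lem:commutator-Ps}, which decay like $e^{-\sigma d(x,y)}$ away from the diagonal. Combined with the volume bound for the supports, this yields the stated estimate provided $\sigma\ge\rho$ can be arranged; the strip widths above allow this at least when $s+\tfrac12\ge\rho$, and otherwise the Harish--Chandra factor $e^{-\rho r}$ supplies the claimed rate.
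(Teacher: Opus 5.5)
Your proposal is correct and is essentially the paper's argument: the radial kernel $k_s$ from the inversion formula, the Harish--Chandra expansion supplying the prefactor $e^{-\rho r}$, and repeated integration by parts in $\beta$ to absorb the polynomial growth of $\widetilde m_s(\beta)\,\overline{c(\beta)}^{-1}$ (the paper obtains $|k_s(r)|\le C_N e^{-\rho r}r^{-N}$ this way and says $\cP_s$ is handled the same way). The contour shift is not needed for the stated rate, and your concern at $\beta=0$ is unfounded, because $1/c(-\beta)$ is a constant times $\Gamma(\rho-\bi\beta)/\Gamma(-\bi\beta)$, which is regular (indeed vanishes) there; on the other hand, your factorization $\widetilde m_s=(\beta^2+\rho^2+1)^N q$ turns the off-diagonal kernel into an absolutely convergent integral, which justifies it more cleanly than the paper's formal integration by parts in a divergent oscillatory integral.
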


	\begin{proof}
		When $s=k\in \mathbb{N}$, since $\widetilde{\cP}_k$ is local differential operator,
		$\operatorname{supp}(\widetilde{\cP}_k f)\subset\operatorname{supp} f$. The assumption
		$\operatorname{dist}(\operatorname{supp} f,\operatorname{supp} g)\ge R>0$ implies $\operatorname{supp}(\widetilde{\cP}_k f)\cap\operatorname{supp} g=\varnothing$,
		hence $\langle \widetilde{\cP}_k f,\,g\rangle=0$.  
		
		When $s\in (0,\frac{n}{2})\setminus\mathbb{N}$, by Fubini's theorem,
		\begin{align*}
			\langle \widetilde{\cP}_s f,g\rangle_{L^2(\mathbb B^n)}
			&=\int_{\mathbb B^n}\!\!\int_{\mathbb B^n}
			\kappa_s\!\bigl(d(x,y)\bigr)\,f(y)\,g(x)\,dV_{\mathbb B^n}(x)dV_{\mathbb B^n}(y) .
		\end{align*}
		By Fubini's theorem and the change of variables $x=T_y(z)$, this becomes
		\[
		\langle \widetilde{\cP}_s f,g\rangle
		=\int_{\mathbb B^n} f(y)\,\Big\langle \kappa_s,\psi_y\Big\rangle\,dV_{\mathbb B^n}(y),
		\qquad
		\psi_y(z):=g(T_y(z))\in C_c^\infty(\mathbb B^n).
		\]
		Applying the distributional inversion formula (\ref{eq:kappa-distributional-pairing}) for $\kappa_s$  yields
		\[
		\Big\langle \kappa_s,\psi_y\Big\rangle
		=\int_{\R}\widetilde m_s(\beta)
		\Bigl(\int_{\mathbb B^n}\psi_y(z)\Phi_\beta(z)\,dV_{\mathbb B^n}(z)\Bigr)\frac{d\beta}{|c(\beta)|^2}.
		\]
		Changing variables $z=T_y(x)$ in the inner integral, we obtain
		\[
		\int_{\mathbb B^n}\psi_y(z)\Phi_\beta(z)\,dV_{\mathbb B^n}(z)
		=\int_{\mathbb B^n}g(x)\,\Phi_\beta(T_y(x))\,dV_{\mathbb B^n}(x).
		\]
		Substituting back and exchanging the order of integration gives
		\begin{equation}\label{eq:bilinear-spherical}
			\langle \widetilde{\cP}_s f,g\rangle_{L^2(\mathbb B^n)}
			=
			\int_{\R}\widetilde m_s(\beta)\,
			\Bigl(\iint_{\mathbb B^n\times\mathbb B^n}
			f(y)\,g(x)\,\Phi_\beta\!\bigl(T_y(x)\bigr)\,dV_{\mathbb B^n}(x)\,dV_{\mathbb B^n}(y)\Bigr)\,
			\frac{d\beta}{|c(\beta)|^2}.
		\end{equation}
		Let $K_f:=\operatorname{supp} f$ and $K_g:=\operatorname{supp} g$. By assumption,
		$r:=d(x,y)\ge R$ for all $(x,y)\in K_g\times K_f$. Since $\Phi_\beta$ is radial and
		$T_y$ is an isometry sending $y$ to $0$, we have $	\Phi_\beta \bigl(T_y(x)\bigr)=\Phi_\beta \bigl(r\bigr),$
		where
		\[	\Phi_\beta(r)
		=
		2^{\frac{n-2}{2}}\,\Gamma\!\Bigl(\frac n2\Bigr)\,
		(\sinh r)^{\frac{2-n}{2}}\,
		P_{-\frac12+\bi\beta}^{\,\frac{2-n}{2}}(\cosh r).\]
		Hence, using \eqref{eq:bilinear-spherical} and the Fubini Theorem, we can write
		\[	\big\langle \widetilde{\cP}_s f,g\big\rangle
		=
		\iint_{K_g\times K_f} f(y)g(x)\,k_s(r(x,y))\,dV_{\mathbb B^n}(x)\,dV_{\mathbb B^n}(y),
		\qquad
		k_s(r):=\int_{\mathbb R}\widetilde m_s(\beta)\Phi_\beta(r)\frac{d\beta}{|c(\beta)|^2}.\]
		Therefore,
		\begin{equation}\label{eq:reduce-to-sup}
			\Big|\big\langle \widetilde{\cP}_s f,g\big\rangle\Big|
			\le
			\|f\|_{L^1}\|g\|_{L^1}\,\sup_{r\ge R}|k_s(r)|.
		\end{equation}
		In particular, one should not take absolute values inside the $\beta$--integral,
		since this would destroy the oscillatory cancellation in $\beta$ coming from the
		large--$r$ asymptotics of $\Phi_\beta(r)$.

		We use the Harish--Chandra expansion from \cite[Theorem~3.1]{stanton1978expansions}
		(for rank one) which, in our notation, reads
		\begin{equation}\label{eq:HC-expansion}
			\Phi_\beta(r)
			=
			c(\beta)e^{(\bi\beta-\rho)r}\phi_\beta(r)+c(-\beta)e^{(-\bi\beta-\rho)r}\phi_{-\beta}(r),
			\qquad
			\phi_\beta(r)=\sum_{k=0}^\infty \Gamma_k(\beta)e^{-2kr},
		\end{equation}
		with $\Gamma_0(\beta)\equiv 1$ and $\rho=\frac{n-1}{2}$. We estimate $k_s(r)$ for $r\ge 1$. Using \eqref{eq:HC-expansion},
		\[
		k_s(r)=e^{-\rho r}\Big(J_+(r)+J_-(r)\Big),
		\]
		where
		\[
		J_+(r):=\int_{\mathbb R}e^{\bi\beta r}\,\widetilde m_s(\beta)\,
		\frac{\phi_\beta(r)}{\overline{c(\beta)}}\,d\beta,
		\qquad
		J_-(r):=\int_{\mathbb R}e^{-\bi\beta r}\,\widetilde m_s(\beta)\,
		\frac{\phi_{-\beta}(r)}{\overline{c(-\beta)}}\,d\beta.
		\]
		We treat $J_+$ (the other term is analogous). Split $\phi_\beta(r)=1+\big(\phi_\beta(r)-1\big)$.
		Accordingly,
		\[
		J_+(r)=J_+^{(0)}(r)+J_+^{(1)}(r),
		\qquad
		J_+^{(0)}(r):=\int_{\mathbb R}e^{\bi\beta r}\,\widetilde m_s(\beta)\,\overline{c(\beta)}^{-1}\,d\beta.
		\]
		
		Let $a(\beta):=\widetilde m_s(\beta)\,\overline{c(\beta)}^{-1}$. By Stirling estimates for
		Gamma ratios and their derivatives together with Lemma~4.2 of \cite{stanton1978expansions},
		we have for every integer $k\ge 0$ the symbol estimate
		\begin{equation}\label{eq:symbol-a}
			|a^{(k)}(\beta)|
			\le C_k\,(1+|\beta|)^{2s+\frac{n-1}{2}-k},
			\qquad \beta\in\mathbb R.
		\end{equation}
		Choose $N\in\mathbb N$ so large that $N>2s+\frac{n+1}{2}$. Then $a^{(N)}\in L^1(\mathbb R\setminus(-1,1))$.
		Let $\chi\in C_c^\infty(\mathbb R)$ satisfy $\chi\equiv 1$ on $[-1,1]$ and $\chi\equiv 0$ on $|\beta|\ge 2$.
		Write
		\[
		J_+^{(0)}(r)=\int \chi(\beta)a(\beta)e^{\bi\beta r}\,d\beta+\int (1-\chi(\beta))a(\beta)e^{\bi\beta r}\,d\beta
		=:J_{+,\mathrm{low}}^{(0)}(r)+J_{+,\mathrm{high}}^{(0)}(r).
		\]
		Since $\chi a$ is compactly supported and smooth, repeated integration by parts gives
		$|J_{+,\mathrm{low}}^{(0)}(r)|\le C_N r^{-N}$. For the high--frequency part,
		integrating by parts $N$ times yields
		\[
		J_{+,\mathrm{high}}^{(0)}(r)=\frac{1}{(\bi  r)^N}\int \partial_\beta^N\!\big((1-\chi(\beta))a(\beta)\big)\,e^{\bi\beta r}\,d\beta,
		\]
		hence by \eqref{eq:symbol-a},
		\[
		|J_{+,\mathrm{high}}^{(0)}(r)|
		\le r^{-N}\int_{\mathbb R}\Big|\partial_\beta^N\!\big((1-\chi)a\big)\Big|\,d\beta
		\le C_N r^{-N}.
		\]
		Therefore,
		\begin{equation}\label{eq:J0-decay}
			|J_+^{(0)}(r)|\le C_N r^{-N},\qquad r\ge 1.
		\end{equation}
		By \eqref{eq:HC-expansion},
		\[
		\phi_\beta(r)-1=\sum_{k\ge 1}\Gamma_k(\beta)e^{-2kr}=e^{-2r}\sum_{k\ge 1}\Gamma_k(\beta)e^{-2(k-1)r}.
		\]
		For each fixed $r\ge 1$, the series converges uniformly in $\beta$ on compact sets
		\cite[Remark~1 after Theorem~3.1]{stanton1978expansions}. Moreover, the coefficients $\Gamma_k(\beta)$
		have at most polynomial growth in $\beta$ (see the discussion following Theorem~3.1 and
		the estimates on $\Gamma_k$ in Section~3 of \cite{stanton1978expansions}), hence the factor $e^{-2r}$
		implies that $J_+^{(1)}(r)$ enjoys the same oscillatory integration--by--parts bound as $J_+^{(0)}(r)$,
		up to an additional $e^{-2r}$ factor. Concretely, repeating the above argument with the amplitude
		$a(\beta)\big(\phi_\beta(r)-1\big)$ yields
		\begin{equation}\label{eq:J1-decay}
			|J_+^{(1)}(r)|\le C_N e^{-2r} r^{-N},\qquad r\ge 1.
		\end{equation}
		
		Combining \eqref{eq:J0-decay}--\eqref{eq:J1-decay} (and the analogous bounds for $J_-$) gives
		\[
		|k_s(r)|\le C_N e^{-\rho r} r^{-N}\lesssim e^{-\rho r},\qquad r\ge 1.
		\]
		Thus $\sup_{r\ge R}|k_s(r)|\le C_Ne^{-\rho R}$ for $R\ge 1$. Plugging this into
		\eqref{eq:reduce-to-sup} yields the desired estimate.
		
		The corresponding estimate for $\cP_s$ can be obtained in the same way.
	\end{proof}\medskip

    As a direct consequence of the off-diagonal decay estimate, we obtain the threshold blow-down phenomenon: once \(\lambda\) exceeds the spectral bottom, the corresponding Poincar\'e--Sobolev level equals to \(-\infty\).

	\begin{proposition}\label{prop:Hnslambda-minus-infty}
		Let $n\ge2$ and  $s\in\bigl(0,\frac n2\bigr)$. Then,
		
		(i) for $\lambda>\widetilde\lambda_{0,s}^{\mathrm{conf}}$, $\widetilde H_{n,s}(\lambda)=-\infty;$
		
		(ii) for $\lambda>\lambda_{0,s}^{\mathrm{conf}}$, $H_{n,s}(\lambda)=-\infty.$
	\end{proposition}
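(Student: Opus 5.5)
The plan is to build, for fixed $\lambda>\widetilde\lambda_{0,s}^{\mathrm{conf}}$, one test function with negative energy, and then multiply it by many far-apart isometric copies of itself. The numerator of the quotient grows linearly in the number of copies, while the $L^{2_s^*}$ denominator grows sublinearly; this drives the quotient to $-\infty$. The argument for $\cP_s$ in (ii) is identical, with $m_s$ and $\lambda_{0,s}^{\mathrm{conf}}$ in place of $\widetilde m_s$ and $\widetilde\lambda_{0,s}^{\mathrm{conf}}$, and uses the second estimate of Proposition~\ref{lem:offdiag-exp-decay-Ptilde}.

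\textbf{Step 1: a compactly supported function with negative energy.} Since $\widetilde\lambda_{0,s}^{\mathrm{conf}}=\inf\sigma(\widetilde\cP_s)=\widetilde m_s(0)<\lambda$ by \eqref{eq:bottom-spectrum-by-symbol}, the spectral projection of $\widetilde\cP_s$ onto $[\widetilde\lambda_{0,s}^{\mathrm{conf}},\lambda)$ is nonzero. Hence there is $w\in L^2$ with $\langle(\widetilde\cP_s-\lambda)w,w\rangle<0$. Concretely, one may take $\hat w$ supported in $\{|\beta|<\beta_0\}$, where $\widetilde m_s(\beta)<\lambda$, and such a $w$ lies in $H^{2s}=\mathcal D(\widetilde\cP_s)$. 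Since $C_c^\infty(\mathbb H^n)$ is dense in $H^{s}$ and the form $\langle\widetilde\cP_s u,u\rangle-\lambda\|u\|_2^2$ is continuous on $H^s$ (Lemma~\ref{lem:domain-tildePs}), there is $u\in C_c^\infty(\mathbb H^n)$ with
\[
-\alpha:=\langle \widetilde\cP_s u,u\rangle-\lambda\|u\|_2^2<0 .
\]

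\textbf{Step 2: superposition of far-apart translates.} Pick isometries $\mathcal T_1,\dots,\mathcal T_N$ such that the supports of $u_i:=u\circ\mathcal T_i$ are pairwise at hyperbolic distance at least $R$, and set $U_N:=\sum_{i=1}^N u_i$. Lemma~\ref{lem:isom-inv-Ps} gives that each diagonal term equals $-\alpha$, and the supports are disjoint, so
\[
\langle\widetilde\cP_s U_N,U_N\rangle-\lambda\|U_N\|_2^2=-N\alpha+\sum_{i\ne j}\langle\widetilde\cP_s u_i,u_j\rangle .
\]
By Proposition~\ref{lem:offdiag-exp-decay-Ptilde}, each cross term is at most $Ce^{-\rho R}$. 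Here $C$ must not depend on $i,j$. This holds because the proof bounds the term by $\|f\|_{L^1}\|g\|_{L^1}\sup_{r\ge R}|k_s(r)|$, and $\|u_i\|_{L^1}=\|u\|_{L^1}$.

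\textbf{Step 3: choice of separation and conclusion.} For each $N$, choose $R=R_N$ with $N^2Ce^{-\rho R_N}\le N\alpha/2$. This is possible because hyperbolic space contains arbitrarily many points with arbitrarily large pairwise distances, for instance points along a geodesic. The numerator is then at most $-N\alpha/2$, while $\|U_N\|_{2_s^*}^2=N^{2/2_s^*}\|u\|_{2_s^*}^2$. The quotient is therefore at most $-cN^{1-2/2_s^*}\to-\infty$, since $2_s^*>2$. This proves $\widetilde H_{n,s}(\lambda)=-\infty$.

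The main obstacle is the uniformity of the constant in the off-diagonal decay. The statement of Proposition~\ref{lem:offdiag-exp-decay-Ptilde} lets $C$ depend on $f,g$, so Step 2 requires reading the uniform bound out of its proof via \eqref{eq:reduce-to-sup}, with $\sup_{r\ge R}|k_s(r)|$ controlled independently of the supports.

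A secondary point is Step 1 near a possibly degenerate bottom, for example when $m_s$ behaves like $\beta^2$. This causes no difficulty, because only the strict inequality $\inf m<\lambda$ and continuity of the symbol are used.
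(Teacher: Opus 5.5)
Your proposal is correct and follows the paper's proof step by step. Both arguments take a compactly supported function with negative shifted energy, superpose $N$ isometric copies at mutual distance $R_N$, use Proposition~\ref{lem:offdiag-exp-decay-Ptilde} to make the $O(N^2e^{-\rho R_N})$ cross terms negligible against $-N\alpha$, and exploit the sublinear growth $N^{2/2_s^*}$ of the denominator. Your two additions make explicit what the paper uses tacitly when it says the constants depend only on $n,s,\varphi$. These are the spectral-projection and density justification of Step~1, and the remark that the off-diagonal constant is uniform over isometric copies because it equals $\|f\|_{L^1}\|g\|_{L^1}\sup_{r\ge R}|k_s(r)|$.
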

	
	\begin{proof}
		(i) Fix $\lambda>\widetilde \lambda_{0,s}^{\mathrm{conf}}$. By the definition of
		$\widetilde\lambda_{0,s}^{\mathrm{conf}}$, there exists $\varphi\in C_c^\infty(\mathbb H^n)\setminus\{0\}$
		such that
		\[
		Q_\lambda(\varphi):=\int_{\mathbb H^n} (\widetilde\cP_s \varphi)\,\varphi\,dV_{\mathbb H^n}
		-\lambda\int_{\mathbb H^n}|\varphi|^2\,dV_{\mathbb H^n}<0.
		\]
		Set $q:=-Q_\lambda(\varphi)>0$ and $p:=2_s^*$.
		
		\noindent\emph{Step 1: many far-apart copies.}
		Let $K:=\operatorname{supp}\varphi$, which is compact. For each $N\in\mathbb N$ we choose
		isometries $\{\tau_j\}_{j=1}^N\subset \mathrm{Isom}(\mathbb H^n)$ such that the sets
		\[
		K_j:=\tau_j(K)
		\quad\text{are pairwise disjoint and satisfy}\quad
		\operatorname{dist}(K_i,K_j)\ge R_N\ \ \text{for all }i\neq j,
		\]
		where $R_N>0$ will be chosen later. Define $\varphi_j:=\varphi\circ\tau_j^{-1}$ and $u_N:=\sum_{j=1}^N \varphi_j\in C_c^\infty(\mathbb H^n).$
		Since isometries preserve $dV_{\mathbb H^n}$ and the supports are disjoint, we have $	\int_{\mathbb H^n}|u_N|^2\,dV_{\mathbb H^n}
			=\sum_{j=1}^N \int_{\mathbb H^n}|\varphi_j|^2\,dV_{\mathbb H^n}
			= N\int_{\mathbb H^n}|\varphi|^2\,dV_{\mathbb H^n},$
		and $\int_{\mathbb H^n}|u_N|^{p}\,dV_{\mathbb H^n}
			=\sum_{j=1}^N \int_{\mathbb H^n}|\varphi_j|^{p}\,dV_{\mathbb H^n}
			= N\int_{\mathbb H^n}|\varphi|^{p}\,dV_{\mathbb H^n}.$
		Hence the denominator satisfies
		\begin{equation}\label{eq:denom-growth}
			\Bigl(\int_{\mathbb H^n}|u_N|^p\,dV_{\mathbb H^n}\Bigr)^{\!2/p}
			= N^{2/p}\Bigl(\int_{\mathbb H^n}|\varphi|^p\,dV_{\mathbb H^n}\Bigr)^{\!2/p}
			= N^{\frac{n-2s}{n}}\Bigl(\int_{\mathbb H^n}|\varphi|^p\,dV_{\mathbb H^n}\Bigr)^{\!2/p}.
		\end{equation}
		
		\medskip
		\noindent\emph{Step 2: estimates of the quadratic form.}
		Write $u_N=\sum_{j=1}^N \varphi_j$. Expanding the quadratic form gives
		\begin{align*}
			Q_\lambda(u_N)
			&=\sum_{j=1}^N Q_\lambda(\varphi_j)
			\;+\;2\sum_{1\le i<j\le N}\int_{\mathbb H^n} (\widetilde\cP_s\varphi_i)\,\varphi_j\,dV_{\mathbb H^n}.
		\end{align*}
		By Lemma \ref{lem:isom-inv-Ps} and $dV_{\mathbb H^n}$, $Q_\lambda(\varphi_j)=Q_\lambda(\varphi)=-q$,
		thus $\sum_{j=1}^N Q_\lambda(\varphi_j)=-Nq$.
		
		By Proposition \ref{lem:offdiag-exp-decay-Ptilde}, there exist constants $\alpha>0$ and $C>0$ (depending on $n,s$ and $\varphi$) such that
		whenever $\operatorname{dist}(\operatorname{supp} f,\operatorname{supp}g)\ge R$,
		\begin{equation}\label{eq:interaction-bound}
			\biggl|\int_{\mathbb H^n} (\widetilde \cP_s f)\,g\,dV_{\mathbb H^n}\biggr|
			\le C\,e^{-\alpha R}.
		\end{equation}

		Applying \eqref{eq:interaction-bound} with $f=\varphi_i$, $g=\varphi_j$ and $R=R_N$ yields
		\[
		\biggl|\sum_{1\le i<j\le N}\int_{\mathbb H^n} (\cP_s\varphi_i)\,\varphi_j\,dV_{\mathbb H^n}\biggr|
		\le C \binom{N}{2} e^{-\alpha R_N}\le C N^2 e^{-\alpha R_N}.
		\]
		Therefore
		\begin{equation}\label{eq:Qlambda-upper}
			Q_\lambda(u_N)\le -Nq + 2C N^2 e^{-\alpha R_N}.
		\end{equation}
		Choose $R_N:=\frac{2}{\alpha}\log N + R_0$ with $R_0>0$ so large that $2C e^{-\alpha R_0}\le \frac q4$.
		Then $2C N^2 e^{-\alpha R_N}\le \frac q4\,N$, and \eqref{eq:Qlambda-upper} becomes
		\begin{equation}\label{eq:Qlambda-negative-linear}
			Q_\lambda(u_N)\le -\frac{3q}{4}\,N\le -\frac q2\,N
			\qquad\text{for all sufficiently large }N.
		\end{equation}
		Combining \eqref{eq:denom-growth} and \eqref{eq:Qlambda-negative-linear}, for large $N$ we obtain
		\[\frac{Q_\lambda(u_N)}{\bigl(\int_{\mathbb H^n}|u_N|^p\,dV_{\mathbb H^n}\bigr)^{2/p}}
		\le
		-\frac{q}{2}\,
		\frac{N}{N^{2/p}}\,
		\Bigl(\int_{\mathbb H^n}|\varphi|^p\,dV_{\mathbb H^n}\Bigr)^{-2/p} \\
		=
		- kN^{1-\frac{2}{p}}
		=
		-kN^{\frac{2s}{n}}
		\to -\infty,\]
		where $k:=\dfrac{q}{2}\Bigl(\int_{\mathbb H^n}|\varphi|^p\,dV_{\mathbb H^n}\Bigr)^{-2/p}>0$.
		Taking the infimum over $u$ gives $H_{n,s}(\lambda)=-\infty$.
		
		(ii) The same argument applies to $\cP_s$ as well, and we omit the proof.
	\end{proof}\bigskip

    	We next record the basic comparison properties of the hyperbolic fractional Poincar\'e--Sobolev levels with respect to the Euclidean sharp constant \(S_{n,s}\).

	\begin{proposition}\label{prop:basic-props-Hns}
		Let $n\ge2$ and $s\in(0,\tfrac n2)$. Then for all $\lambda\in \mathbb{R}$,
		\[
		 H_{n,s}(\lambda)\le S_{n,s} ;\quad  \widetilde H_{n,s}(\lambda)\le S_{n,s}
		\]
		where $S_{n,s},H_{n,s}(\lambda), \widetilde H_{n,s}(\lambda)$ are defined in  (\ref{eq:upper-bound}) (\ref{eq:Sns-Hn-lambda})(\ref{eq:Sns-Hn-lambda111}). In particular, for $\lambda\le 0,$ we have
        \[
		\widetilde H_{n,s}(\lambda)= \widetilde H_{n,s}(0)= S_{n,s}.
		\]
	\end{proposition}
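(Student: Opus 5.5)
The plan is to test both levels against concentrating bubbles, and then to combine the resulting upper bound with the sharp conformal Sobolev inequality.

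\textbf{Upper bound.} Fix $\lambda\in\mathbb R$. I would use the test functions $u_\varepsilon=\phi^{\frac n2-s}w_\varepsilon$ built in Section 3, working in the ball model. By Lemma~\ref{lem:exact-reduction}, Proposition~\ref{lem:L2sstar-cutoff} and \eqref{eq:energy-over-B},
\[
\frac{\int_{\mathbb B^n}(\widetilde\cP_s u_\varepsilon)u_\varepsilon\,dV_{\mathbb B^n}}{L_\varepsilon}=S_{n,s}+O(\varepsilon^{n-2s}).
\]
Proposition~\ref{lem:L2-cutoff} shows that $\int|u_\varepsilon|^2dV_{\mathbb B^n}\to0$ in every regime: the rates are $\varepsilon^{2s}$, $\varepsilon^{2s}|\log\varepsilon|$ or $\varepsilon^{n-2s}$, and all exponents are positive. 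Since $L_\varepsilon\to L_0>0$, the term $\lambda\|u_\varepsilon\|_2^2/L_\varepsilon$ tends to $0$. Hence the Rayleigh quotient of $\widetilde H_{n,s}(\lambda)$ evaluated at $u_\varepsilon$ tends to $S_{n,s}$, which gives $\widetilde H_{n,s}(\lambda)\le S_{n,s}$.

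For $H_{n,s}$ I would use \eqref{eq:P-vs-tildeP}:
\[
\langle\cP_s u,u\rangle=\langle\widetilde\cP_s u,u\rangle+\tfrac{\sin(\pi s)}{\pi}\langle M_s(\cA)u,u\rangle,
\]
where $0\le M_s(\cA)\le M_s(0)$, as shown in the proof of Theorem~\ref{prop:gap-bs-lambda011}. It follows that
\[
\bigl|\langle\cP_s u_\varepsilon,u_\varepsilon\rangle-\langle\widetilde\cP_s u_\varepsilon,u_\varepsilon\rangle\bigr|\le C\|u_\varepsilon\|_2^2\to0.
\]
The same limit argument therefore gives $H_{n,s}(\lambda)\le S_{n,s}$ for every $\lambda$. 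When $s\in\mathbb N$ the two operators coincide and the argument is immediate.

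\textbf{Lower bound for $\lambda\le0$.} By Proposition~\ref{prop:conf-frac-Sob-Hn}, for every $u\in C_c^\infty(\mathbb H^n)$ we have $\langle\widetilde\cP_s u,u\rangle\ge S_{n,s}\|u\|_{2_s^*}^2$. If $\lambda\le0$, then $-\lambda\|u\|_2^2\ge0$, so the numerator of \eqref{eq:Sns-Hn-lambda111} is at least $S_{n,s}\|u\|_{2_s^*}^2$. This gives $\widetilde H_{n,s}(\lambda)\ge S_{n,s}$. Together with the upper bound we get $\widetilde H_{n,s}(\lambda)=S_{n,s}$ for all $\lambda\le0$, and in particular $\widetilde H_{n,s}(0)=S_{n,s}$.

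\textbf{Main obstacle.} The delicate step is the uniformity of the limit in the upper bound. The energy expansion must hold with a genuinely vanishing error, and this is exactly what Proposition~\ref{lem:energy-asymptotics-tilde} provides, since $n-2s>0$. The $L^2$-mass must also vanish in the low-dimensional regime $2s<n<4s$, where its rate is only $\varepsilon^{n-2s}$. Both points are already secured by the estimates of Section 3, so the remaining argument is bookkeeping. For $\cP_s$ with $\sin(\pi s)<0$ the correction term has a sign, but the absolute bound above suffices without using that sign.
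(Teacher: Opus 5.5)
Your argument is correct. The lower bound for $\lambda\le0$ via Proposition~\ref{prop:conf-frac-Sob-Hn} is exactly what the paper does. So is the passage from $\widetilde\cP_s$ to $\cP_s$ through the bounded multiplier in \eqref{eq:P-vs-tildeP} and \eqref{rema}. The difference is in the test functions for the upper bound.

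The paper takes an arbitrary profile $\varphi\in C_c^\infty(B_1(0))$ and concentrates it at a point $p$ in geodesic normal coordinates. It then shows the Rayleigh quotient converges to the Euclidean quotient of $\varphi$, and only afterwards infimizes over $\varphi$. Along the way it asserts, without proof, $O(\varepsilon^2)$ errors from the metric distortion and the conformal factor inside the nonlocal energy, plus $O(\varepsilon^{2s})$ from the $L^2$ and $B_s(\cA)$ terms.

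You instead insert the truncated extremal bubble of Section~3, for which Lemma~\ref{lem:exact-reduction} gives an exact conformal identity. So you have no distortion error to control and no final infimization. You even get the rate \eqref{eq:energy-over-B} that drives Theorem~\ref{thm:Hardy-lowerbound-Ptilde}. The price is that you rely on the explicit extremal $U$ and on the truncation estimate of Proposition~\ref{lem:energy-asymptotics-tilde}. The paper's generic ``local Euclidean model'' argument needs neither.

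One caution about calling that estimate ``already secured'' when $n\le 4s$. Step~3 of the paper's proof bounds $\|(-\Delta)^{s/2}z_\varepsilon\|_2$ by $\|z_\varepsilon\|_{H^m}$. But $z_\varepsilon=(\eta-1)U_\varepsilon$ lives on all of $\{|x|\ge\delta\}$ and is not in $L^2$ when $n\le4s$, so that step fails as written. The conclusion is still true: $z_\varepsilon=\varepsilon^{\frac{n-2s}{2}}g_\varepsilon$ with $g_\varepsilon:=(\eta-1)(\varepsilon^2+|x|^2)^{-\frac{n-2s}{2}}$. This function is smooth, supported in $\{|x|\ge\delta\}$ and decays like $|x|^{-(n-2s)}$, so it is bounded in $\dot H^s$ uniformly in $\varepsilon$ because $2s<n$. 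In any case you only need an $o(1)$ error.

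You can also avoid bubbles entirely by combining the two routes. For any $w\in C_c^\infty(\mathbb R^n)\setminus\{0\}$ put $w_\varepsilon(x)=\varepsilon^{-\frac{n-2s}{2}}w(x/\varepsilon)$ and $u_\varepsilon=\phi^{\frac n2-s}w_\varepsilon$. For small $\varepsilon$, Lemma~\ref{lem:exact-reduction} makes the energy and the $L^{2_s^*}$ norm exactly those of $w$. Meanwhile $\|u_\varepsilon\|_{L^2(dV)}^2=\int|w_\varepsilon|^2\phi^{-2s}\,dx\le C\varepsilon^{2s}\|w\|_2^2\to0$. Infimizing over $w$ then gives both upper bounds with no asymptotic expansion at all.
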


	\begin{proof}
		It is sufficient to show that $H_{n,s}(\lambda)\le S_{n,s}$ and $\widetilde H_{n,s}(\lambda)\le S_{n,s}$. Fix $p\in\mathbb H^n$ and choose $0\not\equiv\varphi\in C_c^\infty(B_1(0))$.
		For $\varepsilon\in(0,1)$, in geodesic normal coordinates at $p$,  define
		\[
		u_\varepsilon(x)
		:= \varepsilon^{-\frac{n-2s}{2}}\,
		\varphi \Bigl(\frac{\exp_p^{-1}(x)}{\varepsilon}\Bigr),
		\qquad x\in\mathbb H^n.
		\]
		Then $u_\varepsilon\in C_c^\infty(\mathbb H^n)$ and $\operatorname{supp}u_\varepsilon
		\subset B_{\mathbb H^n}(p,\varepsilon)$.
		
		Write $x=\exp_p(\varepsilon z)$, so $y=\exp_p^{-1}(x)=\varepsilon z$ and $u_\varepsilon(\exp_p(\varepsilon z))
		=\varepsilon^{-\frac{n-2s}{2}}\varphi(z)$. In normal coordinates,
		\[
		g_{ij}(y)=\delta_{ij}+O(|y|^2),\qquad dV_{\mathbb H^n}(y)=\bigl(1+O(|y|^2)\bigr)\,dy,
		\]
		hence under $y=\varepsilon z$, $dV_{\mathbb H^n}(x)=\bigl(1+O(\varepsilon^2|z|^2)\bigr)\,\varepsilon^n\,dz,$
		uniformly on the support of $\varphi$.
		
		Using the above Jacobian and $|u_\varepsilon|^{2_s^*}=\varepsilon^{-\frac{n-2s}{2}2_s^*}|\varphi(z)|^{2_s^*}$ with
		$\frac{n-2s}{2}\,2_s^*=n$, we get
		\[
		\int_{\mathbb H^n}|u_\varepsilon|^{2_s^*}\,dV_{\mathbb H^n}
		=\int_{\R^n} |\varphi(z)|^{2_s^*}\,\bigl(1+O(\varepsilon^2|z|^2)\bigr)\,dz
		=\int_{\R^n} |\varphi|^{2_s^*}\,dz+O(\varepsilon^2).
		\]
		Similarly,
		\[
		\int_{\mathbb H^n}|u_\varepsilon|^2\,dV_{\mathbb H^n}
		=\int_{\R^n}\varepsilon^{-(n-2s)}|\varphi(z)|^2\,\bigl(1+O(\varepsilon^2|z|^2)\bigr)\,\varepsilon^n\,dz
		=\varepsilon^{2s}\int_{\R^n}|\varphi|^2\,dz+o(\varepsilon^{2s}).
		\]
		
		Moreover, using the decomposition (\ref{eq:P-vs-tildeP}), we write
		\[
		\int_{\mathbb H^n} u_\varepsilon\,\cP_s u_\varepsilon\,dV_{\mathbb H^n}
		= \int_{\mathbb H^n} u_\varepsilon\,\widetilde{\cP}_s u_\varepsilon\,dV_{\mathbb H^n}
		+ \int_{\mathbb H^n} u_\varepsilon\,B_s(\cA)\,u_\varepsilon\,dV_{\mathbb H^n},
		\]
		where $B_s(\cA):=\frac{\sin(\pi s)}{\pi}\,|\Gamma(s+\tfrac12+i\cA)|^2$ is a bounded spectral multiplier, see \eqref{rema}.
		Hence
		\[
		\Bigl|\int_{\mathbb H^n} u_\varepsilon\,B_s(\cA)\,u_\varepsilon\,dV_{\mathbb H^n}\Bigr|
		\;\lesssim\; \|u_\varepsilon\|_{L^2(\mathbb H^n)}^2
		\;=\; \varepsilon^{2s}\int_{\mathbb R^n}|\varphi|^2\,dz + o(\varepsilon^{2s}).
		\]
		
		For the main part, working in the ball model and using the intertwining identity
		\[
		\Bigl(\frac{1-|x|^2}{2}\Bigr)^{\!s+\frac n2}
		(-\Delta)^s\!\Bigl[\Bigl(\frac{1-|x|^2}{2}\Bigr)^{\!s-\frac n2}u\Bigr]
		= \widetilde{\cP}_s u \qquad \text{in }(\mathbb H^n,g_{\mathbb H^n}),
		\]
		we set $v_\varepsilon(x):=\Bigl(\frac{1-|x|^2}{2}\Bigr)^{\!s-\frac n2}u_\varepsilon(x).$
		Since $\operatorname{supp}u_\varepsilon\subset B_{\mathbb H^n}(p,\varepsilon)$, in normal coordinates one has
		\(
		\bigl(\tfrac{1-|x|^2}{2}\bigr)^{s-\frac n2}=1+O(\varepsilon^2)
		\)
		and
		\(
		dV_{\mathbb H^n}(x)=(1+O(\varepsilon^2|z|^2))\,\varepsilon^n\,dz
		\)
		on the support. Using the critical scaling of $u_\varepsilon$  and Plancherel,
		\[
		\int_{\mathbb H^n} u_\varepsilon\,\widetilde{\cP}_s u_\varepsilon\,dV_{\mathbb H^n}
		= \int_{\mathbb R^n} v_\varepsilon\,(-\Delta)^s v_\varepsilon\,dx
		= \int_{\mathbb R^n} \varphi\,(-\Delta)^s \varphi\,dy \;+\; O(\varepsilon^2).
		\]
		Combining the two pieces yields
		\[
		\int_{\mathbb H^n} u_\varepsilon\,\cP_s u_\varepsilon\,dV_{\mathbb H^n}
		= \int_{\mathbb R^n} \varphi\,(-\Delta)^s \varphi\,dy
		\;+\; O(\varepsilon^2)\;+\;O(\varepsilon^{2s})
		= \int_{\mathbb R^n} \varphi\,(-\Delta)^s \varphi\,dy \;+\; O\!\bigl(\varepsilon^{2\min\{1,s\}}\bigr).
		\]
		
		Thus, we have
		\[
		\begin{aligned}
			\frac{\displaystyle \int_{\mathbb H^n} (\cP_s u_\varepsilon)u_\varepsilon\,dV_{\mathbb H^n}
				-\lambda\int_{\mathbb H^n}|u_\varepsilon|^2\,dV_{\mathbb H^n}}
			{\Bigl(\displaystyle\int_{\mathbb H^n}|u_\varepsilon|^{2_s^*}\,dV_{\mathbb H^n}\Bigr)^{\!2/2_s^*}}
			&=
			\frac{\displaystyle \Bigl[\int_{\mathbb R^n}\varphi(-\Delta)^s\varphi\,dy\Bigr]
				+ O(\varepsilon^{2\min\{1,s\}})\;+\;(-\lambda)\,\varepsilon^{2s}\!\int_{\mathbb R^n}|\varphi|^2\,dy
				+ o(\varepsilon^{2s})}
			{\Bigl(\displaystyle\int_{\mathbb R^n}|\varphi|^{2_s^*}\,dy+O(\varepsilon^2)\Bigr)^{\!2/2_s^*}}
			\\[2mm]
			&\le
			\frac{\displaystyle \int_{\mathbb R^n}\varphi(-\Delta)^s\varphi\,dy}
			{\Bigl(\displaystyle\int_{\mathbb R^n}|\varphi|^{2_s^*}\,dy\Bigr)^{\!2/2_s^*}}
			\;+\;
			C\Bigl(\varepsilon^{2\min\{1,s\}}+|\lambda|\,\varepsilon^{2s}+\varepsilon^2\Bigr),
		\end{aligned}\]
		where $C>0$ depends only on $n,s$ and $\varphi$. 
		Letting $\varepsilon\downarrow0$ and then infimizing over $\varphi\in C_c^\infty(\mathbb R^n)\setminus\{0\}$
		gives $H_{n,s}(\lambda)\le S_{n,s}$. Similarly, we obtain $\widetilde H_{n,s}(\lambda)\le S_{n,s}$. By Proposition \ref{prop:conf-frac-Sob-Hn}, we have 
		$\widetilde H_{n,s}(\lambda)=\widetilde H_{n,s}(0)=S_{n,s},\quad \lambda\le 0.$
	The proof is complete. \end{proof}\medskip

    Finally, we prove Theorem~\ref{prop:strict-gap-fractional} and Theorem \ref{thm:strict-gap-Bplus-Bzero} by combining the strict attainment mechanism established above with the threshold behavior at the spectral bottom.\medskip

	\noindent \textbf{Proof of Theorem \ref{prop:strict-gap-fractional}:} 
	(i) By Theorem \ref{thm:Hardy-lowerbound-Ptilde} (i) and Proposition \ref{prop:basic-props-Hns}, we obtain $\mathcal{G}_{n,s}[\widetilde H_{n,s}]=(0,+\infty)$. By the proof of Theorem \ref{mainthe2}, \(\widetilde H_{n,s}(\lambda)\) is achieved for every
	\(\lambda\in(0,\widetilde\lambda_{0,s}^{\mathrm{conf}}]\), thus by Lemma \ref{prop:comparison-attainment1} (ii),  \(\widetilde H_{n,s}(\lambda)\) is strictly decreasing in \((0,\widetilde\lambda_{0,s}^{\mathrm{conf}}]\). The remaining conclusions follow from Proposition \ref{prop:Hnslambda-minus-infty}, Proposition \ref{prop:basic-props-Hns} and Lemma \ref{prop:bottom-spectrum-quadratic-form1}.
	
	(ii) By Theorem \ref{thm:Hardy-lowerbound-Ptilde} (ii), there exists $\widetilde\lambda_{s}^{\mathrm{conf}}\in(0,\widetilde{\lambda}_{0,s}^{\mathrm{conf}}]$ such that $\bigl(\widetilde\lambda_{s}^{\mathrm{conf}},\infty\bigr)
	\subset
	\mathcal{G}_{n,s}\bigl[\widetilde H_{n,s}\bigr]$, the remaining conclusions follow from Proposition \ref{prop:Hnslambda-minus-infty}, Proposition \ref{prop:basic-props-Hns} and Lemma \ref{prop:bottom-spectrum-quadratic-form1}.
	
	(iii) By \cite[Theorem 1.9]{lu2023explicit}, we obtain $\widetilde H_{n,s}\!\bigl(\widetilde{\lambda}_{0,s}^{\mathrm{conf}}\bigr)
	=S_{n,s}.$ By Proposition \ref{prop:basic-props-Hns}, $\widetilde H_{n,s}\!\bigl(0\bigr)
	=S_{n,s},$ thus we complete the proof.\hfill$\Box$\bigskip

	\noindent \textbf{Proof of Theorem \ref{thm:strict-gap-Bplus-Bzero}:} 
	(i) For $s\in \left(0,\frac{n}{4}\right]\cap\Bzero$, we have $b_s=0.$ By Theorem \ref{prop:gap-bs-lambda011} (i), we obtain $H_{n,s}(\lambda)<S_{n,s} \ \, {\rm for}\ \lambda>0,$ the remaining conclusions follow from Proposition \ref{prop:basic-props-Hns}.
	
	(ii)  For $s\in \left(0,\frac{n}{4}\right]\cap\Bplus$, we have $\lambda_{0,s}^{\mathrm{conf}}>b_s>0$. By Proposition \ref{prop:basic-props-Hns} and (\ref{eq:P-vs-tildeP}) we know $S_{n,s}=\widetilde H_{n,s}(0)\le H_{n,s}(0)\le S_{n,s},$ thus $H_{n,s}(0)= S_{n,s}.$ Then by Theorem \ref{prop:gap-bs-lambda011} (i), we have $(b_s,\infty)\subset\mathcal{G}_{n,s}\bigl[ H_{n,s}\bigr].$
	By the proof of Theorem \ref{mainthe} and Lemma \ref{prop:comparison-attainment1} (ii), \(H_{n,s}(\lambda)\) is achieved and strictly decreasing for every
	\(\lambda\in(b_s,\lambda_{0,s}^{\mathrm{conf}}]\). 
	
	(iii) follows immediately by combining Theorem~\ref{prop:gap-bs-lambda011} (b) with Proposition~\ref{prop:basic-props-Hns}.
	
	(iv) By \cite[Theorem 1.4]{lu2023explicit} and Proposition~\ref{prop:basic-props-Hns}, for $\lambda\le{\lambda}_{0,s}^{\mathrm{conf}}$, we obtain $H_{n,s}\!\bigl({\lambda}_{0,s}^{\mathrm{conf}}\bigr)
	=S_{n,s}= H_{n,s}(\lambda).$ The remaining conclusions follow from Proposition \ref{prop:Hnslambda-minus-infty}.\hfill$\Box$\bigskip

	\section{Appendix}
	\label{appendix}

     In this appendix, we collect several properties of Sobolev levels in Euclidean  settings. Some of these results are classical, while others are reproved here from a perspective motivated by our observations. Building on these properties, we provide the proofs of Propositions~\ref{prop:Lp-perturbation-frac-prop} and~\ref{prop:Lp-perturbation-frac-prophn11}.

	\subsection{Stability and Attainability of Sobolev Levels}

    We first show that adding a positive lower-order $L^p$ perturbation term does not change the optimal Sobolev level.

	\begin{proposition}\label{lem:Lp-perturbation}
		Let $\Omega\subset\mathbb{R}^n$ be a nonempty open set, $n\ge3$ and $2^*=\frac{2n}{n-2}$.
		For $p\in(0,\infty)$ and $\mu>0$ define
		\[
		S^{(p)}_{\Omega}
		:= \inf_{v\in C_c^\infty(\Omega)\setminus\{0\}}
		\frac{\displaystyle\int_{\mathbb{R}^n}\bigl(|\nabla v|^2+\mu |v|^p\bigr)\,dx}
		{\Bigl(\displaystyle\int_{\mathbb{R}^n}|v|^{2^*}\,dx\Bigr)^{2/2^*}},\quad 	S_{\Omega}
		:= \inf_{v\in C_c^\infty(\Omega)\setminus\{0\}}
		\frac{\displaystyle\int_{\mathbb{R}^n}|\nabla v|^2\,dx}
		{\Bigl(\displaystyle\int_{\mathbb{R}^n}|v|^{2^*}\,dx\Bigr)^{2/2^*}}.
		\]
		Then $S^{(p)}_{\Omega} = S_{\Omega}=S_{n,1},$
		where $S_{n,1}$ is defined in (\ref{eq:upper-bound}).
	\end{proposition}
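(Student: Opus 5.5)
The claim is $S^{(p)}_\Omega=S_\Omega=S_{n,1}$. I would get the first equality from a concentration argument that kills the perturbation term, and the second from translation–dilation invariance of the Dirichlet and $L^{2^*}$ norms.

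\textbf{Easy inequality.} Since $\mu>0$, every admissible $v$ satisfies
\[
\int_{\mathbb R^n}\bigl(|\nabla v|^2+\mu|v|^p\bigr)\,dx\ \ge\ \int_{\mathbb R^n}|\nabla v|^2\,dx .
\]
Hence $S^{(p)}_\Omega\ge S_\Omega$. Also $C_c^\infty(\Omega)\subset C_c^\infty(\mathbb R^n)$, so $S_\Omega\ge S_{n,1}$, which is the $s=1$ case of \eqref{eq:upper-bound}. It therefore suffices to prove $S^{(p)}_\Omega\le S_{n,1}$.

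\textbf{Upper bound by rescaling.} Fix $\varepsilon_0>0$ and choose $\varphi\in C_c^\infty(\mathbb R^n)\setminus\{0\}$ whose Sobolev quotient is at most $S_{n,1}+\varepsilon_0$; this is possible by the definition of $S_{n,1}$. Pick a point $x_0\in\Omega$ and $r>0$ with $B_r(x_0)\subset\Omega$. Translate and shrink $\varphi$ with the critical scaling,
\[
v_t(x):=t^{-\frac{n-2}{2}}\varphi\Bigl(\frac{x-x_0}{t}\Bigr),
\]
so that for $t$ small enough $\operatorname{supp}v_t\subset B_r(x_0)\subset\Omega$.

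A change of variables gives the three quantities in the quotient:
\[
\|\nabla v_t\|_2^2=\|\nabla\varphi\|_2^2,\qquad \|v_t\|_{2^*}=\|\varphi\|_{2^*},
\]
\[
\int_{\mathbb R^n}|v_t|^p\,dx=t^{\,n-\frac{(n-2)p}{2}}\int_{\mathbb R^n}|\varphi|^p\,dx .
\]
If $p<2^*$, the exponent $n-\frac{(n-2)p}{2}$ is positive, so the perturbation term tends to $0$ as $t\to0^+$. Then $S^{(p)}_\Omega\le S_{n,1}+\varepsilon_0$, and since $\varepsilon_0$ is arbitrary, $S^{(p)}_\Omega\le S_{n,1}$. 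Combined with the easy inequality, this proves the statement for $p<2^*$.

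\textbf{Main obstacle: the range $p\ge 2^*$.} The statement allows all $p\in(0,\infty)$, and here the exponent $n-\frac{(n-2)p}{2}$ is zero or negative, so concentration no longer makes the perturbation small. I would then try the opposite scaling, spreading the profile out ($t\to\infty$). This requires $\Omega$ to contain arbitrarily large balls, which fails for bounded $\Omega$.

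At $p=2^*$ the quotient equals $\|\nabla v\|_2^2/\|v\|_{2^*}^2+\mu\|v\|_{2^*}^{2^*-2}$. Using the homogeneity freedom $v\mapsto cv$ with $c\to0$, the second term tends to $0$ while the first is unchanged. This settles $p=2^*$.

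For $p>2^*$ I would use the same amplitude scaling: replacing $v$ by $cv$ multiplies the $\mu$-term by $c^{p-2}$ relative to the ratio. Sending $c\to0$ makes it vanish whenever $p>2$. So for any $p>2$, amplitude scaling combined with the concentration step above gives the claim. This also covers $p\in(2,2^*)$ without dilation.

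For $p\le 2$ I would rely on the dilation argument, valid because $p\le2<2^*$. Putting the two mechanisms together, dilation for $p<2^*$ and amplitude for $p>2$, covers every $p\in(0,\infty)$ and completes the proof.
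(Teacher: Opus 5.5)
Your proof is correct. It uses the same two mechanisms as the paper: amplitude scaling $v\mapsto cv$, and the critical dilation $t^{-\frac{n-2}{2}}\varphi\bigl((x-x_0)/t\bigr)$ concentrating at a point of $\Omega$. The difference is in how you split the cases.

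The paper uses amplitude scaling for every $p\neq 2$, sending $c\to 0$ when $p>2$ and $c\to\infty$ when $p<2$; you did not use the second option. This gives $S^{(p)}_\Omega=S_\Omega$ on any open set without looking at $\Omega$ at all. The paper reserves dilation for the homogeneous case $p=2$, where amplitude scaling does nothing. For $S_\Omega=S_{n,1}$, and for $S_\Omega=S_{B_r(x_0)}$ in its Case B, the paper simply cites the well-known domain independence of the Sobolev constant.

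You instead make dilation the main tool on the whole range $p<2^*$, where the $L^p$ term picks up the positive power $t^{\,n-\frac{(n-2)p}{2}}$. You use amplitude scaling only to cover $p\ge 2^*$.

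What each buys:
\begin{itemize}
\item Your version: the dilation step also proves $S_\Omega\le S_{n,1}$. So the chain $S_{n,1}\le S_\Omega\le S^{(p)}_\Omega\le S_{n,1}$ is self-contained and needs no citation.
\item The paper's version: for $p\neq 2$ the identity $S^{(p)}_\Omega=S_\Omega$ is a one-line homogeneity computation. It works verbatim for any quadratic energy and any domain, and is decoupled from the value of the Sobolev constant.
\end{itemize}

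Your exploratory remark about spreading the profile out ($t\to\infty$) is not needed and can be dropped.
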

	
	\begin{proof}
		For $v\in C_c^\infty(\Omega)\setminus\{0\},$ set
		\[
		S_{\Omega}(v)
		:= \frac{\displaystyle\int_{\mathbb{R}^n}|\nabla v|^2\,dx}
		{\Bigl(\displaystyle\int_{\mathbb{R}^n}|v|^{2^*}\,dx\Bigr)^{2/2^*}},
		\qquad
		S^{(p)}_{\Omega}(v)
		:= \frac{\displaystyle\int_{\mathbb{R}^n}\bigl(|\nabla v|^2+\mu |v|^p\bigr)\,dx}
		{\Bigl(\displaystyle\int_{\mathbb{R}^n}|v|^{2^*}\,dx\Bigr)^{2/2^*}}.
		\]
		It is obvious that $S^{(p)}_{\Omega}=\inf S^{(p)}_{\Omega}(v)
		\;\ge\;\inf S_{\Omega}(v)=S_{\Omega}.$
		Next, we show that $S^{(p)}_{\Omega}\le S_{\Omega}$
		We split the argument into the cases $p\neq2$ and $p=2$.
		
		\smallskip\noindent
		\textbf{Case A: $p\neq2$.}
		Fix $u\in C_c^\infty(\Omega)\setminus\{0\}$ and set
		\[
		A:=\int_{\mathbb{R}^n}|\nabla u|^2\,dx,\quad
		B:=\int_{\mathbb{R}^n}|u|^p\,dx,\quad
		C:=\Bigl(\int_{\mathbb{R}^n}|u|^{2^*}\,dx\Bigr)^{2/2^*}>0.
		\]
		For $t>0$ consider the amplitude scaling $u_t:=t\,u$. Then $u_t\in C_c^\infty(\Omega)$ and
		\[
		\int_{\mathbb{R}^n}|\nabla u_t|^2\,dx = t^2A,\quad
		\int_{\mathbb{R}^n}|u_t|^p\,dx = t^pB,\quad
		\Bigl(\int_{\mathbb{R}^n}|u_t|^{2^*}\,dx\Bigr)^{2/2^*} = t^2C.
		\]
		Therefore
		\[
		S^{(p)}_{\Omega}(u_t)
		= \frac{t^2A+\mu t^pB}{t^2C}
		= S_{\Omega}(u) + \mu\,t^{p-2}\,\frac{B}{C}.
		\]
		If $p>2$, let $t\to0^+$; then $t^{p-2}\to0$ and $\lim_{t\to0^+}S^{(p)}_{\Omega}(u_t)=S_{\Omega}(u).$
		If $0<p<2$, let $t\to+\infty$; then $t^{p-2}\to0$ and the same limit holds. In both cases we obtain $S^{(p)}_{\Omega}\le \inf_{t>0}S^{(p)}_{\Omega}(u_t)\le S_{\Omega}(u).$
		Since $u$ is arbitrary, taking the infimum over $u$ yields $	S^{(p)}_{\Omega}\le S_{\Omega}.$
		
		\smallskip\noindent
		\textbf{Case B: $p=2$.}
		It is well known that $S_{\Omega}$ coincides
		with the Sobolev constant on any ball contained in $\Omega$. Choose $x_0\in\Omega$
		and $r>0$ such that $B_r(x_0)\subset\Omega$, and let
		\[
		S_{\Omega}=S_{B_r(x_0)}=\inf_{v\in C_c^\infty(B_r(x_0))\setminus\{0\}}S_{B_r(x_0)}(v).
		\]
		Hence, for any given $\varepsilon>0$, there exists $u\in C_c^\infty(B_r(x_0))\setminus\{0\}$ such that $	S_{B_r(x_0)}(u)\le S_{\Omega}+\varepsilon.$
		For $\lambda\ge1$, define the critical Sobolev scaling around $x_0$,
		\[
		u_\lambda(x):=\lambda^{\frac{N-2}{2}}u\bigl(x_0+\lambda(x-x_0)\bigr).
		\]
		Then $u_\lambda\in C_c^\infty(B_{r/\lambda}(x_0))\subset B_r(x_0)\subset\Omega$, and a direct
		change-of-variables computation gives
		\begin{align*}
			\int_{\mathbb{R}^n}|\nabla u_\lambda|^2\,dx
			= \int_{\mathbb{R}^n}|\nabla u|^2\,dx,\quad
			\int_{\mathbb{R}^n}|u_\lambda|^{2^*}\,dx= \int_{\mathbb{R}^n}|u|^{2^*}\,dx
		\end{align*}
		and $\int_{\mathbb{R}^n}|u_\lambda|^{2}\,dx=\lambda^{-2}\int_{\mathbb{R}^n}|u|^{2}\,dx.$
		Therefore,
		\[
		S^{(2)}_{\Omega}(u_\lambda)
		= \frac{\displaystyle\int_{\mathbb{R}^n}|\nabla u|^2\,dx
			+ \mu\lambda^{-2}\displaystyle\int_{\mathbb{R}^n}|u|^2\,dx}
		{\Bigl(\displaystyle\int_{\mathbb{R}^n}|u|^{2^*}\,dx\Bigr)^{2/2^*}}
		= S_{B_r(x_0)}(u)
		+ \mu\lambda^{-2}\,
		\frac{\displaystyle\int_{\mathbb{R}^n}|u|^2\,dx}
		{\Bigl(\displaystyle\int_{\mathbb{R}^n}|u|^{2^*}\,dx\Bigr)^{2/2^*}}.
		\]
		Letting $\lambda\to\infty$ we obtain $\lim_{\lambda\to\infty}S^{(2)}_{\Omega}(u_\lambda)=S_{B_r(x_0)}(u)\le S_{\Omega}+\varepsilon.$
		Hence, $S^{(2)}_{\Omega}\le S_{\Omega}+\varepsilon$
		for every $\varepsilon>0$, which implies $S^{(2)}_{\Omega}\le S_{\Omega}$.
	\end{proof}\medskip

    Analogously, in the fractional setting, a positive lower-order $L^p$ perturbation still leaves the sharp Sobolev level unchanged.

	\begin{proposition}
		\label{lem:Lp-perturbation-frac}
		Let $\Omega\subset\mathbb{R}^n$ be a nonempty open set, $n>2s,s\in (0,1)$ and $2_s^*=\frac{2n}{n-2s}$. 
		For $p\in(0,\infty)$ and $\mu>0$ define
		\[
		S^{(p)}_{\Omega,s}
		:= \inf_{u\in C_c^\infty(\Omega)\setminus\{0\}}
		\frac{\displaystyle\int_{\Omega} u\,(-\Delta)^s u\,dx
			+\mu\displaystyle\int_{\mathbb{R}^n}|u|^p\,dx}
		{\bigl(\displaystyle\int_{\mathbb{R}^n}|u|^{2_s^*}\,dx\bigr)^{2/2_s^*}}, \quad	S_{\Omega,s}
		:= \inf_{u\in C_c^\infty(\Omega)\setminus\{0\}}
		\frac{\displaystyle\int_{\Omega} u\,(-\Delta)^s u\,dx}
		{\bigl(\displaystyle\int_{\mathbb{R}^n}|u|^{2_s^*}\,dx\bigr)^{2/2_s^*}}.
		\]
		Then $S^{(p)}_{\Omega,s} = S_{\Omega,s}	=S_{n,s}$ for all $p>0,$
		where $S_{n,s}$ is defined in (\ref{eq:upper-bound}).
	\end{proposition}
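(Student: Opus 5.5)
The plan mirrors the proof of Proposition~\ref{lem:Lp-perturbation}. The inequality $S^{(p)}_{\Omega,s}\ge S_{\Omega,s}$ is immediate, because $\mu\int|u|^p\ge0$. The identity $S_{\Omega,s}=S_{n,s}$ is the standard domain independence of the fractional Sobolev constant. Translations and dilations preserve both $\int u(-\Delta)^s u$ and $\|u\|_{2_s^*}$, so any test function in $C_c^\infty(\R^n)$ can be moved into a ball $B_r(x_0)\subset\Omega$. Hence $S_{\Omega,s}\le S_{n,s}$, while $C_c^\infty(\Omega)\subset C_c^\infty(\R^n)$ gives the reverse inequality. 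For $u\in C_c^\infty(\Omega)$ I interpret $\int_\Omega u(-\Delta)^s u\,dx$ as $\int_{\R^n}|\xi|^{2s}|\hat u|^2d\xi$, which is legitimate since $u$ vanishes outside $\Omega$. It therefore remains to show $S^{(p)}_{\Omega,s}\le S_{\Omega,s}$.

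\textbf{Case $p\neq2$.} I use amplitude scaling $u_t=tu$. The quotient becomes $S_{\Omega,s}(u)+\mu t^{p-2}\int|u|^p/\|u\|_{2_s^*}^2$. I let $t\to0^+$ if $p>2$ and $t\to\infty$ if $p<2$, which removes the perturbation. Taking the infimum over $u$ then gives the claim.

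\textbf{Case $p=2$.} Amplitude scaling does nothing here, so I use concentration instead. Given $\varepsilon>0$, I pick $u\in C_c^\infty(B_r(x_0))$ with quotient at most $S_{\Omega,s}+\varepsilon$. I then set $u_\lambda(x)=\lambda^{\frac{n-2s}{2}}u(x_0+\lambda(x-x_0))$ for $\lambda\ge1$, so that $\operatorname{supp}u_\lambda\subset B_{r/\lambda}(x_0)\subset\Omega$. On the Fourier side the computation of Step~1 in Proposition~\ref{lem:energy-asymptotics-tilde} applies: $\widehat{u_\lambda}(\xi)=\lambda^{-\frac{n+2s}{2}}e^{\bi x_0\cdot\xi(1-\dots)}\hat u(\xi/\lambda)$ up to a unimodular phase. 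This shows that the Gagliardo/Fourier energy is invariant, and the $L^{2_s^*}$ norm is invariant as well. Meanwhile $\int|u_\lambda|^2=\lambda^{-2s}\int|u|^2\to0$. Letting $\lambda\to\infty$ and then $\varepsilon\to0$ finishes the argument.

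The only mildly delicate point is the identification of the energy $\int_\Omega u(-\Delta)^s u$ for compactly supported $u$ with the full-space quadratic form, so that the scaling and translation invariance can be applied. This identification is exactly the convention used for $H_0^s(\Omega)$ earlier in the paper, so I do not expect a genuine obstacle. The proof is a routine adaptation of Proposition~\ref{lem:Lp-perturbation} with gradient scaling replaced by order-$2s$ scaling.
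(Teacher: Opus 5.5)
Your proposal is correct and follows the paper's route: amplitude scaling $u\mapsto tu$ for $p\neq2$, and for $p=2$ the critical dilation concentrating at a point of $\Omega$, exactly as in Proposition~\ref{lem:Lp-perturbation}, with the Fourier-side scaling computation giving invariance of the energy. Your translation/dilation argument for $S_{\Omega,s}=S_{n,s}$ simply spells out what the paper takes as well known.
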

	
	\begin{proof}
		The argument is entirely analogous to the local case in Proposition~\ref{lem:Lp-perturbation}:
		one employs the critical dilation $u_t(x):=t^{\frac{n-2s}{2}}\,u\bigl(x_0+t(x-x_0)\bigr),\,t\ge 1,$
		which preserves both the quadratic energy and the critical normalization,
		\[
		\int_{\R^n} u_t\,(-\Delta)^s u_t\,dx=\int_{\R^n} u\,(-\Delta)^s u\,dx,
		\qquad
		\Bigl(\int_{\R^n}|u_t|^{2_s^*}\,dx\Bigr)^{\!\frac{2}{2_s^*}}
		=\Bigl(\int_{\R^n}|u|^{2_s^*}\,dx\Bigr)^{\!\frac{2}{2_s^*}}.
		\]
		Thus, we omit the details.
	\end{proof}\medskip

Since $\lambda_{0,s}^{\mathrm{conf}}$ and $\widetilde{\lambda}_{0,s}^{\mathrm{conf}}$ are the spectral bottoms of $\cP_s$ and $\widetilde{\cP}_s$, respectively, the following two results are immediate. Their proofs are straightforward and therefore omitted.

	\begin{lemma}
		\label{prop:bottom-spectrum-quadratic-form}
		Let $n\ge2$ and  $s\in\bigl(0,\frac n2\bigr)$.
		Then, for $\lambda\in\mathbb R$,
		\[	\int_{\mathbb H^n} (\cP_s u)\,u\,dV_{\mathbb H^n}-\lambda\int_{\mathbb H^n}|u|^2\,dV_{\mathbb H^n}\, \ge 0
		\qquad \forall\,u\in C_c^\infty(\mathbb H^n)\]
		holds if and only if $\lambda\le \lambda_{0,s}^{\mathrm{conf}}$. In particular, $H_{n,s}(\lambda)<0$ iff $\lambda>\lambda_{0,s}^{\mathrm{conf}}.$
	\end{lemma}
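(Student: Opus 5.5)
The plan is to use the Helgason--Fourier (Plancherel) representation of the quadratic form and to handle the two directions of the equivalence separately. For $u\in C_c^\infty(\mathbb H^n)$ we have
\[
\int_{\mathbb H^n}(\cP_s u)\,u\,dV_{\mathbb H^n}-\lambda\int_{\mathbb H^n}|u|^2\,dV_{\mathbb H^n}
=\int_{\mathbb R}\int_{\mathbb S^{n-1}}\bigl(m_s(\beta)-\lambda\bigr)\,|\hat u(\beta,\theta)|^2\,\frac{d\sigma(\theta)\,d\beta}{|c(\beta)|^2}.
\]

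For the direction "$\lambda\le\lambda_{0,s}^{\mathrm{conf}}$ implies nonnegativity", note that \eqref{eq:bottom-spectrum-by-symbol} gives $m_s(\beta)\ge\lambda_{0,s}^{\mathrm{conf}}\ge\lambda$ for every $\beta$. The integrand is then pointwise nonnegative, and the claim follows directly; equivalently, it is the sharp Poincar\'e inequality \eqref{eq:conf-Poincare}.

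For the converse, suppose $\lambda>\lambda_{0,s}^{\mathrm{conf}}=m_s(0)$. By continuity of $m_s$ there is $\delta>0$ with $m_s(\beta)<\lambda-\eta$ for $|\beta|<\delta$, for some $\eta>0$. We need a test function $u\in C_c^\infty(\mathbb H^n)$ whose spectral mass concentrates in $\{|\beta|<\delta\}$. Since $\lambda_{0,s}^{\mathrm{conf}}=\inf\sigma(\cP_s)$, the spectral theorem (Remark \ref{rem:spectrum-essential-range}) gives some $f\in L^2$ with $\langle(\cP_s-\lambda)f,f\rangle<0$, for instance $f=\mathcal F^{-1}\bigl(\mathbf 1_{|\beta|<\delta}\,g\bigr)$. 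This $f$ lies in $H^{2s}=\mathcal D(\cP_s)$. Because the form $u\mapsto\langle(\cP_s-\lambda)u,u\rangle$ is continuous in the $H^s$ norm (using \eqref{eq:1+ms-comparable}) and $C_c^\infty(\mathbb H^n)$ is dense in $H^s(\mathbb H^n)$, we can approximate $f$ by some $u\in C_c^\infty$ for which the form is still negative. The only delicate point is confirming that the density and continuity hold in the right norm, and both are recorded in Section 4.1. This density/form-continuity step is the main obstacle, but it is routine.

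For the final assertion, $H_{n,s}(\lambda)<0$ holds exactly when there exists $u\in C_c^\infty$ with negative numerator, since the denominator is always positive. By the equivalence just proved, this happens iff $\lambda>\lambda_{0,s}^{\mathrm{conf}}$. Proposition \ref{prop:Hnslambda-minus-infty}(ii) strengthens this, giving $H_{n,s}(\lambda)=-\infty$ in that range. The analogous statement for $\widetilde\cP_s$ with $\widetilde\lambda_{0,s}^{\mathrm{conf}}$ follows verbatim using $\widetilde m_s$ and Lemma \ref{lem:domain-tildePs}.
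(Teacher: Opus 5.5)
Your argument is correct and is exactly what the paper has in mind when it calls this lemma immediate from $\lambda_{0,s}^{\mathrm{conf}}=\inf\sigma(\cP_s)=m_s(0)$: the forward direction is the Plancherel form of \eqref{eq:conf-Poincare}, and the converse is the variational characterization of the spectral bottom over the core $C_c^\infty(\mathbb H^n)$, which you make explicit via a spectrally localized $f$, the bound $m_s(\beta)\lesssim(1+\beta^2)^s$, and $H^s$-density of $C_c^\infty(\mathbb H^n)$. One small cosmetic tightening: take $f=\mathbf 1_{[0,\delta)}(\cA)h$ for some $h\in L^2(\mathbb H^n)$ rather than $\mathcal F^{-1}$ of an arbitrary truncated $g$, because the Helgason transform is an isometry onto a proper, symmetry-constrained subspace of $L^2(\mathbb R\times\mathbb S^{n-1},|c(\beta)|^{-2}d\beta\,d\sigma)$; the spectral-theorem route you also give already avoids this.
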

	
	\begin{lemma}
		\label{prop:bottom-spectrum-quadratic-form1}
		Let $n\ge2$ and  $s\in\bigl(0,\frac n2\bigr)$.
		Then, for $\lambda\in\mathbb R$,
		\[	\int_{\mathbb H^n} (\widetilde \cP_s u)\,u\,dV_{\mathbb H^n}-\lambda\int_{\mathbb H^n}|u|^2\,dV_{\mathbb H^n}\, \ge 0
		\qquad \forall\,u\in C_c^\infty(\mathbb H^n)\]
		holds if and only if $\lambda\le \widetilde\lambda_{0,s}^{\mathrm{conf}}$. In particular, $\widetilde H_{n,s}(\lambda)<0$ iff $\lambda>\widetilde \lambda_{0,s}^{\mathrm{conf}}.$
	\end{lemma}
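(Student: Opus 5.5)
The statement is Lemma~\ref{prop:bottom-spectrum-quadratic-form1}: for $\lambda\in\mathbb R$, the quadratic form $\langle \widetilde\cP_s u,u\rangle-\lambda\|u\|_2^2$ is nonnegative on $C_c^\infty(\mathbb H^n)$ if and only if $\lambda\le\widetilde\lambda_{0,s}^{\mathrm{conf}}$. I would prove it entirely on the Helgason--Fourier side, using the multiplier representation \eqref{mstl} and the identity $\widetilde\lambda_{0,s}^{\mathrm{conf}}=\inf_{\beta}\widetilde m_s(\beta)=\widetilde m_s(0)$ from \eqref{eq:bottom-spectrum-by-symbol}.

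\textbf{Sufficiency.} Suppose $\lambda\le\widetilde\lambda_{0,s}^{\mathrm{conf}}$. By Plancherel,
\[
\langle \widetilde\cP_s u,u\rangle-\lambda\|u\|_2^2=\int_{\mathbb R}\int_{\mathbb S^{n-1}}\bigl(\widetilde m_s(\beta)-\lambda\bigr)|\hat u(\beta,\theta)|^2\,\frac{d\sigma\,d\beta}{|c(\beta)|^2}.
\]
The integrand is nonnegative because $\widetilde m_s(\beta)\ge\widetilde m_s(0)\ge\lambda$ for every $\beta$. This is exactly the Poincar\'e inequality \eqref{eq:conf-Poincare}.

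\textbf{Necessity.} Suppose $\lambda>\widetilde\lambda_{0,s}^{\mathrm{conf}}$. I need some $u\in C_c^\infty$ with a negative form value. Since $\widetilde m_s$ is continuous, there is $\delta>0$ with $\widetilde m_s(\beta)<\lambda-\eta$ for $|\beta|<\delta$, where $\eta:=(\lambda-\widetilde m_s(0))/2$.

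First I would pick a nonzero $F\in L^2(d\beta\,d\sigma/|c|^2)$ supported in $\{|\beta|<\delta\}$. Then $f:=\mathcal F^{-1}F\in L^2$ lies in $\mathcal D(\widetilde\cP_s)=H^{2s}$, since the multiplier is bounded on that region. Its form value satisfies
\[
\langle(\widetilde\cP_s-\lambda)f,f\rangle\le-\eta\|f\|_2^2<0 .
\]

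\textbf{Main obstacle: passing from $f$ to a compactly supported test function.} The form is continuous with respect to the $H^{s}$ norm, by Lemma~\ref{lem:domain-tildePs} and the Plancherel formula. Also $C_c^\infty(\mathbb H^n)$ is dense in $H^s(\mathbb H^n)$, which follows from bounded geometry and the density already used in the paper. So I choose $u_k\in C_c^\infty$ with $u_k\to f$ in $H^s$. Then
\[
\langle(\widetilde\cP_s-\lambda)u_k,u_k\rangle\to\langle(\widetilde\cP_s-\lambda)f,f\rangle<0,
\]
and this contradicts nonnegativity. Alternatively, one can quote the variational characterization $\inf\sigma(\widetilde\cP_s)=\inf_{u\in C_c^\infty}\langle\widetilde\cP_s u,u\rangle/\|u\|_2^2$, valid because $C_c^\infty$ is a form core (cf.\ \cite[Theorem E.8]{KellerLenzWojciechowski2021}).

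\textbf{The ``in particular'' clause.} If $\lambda\le\widetilde\lambda_{0,s}^{\mathrm{conf}}$, every numerator in \eqref{eq:Sns-Hn-lambda111} is $\ge0$, so $\widetilde H_{n,s}(\lambda)\ge0$. If $\lambda>\widetilde\lambda_{0,s}^{\mathrm{conf}}$, the $u_k$ above has a negative numerator, so $\widetilde H_{n,s}(\lambda)<0$; Proposition~\ref{prop:Hnslambda-minus-infty} in fact gives $-\infty$.

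The companion statement for $\cP_s$ (Lemma~\ref{prop:bottom-spectrum-quadratic-form}) has an identical proof. One replaces $\widetilde m_s$ by $m_s$ and uses \eqref{eq:1+ms-comparable} for continuity of the form on $H^s$.
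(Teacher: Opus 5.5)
Your proposal is correct and is the spectral argument the paper has in mind: the paper calls this lemma immediate from $\widetilde\lambda_{0,s}^{\mathrm{conf}}=\inf\sigma(\widetilde\cP_s)=\widetilde m_s(0)$ and omits the proof. Sufficiency is the Poincar\'e inequality \eqref{eq:conf-Poincare}, and necessity is the variational characterization of the spectral bottom over the form core $C_c^\infty(\mathbb H^n)$, which you make explicit through a spectrally localized $f$ and $H^s$-density. The only detail worth tightening is to take $f$ as a nonzero real-valued element of the range of the spectral projection $\mathbf 1_{[0,\delta)}(\cA)$, rather than $\mathcal F^{-1}F$ for an arbitrary $F$ supported in $\{|\beta|<\delta\}$. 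The range is nonzero because $\sigma(\cA)=[0,\infty)$, and it is invariant under complex conjugation because $-\Delta_{\mathbb H^n}$ is a real operator. This matters for two reasons: the Helgason transform over all $\beta\in\mathbb R$ maps onto a symmetric subspace only, and the numerator in \eqref{eq:Sns-Hn-lambda111} is written without complex conjugation, i.e.\ for real test functions.
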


    We next present a very useful abstract lemma, which links attainment of Sobolev-type levels to a strict comparison (strict monotonicity) of Sobolev-type levels.

	\begin{lemma}\label{prop:comparison-attainment1}
		Let $X$ be a nontrivial function space and fix a parameter $\lambda$.
		Let $f_\lambda,\,g_\lambda : X\setminus\{0\} \to \mathbb{R}$
		be two functionals such that
		\begin{equation}\label{eq:pointwise-strict}
			f_\lambda(u) < g_\lambda(u)
			\qquad\text{for all }u\in X\setminus\{0\}.
		\end{equation}
		Define
		\[
		f(\lambda) := \inf_{u\in X\setminus\{0\}} f_\lambda(u),
		\qquad
		g(\lambda) := \inf_{u\in X\setminus\{0\}} g_\lambda(u).
		\]
		Then the following properties hold:
		
		(i) If $f(\lambda)=g(\lambda)$, then $g(\lambda)$ is not attained.
		
		(ii) Conversely, if $g(\lambda)$ is attained at some
		$u_0\in X\setminus\{0\}$, then $f(\lambda)< g(\lambda).$
	\end{lemma}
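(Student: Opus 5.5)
Both parts come straight from the definitions and the strict pointwise inequality \eqref{eq:pointwise-strict}. I would prove (ii) first and then get (i) as its contrapositive.

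For (ii), suppose $g(\lambda)$ is attained at some $u_0\in X\setminus\{0\}$, so $g(\lambda)=g_\lambda(u_0)$. Apply \eqref{eq:pointwise-strict} at this $u_0$, and use that $f(\lambda)$ is an infimum over a set containing $u_0$:
\[
f(\lambda)\le f_\lambda(u_0)<g_\lambda(u_0)=g(\lambda).
\]
This gives $f(\lambda)<g(\lambda)$. It also implicitly requires $g(\lambda)$ to be a finite real number, which holds automatically because it is attained by a real-valued functional.

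For (i), assume $f(\lambda)=g(\lambda)$ and suppose, for contradiction, that $g(\lambda)$ is attained. Part (ii) would then give $f(\lambda)<g(\lambda)$, which is impossible. Hence $g(\lambda)$ is not attained.

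No step here is a real obstacle. The only point to watch is that pointwise strict inequality does not pass to infima in general; it yields only $f(\lambda)\le g(\lambda)$. Strictness is recovered precisely when $g$ has a minimizer, and that is exactly what (ii) asserts. In the intended application, $f_\lambda$ is the Rayleigh quotient at a larger parameter $\lambda'>\lambda$ and $g_\lambda$ the one at $\lambda$, both on $X=C_c^\infty(\mathbb H^n)$ or the energy space. There $f_\lambda(u)-g_\lambda(u)=-(\lambda'-\lambda)\|u\|_2^2/\|u\|_{2_s^*}^2<0$ for every $u\neq0$. So attainment of $\widetilde H_{n,s}(\lambda)$ or $H_{n,s}(\lambda)$ yields strict decrease, as used in the proofs of Theorems~\ref{prop:strict-gap-fractional} and~\ref{thm:strict-gap-Bplus-Bzero}.
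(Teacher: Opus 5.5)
Your proof is correct and essentially the paper's: (ii) uses the same chain $f(\lambda)\le f_\lambda(u_0)<g_\lambda(u_0)=g(\lambda)$. For (i), the paper runs that contradiction directly instead of citing (ii) as you do, which is only a cosmetic difference.
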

	
	\begin{proof}
		Since $f_\lambda(u) < g_\lambda(u)$ for all $u\in X\setminus\{0\}$, we clearly have
		\[
		f(\lambda)
		= \inf_{u\in X\setminus\{0\}} f_\lambda(u)
		\;\le\; \inf_{u\in X\setminus\{0\}} g_\lambda(u)
		= g(\lambda).
		\]
		
		(i) Assume by contradiction that $f(\lambda)=g(\lambda)$ and that $g(\lambda)$
		is attained, i.e., there exists $u_0\in X\setminus\{0\}$ such that $g_\lambda(u_0)=g(\lambda).$
		Then, by the strict pointwise inequality \eqref{eq:pointwise-strict}, $	f_\lambda(u_0) < g_\lambda(u_0)=g(\lambda)=f(\lambda),$
		which contradicts the definition of $f(\lambda)$ as the infimum of $f_\lambda$.
		Hence $g(\lambda)$ cannot be attained in $X\setminus\{0\}$.
		
		(ii) Conversely, assume that $g(\lambda)$ is attained at some $u_0\in X\setminus\{0\}$,
		so that $	g_\lambda(u_0)=g(\lambda).$
		By \eqref{eq:pointwise-strict} we have $f_\lambda(u_0) < g_\lambda(u_0)=g(\lambda).$
		Using the definition of $f(\lambda)$, $	f(\lambda)
		= \inf_{u\in X\setminus\{0\}} f_\lambda(u)
		\;\le\; f_\lambda(u_0)
		< g(\lambda).$
		Therefore $f(\lambda)<g(\lambda)$, which proves the second claim.
	\end{proof}

	\subsection{Proof of Propositions   \ref{prop:Lp-perturbation-frac-prop} and \ref{prop:Lp-perturbation-frac-prophn11}}

   Using the preceding results together with classical results on the Brezis--Nirenberg problem in Euclidean space, we now prove Propositions~\ref{prop:Lp-perturbation-frac-prop} and~\ref{prop:Lp-perturbation-frac-prophn11}.\medskip

	\noindent \textbf{Proof of Proposition \ref{prop:Lp-perturbation-frac-prop}. } 
	
	\textbf{ Part I: $s=1$.}
	
	(i) By Proposition~\ref{lem:Lp-perturbation} we have, for all $\lambda\le 0$, $S_{n,1,\Omega
	}(\lambda)=S_{n,1}.$
	Next, we show that $S_{n,1,\Omega}(\lambda_{1,1}\left(\Omega\right))=0.$ Let $\phi_1$ be the first Dirichlet eigenfunction,
	$-\Delta\phi_1=\lambda_{1,1}(\Omega)\phi_1$ in $\Omega$,
	$\phi_1\in H_0^1(\Omega)\setminus\{0\}$. Then
	\[
	0\le S_{n,1,\Omega}(\lambda)
	\;\le\;
	\frac{\displaystyle\int_\Omega|\nabla\phi_1|^2\,dx
		-\lambda\int_\Omega\phi_1^2\,dx}
	{\|\phi_1\|_{L^{2^*}(\Omega)}^2}
	=(\lambda_1-\lambda)\,
	\frac{\displaystyle\int_\Omega\phi_1^2\,dx}
	{\|\phi_1\|_{L^{2^*}(\Omega)}^2},
	\]
	thus, $S_{n,1,\Omega}(\lambda_{1,1}\left(\Omega\right))=0.$ Fix $v\in C_c^\infty(\Omega)\setminus\{0\}$, by H\"older's inequality, we get $	\int_\Omega |v|^2\,dx
	\le |\Omega|^{2/n}\Bigl(\int_\Omega |v|^{2^*}\,dx\Bigr)^{\frac{2}{2^*}}.$
	Therefore,
	\[
	\frac{\displaystyle\int_{\Omega} |\nabla v|^2\,dx-\lambda\int_{\Omega} v^2\,dx}
	{\Bigl(\displaystyle\int_{\Omega}|v|^{2^*}\,dx\Bigr)^{2/2^*}}
	\ge -\mu\,|\Omega|^{2/n}\quad \text{for} \quad \mu>\lambda_{1,1}(\Omega
	).
	\]
	Taking the infimum over all $v\in C_c^\infty(\Omega)\setminus\{0\}$ gives
	$S_{n,1,\Omega}(\lambda)\ge -\mu|\Omega|^{2/n}>-\infty$.

	By \cite[Lemma~1.1 and Lemma~1.2]{brezis1983positive} together with
	Lemma~\ref{prop:comparison-attainment1} (ii), then for any $0<\mu_1<\mu_2<\lambda_{1,1}(\Omega)<\mu_3,$
	one has the strict chain
	\[
	S_{n,1,\Omega}(\mu_3)\;<\;0
	\;=\;S_{n,1,\Omega}(\lambda_{1,1}\left(\Omega\right))
	\;<\;S_{n,1,\Omega}(\mu_2)
	\;<\;S_{n,1,\Omega}(\mu_1)
	\;<\;S_{n,1}.
	\]
	Combining with $S_{n,1,\Omega
	}(\lambda)=S_{n,1}$ for $\lambda\le0$ yields $\mathcal{G}_{n,1,\Omega
	}[S_{n,1,\Omega
	}]=(0,\infty).$
	
	It remains to characterize attainability.  
	If $\lambda>0$, by \cite[Lemma~1.1 and Lemma~1.2]{brezis1983positive}, the infimum is attained. For $\lambda=0$, it is well known that $S_{n,1,\Omega}(0)=S_{n,1}$ is not attained on bounded domains.
	
	If $\lambda<0$, suppose by contradiction that $S_{n,1,\Omega
	}(\lambda)$ is attained.
	Then Lemma~\ref{prop:comparison-attainment1} (ii) implies the strict monotonicity
	\(
	S_{n,1,\Omega
	}(\lambda)>S_{n,1,\Omega}(\lambda/2),
	\)
	contradicting Proposition~\ref{lem:Lp-perturbation}, which gives
	\(S_{n,1,\Omega}(\lambda)=S_{n,1,\Omega}(\lambda/2)=S_{n,1}\) for all $\lambda\le0$.
	Therefore $S_{n,1,\Omega}(\lambda)$ is attained  if and only if
	$\lambda\in\mathcal{G}_{n,1,\Omega}[S_{n,1,\Omega}]=(0,\infty)$. 
	
	(iii) By \cite[Lemmas~1.2--1.3]{brezis1983positive}, Lemma~\ref{prop:comparison-attainment1}(ii),
	and Proposition~\ref{lem:Lp-perturbation}, for any choice of parameters
	\[
	\lambda\le 0<\frac{\lambda_{1,1}(B_1)}{4}<\mu_1<\mu_2<\lambda_{1,1}(B_1)<\mu_3,
	\]
	we have the chain of strict inequalities
	\[
	S_{3,1,B_1}(\mu_3)<0
	=S_{3,1,B_1}\bigl(\lambda_{1,1}(B_1)\bigr)
	<S_{3,1,B_1}(\mu_2)
	<S_{3,1,B_1}(\mu_1)
	\le S_{3,1,B_1}\!\Bigl(\frac{\lambda_{1,1}(B_1)}{4}\Bigr)
	\le S_{3,1}
	=S_{3,1,B_1}(\lambda).
	\]
	We claim that $S_{3,1,B_1}\!\Bigl(\frac{\lambda_{1,1}(B_1)}{4}\Bigr)=S_{3,1}.$
	Indeed, if this were false, then \cite[Lemma~1.2]{brezis1983positive} would imply that
	$S_{3,1,B_1}\!\bigl(\frac{\lambda_{1,1}(B_1)}{4}\bigr)$ is attained, contradicting
	\cite[Lemma~1.4]{brezis1983positive}. Consequently,
	\cite[Lemma~1.3]{brezis1983positive} yields $	S_{3,1,B_1}(\mu_1)\;<\;S_{3,1,B_1}\!\Bigl(\frac{\lambda_{1,1}(B_1)}{4}\Bigr).$
	By an analogous argument in (i), we conclude that
	$S_{3,1,B_1}(\lambda)$ is attained if and only if $\lambda\in\mathcal{G}_{3,1,B_1}[S_{3,1,B_1}]$.\smallskip
	
	\textbf{ Part II: $s\in(0,1)$:}
	
	(i) Using \cite[Claim 14.1]{BisciRadulescuServadei2016}, Proposition~\ref{lem:Lp-perturbation-frac}, Lemma~\ref{prop:comparison-attainment1}, the proof is completely analogous to the case $s=1$, and we therefore omit the details.
	
	(ii) By \cite[Proposition  16.4]{BisciRadulescuServadei2016}, we obtain $(\lambda_s^*,\infty)\subset\mathcal{G}_{n,s}[S_{n,s}]$  and  thus, $S_{n,s}(\lambda)$ is attained  if $\lambda \in (\lambda_s^*,\infty)$.
	By Proposition~\ref{lem:Lp-perturbation-frac}, Lemma~\ref{prop:comparison-attainment1}, for any $\lambda\le 0<\lambda_{s}^*<\mu_1<\mu_2$,
	\[
	S_{n,s,\Omega}(\mu_2)\;<\;S_{n,s,\Omega}(\mu_1)\;\le\;S_{n,s,\Omega}(\lambda_{s}^*)
	\;\le\;S_{n,s,\Omega}(0)\;=\;S_{n,s,\Omega}(\lambda).
	\]
	The proof ends. \hfill$\Box$\bigskip

	\noindent\textbf{Proof of Proposition \ref{prop:Lp-perturbation-frac-prophn11}.}  When $k=1$, the proof is the following.  \smallskip
	
	(i) By Proposition \ref{prop:basic-props-Hns} and Proposition \ref{prop:conf-frac-Sob-Hn}, we know that for any $\lambda \le 0,$ 
	\[H_{n,1}(\lambda)=H_{n,1}(0)=S_{n,s}.\] By the proof of \cite[Theorem 1.5, Theorem 1.6]{ManciniSandeep2008}, we know that $H_{n,1}(\lambda)$ is achieved if and only if  $\lambda\in (0,\lambda_{0,1}^{\mathrm{conf}}].$ Thus, by Lemma~\ref{prop:comparison-attainment1}(ii), for any $\lambda\le 0<\mu_1<\mu_2<\frac{1}{4},$ we have
	\[H_{n,1}(\frac{1}{4})<H_{n,2}(\mu_1)<H_{n,1}(\mu_1)<H_{n,1}(\lambda)=H_{n,1}(0)=S_{n,s}.\] 
	By the  Poincar\'e--Sobolev inequality (see \cite{ManciniSandeep2008}) and Lemma \ref{prop:bottom-spectrum-quadratic-form}, we obtain $H_{n,1}(\frac{1}{4})>0>H_{n,1}(\mu_3).$
	Thus, by Lemma~\ref{prop:comparison-attainment1} (ii) we obtain the strict decreasing property  and $\mathcal{G}_{n,1}[	H_{n,1}]=(0,\infty).$ By Proposition \ref{prop:Hnslambda-minus-infty}, we obtain $H_{n,1}(\mu)=-\infty$ for $\mu>\lambda_{0,1}^{\mathrm{conf}}.$
	
	(ii) For dimension $n=3$, by \cite[Theorem 1.1]{BenguriaFrankLoss2007}, we have $H_{3,1}(\frac{1}{4})=S_{3,1}.$ Thus, by Proposition \ref{prop:basic-props-Hns}, we obtain for any $\mu_1\le \frac{1}{4},$ $H_{3,1}(\mu_1)=H_{3,1}(\frac{1}{4})=S_{3,1}.$
	Again by the  Poincar\'e--Sobolev inequality and Lemma \ref{prop:bottom-spectrum-quadratic-form}, we obtain for any $\mu_2>\frac{1}{4},$ $H_{3,1}(\frac{1}{4})>0>H_{n,1}(\mu_2).$ Thus, we prove the inequality and $\mathcal{G}_{3,1}[	H_{3,1}]=(\frac{1}{4},\infty).$ 
	By  \cite[Theorem 1.7]{ManciniSandeep2008}, we obtain $H_{3,1}(\lambda)$ is never achieved for any $\lambda\in \mathbb{R}$.
	
	\smallskip
	
	Now we deal with the case $k\geq 2$.  \smallskip
	
	(i) By \cite[Theorem~1.14]{li2022higher} and \cite[Theorem~1.7]{lu2022green}, we know that
	$H_{n,k}(\lambda)<S_{n,k}$ and  $H_{n,k}(\lambda)$ is achieved whenever $\lambda\in (0,\lambda_{0,k}^{\mathrm{conf}})$ when $n\ge 4k$.
	By arguments completely analogous to those used in the preceding propositions in high dimensions, we obtain
	the desired conclusion here, and therefore omit the details.
	
	(ii) By \cite[Theorem~1.14]{li2022higher} and \cite[Theorem~1.7]{lu2022green}, there exists $\lambda_k^{\mathrm{conf}}\in (0,\lambda_{0,k}^{\mathrm{conf}})$ such that
	$H_{n,k}(\lambda)<S_{n,k}$ when $\lambda>\lambda_k^{\mathrm{conf}}$ and $H_{n,k}(\lambda)$ is achieved whenever $\lambda\in (\lambda_{k}^{\mathrm{conf}},\lambda_{0,k}^{\mathrm{conf}})$.
	The remainder of the proof follows by an entirely analogous argument, combined with
	Lemma~\ref{prop:comparison-attainment1} (ii) and Proposition \ref{prop:Hnslambda-minus-infty}, and we therefore omit the details.

	(iii) When $n=2k+1$, by \cite[Theorem~1.6]{lu2023explicit} or \cite[Theorem~1.2]{lu2022green}, we obtain $H_{n,k}(\lambda_{0,k}^{\mathrm{conf}})=S_{n,k}.$ The rest of the proof is completely analogous to (ii), and we omit it.   \hfill$\Box$
	
	 \bigskip\bigskip

\noindent {\bf  Conflicts of interest:} The authors declare that they have no conflicts of interest regarding this work.
\medskip

\noindent {\bf  Data availability:}  This paper has no associated data.\medskip

\noindent{{\bf Acknowledgements:}    
H. Chen is supported by  NSFC, no. 12361043. \\
R. Chen is supported by China Scholarship Council,  Liujinxuan [2025] no. 37. \\

	\printbibliography

@article{stanton1978expansions,
  author  = {Stanton, R. J. and Tomas, P. A.},
  title   = {Expansions for spherical functions on noncompact symmetric spaces},
  journal = {Acta Math.},
  volume  = {140},
  number  = {3-4},
  pages   = {251--276},
  year    = {1978}
}

@book{TaylorPDO,
  author    = {Taylor, M. E.},
  title     = {Pseudodifferential Operators},
  series    = {Princeton Math. Ser.},
  volume    = {34},
  publisher = {Princeton Univ. Press},
  address   = {Princeton, NJ},
  year      = {1981}
}

@article{brezis1983positive,
  author  = {Br{\'e}zis, H. and Nirenberg, L.},
  title   = {Positive solutions of nonlinear elliptic equations involving critical {S}obolev exponents},
  journal = {Comm. Pure Appl. Math.},
  volume  = {36},
  number  = {4},
  pages   = {437--477},
  year    = {1983}
}

@article{mazzeo1987meromorphic,
  author  = {Mazzeo, R. R. and Melrose, R. B.},
  title   = {Meromorphic extension of the resolvent on complete spaces with asymptotically constant negative curvature},
  journal = {J. Funct. Anal.},
  volume  = {75},
  number  = {2},
  pages   = {260--310},
  year    = {1987}
}

@article{graham1992conformally,
  author  = {Graham, C. R. and Jenne, R. and Mason, L. J. and Sparling, G. A. J.},
  title   = {Conformally invariant powers of the Laplacian, {I}: Existence},
  journal = {J. Lond. Math. Soc. (2)},
  volume  = {46},
  number  = {3},
  pages   = {557--565},
  year    = {1992}
}

@book{hebey1999nonlinear,
  author    = {Hebey, E.},
  title     = {Nonlinear Analysis on Manifolds: Sobolev Spaces and Inequalities},
  series    = {Courant Lect. Notes Math.},
  volume    = {5},
  publisher = {Amer. Math. Soc.},
  address   = {Providence, RI},
  year      = {1999}
}

@article{joshi2000inverse,
  author  = {Joshi, M. S. and S{\'a} Barreto, A.},
  title   = {Inverse scattering on asymptotically hyperbolic manifolds},
  journal = {Acta Math.},
  volume  = {184},
  number  = {1},
  pages   = {41--86},
  year    = {2000}
}

@incollection{stapelkamp2002brezis,
  author    = {Stapelkamp, S.},
  title     = {The {B}r{\'e}zis--{N}irenberg problem on $\mathbb{H}^n$: Existence and uniqueness of solutions},
  booktitle = {Elliptic and Parabolic Problems},
  series    = {Progr. Nonlinear Differential Equations Appl.},
  volume    = {63},
  pages     = {283--290},
  publisher = {Birkh{\"a}user},
  address   = {Basel},
  year      = {2005}
}

@book{GelfandGindikinGraev2003,
  author    = {Gelfand, I. M. and Gindikin, S. G. and Graev, M. I.},
  title     = {Selected Topics in Integral Geometry},
  publisher = {Amer. Math. Soc.},
  address   = {Providence, RI},
  year      = {2003}
}

@article{graham2003scattering,
  author  = {Graham, C. R. and Zworski, M.},
  title   = {Scattering matrix in conformal geometry},
  journal = {Invent. Math.},
  volume  = {152},
  number  = {1},
  pages   = {89--118},
  year    = {2003}
}

@article{gover2006laplacian,
  author  = {Gover, A. R.},
  title   = {Laplacian operators and $Q$-curvature on conformally {E}instein manifolds},
  journal = {Math. Ann.},
  volume  = {336},
  number  = {2},
  pages   = {311--334},
  year    = {2006}
}

@book{hormander2007analysis,
  author    = {H{\"o}rmander, L.},
  title     = {The Analysis of Linear Partial Differential Operators {III}},
  subtitle  = {Pseudo-Differential Operators},
  publisher = {Springer},
  address   = {Berlin},
  year      = {2007}
}

@article{BenguriaFrankLoss2007,
  author  = {Benguria, R. D. and Frank, R. L. and Loss, M.},
  title   = {The sharp constant in the {H}ardy--{S}obolev--{M}az'ya inequality in the three dimensional upper half-space},
  journal = {Math. Res. Lett.},
  volume  = {15},
  number  = {4},
  pages   = {613--622},
  year    = {2008}
}

@article{ManciniSandeep2008,
  author  = {Mancini, G. and Sandeep, K.},
  title   = {On a semilinear elliptic equation in $\mathbb{H}^n$},
  journal = {Ann. Sc. Norm. Super. Pisa Cl. Sci. (5)},
  volume  = {7},
  number  = {4},
  pages   = {635--671},
  year    = {2008}
}

@article{LiuPeng2009,
  author  = {Liu, C. and Peng, L.},
  title   = {Generalized {H}elgason--{F}ourier transforms associated to variants of the {L}aplace--{B}eltrami operators on the unit ball in $\mathbb{R}^n$},
  journal = {Indiana Univ. Math. J.},
  volume  = {58},
  number  = {3},
  pages   = {1457--1491},
  year    = {2009}
}

@article{chang2011fractional,
  author  = {Chang, S.-Y. A. and Gonz{\'a}lez, M. M.},
  title   = {Fractional Laplacian in conformal geometry},
  journal = {Adv. Math.},
  volume  = {226},
  number  = {2},
  pages   = {1410--1432},
  year    = {2011}
}

@article{fefferman2013juhl,
  author  = {Fefferman, C. and Graham, C. R.},
  title   = {Juhl's formulae for {GJMS} operators and $Q$-curvatures},
  journal = {J. Amer. Math. Soc.},
  volume  = {26},
  number  = {4},
  pages   = {1191--1207},
  year    = {2013}
}

@article{juhl2013explicit,
  author  = {Juhl, A.},
  title   = {Explicit formulas for {GJMS}-operators and $Q$-curvatures},
  journal = {Geom. Funct. Anal.},
  volume  = {23},
  number  = {4},
  pages   = {1278--1370},
  year    = {2013}
}

@article{liu2013sharp,
  author  = {Liu, G.},
  title   = {Sharp higher-order {S}obolev inequalities in the hyperbolic space},
  journal = {Calc. Var. Partial Differential Equations},
  volume  = {47},
  number  = {3-4},
  pages   = {567--588},
  year    = {2013}
}

@article{gonzalez2013fractional,
  author  = {Gonz{\'a}lez, M. M. and Qing, J.},
  title   = {Fractional conformal {L}aplacians and fractional {Y}amabe problems},
  journal = {Anal. PDE},
  volume  = {6},
  number  = {7},
  pages   = {1535--1576},
  year    = {2013}
}

@book{gradshteyn2014table,
  author    = {Gradshteyn, I. S. and Ryzhik, I. M.},
  title     = {Table of Integrals, Series, and Products},
  edition   = {8},
  publisher = {Academic Press},
  address   = {Amsterdam},
  year      = {2014}
}

@article{servadei2015brezis,
  author  = {Servadei, R. and Valdinoci, E.},
  title   = {The {B}rezis--{N}irenberg result for the fractional {L}aplacian},
  journal = {Trans. Amer. Math. Soc.},
  volume  = {367},
  number  = {1},
  pages   = {67--102},
  year    = {2015}
}

@book{BisciRadulescuServadei2016,
  author    = {Molica Bisci, G. and R\u{a}dulescu, V. D. and Servadei, R.},
  title     = {Variational Methods for Nonlocal Fractional Problems},
  publisher = {Cambridge Univ. Press},
  address   = {Cambridge},
  year      = {2016}
}

@article{benguria2016solution,
  author  = {Benguria, S.},
  title   = {The solution gap of the {B}rezis--{N}irenberg problem on the hyperbolic space},
  journal = {Monatsh. Math.},
  volume  = {181},
  number  = {3},
  pages   = {537--559},
  year    = {2016}
}

@book{KellerLenzWojciechowski2021,
  author    = {Keller, M. and Lenz, D. and Wojciechowski, R. K.},
  title     = {Graphs and Discrete Dirichlet Spaces},
  publisher = {Springer},
  address   = {Cham},
  year      = {2021}
}

@article{lu2022green,
  author  = {Lu, G. and Yang, Q.},
  title   = {Green's functions of Paneitz and {GJMS} operators on hyperbolic spaces and sharp {H}ardy--{S}obolev--{M}az'ya inequalities on half spaces},
  journal = {Adv. Math.},
  volume  = {398},
  pages   = {108156},
  year    = {2022}
}

@article{li2022higher,
  author  = {Li, J. and Lu, G. and Yang, Q.},
  title   = {Higher order {B}rezis--{N}irenberg problem on hyperbolic spaces: existence, nonexistence and symmetry of solutions},
  journal = {Adv. Math.},
  volume  = {399},
  pages   = {108259},
  year    = {2022}
}

@article{lu2023explicit,
  author      = {Lu, G. and Yang, Q.},
  title       = {Explicit formulas of fractional {GJMS} operators on hyperbolic spaces and sharp fractional {P}oincar{\'e}--{S}obolev and {H}ardy--{S}obolev--{M}az'ya inequalities},
  journal     = {arXiv preprint},
  eprint      = {2310.15973},
  eprinttype  = {arxiv},
  eprintclass = {math.AP},
  year        = {2023}
}

@book{Helgason2024,
  author    = {Helgason, S.},
  title     = {Geometric Analysis on Symmetric Spaces},
  publisher = {Amer. Math. Soc.},
  address   = {Providence, RI},
  year      = {2024}
}

@article{ChenRLogarithmic,
  author      = {Chen, R.},
  title       = {Logarithmic Laplacian on General Riemannian Manifolds},
  journal     = {arXiv preprint},
  eprint      = {2506.19311},
  eprinttype  = {arxiv},
  eprintclass = {math.AP},
  year        = {2025}
}

@article{WW25,
  author  = {Li, F. and Vaira, G. and Wei, J. and Wu, Y.},
  title   = {Construction of bubbling solutions of the {B}rezis--{N}irenberg problem in general bounded domains ({I}): the dimensions 4 and 5},
  journal = {J. Lond. Math. Soc. (2)},
  volume  = {112},
  number  = {2},
  pages   = {e70246},
  year    = {2025}
}

@article{bruno2025blow,
  author      = {Bruno, T. and Papageorgiou, E.},
  title       = {Blow-up exponents and a semilinear elliptic equation for the fractional Laplacian on hyperbolic spaces},
  journal     = {arXiv preprint},
  eprint      = {2509.12349},
  eprinttype  = {arxiv},
  eprintclass = {math.AP},
  year        = {2025}
}

@article{FK25,
  author  = {Frank, R. L. and K{\"o}nig, T. and Kovarik, H.},
  title   = {Blow-up of solutions of critical elliptic equations in three dimensions},
  journal = {Anal. PDE},
  volume  = {17},
  number  = {5},
  pages   = {1633--1692},
  year    = {2024}
}

@article{FK21,
  author  = {Frank, R. L. and K{\"o}nig, T. and Kovarik, H.},
  title   = {Energy asymptotics in the three-dimensional {B}rezis--{N}irenberg problem},
  journal = {Calc. Var. Partial Differential Equations},
  volume  = {60},
  number  = {2},
  pages   = {58},
  year    = {2021}
}

@article{PV22,
  author  = {Premoselli, B. and V{\'e}tois, J.},
  title   = {Sign-changing blow-up for the {Y}amabe equation at the lowest energy level},
  journal = {Adv. Math.},
  volume  = {410},
  pages   = {108769},
  year    = {2022}
}

@article{SZ10,
  author  = {Schechter, M. and Zou, W.},
  title   = {On the {B}r{\'e}zis--{N}irenberg problem},
  journal = {Arch. Ration. Mech. Anal.},
  volume  = {197},
  number  = {1},
  pages   = {337--356},
  year    = {2010}
}

@article{MR24,
  author  = {Musso, M. and Rocci, S. and Vaira, G.},
  title   = {Nodal cluster solutions for the {B}rezis--{N}irenberg problem in dimensions {$N\ge 7$}},
  journal = {Calc. Var. Partial Differential Equations},
  volume  = {63},
  number  = {5},
  pages   = {119},
  year    = {2024}
}

@article{DK23,
  author  = {De Nitti, N. and K{\"o}nig, T.},
  title   = {Critical functions and blow-up asymptotics for the fractional {B}rezis--{N}irenberg problem in low dimension},
  journal = {Calc. Var. Partial Differential Equations},
  volume  = {62},
  number  = {4},
  pages   = {114},
  year    = {2023}
}

@article{SV15,
  author  = {Servadei, R. and Valdinoci, E.},
  title   = {The {B}rezis--{N}irenberg result for the fractional {L}aplacian},
  journal = {Trans. Amer. Math. Soc.},
  volume  = {367},
  number  = {1},
  pages   = {67--102},
  year    = {2015}
}

@article{MS15,
  author  = {Molica Bisci, G. and Servadei, R.},
  title   = {A {B}rezis--{N}irenberg splitting approach for nonlocal fractional equations},
  journal = {Nonlinear Anal.},
  volume  = {119},
  pages   = {341--353},
  year    = {2015}
}

@article{BC15,
  author  = {Barrios, B. and Colorado, E. and Servadei, R. and Soria, F.},
  title   = {A critical fractional equation with concave-convex power nonlinearities},
  journal = {Ann. Inst. H. Poincar{\'e} C Anal. Non Lin{\'e}aire},
  volume  = {32},
  number  = {4},
  pages   = {875--900},
  year    = {2015}
}

@article{MM17,
  author  = {Mawhin, J. and Molica Bisci, G.},
  title   = {A {B}rezis--{N}irenberg type result for a nonlocal fractional operator},
  journal = {J. Lond. Math. Soc. (2)},
  volume  = {95},
  number  = {1},
  pages   = {73--93},
  year    = {2017}
}

@article{GL21,
  author  = {Guo, Y. and Li, B. and Pistoia, A. and Yan, S.},
  title   = {The fractional {B}rezis--{N}irenberg problems on lower dimensions},
  journal = {J. Differential Equations},
  volume  = {286},
  pages   = {284--331},
  year    = {2021}
}

	\noindent\textit{Huyuan Chen}: Center for Mathematics and Interdisciplinary Sciences, Fudan University,\\[1mm]
	Shanghai 200433, China\\[1mm]
	Shanghai Institute for Mathematics and Interdisciplinary Sciences,\\[1mm]
	Shanghai 200433, China\\[1mm]
	\noindent\emph{Email:} \texttt{chenhuyuan@yeah.net, chenhuyuan@simis.cn}
	
	\vspace{1em}
	
	\noindent\textit{Rui Chen}: School of Mathematical Sciences, Fudan University,\\[1mm]
	Shanghai 200433,  China\\[1mm]
	Brandenburg University of Technology Cottbus--Senftenberg,\\[1mm]
	Cottbus 03046, Germany\\[1mm]
	\noindent\emph{Email:} \texttt{chenrui23@m.fudan.edu.cn} 
	
\end{document}